\documentclass[letterpaper,11pt,reqno]{amsart} 
\usepackage[portrait,margin=1in]{geometry}
\usepackage{comment}
\usepackage{systeme}
\usepackage{mathrsfs,xfrac} 
\usepackage{xstring}
\usepackage{enumitem}
\usepackage[colorlinks=true,linkcolor=blue,citecolor=blue,urlcolor=blue]{hyperref} 
\usepackage{cleveref}
\usepackage{amsmath,amssymb,amsthm,amsfonts,amsbsy,latexsym,dsfont,color} 
\usepackage[numeric,initials,nobysame]{amsrefs} 
\usepackage[utf8]{inputenc}
\usepackage[english]{babel}
\usepackage{comment}
\usepackage{mathtools}
\usepackage[textsize=tiny]{todonotes}

\usepackage{pgfplots}
\pgfplotsset{compat=1.18}
\usepackage[foot]{amsaddr}

\usepackage{subcaption} 

\setuptodonotes{inline}

\newcommand\vect[1]{\ifstrequal{#1}{0}{\ensuremath{\mathbf{0}}}{\ensuremath{\boldsymbol{#1}}}} %vector symbol
\newtheorem{thm}{Theorem}[section]
\newtheorem{lem}[thm]{Lemma}
\newtheorem{lemma}[thm]{Lemma}
\newtheorem{cor}[thm]{Corollary}
\newtheorem{prop}[thm]{Proposition}
\newtheorem{defn}[thm]{Definition}

\newtheorem{conj}[thm]{Conjecture}   
\theoremstyle{definition}
\newtheorem{rmk}[thm]{Remark}
\newcommand{\conv}{\circledast}

\newcommand{\eset}{\varnothing}

\newcommand{\TV}{\mathrm{TV}}
\newcommand{\Law}{\mathcal L}
\newcommand{\IK}{\mathfrak{K}}

\DeclareMathOperator\supp{supp}

\newcommand{\SR}{{\sf SR}}
\newcommand{\Red}{{\sf Red}}

\newcommand{\abs}[1]{\left\vert#1\right\vert}

\newcommand{\dist}{\mathrm{dist}}

\newcommand{\vs}{\mathbf{s}}

\makeatletter
\renewcommand\paragraph{\@startsection{paragraph}{4}{\z@}%
  {3.25ex \@plus1ex \@minus.2ex}%
  {-1em}%
  {\normalfont\normalsize\bfseries}}
\makeatother
\begin{document}
\title[Potts]{Cutoff with an $O(1)$ window for Potts Glauber Dynamics\\on lattice at High Temperature}

\author[S. Yang and A. Sly]
       {Seoyeon Yang$^1$ and Allan Sly$^1$}

\address{$^1$Department of Mathematics,
  Princeton University,
  Princeton, NJ 08540, USA}
  
\email{syeon.y@princeton.edu}
\email{allansly@princeton.edu}
\subjclass[2020]{Primary: 60K35; Secondary: 82C20, 60J27}
\keywords{Glauber dynamics, mixing time, cutoff, Potts model, ferromagnetic}
%%%%%%%%%%%%%%%%%%%%%%%%%%%%%%%%%%%%%%%%%%%%%%%%%%%%
\begin{abstract}
We prove cutoff with an $O(1)$ window for the continuous-time heat-bath Glauber dynamics of the ferromagnetic $q$-state Potts model on the discrete torus $\Lambda_n=(\mathbb Z/n\mathbb Z)^d$
at sufficiently high temperature. For every fixed $d\ge2$ and $q\ge3$, there exists $\beta_0=\beta_0(d,q)>0$ such that, for $0<\beta<\beta_0$, the Glauber dynamics of the Potts model on $\Lambda_n$
exhibits cutoff with optimal $O(1)$ window around
\[
t_\star=t_\star^{(n)}:=\frac{1}{2\mathfrak{r}}\log |\Lambda_n|,
\]
where $\mathfrak{r}\in(0,1)$ is the exponential decay rate of the one-site magnetization.
%Compared with the prior \(O(\log\log n)\)-window result for high-temperature Potts Glauber dynamics, whose center was subsequently expressed in terms of the infinite-volume spectral gap, we characterize the center through the exact rate and sharpen the window to $O(1)$.
In particular, this determines the mixing time up to an additive $O(1)$. It is characterized by the point at which the macroscopic color-density bias from the monochromatic initial condition enters the scale of equilibrium fluctuations. Moreover, our proof shows that the monochromatic initial condition uniquely maximizes the color bias.

This is the first implementation of information percolation to prove cutoff for a non-monotone spin system. In contrast with the Ising model, a direct implementation of information percolation does not yield matching upper and lower bounds for the Potts dynamics when $q\ge3$. 
We overcome this by developing an information-percolation framework for signed influences and combining it with Fourier bounds on signed convolution powers and geometric control of history diagrams.

\end{abstract}
\maketitle

% \setcounter{tocdepth}{1}
%\tableofcontents

%%%%%%%%%%%%%%%%%%%%%%%%%%%%%%%%%%%%%%%%%%%%%%%%%%%%
\section{Introduction}\label{sec:intro}

\subsection{High-temperature cutoff for Potts Glauber dynamics}
The cutoff phenomenon---the abrupt transition of a Markov chain from far
from equilibrium to close to equilibrium---is a central theme in the study of stochastic dynamics. High-temperature Ising Glauber dynamics provides a particularly well-studied setting.  On the lattice boxes with periodic boundary conditions, cutoff has been established up to the critical temperature with an optimal $O(1)$ cutoff window and the mixing time location given by the point at which the magnetization started from all plus initial conditions is square root of the volume~\cites{IsingMeanfield,StochasticIP,Universality}. The sharpest result was proved with information percolation which gives a graphical interpretation of the dependence on the initial condition.  However, a key ingredient of using information percolation to prove cutoff is the monotonicity of the Markov chain.

For the Potts model with \(q\ge3\) the Glauber dynamics is not monotone and consequently the standard information percolation gives upper and lower bounds that differ by a constant factor, even at very high temperatures. Earlier work approximating mixing to a product chain proves cutoff for the Potts Glauber dynamics at high enough temperatures~\cites{CutoffSpin} but achieves a suboptimal $O(\log\log n)$ cutoff window. Recent breakthroughs of Pedrotti and Salez~\cites{SalezCurvature,PedrottiSalezLocalProduct} introduced the use of curvature criteria to establish cutoff yield $O(1)$ windows. These powerful general methods, however, establish cutoff without pinpointing the location of the mixing time.

The principal goal of this paper is to extend information-percolation beyond the monotone setting. In doing so, we prove that the continuous-time single-site heat-bath Glauber dynamics for the ferromagnetic Potts model on the torus exhibits cutoff with an \(O(1)\) window at sufficiently high temperature, and we identify its center through the exact exponential decay rate of one-site magnetization.
Theorems~\ref{thm:mono-extremal} and~\ref{thm:worstini} further show
that monochromatic initial states exactly maximize the expected total occupation of a fixed color in finite volume and asymptotically maximize the corresponding one-site bias in infinite volume.

\subsection{Main results}

Fix integers $d\ge2$ and $q\ge3$, and let $\Lambda_n=(\mathbb Z/n\mathbb Z)^d$ with nearest-neighbor
edge set. Let $(\sigma_t)_{t\ge0}$ be the continuous-time heat-bath Glauber dynamics for the
ferromagnetic $q$-state Potts model at inverse temperature $\beta>0$, with each vertex updated at
rate $1$. Write $\mu_{\Lambda_n}$ for the Gibbs measure and
\[
d_{\Lambda_n}(t):=
\max_{\sigma_0\in[q]^{\Lambda_n}}
\left\|
\mathbb P_{\sigma_0}(\sigma_t\in\cdot)-\mu_{\Lambda_n}
\right\|_{\mathrm{TV}}.
\]

We first state the cutoff theorem and then turn to two extremality results for monochromatic initial data.

\begin{thm}[Cutoff]\label{thm:main}
There exists \(\beta_0=\beta_0(d,q)>0\) such that for every \(\beta\in(0,\beta_0)\) there exists a
constant \(\mathfrak{r}=\mathfrak{r}(d,q,\beta)\in(0,1)\) with the following property: the family of
continuous-time heat-bath Glauber dynamics for the ferromagnetic \(q\)-state Potts model on
\(\Lambda_n\) exhibits cutoff with an \(O(1)\) window centered at
\begin{equation}\label{eq:tstar-def}
t_\star=t_\star^{(n)}:=\frac{1}{2\mathfrak{r}}\log |\Lambda_n| .
\end{equation}
\end{thm}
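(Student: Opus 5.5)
We prove matching bounds $d_{\Lambda_n}(t_\star+s)\le \varepsilon(s)$ and $d_{\Lambda_n}(t_\star-s)\ge 1-\varepsilon(s)$, with $\varepsilon(s)\to0$ as $s\to\infty$ uniformly in $n$. The first task is to define $\mathfrak r$. Run the infinite-volume dynamics on $\mathbb Z^d$ from the monochromatic state $\mathbf 1$ and set $m_t:=\mathbb P_{\mathbf 1}(\sigma_t(0)=1)-1/q$. Using the information-percolation (update-history) representation at small $\beta$, we will show $m_t=(c_\star+o(1))e^{-\mathfrak r t}$ for some $c_\star>0$ and $\mathfrak r\in(0,1)$. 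Heuristically, $\mathfrak r=1-O(\beta)$ comes from the one-site oblivious update rate, corrected by the interaction. The exact asymptotic, not just a rate, should follow from a renewal/Laplace-transform analysis of the signed influence kernel: the one-site bias satisfies a linear renewal equation $m_t=e^{-t}m_0+\int_0^t \sum_x K(s,x)\,m_{t-s}\,ds+(\text{nonlinear})$, whose kernel has exponentially small total mass beyond the first order. We then transfer to the torus with error $e^{-cn}$ by finite speed of propagation.

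\textbf{Lower bound.} Start from $\mathbf 1$ and use the distinguishing statistic $F(\sigma)=\sum_v(\mathbf 1\{\sigma(v)=1\}-1/q)$. Under $\mu_{\Lambda_n}$, exponential decay of correlations at high temperature gives $\mathbb E F=0$ and $\mathrm{Var}F=O(|\Lambda_n|)$. At time $t=t_\star-s$,
\[
\mathbb E_{\mathbf 1}F(\sigma_t)=|\Lambda_n|\,m_t\approx c_\star |\Lambda_n|^{1/2}e^{\mathfrak r s}.
\]
Bounding $\mathrm{Var}_{\mathbf 1}F(\sigma_t)=O(|\Lambda_n|)$ via spatial decorrelation of histories (clusters are small), Chebyshev gives total variation distance at least $1-O(e^{-2\mathfrak r s})$.

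\textbf{Upper bound.} We follow the Lubetzky–Sly information-percolation scheme. Reveal update histories backward from time $t$. Clusters that die out before time $0$ (blue) are independent of $\sigma_0$. Clusters that reach time $0$ are split into singletons that hit time $0$ (green-type, size one) and larger red clusters. The red clusters have exponentially small probability, and their contribution is controlled by the standard $\sum_{\text{red}}\to0$ estimate after an $O(1)$ burn-in. The difficulty is the singleton part. In the monotone Ising case one compares with the plus chain via the $L^2$ bound $\mathbb E[2^{|Y|}]-1$, where $Y$ is the set of singletons. For Potts with $q\ge3$, each singleton carries an arbitrary initial color, and its effect on the final site is a signed measure on $[q]$ rather than a single scalar bias. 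Naively bounding the absolute influence gives the wrong rate, which is the constant-factor loss. Instead we decompose each singleton's influence into its Fourier mode on $\mathbb Z_q$ (the nontrivial characters). We show that the total signed influence of a history from initial color $a$ on final color $b$ equals $(\delta_{ab}-1/q)$ times a scalar weight $w$, whose expectation is exactly $m_t$ and which is maximized by monochromatic data; this is Theorem~\ref{thm:mono-extremal}. The resulting $L^2$ bound becomes
\[
\mathbb E\Bigl[\prod_{v\in Y}\bigl(1+(q-1)w_v^2\bigr)\Bigr]-1\lesssim \exp\bigl(C|\Lambda_n| m_t^2\bigr)-1=\exp\bigl(Ce^{-2\mathfrak r s}\bigr)-1 .
\]

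The main obstacle is the step that produces the signed weights $w_v$. Unlike in Ising, histories involve heat-bath updates whose outcome depends on neighbors' colors nonmonotonically. We must therefore show that summing the signed contributions over all cluster geometries gives a quantity bounded by the true $m_t$ rather than by $\sum|\cdot|$. We plan to do this with Fourier bounds on signed convolution powers of the one-step influence kernel. Its leading term is positive and its sign-indefinite corrections are $O(\beta)$, so $\|\hat K^{*k}\|\le(\hat K(0)+C\beta)^k$. We combine this with geometric control of history diagrams, which bounds how many branchings a singleton history can have with exponentially decaying weight in the number of branchings. Any residual multiplicative constant is absorbed into an $O(1)$ shift of $s$.
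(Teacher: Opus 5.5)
Your lower bound is essentially the paper's: Chebyshev on a color-count statistic, with variance $O(|\Lambda_n|)$ from cluster sizes and $\mathfrak m_t(\mathfrak c,c)\ge c\,e^{-\mathfrak r t}$. The upper bound, however, has a genuine gap, exactly where the paper locates the obstacle for non-monotone dynamics.

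\emph{Red clusters are not negligible.} A history survives to time $0$ with probability decaying only at rate $1-2d\alpha+O(\alpha^2)$, and two adjacent top sites merge with probability of order $\alpha$. By contrast $\mathfrak r=1-\alpha+O(\alpha^2)$, so $1-m<\mathfrak r$ (Lemma~\ref{lem:kappa bound}). The standard Red overlap is therefore at least of order $\alpha^2|\Lambda_n|e^{-2(1-2d\alpha+O(\alpha^2))t}$, which grows like a positive power of $|\Lambda_n|$ at $t=t_\star+s$. That estimate is what puts the naive upper bound a constant factor after $t_\star$, so it cannot be absorbed by an $O(1)$ burn-in.

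\emph{The singleton step does not work either.} First, the form $\bigl(\delta_{ab}-\frac1q\bigr)w$ follows from color symmetry only when the history meets time $0$ at a single site; otherwise the top law is a nonlinear function of several initial colors. Second, if $w_v$ is the bias conditional on the history, then $\mathbb E[w_v^2]$ is a renewal sum with weights $p(x)^2$. Its one-branch term is $p_1^2\approx(2d)^{-2}$, so it decays no faster than $e^{-(1-\alpha/(2d)+O(\alpha^2))t}$. Hence $|\Lambda_n|\mathbb E[w_v^2]\to\infty$ at $t_\star$, and your same-copy quantity $\mathbb E\prod_v(1+(q-1)w_v^2)$ is not $\exp(O(|\Lambda_n|m_t^2))$. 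Third, Theorem~\ref{thm:mono-extremal} plays no role here, since $|w_v|$ does not depend on the initial data.

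What the overlap lemma actually needs is a set $S$ carrying all dependence on $\sigma_0$ with $\mathbb P(v\in S)\lesssim e^{-\mathfrak r t}$; two independent copies then contribute only $\mathbb P(v\in S)^2$. Building such an $S$ is the paper's key idea:
\begin{itemize}
\item Geometrically bad Red clusters are declared Purple. They cost $O(R^de^{-2(1-m)T})$ per site, and their overlap is $o(1)$ because $4(1-m)>2\mathfrak r$.
\item Every other Red cluster is cut at a separation tip where its history is a singleton with an isolated lower cone. The tip law is split as $p_t\,\mathrm{Unif}([q])+(1-p_t)\mathsf R$, with $p_t$ an infimum over all initial states. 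The uniform (Yellow) part contributes nothing.
\item The residual Strong-Red mass satisfies $M_t(1-p_t)\le Ce^{-\mathfrak r t}$ (Proposition~\ref{prop:isolated-cone-residual}).
\item Clusters that never find a tip or must retry are handled by the continuation estimates behind Proposition~\ref{prop:one-block-SR}.
\end{itemize}

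\emph{The Fourier bound loses the rate.} The residual bound needs $\|\IK(\cdot,t)\|_{1}\le Ce^{-\mathfrak r t}$ at the \emph{exact} rate, uniformly over what is fed in at the bottom. Your bound $\|\hat K^{*k}\|\le(\hat K(0)+C\beta)^k$, read as an $\ell^1$ bound, loses $O(\beta)$ in the exponent per unit time. Over $t\asymp\log|\Lambda_n|$ this is a factor $|\Lambda_n|^{O(\beta)}$. It moves the center by a multiple of $\log|\Lambda_n|$ and cannot be absorbed into an $O(1)$ shift of $s$; any loss in the rate, however small, is fatal. Read instead as a sup-norm bound on the Fourier side, it is true but does not control $\ell^1$.

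The paper handles this in two steps. It first makes your ``linear renewal plus nonlinear term'' rigorous by regenerating at singleton integer times. Complete sausages then act linearly through the scalar $p(x)$, and all nonlinearity sits in a terminal partial sausage whose tail decays at rate $\eta>\mathfrak r$. It then proves a local limit expansion (Proposition~\ref{prop:Fourier-six-exp}). This gives $\|\Phi_{\mathbf s}\|_{\ell^1}\le C$ uniformly in the number of signed convolution factors (Lemma~\ref{lem:Phi-l1}), even though $\|\varphi_\ell\|_{\ell^1}$ may exceed $1$.
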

The constant \(\mathfrak r\) is the exact exponential decay rate of the one-site magnetization \eqref{eq:magnetization-def}, namely the signed deviation of a fixed color's one-site
marginal from \(1/q\). For the analysis, it is convenient to write 
\(\kappa:=\mathfrak r-1\in(-1,0)\). We prove that $-\kappa=\Theta_{d,q}(\beta)$; see Lemma \ref{lem:kappa bound}.
This also explains the cutoff location: a one-site bias of order
\(e^{-\mathfrak{r}t}\) produces a total color bias of order
\(|\Lambda_n|e^{-\mathfrak{r}t}\), while equilibrium fluctuations are of
order \(|\Lambda_n|^{1/2}\). These scales become comparable when 
$e^{-\mathfrak{r}t}=|\Lambda_n|^{-1/2}$,
which gives \(t=t_\star\).

\smallskip

Our next theorem gives an exact finite-volume extremality statement for the expected total occupation of a fixed color.

\begin{thm}[Monochromatic extremality]\label{thm:mono-extremal}
Assume \(\beta\in(0,\beta_0)\). Fix \(c\in[q]\), and let
\(\mathfrak c\in[q]^{\Lambda_n}\) be the monochromatic configuration
\(\mathfrak c(x)=c\) for all \(x\in\Lambda_n\). Then, for every \(t\ge0\),
\[
\mathbb E_{\mathfrak c}\!\left[\sum_{w\in\Lambda_n}\mathbf 1\{\sigma_t(w)=c\}\right]
=
\sup_{\sigma_0\in[q]^{\Lambda_n}}
\mathbb E_{\sigma_0}\!\left[\sum_{w\in\Lambda_n}\mathbf 1\{\sigma_t(w)=c\}\right].
\]
\end{thm}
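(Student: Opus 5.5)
Since $\sigma_0\mapsto \mathbb E_{\sigma_0}[\sum_w \mathbf 1\{\sigma_t(w)=c\}]$ is a function on a finite set, I will reduce the extremality claim to a positivity statement. The plan is to find a linear representation
\[
\mathbb E_{\sigma_0}\Bigl[\sum_{w}\mathbf 1\{\sigma_t(w)=c\}\Bigr]-\frac{|\Lambda_n|}{q}
=\sum_{x\in\Lambda_n}\Bigl(\mathbf 1\{\sigma_0(x)=c\}-\tfrac1q\Bigr)\,K_t(x)+R_t(\sigma_0)
\]
in which the kernel $K_t(x)\ge 0$ and the remainder $R_t$ is dominated by the linear term. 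The linear part is maximized exactly when $\sigma_0(x)=c$ for every $x$, because $\mathbf 1\{\sigma_0(x)=c\}-1/q\le 1-1/q$ with equality only at color $c$. So the whole task is to show that, at high temperature, the dependence on the initial data is dominated by a nonnegative first-order term.

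The expansion comes from the graphical construction of heat-bath dynamics at small $\beta$. At rate $1$, each site is updated. With probability $1-\epsilon(\beta)$, where $\epsilon=O(\beta)$, the update is an \emph{oblivious} draw of a uniform color. With the remaining probability, it uses a signed correction kernel that depends on the neighbors. Running this backward from $(w,t)$ gives a history diagram. If the diagram dies before time $0$, the contribution to $\mathbf 1\{\sigma_t(w)=c\}-1/q$ does not depend on $\sigma_0$, so it cancels in the comparison. The surviving diagrams produce a multilinear polynomial in the indicators $e_x:=\mathbf 1\{\sigma_0(x)=c\}-1/q$ (together with analogous indicators for the other colors). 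Summing over $w$ and using translation invariance of the torus, the degree-one coefficient becomes $K_t(x)=\sum_w k_t(w-x)$, a signed convolution power summed over all of $\Lambda_n$. It therefore equals $\mathbb E_{\mathfrak c}[\sum_w\mathbf 1\{\sigma_t(w)=c\}]$ divided by $|\Lambda_n|(1-1/q)$, up to the higher-order terms. From the one-site magnetization analysis, this total mass is $e^{-\mathfrak r t}$ times a constant bounded away from $0$, which gives $K_t>0$ for all $t$. Summing $k_t$ over space is what removes the sign problem: the total mass evolves through a scalar recursion driven by $\kappa\in(-1,0)$, whose sign is fixed.

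The next step bounds the higher-order terms, which come from diagrams that reach time $0$ at two or more sites. Consider a multi-site term on a set $A$ with $|A|\ge 2$. The two-color symmetry constraint forces a branching, which costs a factor $O(\beta)$. I will use the geometric control of history diagrams, i.e.\ exponential tails in diagram size, to bound the total weight of these terms. For any $\sigma_0$, the goal is
\[
|R_t(\sigma_0)-R_t(\mathfrak c)|\le C\beta\sum_x (1-1/q-e_x)\,K_t(x).
\]
This requires the remainder's dependence on $\sigma_0$ to be controlled site by site, with each site's contribution weighted by $K_t(x)$ and multiplied by a factor $O(\beta)$. I expect to obtain this by fixing one site $x$ where $\sigma_0$ differs from $\mathfrak c$ and bounding the derivative of the remainder in $e_x$ by the clusters through $x$.

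The main obstacle is this last comparison. It must hold exactly, with no additive error, for every $t$ and $n$, and uniformly over all $\sigma_0$ including those far from monochromatic. This rules out a crude total-variation bound and requires a pointwise multiplicative domination of the multi-site contributions by the single-site kernel. My first attempt will be a one-site-at-a-time interpolation. I will change $\sigma_0$ toward $\mathfrak c$ one site at a time, recoloring one site $x$ to $c$ at each step. I will then show that each change increases the expectation by at least $(1-C\beta)K_t(x)(1-1/q-e_x)>0$. This needs the influence of a single initial site on the total color count to be positive regardless of the other sites, which reduces to positivity of the spatially summed signed convolution with an $O(\beta)$ perturbation. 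If this fails for general configurations, the fallback is to establish positivity through a Fourier bound on the signed kernel at zero frequency, since summing over $w$ projects the kernel onto the zero mode, where the sign is controlled by $\kappa$.
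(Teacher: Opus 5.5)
\textbf{Verdict: there is a genuine gap, and it is exactly the step you flag as the main obstacle.}

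Your outer structure matches the paper's. You reduce to a single-site replacement $\sigma_0(x)\neq c\mapsto c$, and you sum over observation sites $w$ so that the signed kernel collapses to its total mass. What you never supply is a mechanism that makes the multi-site remainder comparable to $K_t$.

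The mechanism you suggest cannot give $C\beta K_t$ uniformly in $t$. That mechanism is to bound the $e_x$-derivative of the remainder by clusters through $x$, using exponential tails in diagram size. Such a bound counts histories that reach $x$ at time $0$. The total weight of one-site histories that survive to time $0$ with at least two sites there is of order $\alpha e^{-(1-2d\alpha+O(\alpha^2))t}$. By contrast, $K_t\asymp e^{-\mathfrak r t}$ with $\mathfrak r=1-\alpha+O(\alpha^2)$ (Lemma~\ref{lem:kappa bound}). Since $d\ge2$, the ratio grows like $e^{(2d-1)\alpha t}$.

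The reason is that a branch-out multiplies the survival probability by roughly $2d$ (any child may carry the lineage). It multiplies the influence only by $2d\,p_1\approx1$. That discount is lost as soon as you take absolute values of cluster contributions.

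Your positivity argument for $K_t$ is also partly circular. Identifying $K_t$ with $\mathbb E_{\mathfrak c}[\cdot]$ ``up to higher-order terms'' presupposes control of those terms. Moreover, the monochromatic lower bound of Proposition~\ref{prop:all-one-magnetization-bound} holds only for $t\ge t_{\rm all}$.

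\textbf{The missing idea is the renewal factorization.}

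Cut the backward history into sausages. Above the terminal (possibly partial) sausage, the history is an i.i.d.\ concatenation of singleton-to-singleton blocks. By color symmetry, each block acts on centered color functions as multiplication by the scalar $p(x)$. All dependence on $\sigma_0$, including every multi-site effect, is confined to the terminal bias $g_{x_0,\sigma_0,c}$. This gives \eqref{eq:mag-sausage}.

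Write $M_t(\sigma_0)$ for the expected number of $c$-colored sites. After summing over $w$, the single-flip difference is exactly
\[
M_t(\sigma_0)-M_t(\sigma_0')=\frac{q-1}{q}\sum_{\tau=0}^{n_t}\mathcal I_{n_t-\tau}\,\Gamma_t(\tau),
\qquad
\mathcal I_s=\sum_{u}\IK(u,s)=e^{-(1+\kappa)s}\sum_{r\ge1}\tilde{\mathbb P}_\kappa(S_r=s)>0 .
\]
Here $\Gamma_t(\tau)$ is the $\mu$-average over terminal sausages of $\sum_z$ of the difference of the two terminal biases at depth $\tau+\theta_t$. Positivity of $\mathcal I_s$ holds for every $s$, because every length weight $w_\ell$ is positive.

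The key point is that your linear term and your remainder carry the \emph{same} positive weight $\mathcal I_{n_t-\tau}$ at each depth $\tau$. So they never have to be compared across different kernels. One only needs $\Gamma_t(\tau)\ge0$, which holds because of the following two estimates.
\begin{itemize}
\item The non-branching terminal sausage contributes $\frac{q}{q-1}\mu\bigl(\ell(x_0)>\tau+\theta_t,\ \tilde b_{\tau+\theta_t}(x_0)=0\bigr)\ge c_{\rm str}e^{-\eta\tau}$.
\item The branching terminal sausages contribute at least $-2\sum_{j\ge1}(2j+1)^d\mu\bigl(\ell(x_0)>\tau+\theta_t,\ \tilde b_{\tau+\theta_t}(x_0)\ge j\bigr)\ge-c_{\rm br}C_{\rm ball}(d,\vartheta^2)e^{-\eta\tau}$, which is smaller once $\beta_0$ is shrunk.
\end{itemize}

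Your zero-frequency Fourier fallback is the right object, since it is $\mathcal I_s$. But it only helps once the remainder has been written as a convolution against that same kernel. Without this identity, or an equivalent one, your linear-plus-remainder decomposition does not close.
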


We also obtain an asymptotic one-site extremality statement.
The theorem is stated in infinite volume $\mathbb Z^d$, where the proof framework, including the Fourier-analytic part, takes its cleanest form.

\begin{thm}[Asymptotic one-site extremality]\label{thm:worstini}
Assume \(\beta\in(0,\beta_0)\). Let \((\sigma_t)_{t\ge0}\) denote the infinite-volume heat-bath
Glauber dynamics for the ferromagnetic \(q\)-state Potts model on \(\mathbb Z^d\). Fix \(c\in[q]\),
let \(\mathfrak c\in[q]^{\mathbb Z^d}\) be the monochromatic configuration \(\mathfrak c(x)=c\) for
all \(x\in\mathbb Z^d\), and let \(o\in\mathbb Z^d\) be the origin. Then there exist constants
\(C<\infty\), \(\Delta>0\), and \(t_0<\infty\), depending only on \(d,q,\beta\), such that for all
\(t\ge t_0\),
\[
\mathbb P_{\mathfrak c}\bigl(\sigma_t(o)=c\bigr)-\frac1q
\ge
\left(1-C(\log t)^{-\Delta}\right)
\sup_{\sigma_0\in[q]^{\mathbb Z^d}}
\left(
\mathbb P_{\sigma_0}\bigl(\sigma_t(o)=c\bigr)-\frac1q
\right).
\]
\end{thm}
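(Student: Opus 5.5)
We would prove Theorem~\ref{thm:worstini} by expressing the one-site bias as a signed linear functional of the initial condition, and then comparing it with its value at the monochromatic configuration. Run the dynamics in the graphical (update-sequence) representation. At high temperature each heat-bath update at $v$ can be decomposed as follows: with probability $1-\theta$ (where $\theta=O(\beta)$) the new spin is a uniform color, independent of the neighbours. Otherwise it is drawn from a bounded signed kernel depending on the neighbouring colors.

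The first step is to reveal the history of $o$ backward from time $t$, i.e.\ the information-percolation cluster. On the event that the history dies out before time $0$, the spin is uniform and contributes nothing to the bias. Expanding the remaining contribution to first order in indicators gives
\[
\mathbb P_{\sigma_0}(\sigma_t(o)=c)-\tfrac1q=\sum_{x} K_t(x)\Bigl(\mathbf 1\{\sigma_0(x)=c\}-\tfrac1q\Bigr)+R_t(\sigma_0).
\]
Here $K_t$ is a signed kernel: the influence of a single site, obtained by summing over history paths that reach time $0$ at exactly one site. The remainder $R_t$ collects histories hitting time $0$ at two or more sites. The geometric control of history diagrams should give $|R_t|\le e^{-\mathfrak r t}\cdot O(e^{-\delta t})$, because branching histories cost an extra factor of $\beta$ per unit time. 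One also needs a subexponential correction that accounts for the $(\log t)^{-\Delta}$ loss.

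The second step is to compare the linear term with its value at $\mathfrak c$. For any $\sigma_0$,
\[
\Bigl|\sum_x K_t(x)\bigl(\mathbf 1\{\sigma_0(x)=c\}-\tfrac1q\bigr)\Bigr|\le \bigl(1-\tfrac1q\bigr)\sum_x K_t(x)^+ + \tfrac1q\sum_x K_t(x)^- ,
\]
whereas the monochromatic configuration yields exactly $(1-\tfrac1q)\sum_x K_t(x)=(1-\tfrac1q)\bigl(\sum K^+-\sum K^-\bigr)$. Hence it suffices to show that the negative mass of $K_t$ is a vanishing fraction of its total:
\[
\sum_x K_t(x)^- \le C(\log t)^{-\Delta}\sum_x K_t(x).
\]
The kernel $K_t$ is, up to the factor $e^{-t}$, a Poissonised signed convolution power $\sum_k \frac{t^k}{k!}e^{-t}p^{*k}$ of a one-step signed kernel $p$ supported near the origin, with total mass $\hat p(0)=1+\kappa$. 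Its $x$-sum is therefore $e^{\kappa t}=e^{-\mathfrak r t}$ times a constant close to $1$. The Fourier bounds on signed convolution powers enter here. Write $p=(1+\kappa)\nu-\eta$, with $\nu$ a probability measure and $\eta$ of mass $O(\beta^2)$. A local CLT for $\hat p^{\,k}$ shows that $p^{*k}$ is positive, with Gaussian shape, on the bulk $|x|\lesssim \sqrt{k\log k}$. The negative part is confined to moderate-deviation tails, whose relative mass decays polynomially in $\log t$.

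The main obstacle is this positivity of the signed convolution power, uniformly in the bulk. The Fourier estimate has to control $|\hat p(\xi)|/\hat p(0)$ away from $\xi=0$, where the signed part could cancel the Gaussian bulk. Our first approach would be to use $|\hat p(\xi)|\le (1+\kappa)\hat\nu(\xi)+O(\beta^2)$ together with the quadratic decay of $\hat\nu$ near $0$, and then to truncate $k$ to its Poisson window around $t$.

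A secondary obstacle is controlling $R_t$ when the same site is revisited by two branches. For this we would invoke the history-diagram geometry bounds from earlier in the paper, and absorb the resulting loss into the constant $C$ and the threshold $t_0$.
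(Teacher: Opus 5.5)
Your treatment of the linear part is sound. The supremum over $\sigma_0$ of $\sum_x K_t(x)(\mathbf 1\{\sigma_0(x)=c\}-\tfrac1q)$ exceeds its value at $\mathfrak c$ by exactly $\sum_x K_t(x)^-$. Bulk positivity plus a moderate-deviation exterior bound is what Proposition~\ref{prop:local-kernel-input}(a)--(c) supply for the analogous kernel $\IK$.

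The gap is the remainder. The claim $|R_t|\le e^{-\mathfrak r t}\,O(e^{-\delta t})$ is false. A history may be a single lineage for all but the last $O(1)$ units of backward time, branching only there with two branches reaching time $0$. This costs one factor $O(\beta)$, not $\beta$ per unit time. In the paper's language these are the branching terminal sausages, of weight $\asymp\alpha e^{-\eta\tau}$ at depth $\tau$. After convolution with $\IK(\cdot,n_t-\tau)$ they contribute on the order of $\beta e^{-\mathfrak r t}$, the same exponential order as the main term.

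Worse, ``geometric control'' of history diagrams in absolute value does not even give that much. The backward history survives to time $0$ with probability decaying at rate $1-2d\alpha+O(\alpha^2)$, strictly slower than $\mathfrak r=1-\alpha+O(\alpha^2)$. You must carry the influence weights of the complete sausages above the terminal block, with their dilution $p_1\approx 1/(2d)$ per branch-out, just to get $|R_t|=O(\beta)e^{-\mathfrak r t}$. Since $R_t(\sigma_0)-R_t(\mathfrak c)$ has no sign, your argument then yields only $\mathfrak m_t(\mathfrak c,c)\ge(1-C\beta-C(\log t)^{-\Delta})\sup_{\sigma_0}\mathfrak m_t(\sigma_0,c)$. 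That relative loss does not vanish as $t\to\infty$.

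The missing idea is to \emph{compare} the nonlinear terminal contributions of $\sigma_0$ and $\mathfrak c$ rather than discard them. The paper does this by local improvement (Lemma~\ref{lem:local-improvement}). For $\|u\|_\infty\le R_t\approx\sqrt{t\log\log t}$, recoloring the single site $u$ to $c$ never decreases the bias. At each shallow terminal depth $\tau$, the vertical-terminal gain, proportional to $c_{\rm str}e^{-\eta\tau}\IK(u,n_t-\tau)$, dominates the branching-terminal change. That change is localized near $u$ and bounded by a multiple of $c_{\rm br}e^{-\eta\tau}\sum_{b\ge1}\vartheta^b\sum_{z\in B_\infty(u,b)}|\IK(z,n_t-\tau)|$. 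This step needs the pointwise local domination of Proposition~\ref{prop:local-kernel-input}(b), not merely small total negative mass. Deep terminal blocks $\tau>c_{\rm cut}n_t$ are beaten using $\eta c_{\rm cut}>1+\kappa$ together with the quantitative lower bound (a).

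After recoloring a near-maximizer to $c$ on the whole ball, its remaining discrepancy with $\mathfrak c$ comes only from exterior sites, and (c) produces the $(\log t)^{-\Delta}$ loss. As a secondary point, $K_t$ is not a Poissonised convolution power of a one-step kernel. Single-site influence paths include branch-and-remerge excursions, which is why the sausage renewal and the tilt $\kappa$ are needed. Also, with your normalization $\hat p(0)=1+\kappa$ the Poissonised mass would be $e^{\kappa t}$ rather than $e^{-\mathfrak r t}=e^{-(1+\kappa)t}$.
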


We expect the logarithmic loss \((\log t)^{-\Delta}\) to be inessential and monochromatic initial data to be exactly extremal for the infinite-volume one-site bias; cf.\
Conjecture~\ref{conj:one-site-exact} below.

\begin{conj}\label{conj:one-site-exact}
Fix \(c\in[q]\), and let \(\mathfrak c\in[q]^{\mathbb Z^d}\) be the monochromatic configuration
\(\mathfrak c(x)=c\) for all \(x\in\mathbb Z^d\). Then, for every \(t\ge0\),
\[
\mathbb P_{\mathfrak c}\bigl(\sigma_t(o)=c\bigr)-\frac1q
=
\sup_{\sigma_0\in[q]^{\mathbb Z^d}}
\left(
\mathbb P_{\sigma_0}\bigl(\sigma_t(o)=c\bigr)-\frac1q
\right).
\]
\end{conj}

We expect the analogous finite-volume one-site statement on \(\Lambda_n\) to hold as well.

\subsection{Background and related work}
Cutoff for spin systems was first established in mean-field settings for the Curie--Weiss Ising and Potts models~\cites{IsingMeanfield,PottsMeanfield} where the symmetry of the system reduces it to analysing the much simpler magnetization chain.
{Multi-component Curie--Weiss extensions give a full phase diagram for fixed-group Ising and a rapid-to-exponential transition for homogeneous Potts models \cites{MultiIsing,MultiPotts}.}

Beyond mean field, Lubetzky and the first author gave an Ising cutoff criterion for bounded-degree graphs \cite{CutoffSpin} that reduces the analysis to proving cutoff on a random product chain. It assumes subexponential ball growth and uniformly positive local log-Sobolev constants, while allowing arbitrary boundaries and external fields. For lattice boxes, exponentially decaying sitewise disagreements under a Markovian grand coupling yield an \(O(\log\log n)\) cutoff window; they verified this condition for sufficiently high-temperature non-monotone systems, including Potts dynamics with arbitrary boundary conditions.

Sharper results hold for high-temperature Ising dynamics. The same authors introduced information percolation, proving cutoff with \(O(1)\) windows throughout the full high-temperature regime on fixed-dimensional tori and, at sufficiently high temperature, on arbitrary bounded-degree graphs \cites{StochasticIP,Universality}; see also \cite{Exposition}. The method traces update histories backward in time and exploits the subcritical spread of information to control dependence on the initial condition. Variants have also been developed for the random-cluster model and Swendsen--Wang dynamics; see \cite{RCM,SW}.

More recently, curvature methods have provided a new approach to proving cutoff, giving quantitative window bounds without monotonicity.
For irreducible chains with symmetric support, Salez proved cutoff under nonnegative curvature and a refined product condition via entropic concentration \cite{SalezCurvature}.
Pedrotti and Salez bounded the total-variation window in terms of a local Poincar\'e constant; a volume-uniform positive Bakry--\'Emery curvature lower bound therefore yields a bounded cutoff window \cite{PedrottiSalezLocalProduct}.
These methods are very general and so tend not to yield explicit model-specific information about the mixing time.

A complementary line of work links strong spatial mixing to rapid single-site mixing for monotone systems and sufficiently large-block mixing for general lattice systems \cite{RapidMixLattice}.
For ferromagnetic Potts dynamics on general graphs, rapid-mixing bounds relate interaction strength, maximum degree, and number of colors \cite{GeneralPotts}.
Blanca et al.\ prove an \(O(|\Lambda_n|\log|\Lambda_n|)\) mixing bound
for discrete-time single-site dynamics. Under our continuous-time convention, in which every site updates at rate \(1\), this becomes
\(O(\log|\Lambda_n|)\) \cite{Spectral}.
For fixed \(q\ge3\) on \(\Lambda_n=(\mathbb Z/n\mathbb Z)^d\), these approaches apply in suitable high-temperature regimes, but do not by themselves establish cutoff or identify its location.

%Accordingly, to the best of our knowledge, Theorem~\ref{thm:main} is the first result for continuous-time single-site heat-bath Potts Glauber dynamics on \((\mathbb Z/n\mathbb Z)^d\) that proves an \(O(1)\) cutoff window and identifies its center through the exact decay rate of the infinite-volume one-site magnetization.

\subsection{Proof idea}

Our argument extends the information-percolation framework of \cite{Exposition,Universality} beyond the monotone setting. In the standard information-percolation analysis for monotone systems, backward-update histories are classified into \emph{Blue/Green/Red} clusters. In this decomposition, most of the dependence structure is expressed in the Green clusters, Blue corresponds to IID noise and only Red clusters retain information from the initial condition. For \(q\ge3\), however, some red histories can translate initial conditions for one state into a different state at the end and so bounding only the total Red mass is no longer sufficient.

Our main new ingredient is a refinement of Red clusters splitting them into Purple, Yellow, and Strong Red.
Geometrically bad Red clusters are designated \emph{Purple}, which are rare enough to not affect the total variation distance close to the mixing time. For each remaining Red cluster, a one-site mixture decomposition is performed at the merge point, expressing the law as a convex combination of a common uniform law and a residual law. The uniform component contributes no dependence on the initial condition and gives rise to what we call \emph{Yellow} clusters, whereas the residual component carries all remaining information and gives rise to \emph{Strong Red} clusters.

We then combine geometric control of backward histories with Fourier estimates for signed convolution powers on \(\mathbb Z^d\), in the spirit of \cite{ConvPower}. This identifies the precise exponential decay rate $\mathfrak{r}$ of the one-site color bias, even though the natural influence kernels are signed.
To obtain the required estimates, we decompose histories into regeneration blocks, called \emph{sausages}.
An overlap argument from \cite{MillerPeres} reduces total-variation mixing to bounding overlaps of Purple and Strong Red clusters; combined with the preceding decay estimate, this yields the cutoff location.
Finally, global and local positivity properties of the same signed kernel yield the finite-volume and one-site extremality results.

\subsection{Organization of the paper}

Section~\ref{sec:prelim} introduces the Potts model, the heat-bath Glauber dynamics, the graphical construction, and the mixing-time and magnetization notation.
Section~\ref{sec:IP} develops the information-percolation framework, including the refinement of Red clusters into Purple, Yellow, and Strong Red clusters, and reduces the total-variation upper bound to overlap estimates for the Purple and Strong Red sets.
Section~\ref{sec:sausage} develops the quantitative estimates on one-site influence that underpin the cutoff and extremality arguments. To this end, it introduces the sausage renewal decomposition, analyzes the resulting signed influence kernel, identifies its decay rate $\mathfrak{r}$, and proves the magnetization and Strong--Red bounds needed later.
Section~\ref{sec:cutoff} combines these ingredients to establish matching upper and lower bounds around \(t_\star\), thereby proving Theorem~\ref{thm:main}.
Section~\ref{sec:extremality} shows that monochromatic initial states are exactly extremal for the total occupation of a fixed color in finite volume and asymptotically near-extremal for the one-site color bias in infinite volume, proving Theorems~\ref{thm:mono-extremal} and~\ref{thm:worstini}.
Finally, Appendix~\ref{sec:appendix} provides the deferred proofs of the Fourier and local-kernel estimates.

\medskip

\section{Model, Dynamics, and Graphical Construction}\label{sec:prelim}
\subsection{Potts model on the discrete torus}
Let $\Lambda=\Lambda_n=(\mathbb{Z}/n\mathbb{Z})^d$ with nearest-neighbor edge set $E(\Lambda)$. We fix throughout $d\geq 2,\ q\geq 3$. For a configuration $\sigma\in[q]^\Lambda=:\Omega$, the ferromagnetic $q$-state Potts Hamiltonian at inverse temperature $\beta>0$ is
\[
H(\sigma) \;=\; - \sum_{\{x,y\}\in E(\Lambda)} \mathbf{1}\{\sigma_x=\sigma_y\},
\]
and the Gibbs measure is
\[
\mu_\Lambda(\sigma)
\;=\;
Z_\Lambda^{-1}\exp\big(-\beta H(\sigma)\big),
\]
where $Z_\Lambda$ is the normalizing constant.

We work throughout in a sufficiently high-temperature regime,
\begin{equation}\label{eq:high-temp-assumption}
0<\beta<\beta_0(d,q),
\end{equation}
where \(\beta_0(d,q)\) is the constant from Theorem~\ref{thm:main}. {Any additional smallness assumptions on \(\beta\) that arise later are absorbed by further decreasing \(\beta_0(d,q)\).}

\subsection{Glauber dynamics}
We consider the continuous-time single-site Glauber dynamics $(\sigma_t)_{t\ge0}$ on $[q]^\Lambda$, reversible with respect to $\mu_\Lambda$. Each vertex $x\in\Lambda$ updates at rate $1$.
At an update time, the spin at $x$ is resampled from the conditional distribution
\begin{equation}\label{eq:heat-bath-update}
\mathbb P\!\left(\sigma_t(x)=a \,\middle|\, \sigma_{t^-}(y),\, y\neq x\right)
\propto
\exp\!\left(
\beta\sum_{y\sim x}\mathbf 1\{a=\sigma_{t^-}(y)\}
\right),
\qquad a\in[q].
\end{equation}

\subsection{Graphical construction and backward histories}\label{subsec:graphical}

We realize $(\sigma_t)$ via a graphical construction: for each $x\in\Lambda$ attach an independent
rate-$1$ Poisson clock, and equip each ring with an independent auxiliary mark
$U_i\sim\mathrm{Unif}[0,1]$ that determines the resampling according to
\eqref{eq:heat-bath-update}. Write $\mathcal U^\infty=\{(x_i,t_i,U_i)\}_i$ for the full sequence
of update events. When analyzing dynamics up to a terminal time $T>0$, we write
$\mathcal U=\{(x_i,t_i,U_i)\}_i$ for the restriction to $\Lambda\times[0,T]$.

\smallskip
\noindent\textbf{Oblivious updates.}
For every neighborhood configuration, the heat-bath probability assigned to any given color is at
least
\[
\frac{1}{e^{2d\beta}+q-1}.
\]
Accordingly, each update can be coupled as follows: with probability
\[
p_{\mathrm{obl}}:=\frac{q}{e^{2d\beta}+q-1},
\]
the new spin is sampled from \(\mathrm{Unif}([q])\), independently of the neighboring spins; with
the remaining probability, the spin is sampled from a residual law depending on the neighborhood.
We call the first type of update \emph{oblivious}. Set
\[
\alpha(\beta):=1-p_{\mathrm{obl}}
=
\frac{e^{2d\beta}-1}{e^{2d\beta}+q-1}.
\]
Since \(\alpha(\beta)\) is strictly increasing on \((0,\infty)\), we write \(\beta(\alpha)\) for
its inverse:
\[
\beta(\alpha):=\frac{1}{2d}\log\frac{1+\alpha(q-1)}{1-\alpha},
\qquad
\alpha\in(0,1).
\]

For \(0<2d\beta\le 1\), $\frac{2d}{q-1+e}\,\beta
\le
\alpha
\le
\frac{2de}{q}\,\beta,$
so \(\alpha=\Theta(\beta)\) as \(\beta\downarrow0\).

\smallskip
\noindent\textbf{Backward history.}
Fix \(v\in\Lambda\). The backward history \(\mathcal H_v\subset\Lambda\times[0,T]\) is obtained by
starting from \((v,T)\) and exploring backward through the update sequence \(\mathcal U\):
\begin{itemize}
\item If the exploration encounters an oblivious update at \((x,t)\), then the branch at \((x,t)\)
terminates;
\item Otherwise the new spin at \(x\) depends on the neighboring spins just before time \(t\), so
the exploration branches to all \((y,t)\) with \(y\sim x\) and continues backward from each such
neighbor;
\item Between consecutive encountered updates, a branch remains at the same spatial site, producing a
vertical space--time segment.
\end{itemize}
For \(A\subset\Lambda\), define
\[
\mathcal H_A:=\bigcup_{v\in A}\mathcal H_v,
\qquad
\mathcal H_A([s,t]):=
\mathcal H_A\cap \big(\Lambda\times[s,t]\big),\qquad
\mathcal H_A(s):=\mathcal H_A([s,s]).
\]
The connected components of \(\mathcal H_\Lambda\) are the \emph{information-percolation clusters}.

This history exploration is stochastically dominated by a branching process with offspring
distribution \(0\) with probability \(1-\alpha\) and \(2d\) with probability \(\alpha\). After
decreasing \(\beta_0\) if necessary, we assume throughout that
\[
{2d\,\alpha<1,}
\]
so the dominating branching process is subcritical.

\subsection{Mixing time and cutoff}\label{subsec:mixing}
Denote by $(X_t)_{t\geq 0}$ the Markov chain. For $t\ge0$, define worst-case total-variation distance
\[
d_\Lambda(t)
:=
\max_{\sigma_0\in\Omega}
\left\|
\mathbb P_{\sigma_0}(X_t\in\cdot)-\mu_\Lambda
\right\|_{\mathrm{TV}}.
\]
For $\varepsilon\in(0,1)$, the total-variation \emph{mixing time} is
\[
t_{\mathrm{mix}}(\varepsilon):=\inf\{t\ge0:\ d_\Lambda(t)\le\varepsilon\},
\qquad
t_{\mathrm{mix}}:=t_{\mathrm{mix}}(1/4).
\]

We say that the family of Glauber dynamics on \((\Lambda_n)\) exhibits \emph{cutoff} at $t_{\mathrm{mix}}^{(n)}$ with \emph{window} of order $w_n$ if \(w_n=o\bigl(t_{\mathrm{mix}}^{(n)}\bigr)\) and
\[
\lim_{c\to\infty}\ \liminf_{n\to\infty}
d_{\Lambda_n}(t_{\mathrm{mix}}^{(n)}-cw_n)=1,
\quad
\lim_{c\to\infty}\ \limsup_{n\to\infty}
d_{\Lambda_n}(t_{\mathrm{mix}}^{(n)}+c w_n)=0 ,
\]
so the drop of $d_{\Lambda_n}(t)$ from near $1$ to near $0$ occurs within a window $o(t_{\mathrm{mix}})$ around $t_{\mathrm{mix}}$. 
Figure~\ref{fig:cutoff} gives a schematic illustration.

\begin{figure}[ht]
\centering
\includegraphics[width=0.4\linewidth]{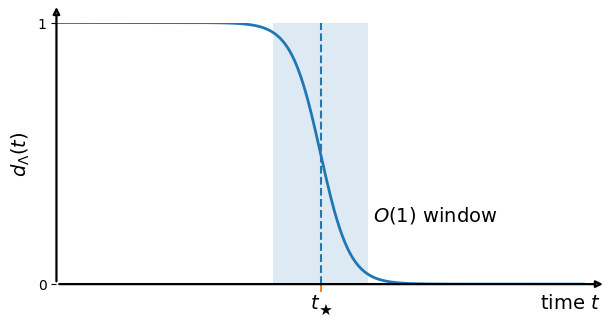}
\caption{Cutoff: total-variation distance drops from near \(1\) to
near \(0\) in a bounded window around the cutoff location \(t_\star\).}
\label{fig:cutoff}
\end{figure}

A key quantity for us is the one-site color bias (which we also refer to as the one-site
magnetization). Fix \(c\in[q]\) and define
\begin{equation}\label{eq:magnetization-def}
\mathfrak m_t^{(n)}(\sigma_0,c)
:=
\mathbb P_{\sigma_0}\bigl(\sigma_t(o)=c\bigr)-\frac1q,
\end{equation}
where \(o\in\Lambda\) is the origin. In the high temperature regime, we will
identify a constant \(\mathfrak{r}=\mathfrak{r}(d,q,\beta)\) governing
the exponential decay of this quantity.

%%%%%%%%%%%%%%%%%%%%%%%%%%%%%%%%%%%%%%%%%%%%%%%%%%%%%%%%%%%%%%%%%%%%%%%%%%%%%%%%%%%%%%%%%%%%%%%%%%%%%%%%%%%%%%%%%%%%%%%%%%%%%%%%%%%%%%%%%%%%%%%%%%%%%%%%%%%%%%%%%%%%%%%%%%%%%%%%%%%%%%%%%%%%%%%%%%%%%%%%%%%%%%%%%%%%%%%%%%%%%%%%%%%%%%%%%%%%%%%%

\medskip
%%%%%%%%%%%%%%%%%%%%%%%%%%%%%%%%%%%%%%%%%%%%%%%%%%%%%%%%%%%%%%%%%%%%
\section{Information Percolation}\label{sec:IP}

Throughout this section we fix \(\Lambda=\Lambda_n\) and a terminal time \(T>0\).
In Section~\ref{subsec:BGR}, we recall the Blue, Green, and Red cluster
classification. Section~\ref{subsec:yellowSR} then introduces the main new
ingredient of this paper: a refinement of the Red clusters into Purple, Yellow,
and Strong Red clusters. Finally, Section~\ref{subsec:overlap-reduction} reduces
the total-variation upper bound to overlap estimates for the Purple and Strong
Red clusters.

\subsection{Blue, Green, and Red clusters}\label{subsec:BGR}

We begin with the standard Blue/Green/Red classification. A connected cluster
\(\mathcal C=\mathcal H_A\) is called

\begin{itemize}
\item \emph{Blue} if \(A=\{v\}\) for some \(v\in\Lambda\) and
    \(\mathcal H_A(T-1)=\varnothing\);
\item \emph{Red} if
    \(\mathcal H_A\cap(\Lambda\times\{0\})\neq\varnothing\);
\item \emph{Green} otherwise.
\end{itemize}
The Blue and Green clusters do not transmit information from the initial condition to time \(T\); only Red clusters can do so. In the Potts model,
however, part of the randomness carried by a Red cluster may still be independent of the initial condition. We therefore refine the Red clusters into three classes:
\[
\text{Red}=\text{Purple}\sqcup\text{Yellow}\sqcup\text{Strong Red}.
\]
Purple clusters are the geometrically bad Red clusters, from which we do not
attempt to extract any common randomness. For each remaining Red cluster, the
Yellow part corresponds to the common component and the Strong Red part to the
residual component. Consequently, Blue, Green, and Yellow clusters carry no
dependence on the initial condition; all possible dependence is confined to the
Purple and Strong Red clusters.

\subsection{Red refinement: Purple, Yellow, and Strong Red}
\label{subsec:yellowSR}

We now construct the refinement described above. 
Fix
\(c_{\rm cone}:=2\), and set
\[
        R:=(\log|\Lambda|)^2 .
\]
In the cutoff regime,
\(T=O(\log|\Lambda|)\); hence, for all sufficiently large tori, we assume that $R\ge 4c_{\rm cone}T$.
For a space--time history
\(\mathcal H\subset\Lambda\times[0,T]\), define its spatial span by
\[
        \operatorname{Span}(\mathcal H):=
        \{x\in\Lambda:\exists s\in[0,T]\text{ such that }(x,s)\in \mathcal H\}.
\]

We first designate the geometrically bad Red clusters as \emph{Purple}. A Red
cluster with top set \(A\) is Purple if either
\[
\max_{u\in\operatorname{Span}(\mathcal H_A)}
\operatorname{dist}(u,A)>R/3,
\]
or there exists another Red cluster \(A'\neq A\) such that
\[
\operatorname{dist}(A,A')\le R.
\]
A Red cluster that is not Purple is called
\emph{geometrically regular}.

We split each geometrically regular Red cluster into Yellow and Strong Red. For \(r\in[0,T]\), define the \emph{cone} with tip
\((v,T-r)\) by
\[
    \mathsf C_r(v)
    :=
    \{(y,s)\in\Lambda\times[0,T-r]:
      \dist(y,v)\le c_{\rm cone}(T-r-s)\},
\]
and set
\[
\mathsf C_0(A):=\bigcup_{v\in A}\mathsf C_0(v).
\]

Fix a geometrically regular Red cluster \(A\). Let \(\mathcal H_A^-\) be the
union of the histories generated from \(\Lambda\setminus A\), and let
\(\mathcal H_{G,P}\) be the union of the Green and Purple histories. Whenever
\(|\mathcal H_A(s)|=1\), write \(v_{A,s}\) for its unique vertex. Define

\[
\tau_A:=\sup\left(
\{0\}\cup
\left\{
\begin{aligned}
s\in(0,T-1]:\quad
&|\mathcal H_A(s)|=1,\,
(v_{A,s},s)\in\mathsf C_0(A),\\
&\mathsf C_{T-s}(v_{A,s})
  \cap\mathcal H_{G,P}=\eset,\,\mathcal H_A([0,s])
  \subset\mathsf C_{T-s}(v_{A,s})
\end{aligned}
\right\}
\right).
\]

% Requires \usepackage{tikz}

\begin{figure}[t]
\centering

\definecolor{taupink}{HTML}{DB2777}
\definecolor{taured}{HTML}{DC2626}
\definecolor{taugreen}{HTML}{16A34A}
\definecolor{taupurple}{HTML}{9333EA}
\definecolor{taucyan}{HTML}{0891B2}

\begin{tikzpicture}[
    scale=1.3,
    x=.64cm,
    y=.45cm,
    line cap=round,
    line join=round,
    axis/.style={
        black!82,
        line width=.45pt
    },
    guide/.style={
        black!18,
        line width=.35pt,
        dash pattern=on 2pt off 2.5pt
    },
    ambient/.style={
        taupink,
        line width=.65pt
    },
    failedcone/.style={
        black!50,
        line width=.5pt,
        dash pattern=on 3pt off 2.4pt
    },
    secondcone/.style={
        black!42,
        line width=.45pt,
        dash pattern=on 1.2pt off 1.8pt
    },
    protected/.style={
        taucyan!90!black,
        line width=.75pt
    },
    redhist/.style={
        taured,
        line width=.9pt
    },
    greenhist/.style={
        taugreen,
        line width=.75pt
    },
    purplehist/.style={
        taupurple,
        line width=.75pt
    },
    badtip/.style={
        circle,
        draw=taured,
        fill=white,
        minimum size=3.2pt,
        inner sep=0pt,
        line width=.45pt
    },
    goodtip/.style={
        circle,
        draw=black!70,
        fill=taucyan,
        minimum size=3.6pt,
        inner sep=0pt,
        line width=.35pt
    },
    witness/.style={
        circle,
        draw=taupurple,
        fill=white,
        minimum size=3pt,
        inner sep=0pt,
        line width=.45pt
    },
    pinkgap/.style={
        taupink,
        line width=.45pt,
        dash pattern=on 1pt off 1.5pt
    },
    depth/.style={
        taucyan!90!black,
        line width=.55pt
    },
    lab/.style={
        font=\tiny\sffamily,
        text=black!58,
        inner sep=1pt
    },
    pinklab/.style={
        font=\tiny\sffamily,
        text=taupink!85!black,
        inner sep=1pt
    },
    historylabel/.style={
        font=\tiny\sffamily,
        inner sep=1pt
    },
    callhead/.style={
        font=\tiny\sffamily,
        text=black!85,
        inner sep=1pt
    },
    note/.style={
        font=\tiny\sffamily,
        text=black!58,
        inner sep=1pt
    }
]

% ================================================================
% Ambient cone C_0(A)
% ================================================================

\path[
    fill=taupink,
    fill opacity=.08
]
    (-1.2,10) --
    (-4.2,0)  --
    ( 4.2,0)  --
    ( 1.2,10) -- cycle;

\draw[ambient]
    (-1.2,10) -- (-4.2,0)
    ( 1.2,10) -- ( 4.2,0);

% ================================================================
% Candidate cones
% ================================================================

% Cone from s_1=(0,8).
\path[
    fill=black,
    fill opacity=.030
]
    (0,8) -- (-2.40,0) -- (2.40,0) -- cycle;

\draw[failedcone]
    (0,8) -- (-2.40,0)
    (0,8) -- ( 2.40,0);

% Cone from s_2=(3.55,4.30).
\path[
    fill=black,
    fill opacity=.025
]
    (3.55,4.30) -- (2.26,0) -- (4.84,0) -- cycle;

\draw[secondcone]
    (3.55,4.30) -- (2.26,0)
    (3.55,4.30) -- (4.84,0);

% Accepted cone below tau_A=(3.30,2.10).
\path[
    fill=taucyan,
    fill opacity=.10
]
    (3.30,2.10) -- (2.67,0) -- (3.93,0) -- cycle;

\draw[protected]
    (3.30,2.10) -- (2.67,0)
    (3.30,2.10) -- (3.93,0);

% ================================================================
% Horizontal time guides
% ================================================================

\draw[guide] (-5.9,8)   -- (6.05,8);
\draw[guide] (-5.9,4.3) -- (6.05,4.3);
\draw[guide] (-5.9,2.1) -- (6.05,2.1);

% ================================================================
% Green information-percolation history on the right
% ================================================================
\begin{scope}[xshift=-8.5mm]
\foreach \x in {4.35,4.95,5.55}{
    \draw[greenhist] (\x,10.14) -- (\x,9.88);
}

\draw[greenhist]
    (4.35,10)
        .. controls (4.38,9.35) and (4.60,8.82)
        .. (4.88,8.55);

\draw[greenhist]
    (4.95,10)
        .. controls (4.93,9.30) and (4.91,8.82)
        .. (4.88,8.55);

\draw[greenhist]
    (5.55,10)
        .. controls (5.60,9.30) and (5.25,8.82)
        .. (4.88,8.55);

% Branches leave the new merging point (4.88,8.55).
\draw[greenhist]
    (4.88,8.55)
        .. controls (4.45,7.65) and (4.32,6.55)
        .. (4.34,6.50);

\draw[greenhist]
    (4.88,8.55)
        .. controls (5.28,7.65) and (5.50,6.45)
        .. (5.55,5.80);

\fill[taugreen]
    (4.34,6.50) circle[radius=.75pt];

\fill[taugreen]
    (5.55,5.80) circle[radius=.75pt];

\node[historylabel,text=taugreen!85!black]
    at (4.88,9.15)
    {$\mathcal{H}_{G}$};
\end{scope}
% ================================================================
% Purple information-percolation history
% ================================================================

\foreach \x in {-3.55,-2.85,-2.18}{
    \draw[purplehist] (\x,10.14) -- (\x,9.88);
}

\draw[purplehist]
    (-3.55,10)
        .. controls (-3.53,9.25) and (-3.08,8.20)
        .. (-2.68,7.65);

\draw[purplehist]
    (-2.85,10)
        .. controls (-2.87,9.22) and (-2.79,8.20)
        .. (-2.68,7.65);

\draw[purplehist]
    (-2.18,10)
        .. controls (-2.10,9.25) and (-2.34,8.20)
        .. (-2.68,7.65);

% One Purple branch dies early.
\draw[purplehist]
    (-2.68,7.65)
        .. controls (-3.17,7.00) and (-3.25,6.10)
        .. (-3.18,5.28);

\fill[taupurple]
    (-3.18,5.28) circle[radius=.75pt]; 

% Surviving branch continues to the final split.
\draw[purplehist]
    (-2.68,7.65)
        .. controls (-2.75,6.55) and (-2.45,5.55)
        .. (-2.18,4.88);

% Left branch reaches time 0 without entering the s_1-cone.
\draw[purplehist]
    (-2.18,4.88)
        .. controls (-2.42,4.12) and (-2.79,1.35)
        .. (-2.90,0);

% Right branch enters the s_1-cone and dies above tau_A.
\draw[purplehist]
    (-2.18,4.88)
        .. controls (-1.82,4.55) and (-1.55,3.83)
        .. (-1.22,3.45)
        .. controls (-1.05,3.20) and (-.90,2.91)
        .. (-.80,2.75);

\node[witness] at (-1.33,3.58) {}; %white doc in H_P

\fill[taupurple]
    (-.80,2.75) circle[radius=.75pt]; %purple dot in H_P

\node[lab,anchor=south west]
    at (-1.13,3.53)
    {$\mathcal{H}_{P}$ enters the $s_1$-cone};

\node[historylabel,text=taupurple!88!black]
    at (-2.84,9.15)
    {$\mathcal{H}_{P}$};

% ================================================================
% Red information-percolation history of A
% ================================================================

\foreach \x in {-1.02,-.36,.46,1.00}{
    \draw[redhist] (\x,10.14) -- (\x,9.88);
}

\draw[redhist]
    (-1.02,10)
        .. controls (-.98,9.20) and (-.56,8.52)
        .. (0,8);

\draw[redhist]
    (-.36,10)
        .. controls (-.38,9.24) and (-.22,8.50)
        .. (0,8);

\draw[redhist]
    (.46,10)
        .. controls (.63,9.18) and (.45,8.52)
        .. (0,8);

\draw[redhist]
    (1.00,10)
        .. controls (1.18,9.15) and (.74,8.45)
        .. (0,8);

% From s_1 to the outside tip s_2.
\draw[redhist]
    (0,8)
        .. controls (-.68,7.18) and (-.72,6.24)
        .. (0,5.82)
        .. controls (.94,5.12) and (2.66,5.02)
        .. (3.55,4.30);

\draw[redhist]
    (0,8)
        .. controls (.62,7.25) and (.27,6.65)
        .. (.80,6.15)
        .. controls (1.47,5.47) and (2.86,5.18)
        .. (3.55,4.30);

% Below s_2, both red branches stay in the s_2-cone.
\draw[redhist]
    (3.55,4.30)
        .. controls (3.48,3.72) and (3.20,2.72)
        .. (3.30,2.10);

\draw[redhist]
    (3.55,4.30)
        .. controls (3.62,3.72) and (3.58,2.72)
        .. (3.30,2.10);

% Below tau_A, every red branch stays in the accepted cone.
\draw[redhist]
    (3.30,2.10)
        .. controls (3.27,1.72) and (3.38,1.42)
        .. (3.30,1.10);

\draw[redhist]
    (3.30,1.10)
        .. controls (3.15,.75) and (2.90,.34)
        .. (2.80,0);

\draw[redhist]
    (3.30,1.10)
        .. controls (3.30,.75) and (3.30,.34)
        .. (3.30,0);

\draw[redhist]
    (3.30,1.10)
        .. controls (3.45,.75) and (3.70,.34)
        .. (3.80,0);

\node[historylabel,text=black!85]
    at (0,10.58)
    {$A$};

% ================================================================
% Test markers
% ================================================================

\node[badtip] at (0,8) {};
\node[badtip] at (3.55,4.30) {};
\node[goodtip] at (3.30,2.10) {};

% Tiny crosses inside the rejected points.
\draw[taured,line width=.3pt]
    (-.035,7.945) -- (.035,8.055)
    (.035,7.945) -- (-.035,8.055);

\draw[taured,line width=.3pt]
    (3.515,4.245) -- (3.585,4.355)
    (3.585,4.245) -- (3.515,4.355);

% Make s_2 visibly outside the pink region.
\draw[pinkgap]
    (2.91,4.30) -- (3.47,4.30);

\node[pinklab,anchor=south]
    at (3.18,4.53)
    {outside $\mathsf{C}_0(A)$};

% ================================================================
% Time axis
% ================================================================

\draw[axis] (-5.90,0) -- (-5.90,10.52);
\draw[axis] (-5.90,0) -- (6.05,0);
\draw[axis] (-5.90,10) -- (6.05,10);

\node[font=\tiny\sffamily,anchor=east]
    at (-6.15,10)
    {$T$};

\node[font=\tiny\sffamily,anchor=east]
    at (-6.15,0)
    {$0$};

\node[lab,anchor=east]
    at (-6.10,8)
    {$s_1$};

\node[lab,anchor=east]
    at (-6.10,4.3)
    {$s_2$};

\node[lab,anchor=east]
    at (-6.10,2.1)
    {$\tau_A$};

% Cone labels.
\node[pinklab,anchor=north west]
    at (-4.35,-.10)
    {geometric cone $\mathsf{C}_0(A)$};

\node[lab,anchor=south]
    at (-1.78,.32)
    {$s_1$-cone};

\node[lab,anchor=south]
    at (2.55,.32)
    {$s_2$-cone};

% ================================================================
% First accepted depth -- on the left
% ================================================================

\draw[depth,->]
    (-5.25,9.84) -- (-5.25,2.28);

\node[
    lab,
    text=taucyan!75!black,
    rotate=90
]
    at (-4.86,6.05)
    {first accepted depth $T-\tau_A$};

% ================================================================
% Explanations immediately beside the dotted levels
% ================================================================

\node[callhead,anchor=west]
    at (6.20,8.10)
    {Reject at $s_1$};

\node[note,anchor=west]
    at (6.20,7.62)
    {$\mathsf{C}_{T-s_1}(v_1)
      \cap\mathcal{H}_{P}\neq\varnothing$};

\node[callhead,anchor=west]
    at (6.20,4.40)
    {Reject at $s_2$};

\node[note,anchor=west]
    at (6.20,3.92)
    {$\mathcal{H}_{A}([0,s_2])
      \subset\mathsf{C}_{T-s_2}(v_2)$ but};

\node[note,anchor=west]
    at (6.20,3.42){$(v_2,s_2)\notin\mathsf{C}_0(A)$};

\node[callhead,anchor=west]
    at (6.20,2.20)
    {Accept at $\tau_A$};

\node[note,anchor=west]
    at (6.20,1.72)
    {$\mathcal{H}_{A}([0,\tau_A])
      \subset\mathsf{C}_{T-\tau_A}(v_A)$};

\node[note,anchor=west]
    at (6.20,1.22)
    {$\mathsf{C}_{T-\tau_A}(v_A)
      \cap\mathcal{H}_{G,P}=\varnothing$};

\node[note,anchor=west]
    at (6.20,0.72)
    {$(v_A,\tau_A)\in\mathsf{C}_0(A)$};

\end{tikzpicture}

\caption{
The time \(s_1\) is ruled out because a Purple history enters the corresponding
cone. The time \(s_2\) is ruled out because, although the lower Red history is
confined to its cone, the tip \((v_2,s_2)\) lies outside \(\mathsf C_0(A)\).
At the selected time \(\tau_A\), the tip lies in \(\mathsf C_0(A)\), the lower
Red history remains confined, and the accepted cone is disjoint from all
outside histories.
}
\end{figure}

If \(\tau_A=0\), declare \(A\) Strong Red. Henceforth assume that
\(\tau_A>0\), and set
\[
v_A:=v_{A,\tau_A},
\qquad
\mathsf C_A:=\mathsf C_{T-\tau_A}(v_A).
\]
We call \((v_A,\tau_A)\) the \emph{separation tip} and leave the graphical
marks in \(\mathsf C_A\) unrevealed.

The key feature of the separation rule is that, conditional on the accepted
tip, the lower-cone construction is independent of the revealed exterior.
More precisely, once the tip and the revealed exterior are fixed, its
conditional law is that of an independent graphical construction conditioned
only on cone confinement and survival to time \(0\).

We first check that geometric regularity imposes no additional condition on
the lower-cone marks. If \((v,s)\in\mathsf C_0(A)\), then every
\((y,u)\in\mathsf C_{T-s}(v)\) satisfies
\[
\dist(y,A)
\le c_{\rm cone}(s-u)+c_{\rm cone}(T-s)
\le c_{\rm cone}T
\le R/4.
\]
In particular, suppose that \(A'\ne A\) is geometrically regular Red and that
its history meets \(\mathsf C_A\). At an intersection point \((y,u)\),
geometric regularity of \(A'\) gives \(\dist(y,A')\le R/3\), and hence
\[
\dist(A,A')\le R/4+R/3<R,
\]
contrary to geometric regularity of \(A\). A Blue history is empty at and
below time \(T-1\), whereas \(\mathsf C_A\) lies below
\(\tau_A\le T-1\). The defining disjointness from
\(\mathcal H_{G,P}\) therefore yields
$\mathsf C_A\cap\mathcal H_A^-=\eset$.
The same \(R/4\) bound shows that the lower-cone marks cannot trigger the
spatial-span criterion for \(A\) to be Purple. In view of the displayed
disjointness, the nearby-Red-cluster criterion is determined entirely by the
histories outside \(\mathsf C_A\). Thus conditioning on \(A\) being
geometrically regular does not further bias the lower-cone marks.

For \(v\in\Lambda\) and \(0<t\le T\), let \(\mathcal H^{v,t}\) be the backward
history started from \(\{v\}\) at time \(t\) using an independent copy of the
graphical construction on \([0,t]\), and define
\[
\mathsf{Bot}_t(v)
:=
\left\{
\mathcal H^{v,t}\subseteq\mathsf C_{T-t}(v)
\ \text{and}\
\mathcal H^{v,t}(0)\ne\varnothing
\right\},
\qquad
M_t:=\mathbb P\bigl(\mathsf{Bot}_t(v)\bigr).
\]
We set $\mathsf{Bot}_0(v):=\Omega$ and $M_0:=1$.
By translation invariance, \(M_t\) does not depend on \(v\).
The preceding properties identify the conditional law below the separation
tip: the lower-cone marks have the law of an independent
graphical construction conditioned only on
\(\mathsf{Bot}_{\tau_A}(v_A)\). In particular, this conditional law does not
otherwise depend on the revealed exterior.

For an initial configuration \(\sigma_0\in[q]^\Lambda\), use that same
independent graphical construction to generate \(\sigma_t(v)\), and define
\[
\mathsf S(t,v,\sigma_0)(c)
:=
\mathbb P_{\sigma_0}\bigl(
\sigma_t(v)=c
\,\big|\,
\mathsf{Bot}_t(v)
\bigr),
\qquad c\in[q],
\]
and set
\[
p_t(v)
:=
q\inf_{\sigma_0\in[q]^\Lambda}
\min_{c\in[q]}\mathsf S(t,v,\sigma_0)(c),
\qquad p_0(v):=0.
\]
There is a probability measure \(\mathsf R(t,v,\sigma_0)\) on \([q]\) such that
\[
\mathsf S(t,v,\sigma_0)
=
p_t(v)\operatorname{Unif}([q])
+
\bigl(1-p_t(v)\bigr)\mathsf R(t,v,\sigma_0).
\]

For every \((v,t)\in\Lambda\times(0,T]\), let
\(J_{(v,t)}\sim\operatorname{Unif}[0,1]\), with these variables mutually
independent and independent of the graphical construction. Set
\(J_A:=J_{(v_A,\tau_A)}\), and $J_{(v,0)}:=1$. If
\(J_A\le p_{\tau_A}(v_A)\), declare \(A\) Yellow and sample the
separation-tip color from \(\operatorname{Unif}([q])\). Otherwise, declare
\(A\) Strong Red and sample the separation-tip color from
\(\mathsf R(\tau_A,v_A,\sigma_0)\). Together with the declaration for
\(\tau_A=0\), this partitions every geometrically regular Red cluster into
Yellow or Strong Red.

Subsection~\ref{subsec:cone-confined} bounds the unconditioned residual mass
\[
\mathbb P_{\sigma_0}\left(
\mathsf{Bot}_t(v),\,
J_{(v,t)}>p_t(v)
\right)
=M_t\bigl(1-p_t(v)\bigr),
\]
and Section~\ref{sec:cutoff} controls the backward delay \(T-\tau_A\).

\begin{figure}[t]
\centering
\begingroup

\definecolor{ipblue}{HTML}{1D4ED8}
\definecolor{ipgreen}{HTML}{16A34A}
\definecolor{ippurple}{HTML}{9333EA}
\definecolor{ipyellow}{HTML}{D97706}
\definecolor{ipred}{HTML}{C0392B}

% Draw the common spatial lattice at height #1.
\newcommand{\IPplane}[1]{%
    \foreach \i in {0,...,15}{
        \draw[ipgrid] (\i,0,#1)--(\i,5,#1);
    }
    \foreach \j in {0,...,5}{
        \draw[ipgrid] (0,\j,#1)--(15,\j,#1);
    }
    \draw[ipframe]
        (0,0,#1)--(15,0,#1)--(15,5,#1)--(0,5,#1)--cycle;
}

\begin{tikzpicture}[
    scale=.8,
    x={(0.72cm,0cm)},
    y={(0.25cm,0.13cm)},
    z={(0cm,0.58cm)},
    line cap=round,
    line join=round,
    ipgrid/.style={
        black!28,
        line width=.25pt
    },
    ipframe/.style={
        black!52,
        line width=.4pt
    },
    ipvertical/.style={
        black!16,
        line width=.3pt
    },
    ipguide/.style={
        black!32,
        line width=.4pt,
        dash pattern=on 2.5pt off 2.5pt
    },
    bluehist/.style={
        ipblue,
        line width=.85pt
    },
    greenhist/.style={
        ipgreen,
        line width=.85pt
    },
    purplehist/.style={
        ippurple,
        line width=.9pt
    },
    yellowhist/.style={
        ipyellow,
        line width=.9pt
    },
    redhist/.style={
        ipred,
        line width=.9pt
    },
    coneedge/.style={
        line width=.45pt,
        dash pattern=on 2.5pt off 2pt
    },
    topnode/.style={
        circle,
        fill=white,
        minimum size=3.2pt,
        inner sep=0pt,
        line width=.45pt
    },
    tipnode/.style={
        circle,
        minimum size=5pt,
        inner sep=0pt,
        line width=.65pt
    },
    classlabel/.style={
        font=\scriptsize\sffamily\bfseries,
        fill=white,
        fill opacity=.85,
        text opacity=1,
        inner sep=1.5pt
    },
    note/.style={
        font=\tiny\sffamily,
        fill=white,
        fill opacity=.82,
        text opacity=1,
        inner sep=1pt
    }
]

% ================================================================
% One common space--time slab
% ================================================================

\IPplane{0}

% Faint vertical sides emphasize that all histories occupy one space.
\draw[ipvertical] (0,0,0)--(0,0,8);
\draw[ipvertical] (15,0,0)--(15,0,8);
\draw[ipvertical] (15,5,0)--(15,5,8);
\draw[ipvertical] (0,5,0)--(0,5,8);

% The level T-1.
\path[fill=black,fill opacity=.025]
    (0,0,6.6)--(15,0,6.6)--(15,5,6.6)--(0,5,6.6)--cycle;

\draw[ipguide]
    (0,0,6.6)--(15,0,6.6)--(15,5,6.6)
    --(0,5,6.6)--cycle;

% Time axis.
\draw[black!75,line width=.5pt]
    (-.65,0,0)--(-.65,0,8.9);

\node[font=\scriptsize,anchor=east] at (-.82,0,8) {$T$};
\node[font=\scriptsize,anchor=east] at (-.82,0,6.6) {$T-1$};
\node[font=\scriptsize,anchor=east] at (-.82,0,0) {$0$};

\node[
    font=\tiny\sffamily,
    rotate=90,
    anchor=south
] at (-1.20,0,4.2) {time};

% ================================================================
% BLUE CLUSTERS
% ================================================================

\draw[bluehist]
    (.55,.75,8)
    --(.55,.75,7.70)
    --(.92,.75,7.70)
    --(.92,1.12,7.70)
    --(.92,1.12,6.94);

\draw[bluehist]
    (1.35,2.25,8)
    --(1.35,2.25,7.58)
    --(1.72,2.25,7.58)
    --(1.72,1.88,7.58)
    --(1.72,1.88,6.88);

\draw[bluehist]
    (2.05,3.85,8)
    --(2.05,3.85,7.66)
    --(1.72,3.85,7.66)
    --(1.72,3.48,7.66)
    --(1.72,3.48,7.02);

\fill[ipblue] (.92,1.12,6.94) circle (1.25pt);
\fill[ipblue] (1.72,1.88,6.88) circle (1.25pt);
\fill[ipblue] (1.72,3.48,7.02) circle (1.25pt);

% ================================================================
% GREEN CLUSTER
% ================================================================

% Main Green backbone.
\draw[greenhist]
    (3.90,2.50,8)
    --(3.90,2.50,7.38)
    --(4.28,2.50,7.38)
    --(4.28,2.12,7.38)
    --(4.28,2.12,6.72)
    --(3.82,2.12,6.72)
    --(3.82,2.55,6.72)
    --(3.82,2.55,5.88)
    --(3.38,2.55,5.88)
    --(3.38,2.98,5.88)
    --(3.38,2.98,4.95)
    --(3.84,2.98,4.95)
    --(3.84,2.52,4.95)
    --(3.84,2.52,4.05)
    --(4.30,2.52,4.05)
    --(4.30,2.10,4.05)
    --(4.30,2.10,3.05)
    --(3.82,2.10,3.05)
    --(3.82,2.52,3.05)
    --(3.82,2.52,1.05);

% Upper branches.
\draw[greenhist]
    (3.02,.75,8)
    --(3.02,.75,7.55)
    --(3.45,.75,7.55)
    --(3.45,1.18,7.55)
    --(3.45,1.18,6.92)
    --(3.82,1.18,6.92)
    --(3.82,2.55,6.72);

\draw[greenhist]
    (4.88,1.12,8)
    --(4.88,1.12,7.48)
    --(4.48,1.12,7.48)
    --(4.48,1.55,7.48)
    --(4.48,1.55,6.28)
    --(3.82,1.55,6.28)
    --(3.82,2.55,5.88);

\draw[greenhist]
    (3.12,4.20,8)
    --(3.12,4.20,7.52)
    --(3.52,4.20,7.52)
    --(3.52,3.78,7.52)
    --(3.52,3.78,6.22)
    --(3.38,3.78,6.22)
    --(3.38,2.98,5.88);

\draw[greenhist]
    (4.95,3.90,8)
    --(4.95,3.90,7.62)
    --(4.52,3.90,7.62)
    --(4.52,3.48,7.62)
    --(4.52,3.48,6.12)
    --(3.82,3.48,6.12)
    --(3.82,2.55,5.88);

% Lower branches, all terminating above time 0.
\draw[greenhist]
    (3.38,2.98,4.95)
    --(2.92,2.98,4.95)
    --(2.92,3.42,4.95)
    --(2.92,3.42,2.12);

\draw[greenhist]
    (3.84,2.52,4.05)
    --(4.75,2.52,4.05)
    --(4.75,2.92,4.05)
    --(4.75,2.92,2.28);

\draw[greenhist]
    (4.30,2.10,3.05)
    --(4.82,2.10,3.05)
    --(4.82,1.62,3.05)
    --(4.82,1.62,1.50);

\foreach \p in {
    (3.82,2.52,1.05),
    (2.92,3.42,2.12),
    (4.75,2.92,2.28),
    (4.82,1.62,1.50)
}{
    \fill[ipgreen] \p circle (1.2pt);
}

% ================================================================
% PURPLE CLUSTER
% ================================================================

% Main Purple backbone.
\draw[purplehist]
    (7.05,2.48,8)
    --(7.05,2.48,7.30)
    --(7.48,2.48,7.30)
    --(7.48,2.05,7.30)
    --(7.48,2.05,6.52)
    --(7.02,2.05,6.52)
    --(7.02,2.52,6.52)
    --(7.02,2.52,5.62)
    --(6.55,2.52,5.62)
    --(6.55,2.98,5.62)
    --(6.55,2.98,4.62)
    --(7.05,2.98,4.62)
    --(7.05,2.48,4.62)
    --(7.05,2.48,3.58)
    --(7.55,2.48,3.58)
    --(7.55,2.02,3.58)
    --(7.55,2.02,2.55)
    --(7.08,2.02,2.55)
    --(7.08,2.48,2.55)
    --(7.08,2.48,0);

% Upper Purple branches.
\draw[purplehist]
    (5.65,.55,8)
    --(5.65,.55,7.55)
    --(6.10,.55,7.55)
    --(6.10,1.00,7.55)
    --(6.10,1.00,6.78)
    --(7.02,1.00,6.78)
    --(7.02,2.52,6.52);

\draw[purplehist]
    (8.55,.62,8)
    --(8.55,.62,7.48)
    --(8.12,.62,7.48)
    --(8.12,1.08,7.48)
    --(8.12,1.08,6.05)
    --(7.02,1.08,6.05)
    --(7.02,2.52,5.62);

\draw[purplehist]
    (5.78,4.45,8)
    --(5.78,4.45,7.55)
    --(6.22,4.45,7.55)
    --(6.22,4.02,7.55)
    --(6.22,4.02,6.15)
    --(6.55,4.02,6.15)
    --(6.55,2.98,5.62);

\draw[purplehist]
    (8.58,4.38,8)
    --(8.58,4.38,7.50)
    --(8.15,4.38,7.50)
    --(8.15,3.92,7.50)
    --(8.15,3.92,6.08)
    --(7.02,3.92,6.08)
    --(7.02,2.52,5.62);

% Widely separated lower branches.
\draw[purplehist]
    (6.55,2.98,4.62)
    --(5.65,2.98,4.62)
    --(5.65,4.20,4.62)
    --(5.25,4.20,4.62)
    --(5.25,4.78,4.62)
    --(5.25,4.78,0);

\draw[purplehist]
    (7.05,2.48,3.58)
    --(8.18,2.48,3.58)
    --(8.18,4.18,3.58)
    --(8.82,4.18,3.58)
    --(8.82,4.78,3.58)
    --(8.82,4.78,0);

\draw[purplehist]
    (7.55,2.02,2.55)
    --(8.25,2.02,2.55)
    --(8.25,.65,2.55)
    --(8.88,.65,2.55)
    --(8.88,.18,2.55)
    --(8.88,.18,0);

\draw[purplehist]
    (7.08,2.48,2.55)
    --(6.18,2.48,2.55)
    --(6.18,.72,2.55)
    --(5.42,.72,2.55)
    --(5.42,.18,2.55)
    --(5.42,.18,0);

\foreach \p in {
    (7.08,2.48,0),
    (5.25,4.78,0),
    (8.82,4.78,0),
    (8.88,.18,0),
    (5.42,.18,0)
}{
    \node[
        circle,
        draw=ippurple,
        fill=ippurple!18,
        minimum size=3.2pt,
        inner sep=0pt
    ] at \p {};
}

% ================================================================
% YELLOW CLUSTER
% ================================================================

\coordinate (Ytip) at (10.40,2.55,4.42);

% Accepted cone.
\draw[coneedge,ipyellow!75!black] (Ytip)--(9.55,1.35,0);
\draw[coneedge,ipyellow!75!black] (Ytip)--(11.22,1.35,0);
\draw[coneedge,ipyellow!75!black] (Ytip)--(11.22,3.72,0);
\draw[coneedge,ipyellow!75!black] (Ytip)--(9.55,3.72,0);

\draw[coneedge,ipyellow!75!black]
    (9.55,1.35,0)--(11.22,1.35,0)
    --(11.22,3.72,0)--(9.55,3.72,0)--cycle;

% History above the Yellow separation tip.
\draw[yellowhist]
    (10.38,2.52,8)
    --(10.38,2.52,7.32)
    --(10.82,2.52,7.32)
    --(10.82,2.08,7.32)
    --(10.82,2.08,6.55)
    --(10.38,2.08,6.55)
    --(10.38,2.52,6.55)
    --(10.38,2.52,5.72)
    --(9.95,2.52,5.72)
    --(9.95,2.95,5.72)
    --(9.95,2.95,5.05)
    --(10.40,2.95,5.05)
    --(Ytip);

\draw[yellowhist]
    (9.48,1.05,8)
    --(9.48,1.05,7.50)
    --(9.90,1.05,7.50)
    --(9.90,1.48,7.50)
    --(9.90,1.48,6.82)
    --(10.38,1.48,6.82)
    --(10.38,2.52,6.55);

\draw[yellowhist]
    (11.28,1.15,8)
    --(11.28,1.15,7.46)
    --(10.88,1.15,7.46)
    --(10.88,1.58,7.46)
    --(10.88,1.58,6.10)
    --(10.38,1.58,6.10)
    --(10.38,2.52,5.72);

\draw[yellowhist]
    (9.58,4.05,8)
    --(9.58,4.05,7.55)
    --(9.98,4.05,7.55)
    --(9.98,3.65,7.55)
    --(9.98,3.65,6.18)
    --(9.95,3.65,6.18)
    --(9.95,2.95,5.72);

\draw[yellowhist]
    (11.25,4.08,8)
    --(11.25,4.08,7.55)
    --(10.85,4.08,7.55)
    --(10.85,3.65,7.55)
    --(10.85,3.65,6.18)
    --(10.38,3.65,6.18)
    --(10.38,2.52,5.72);

% Confined Yellow history below the tip.
\draw[yellowhist]
    (Ytip)
    --(10.40,2.55,3.75)
    --(10.02,2.55,3.75)
    --(10.02,2.92,3.75)
    --(10.02,2.92,3.02)
    --(10.42,2.92,3.02)
    --(10.42,2.55,3.02)
    --(10.42,2.55,2.18)
    --(10.05,2.55,2.18)
    --(10.05,2.18,2.18)
    --(10.05,2.18,0);

\draw[yellowhist]
    (10.02,2.92,3.02)
    --(9.72,2.92,3.02)
    --(9.72,3.25,3.02)
    --(9.72,3.25,1.48)
    --(9.72,2.95,1.48)
    --(9.72,2.95,0);

\draw[yellowhist]
    (10.42,2.55,2.18)
    --(10.82,2.55,2.18)
    --(10.82,2.88,2.18)
    --(10.82,2.88,1.28)
    --(11.02,2.88,1.28)
    --(11.02,2.88,0);

\draw[yellowhist]
    (10.40,2.55,3.75)
    --(10.80,2.55,3.75)
    --(10.80,2.18,3.75)
    --(10.80,2.18,2.65)
    --(11.02,2.18,2.65)
    --(11.02,1.72,2.65)
    --(11.02,1.72,0);

\node[
    tipnode,
    draw=ipyellow,
    fill=ipyellow!18
] at (Ytip) {};

% ================================================================
% STRONG RED CLUSTER
% ================================================================

\coordinate (Rtip) at (13.18,2.48,4.25);

% Accepted cone.
\draw[coneedge,ipred!75!black] (Rtip)--(12.20,1.18,0);
\draw[coneedge,ipred!75!black] (Rtip)--(14.22,1.18,0);
\draw[coneedge,ipred!75!black] (Rtip)--(14.22,3.82,0);
\draw[coneedge,ipred!75!black] (Rtip)--(12.20,3.82,0);

\draw[coneedge,ipred!75!black]
    (12.20,1.18,0)--(14.22,1.18,0)
    --(14.22,3.82,0)--(12.20,3.82,0)--cycle;

% History above the Strong Red separation tip.
\draw[redhist]
    (13.18,2.48,8)
    --(13.18,2.48,7.38)
    --(12.75,2.48,7.38)
    --(12.75,2.88,7.38)
    --(12.75,2.88,6.62)
    --(13.18,2.88,6.62)
    --(13.18,2.48,6.62)
    --(13.18,2.48,5.70)
    --(13.62,2.48,5.70)
    --(13.62,2.05,5.70)
    --(13.62,2.05,4.92)
    --(13.18,2.05,4.92)
    --(Rtip);

\draw[redhist]
    (12.15,.92,8)
    --(12.15,.92,7.52)
    --(12.58,.92,7.52)
    --(12.58,1.35,7.52)
    --(12.58,1.35,6.85)
    --(13.18,1.35,6.85)
    --(13.18,2.48,6.62);

\draw[redhist]
    (14.28,1.02,8)
    --(14.28,1.02,7.45)
    --(13.85,1.02,7.45)
    --(13.85,1.45,7.45)
    --(13.85,1.45,6.10)
    --(13.18,1.45,6.10)
    --(13.18,2.48,5.70);

\draw[redhist]
    (12.25,4.18,8)
    --(12.25,4.18,7.55)
    --(12.68,4.18,7.55)
    --(12.68,3.75,7.55)
    --(12.68,3.75,6.18)
    --(13.18,3.75,6.18)
    --(13.18,2.48,5.70);

\draw[redhist]
    (14.22,4.15,8)
    --(14.22,4.15,7.52)
    --(13.82,4.15,7.52)
    --(13.82,3.72,7.52)
    --(13.82,3.72,6.20)
    --(13.18,3.72,6.20)
    --(13.18,2.48,5.70);

% Confined Strong Red history below the tip.
\draw[redhist]
    (Rtip)
    --(13.18,2.48,3.62)
    --(12.78,2.48,3.62)
    --(12.78,2.88,3.62)
    --(12.78,2.88,2.90)
    --(13.18,2.88,2.90)
    --(13.18,2.48,2.90)
    --(13.18,2.48,2.05)
    --(12.80,2.48,2.05)
    --(12.80,2.10,2.05)
    --(12.80,2.10,0);

\draw[redhist]
    (12.78,2.88,2.90)
    --(12.48,2.88,2.90)
    --(12.48,3.22,2.90)
    --(12.48,3.22,1.42)
    --(12.48,2.92,1.42)
    --(12.48,2.92,0);

\draw[redhist]
    (13.18,2.48,2.05)
    --(13.62,2.48,2.05)
    --(13.62,2.85,2.05)
    --(13.62,2.85,1.20)
    --(13.92,2.85,1.20)
    --(13.92,2.85,0);

\draw[redhist]
    (13.18,2.48,3.62)
    --(13.58,2.48,3.62)
    --(13.58,2.08,3.62)
    --(13.58,2.08,2.58)
    --(13.92,2.08,2.58)
    --(13.92,1.58,2.58)
    --(13.92,1.58,0);

\node[
    tipnode,
    draw=ipred,
    fill=ipred!18
] at (Rtip) {};

% ================================================================
% Common top lattice and top-set markers
% ================================================================

\IPplane{8}

% Blue top sets.
\foreach \p in {
    (.55,.75,8),
    (1.35,2.25,8),
    (2.05,3.85,8)
}{
    \node[topnode,draw=ipblue] at \p {};
}

% Green top set.
\foreach \p in {
    (3.90,2.50,8),
    (3.02,.75,8),
    (4.88,1.12,8),
    (3.12,4.20,8),
    (4.95,3.90,8)
}{
    \node[topnode,draw=ipgreen] at \p {};
}

% Purple top set.
\foreach \p in {
    (7.05,2.48,8),
    (5.65,.55,8),
    (8.55,.62,8),
    (5.78,4.45,8),
    (8.58,4.38,8)
}{
    \node[topnode,draw=ippurple] at \p {};
}

% Yellow top set.
\foreach \p in {
    (10.38,2.52,8),
    (9.48,1.05,8),
    (11.28,1.15,8),
    (9.58,4.05,8),
    (11.25,4.08,8)
}{
    \node[topnode,draw=ipyellow] at \p {};
}

% Strong Red top set.
\foreach \p in {
    (13.18,2.48,8),
    (12.15,.92,8),
    (14.28,1.02,8),
    (12.25,4.18,8),
    (14.22,4.15,8)
}{
    \node[topnode,draw=ipred] at \p {};
}

% ================================================================
% Direct labels
% ================================================================

\node[classlabel,text=ipblue]
    at (1.15,2.45,8.78) {Blue};

\node[classlabel,text=ipgreen]
    at (3.95,2.50,8.78) {Green};

\node[classlabel,text=ippurple]
    at (7.05,2.50,8.78) {Purple};

\node[classlabel,text=ipyellow!88!black]
    at (10.35,2.50,8.78) {Yellow};

\node[classlabel,text=ipred]
    at (13.20,2.50,8.78) {Strong Red};

\node[
    note,
    text=ipyellow!88!black,
    anchor=west
] at (10.55,2.55,4.42)
    {$J_A\le p_{\tau_A}$};

\node[
    note,
    text=ipred,
    anchor=west
] at (13.35,2.48,4.25)
    {$J_A>p_{\tau_A}$};

\end{tikzpicture}

\caption{
Blue histories die before time \(T-1\), while the Green history does not reach
time \(0\). The Purple history reaches time \(0\) but is geometrically
irregular. The Yellow and Strong Red histories are isolated and geometrically
regular; their accepted cones are shown by dashed lines.
}
\label{fig:IP-common-space}

\endgroup
\end{figure}

\subsection{Reduction to Purple/Strong-Red overlap}\label{subsec:overlap-reduction}
Let \(V_G\), \(V_{\mathrm{P}}\), \(V_Y\), \(V_{\mathrm{Blue}}\), and
\(V_{\mathrm{SR}}\) denote the unions of the top sets of the Green, Purple,
Yellow, Blue, and Strong Red clusters, respectively. Set

\[
V_{G,Y}:=V_G\cup V_Y,
\qquad
W:=\Lambda\setminus V_{G,Y}.
\]

We use the following \(q\)-ary version of the overlap argument in Proposition~3.2 of \cite{MillerPeres}.

\begin{lemma}\label{lem:MP-overlap}

Let \(W\) be a finite set, and let \(\nu\) be the uniform product measure on
\([q]^W\). Suppose that a probability measure \(\mu\) on \([q]^W\) is
generated as follows. First sample a random set \(S\subseteq W\), and then
sample the spins on \(S\) from an arbitrary law that may depend on \(S\).
Conditional on \(S\) and on these spins, sample the spins on \(W\setminus S\)
independently and uniformly from \([q]\). Then

\[
\|\mu-\nu\|_{L^2(\nu)}^2
\le
\mathbb E\big[q^{|S\cap S'|}\big]-1,
\]
where \(S'\) is an independent copy of \(S\).
\end{lemma}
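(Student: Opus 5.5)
We compute the $L^2(\nu)$ norm directly from the density of $\mu$ with respect to $\nu$, following the proof of Proposition~3.2 of \cite{MillerPeres}. For a fixed set $S\subseteq W$ and a law $\rho_S$ on $[q]^S$, the measure obtained by sampling $\sigma|_S\sim\rho_S$ and then the remaining spins uniformly has $\nu$-density
\[
f_S(\sigma)=q^{|S|}\rho_S(\sigma|_S),
\]
since $\nu(\sigma)=q^{-|W|}$ and the conditional probability of $\sigma|_{W\setminus S}$ is $q^{-|W\setminus S|}$. Averaging over $S$, the density of $\mu$ is $f=\mathbb E_S[f_S]$. Hence
\[
\|\mu-\nu\|_{L^2(\nu)}^2=\int f^2\,d\nu-1=\mathbb E_{S,S'}\Big[\int f_Sf_{S'}\,d\nu\Big]-1,
\]
where $S'$ is an independent copy of $S$. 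This uses Fubini, which is fine since everything is finite.

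The key step is to bound $\int f_Sf_{S'}\,d\nu$ by $q^{|S\cap S'|}$ for fixed $S,S'$. Write $I=S\cap S'$. Under $\nu$ the coordinates are independent uniform. Integrate out the coordinates in $S\setminus I$: this gives
\[
\mathbb E_\nu[f_S\mid\sigma|_{S'}]=q^{|I|}\rho_S^{I}(\sigma|_I),
\]
where $\rho_S^I$ is the marginal of $\rho_S$ on $I$. The reason is that $q^{|S\setminus I|}$ times the average over $\sigma|_{S\setminus I}$ equals the sum over those coordinates. Since $\rho_S^I(\cdot)\le1$, this conditional expectation is at most $q^{|I|}$.

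Therefore
\[
\int f_Sf_{S'}\,d\nu=\mathbb E_\nu\big[f_{S'}\,\mathbb E_\nu[f_S\mid\sigma|_{S'}]\big]\le q^{|I|}\int f_{S'}\,d\nu=q^{|S\cap S'|}.
\]
Here we used $f_{S'}\ge0$ and $\int f_{S'}\,d\nu=1$. Taking expectation over $(S,S')$ and subtracting $1$ gives the claimed bound $\mathbb E\big[q^{|S\cap S'|}\big]-1$.

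There is no real obstacle. The only point needing care is that the law on $S$ may depend on $S$ arbitrarily, so the laws $\rho_S$ and $\rho_{S'}$ are different. This is handled by discarding all information about $\rho_S$ except the pointwise bound on its marginal, rather than trying to exploit a common law.
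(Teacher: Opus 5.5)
Your proof is correct and is the standard Miller--Peres computation. You write the $\nu$-density of $\mu$ as $\mathbb E_S\bigl[q^{|S|}\rho_S(\sigma|_S)\bigr]$, expand $\int f^2\,d\nu$ over an independent pair $(S,S')$, and bound each cross term by $q^{|S\cap S'|}$ after integrating out $S\setminus S'$ and using that the marginal of $\rho_S$ on $S\cap S'$ is at most $1$. The paper gives no separate proof of this lemma; it simply invokes the $q$-ary version of Proposition~3.2 of \cite{MillerPeres}, whose proof is exactly this argument with $2$ replaced by $q$.
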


By Cauchy--Schwarz, the \(L^2\)-bound in Lemma~\ref{lem:MP-overlap}
implies the total-variation bound

\[
\|\mu-\nu\|_{\TV}
=
\frac12\int \left|\frac{d\mu}{d\nu}-1\right|\,d\nu
\le
\frac12
\left\|
\frac{d\mu}{d\nu}-1
\right\|_{L^2(\nu)}=
\frac12\|\mu-\nu\|_{L^2(\nu)}.
\]

Fix an initial condition \(x_0\in[q]^\Lambda\). Let \(\pi\) be the stationary
Potts measure, and take \(X_0^\pi\sim\pi\) independently of the graphical
construction and all auxiliary randomness. Using the same graphical
construction, run one chain \(X_t^{x_0}\) from \(x_0\) and another
\(X_t^\pi\) from \(X_0^\pi\). By stationarity, \(X_t^\pi\sim\pi\). For
\(U\subseteq\Lambda\), let \(\nu_U\) denote the product-uniform measure on
\([q]^U\).

Let $\mathfrak C:=\{\mathrm{Blue},G,P,Y,\mathrm{SR}\}$
be the set of cluster types. For \(I\subseteq\mathfrak C\), write
\[
\mathscr A_I
:=
\{A:\mathcal H_A\text{ is a cluster whose type belongs to }I\},
\qquad
\mathcal H_I
:=
\bigcup_{A\in\mathscr A_I}\mathcal H_A .
\]
When we use this notation, let this information include the cluster-type labels as well as the geometry.
We also abbreviate, for example, $\mathcal H_{G,Y}:=\mathcal H_{\{G,Y\}}$, $\mathcal H_{G,\mathrm{Blue},\mathrm{SR}}
:=\mathcal H_{\{G,\mathrm{Blue},\mathrm{SR}\}}$.

\begin{prop}[Purple/Strong-Red overlap reduction]\label{prop:IP-overlap}

Set \(V_{\mathrm{P}}^{(1)}:=V_{\mathrm{P}}\), and, conditional on
\(\mathcal H_{G,Y}\vee\mathcal H_{\mathrm{SR}}\), let
\(V_{\mathrm{P}}^{(2)}\) be an independent copy of
\(V_{\mathrm{P}}^{(1)}\). Similarly, set
\(V_{\mathrm{SR}}^{(1)}:=V_{\mathrm{SR}}\), and, conditional on
\(\mathcal H_{G,Y}\vee\mathcal H_{\mathrm{P}}\), let
\(V_{\mathrm{SR}}^{(2)}\) be an independent copy of
\(V_{\mathrm{SR}}^{(1)}\). Then, for every \(x_0\in[q]^\Lambda\),

\[
\left\|
\Law(X_t^{x_0})-\pi
\right\|_{\mathrm{TV}}
\le
\left(
\mathbb E\left[
q^{|V_{\mathrm{P}}^{(1)}\cap V_{\mathrm{P}}^{(2)}|}-1
\right]
\right)^{1/2}
+
\left(
\mathbb E\left[
q^{|V_{\mathrm{SR}}^{(1)}\cap V_{\mathrm{SR}}^{(2)}|}-1
\right]
\right)^{1/2}.
\]
Consequently,
\[
d_\Lambda(t)
\le
\left(
\mathbb E\left[
q^{|V_{\mathrm{P}}^{(1)}\cap V_{\mathrm{P}}^{(2)}|}-1
\right]
\right)^{1/2}
+
\left(
\mathbb E\left[
q^{|V_{\mathrm{SR}}^{(1)}\cap V_{\mathrm{SR}}^{(2)}|}-1
\right]
\right)^{1/2}.
\]
\end{prop}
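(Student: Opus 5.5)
We follow the standard information-percolation reduction of \cite{Exposition,Universality}, adapted to the five cluster types. Throughout we couple $X_t^{x_0}$ and $X_t^\pi$ through the same graphical construction, the same Yellow/Strong Red marks $J_{(v,t)}$, and the same lower-cone samples. We condition on the full cluster geometry together with the type labels, and on the colors produced by the Blue, Green and Yellow clusters.

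\textbf{Step 1: common sites.} Blue and Green clusters never reach time $0$, so their top colors are deterministic functions of the update marks and agree in both chains. For a Yellow cluster $A$, the separation-tip color is drawn from $\operatorname{Unif}([q])$, independently of $x_0$. By the independence property of the lower cone established above, the lower-cone marks have the law of an independent construction conditioned on $\mathsf{Bot}_{\tau_A}(v_A)$. The colors on $A$ are therefore a function of this uniform tip color and of marks above $\tau_A$ that do not depend on the initial condition, so they too agree in both chains. Conditioning on $\mathcal F:=\mathcal H_{G,Y,\mathrm{Blue}}$ together with these colors, the restriction to $V_{G,Y}\cup V_{\rm Blue}$ is common. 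By convexity of total variation, it suffices to bound, uniformly in this conditioning, the total-variation distance between the conditional laws of the configuration on the remaining sites.

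\textbf{Step 2: triangle inequality through a uniform reference.} Let $W'$ be the set of sites covered by Purple and Strong Red clusters. Given $\mathcal F$, the restriction $\sigma|_{W'}$ is generated by Purple clusters, whose dependence on the initial condition is arbitrary, and Strong Red clusters, whose tip colors are drawn from $\mathsf R(\tau_A,v_A,\cdot)$. We insert the intermediate law $\mu^{\mathrm{mid}}$ in which the Purple clusters keep their $X^{x_0}$-colors and the Strong Red clusters take their $X^\pi$-colors. This gives
\[
\|\mathcal L(X^{x_0})-\mathcal L(X^{\pi})\|_{\TV}\le \|\mu^{x_0}-\mu^{\mathrm{mid}}\|_{\TV}+\|\mu^{\mathrm{mid}}-\mu^\pi\|_{\TV},
\]
and each term differs in only one cluster class. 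We bound each term by comparing to a uniform reference through a further triangle inequality, applying Lemma~\ref{lem:MP-overlap} to both sides. The constant factor this introduces is absorbed by applying the lemma directly to the pushforward described next.

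The key device is a \emph{reparametrization trick}. Fix the Strong Red class and condition on $\mathcal H_{G,Y}\vee\mathcal H_{\mathrm P}$. The map from the uniform color configuration on the top sets $V_{\mathrm{SR}}$ to the colors is replaced by the following description. The top colors of all non-SR sites are fixed by the conditioning. The top colors of the SR clusters form a random vector whose support is the random set $S=V_{\mathrm{SR}}$. Applying a fixed, conditioning-measurable bijection to each site, namely a relabeling that turns the conditional equilibrium law into the uniform law, puts us in the setting of Lemma~\ref{lem:MP-overlap} with $S=V_{\rm SR}$. An independent copy of $S$ given $\mathcal H_{G,Y}\vee\mathcal H_{\mathrm P}$ is then exactly $V^{(2)}_{\mathrm{SR}}$. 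Jensen's inequality over the conditioning, followed by Cauchy--Schwarz as displayed after the lemma, gives the bound $(\mathbb E[q^{|V^{(1)}_{\rm SR}\cap V^{(2)}_{\rm SR}|}-1])^{1/2}$. The Purple term is handled symmetrically, conditioning on $\mathcal H_{G,Y}\vee\mathcal H_{\mathrm{SR}}$.

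\textbf{Main obstacle.} The main obstacle is justifying this relabeling. Unlike Ising, the non-initial-condition part of a cluster's output is not uniform: the Green colors and the internal dynamics of Red clusters are correlated. The clean route is to work with the \emph{update-mark} variables rather than the spins, making the reference law the equilibrium chain's own law rather than the uniform law. We would therefore first prove a version of Lemma~\ref{lem:MP-overlap} with $\nu$ replaced by an arbitrary product reference measure whose atoms have mass at least $1/q$. This is exactly the setting for the separation-tip colors: $\mathsf S=p\,\mathrm{Unif}+(1-p)\mathsf R$, and only the residual $\mathsf R$-part is exposed. We would then check, using the cone-independence statement, that conditionally on the other classes the exposed tips of distinct clusters are independent. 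Under that independence the $L^2$ computation of \cite{MillerPeres} goes through verbatim and yields the factor $q^{|S\cap S'|}$.
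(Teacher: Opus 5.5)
\textbf{Verdict: there is a genuine gap, in Step 1 and in the ``main obstacle'' paragraph.} Your Step 2 hybrid is fine: Purple colors driven by $x_0$, Strong Red colors driven by $X_0^\pi$. It is a legitimate alternative to the paper's three-step chain $X_t^{x_0}(W)\to Y_t^{x_0}\to Y_t^\pi\to X_t^\pi(W)$, in which Purple spins are replaced by fresh uniform ones, and done correctly it gives the same constants. The problem is where the uniform reference for Lemma~\ref{lem:MP-overlap} comes from.

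In Step 1 you condition on $\mathcal H_{G,Y,\mathrm{Blue}}$ and the Blue colors, and ask for a bound uniform in this conditioning; the paper conditions only on $\mathcal H_{G,Y}$ at this stage. Once the Blue clusters are revealed, $V_{\mathrm P}\cup V_{\mathrm{SR}}$ is deterministic. After the further conditioning on $\mathcal H_{\mathrm P}$ in Step 2, $V_{\mathrm{SR}}$ is deterministic too. Its conditionally independent copy then coincides with it, and the lemma only returns $\mathbb E[q^{|V_{\mathrm{SR}}|}-1]$. That is not the statement's quantity, whose copy is taken conditionally on $\mathcal H_{G,Y}\vee\mathcal H_{\mathrm P}$ alone, and it is exponentially large in the number of Strong Red sites. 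Likewise, no bound uniform over realizations of the geometry exists: the conditional distance must be averaged, which is where Jensen enters. Your sentence ``the top colors of all non-SR sites are fixed by the conditioning'' is the same mistake.

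\textbf{What is missing: the Blue sites must stay unrevealed, because they \emph{are} the uniform fill-in.} A Blue history lies in $(T-1,T]$ and ends at oblivious updates, which output $\mathrm{Unif}([q])$. The residual heat-bath update is color-equivariant. Hence, conditional on $\mathcal H_{G,Y}\vee\mathcal H_{\mathrm P}$ and on $V_{\mathrm{SR}}$ with its spins, the spins on $V_{\mathrm{Blue}}=W\setminus(V_{\mathrm P}\cup V_{\mathrm{SR}})$ are i.i.d.\ uniform. Green and Yellow sites enter only as a common conditional factor. Lemma~\ref{lem:MP-overlap} then applies verbatim on $W\setminus V_{\mathrm P}$ with $S=V_{\mathrm{SR}}$, and symmetrically on $W\setminus V_{\mathrm{SR}}$ with $S=V_{\mathrm P}$. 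In your hybrid you must also condition on $X_0^\pi$ in the Purple comparison, so that the $X_0^\pi$-driven Strong Red spins are common. Each of the two comparisons with the uniform law costs $\tfrac12(\cdot)^{1/2}$, so there is no constant to absorb.

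\textbf{Your ``main obstacle'' is therefore a misdiagnosis.}
\begin{itemize}
\item A bijection of $[q]$ cannot turn a non-uniform law into the uniform one.
\item A product reference measure whose single-site atoms all have mass at least $1/q$ is already uniform, so the proposed generalization of the lemma is vacuous.
\item The decomposition $\mathsf S=p\,\mathrm{Unif}+(1-p)\mathsf R$ is already built into the Yellow/Strong Red classification and plays no further role at this step.
\end{itemize}
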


\begin{proof}
Conditioning on \(\mathcal H_{G,Y}\) and using convexity of total variation gives
\[
\bigl\|\Law(X_t^{x_0})-\pi\bigr\|_{\TV}
=
\bigl\|\Law(X_t^{x_0})-\Law(X_t^\pi)\bigr\|_{\TV}
\le
\mathbb E\left[
\left\|
\Law(X_t^{x_0}\mid\mathcal H_{G,Y})
-
\Law(X_t^\pi\mid\mathcal H_{G,Y})
\right\|_{\TV}
\right].
\]
By construction, the spins on $V_{G,Y}$ at time $t$ are \(\mathcal H_{G,Y}\)-measurable, and their conditional law does not depend on the initial condition. These coordinates therefore form a common conditional
factor, so
\[
\left\|
\Law(X_t^{x_0}\mid\mathcal H_{G,Y})
-
\Law(X_t^\pi\mid\mathcal H_{G,Y})
\right\|_{\TV}
=
\left\|
\Law(X_t^{x_0}(W)\mid\mathcal H_{G,Y})
-
\Law(X_t^\pi(W)\mid\mathcal H_{G,Y})
\right\|_{\TV}.
\]

For each initial condition \(\xi\in[q]^\Lambda\), construct an auxiliary
configuration \(Y_t^\xi\) on \(W\) from \(X_t^\xi(W)\) by replacing the spins
on \(V_{\mathrm{P}}\) with fresh independent uniform spins and leaving the
spins on \(W\setminus V_{\mathrm{P}}\) unchanged. The fresh spins are also
independent of all other randomness. When \(\xi=X_0^\pi\), write
\(Y_t^\pi:=Y_t^{X_0^\pi}\).
We apply the triangle inequality along the interpolation
\[
X_t^{x_0}(W)
\longrightarrow
Y_t^{x_0}
\longrightarrow
Y_t^\pi
\longrightarrow
X_t^\pi(W).
\]

We first bound the term comparing \(X_t^{x_0}\) and \(Y_t^{x_0}\). Another
application of convexity, now conditioning further on
\(\mathcal H_{\mathrm{SR}}\), gives
\[
\begin{aligned}
&\left\|
\Law(X_t^{x_0}(W)\mid\mathcal H_{G,Y})
-
\Law(Y_t^{x_0}(W)\mid\mathcal H_{G,Y})
\right\|_{\TV} \\
&\qquad\le
\mathbb E\left[
\left\|
\Law(X_t^{x_0}(W)\mid\mathcal H_{G,Y}\vee\mathcal H_{\mathrm{SR}})
-
\Law(Y_t^{x_0}(W)\mid\mathcal H_{G,Y}\vee\mathcal H_{\mathrm{SR}})
\right\|_{\TV}
\,\middle|\,
\mathcal H_{G,Y}
\right].
\end{aligned}
\]

Here the inner quantity is measurable with respect to \(\mathcal H_{G,Y}\vee\mathcal H_{\mathrm{SR}}\), so the outer conditional expectation averages over the remaining randomness in \(\mathcal H_{\mathrm{SR}}\) given \(\mathcal H_{G,Y}\).
Conditional on
\(\mathcal H_{G,Y}\vee\mathcal H_{\mathrm{SR}}\), the Strong Red spins depend
only on their own histories and on the initial configuration at the bottoms of
those histories. Since \(Y_t^{x_0}\) modifies only the disjoint Purple set, the
Strong Red coordinates form a common conditional factor of the two laws. On
\(W\setminus V_{\mathrm{SR}}\), meanwhile, \(Y_t^{x_0}\) is product-uniform:
the Blue coordinates are product-uniform, and the Purple coordinates have been
replaced by fresh product-uniform randomness. Therefore,
\[
\left\|
\Law(X_t^{x_0}(W)\mid\mathcal H_{G,Y}\vee\mathcal H_{\mathrm{SR}})
-
\Law(Y_t^{x_0}(W)\mid\mathcal H_{G,Y}\vee\mathcal H_{\mathrm{SR}})
\right\|_{\TV} 
=
\left\|
\Law(X_t^{x_0}(W\setminus V_{\mathrm{SR}})
\mid\mathcal H_{G,Y}\vee\mathcal H_{\mathrm{SR}})
-
\nu_{W\setminus V_{\mathrm{SR}}}
\right\|_{\TV}.
\]
Conditional on \(\mathcal H_{G,Y}\vee\mathcal H_{\mathrm{SR}}\), we may apply
Lemma~\ref{lem:MP-overlap} on \(W\setminus V_{\mathrm{SR}}\), with exceptional
set \(S=V_{\mathrm{P}}\). Indeed, only the Purple coordinates may retain
information from the initial condition; conditional on \(V_{\mathrm{P}}\), the
remaining Blue coordinates are independent uniform spins. Hence,
\[
\left\|
\Law(X_t^{x_0}(W\setminus V_{\mathrm{SR}})
\mid\mathcal H_{G,Y}\vee\mathcal H_{\mathrm{SR}})
-
\nu_{W\setminus V_{\mathrm{SR}}}
\right\|_{\TV} \le
\frac12
\left(
\mathbb E\left[
q^{|V_{\mathrm{P}}^{(1)}\cap V_{\mathrm{P}}^{(2)}|}-1
\,\middle|\,
\mathcal H_{G,Y}\vee\mathcal H_{\mathrm{SR}}
\right]
\right)^{1/2}.
\]

The same argument bounds the term comparing \(Y_t^\pi\) and \(X_t^\pi\):
condition first on \(X_0^\pi=\xi\), apply the preceding fixed-initial-state
estimate, and then average over \(\xi\sim\pi\). Thus, conditional on
\(\mathcal H_{G,Y}\), the two outer triangle terms together are bounded by

\[
\mathbb E\left[
\left(
\mathbb E\left[
q^{|V_{\mathrm{P}}^{(1)}\cap V_{\mathrm{P}}^{(2)}|}-1
\,\middle|\,
\mathcal H_{G,Y}\vee\mathcal H_{\mathrm{SR}}
\right]
\right)^{1/2}
\,\middle|\,
\mathcal H_{G,Y}
\right].
\]

It remains to compare \(Y_t^{x_0}\) and \(Y_t^\pi\). Applying convexity once
more, this time by conditioning further on \(\mathcal H_{\mathrm{P}}\), gives

\[
\begin{aligned}
&\left\|
\Law(Y_t^{x_0}(W)\mid\mathcal H_{G,Y})
-
\Law(Y_t^\pi(W)\mid\mathcal H_{G,Y})
\right\|_{\TV} \\
&\qquad\le
\mathbb E\left[
\left\|
\Law(Y_t^{x_0}(W)\mid\mathcal H_{G,Y}\vee\mathcal H_{\mathrm{P}})
-
\Law(Y_t^\pi(W)\mid\mathcal H_{G,Y}\vee\mathcal H_{\mathrm{P}})
\right\|_{\TV}
\,\middle|\,
\mathcal H_{G,Y}
\right].
\end{aligned}
\]
Given \(\mathcal H_{G,Y}\vee\mathcal H_{\mathrm{P}}\), the set \(V_{\mathrm{P}}\) is fixed, and the spins \(Y_t^\xi(V_{\mathrm{P}})\) are independent product-uniform spins for every initial condition \(\xi\). Thus the \(V_{\mathrm{P}}\)-coordinates form a common conditional factor, so
\[
\begin{aligned}
&\left\|
\Law(Y_t^{x_0}(W)\mid\mathcal H_{G,Y}\vee\mathcal H_{\mathrm{P}})
-
\Law(Y_t^\pi(W)\mid\mathcal H_{G,Y}\vee\mathcal H_{\mathrm{P}})
\right\|_{\TV} \\
&\qquad=
\left\|
\Law(Y_t^{x_0}(W\setminus V_{\mathrm{P}})
\mid\mathcal H_{G,Y}\vee\mathcal H_{\mathrm{P}})
-
\Law(Y_t^\pi(W\setminus V_{\mathrm{P}})
\mid\mathcal H_{G,Y}\vee\mathcal H_{\mathrm{P}})
\right\|_{\TV}.
\end{aligned}
\]
Now
\(W\setminus V_{\mathrm{P}}=V_{\mathrm{Blue}}\sqcup V_{\mathrm{SR}}\), and
only the Strong Red coordinates may retain information from the initial
condition. Conditional on \(V_{\mathrm{SR}}\), the Blue coordinates are
independent uniform spins. 
Therefore, applying Lemma
\ref{lem:MP-overlap} conditionally to each of the two laws and using the triangle inequality through
\(\nu_{W\setminus V_{\mathrm P}}\), we get

\[
\begin{aligned}
&\left\|
\Law(Y_t^{x_0}(W\setminus V_{\mathrm{P}})
\mid\mathcal H_{G,Y}\vee\mathcal H_{\mathrm{P}})
-
\Law(Y_t^\pi(W\setminus V_{\mathrm{P}})
\mid\mathcal H_{G,Y}\vee\mathcal H_{\mathrm{P}})
\right\|_{\TV} \le
\left(
\mathbb E\left[
q^{|V_{\mathrm{SR}}^{(1)}\cap V_{\mathrm{SR}}^{(2)}|}-1
\,\middle|\,
\mathcal H_{G,Y}\vee\mathcal H_{\mathrm{P}}
\right]
\right)^{1/2}.
\end{aligned}
\]

Combining the three triangle bounds and then averaging over
\(\mathcal H_{G,Y}\) yields
\[
\bigl\|\Law(X_t^{x_0})-\pi\bigr\|_{\TV}
\le
\mathbb E[\Delta_{\mathrm{P}}]
+
\mathbb E[\Delta_{\mathrm{SR}}],
\]
where, for brevity,
\[
\Delta_{\mathrm{P}}
:=
\left(
\mathbb E\left[
q^{|V_{\mathrm{P}}^{(1)}\cap V_{\mathrm{P}}^{(2)}|}-1
\,\middle|\,
\mathcal H_{G,Y}\vee\mathcal H_{\mathrm{SR}}
\right]
\right)^{1/2},\quad
\Delta_{\mathrm{SR}}
:=
\left(
\mathbb E\left[
q^{|V_{\mathrm{SR}}^{(1)}\cap V_{\mathrm{SR}}^{(2)}|}-1
\,\middle|\,
\mathcal H_{G,Y}\vee\mathcal H_{\mathrm{P}}
\right]
\right)^{1/2}.
\]
Since the square-root function is concave, Jensen's inequality and the tower
property give

\[
\mathbb E[\Delta_{\mathrm{P}}]
\le
\left(
\mathbb E\left[
q^{|V_{\mathrm{P}}^{(1)}\cap V_{\mathrm{P}}^{(2)}|}-1
\right]
\right)^{1/2},
\qquad
\mathbb E[\Delta_{\mathrm{SR}}]
\le
\left(
\mathbb E\left[
q^{|V_{\mathrm{SR}}^{(1)}\cap V_{\mathrm{SR}}^{(2)}|}-1
\right]
\right)^{1/2}.
\]

This proves the claimed bound for the arbitrary initial condition \(x_0\).
Taking the maximum over \(x_0\in[q]^\Lambda\) gives the stated bound on
\(d_\Lambda(t)\).

\end{proof}

In Subsection~\ref{subsec:cutoff-upper}, we will establish, at terminal time $t$, the estimates
\[
\mathbb E\left[
q^{|V_{\mathrm{P}}^{(1)}\cap V_{\mathrm{P}}^{(2)}|}-1
\right]
=
o\bigl(|\Lambda|e^{-2\mathfrak{r}t}\bigr),
\qquad
\mathbb E\left[
q^{|V_{\mathrm{SR}}^{(1)}\cap V_{\mathrm{SR}}^{(2)}|}-1
\right]
=
O\bigl(|\Lambda|e^{-2\mathfrak{r}t}\bigr).
\]
Choose \(t_\star\) so that
\(|\Lambda|e^{-2\mathfrak{r}t_\star}=1\), and set \(T:=t_\star+s\). Evaluating
the preceding estimates at \(t=T\) gives

\[
\mathbb E\left[
q^{|V_{\mathrm{P}}^{(1)}\cap V_{\mathrm{P}}^{(2)}|}-1
\right]
=
o(e^{-2\mathfrak{r}s}),
\qquad
\mathbb E\left[
q^{|V_{\mathrm{SR}}^{(1)}\cap V_{\mathrm{SR}}^{(2)}|}-1
\right]
=
O(e^{-2\mathfrak{r}s}).
\]

Taking square roots and applying Proposition~\ref{prop:IP-overlap} yields the
desired upper bound on the total-variation distance in
Theorem~\ref{thm:main}.

%%%%%%%%%%%%%%%%%%%%%%%%%%%%%%%%%%%%%%%%%%%%%%%%%%%%%%%%%%%%%%%%%%%%%%%%%%%%%%%%%%%%%%%%%%%%%%%%%%%%%%%%%%%%%%%%%%%%%%%%%%%%%%%%%%%%%%%%%%%%%%%%%%%%%%%%%%%%%%%%%%%%%%%%%%%%%%%%%%%%%%%%%%%%%%%%%%%%%%%%%%%%%%%%%%%%%%%%%%%%%%%%%%%%%%%%%%%%%%%%

\medskip

\section{Sausage decomposition and magnetization}\label{sec:sausage}

In this section we develop the sausage estimates used in the cutoff proof and in the extremality results. We first construct a regeneration, or sausage, decomposition of a one-site backward history. This gives a signed renewal kernel \(\IK(z,t)\) and identifies the decay rate $\mathfrak{r}$. We then state the Fourier input needed to control the signed convolution powers and derive an exponential \(\ell^1\)-bound on \(\IK\). Finally, we convert these kernel estimates into magnetization bounds and a cone-confined estimate for Strong Red mass.

We begin with the i.i.d.\ graphical construction on \(\mathbb Z^d\). All renewal objects below---the sausage law \(\mu\), the signed weight \(p\), and the influence kernel \(\IK\)---are defined in this infinite-volume construction. For finite-volume applications, the kernel is periodized in the extremality proof, while the cutoff estimates are transferred to the torus by local couplings in regions where the quotient map is injective.

Throughout this section and Appendix~\ref{sec:appendix}, we fix
\begin{equation}\label{eq:pstop-choice}
    \varepsilon:=\frac18,
    \qquad
    p_{\mathrm{stop}}:=\frac12,
    \qquad
    \eta:=1-\log(1-p_{\mathrm{stop}})=1+\log 2.
\end{equation}
In particular,
\[
    1<\eta=1+\log 2<\frac74=2(1-\varepsilon).
\]

\subsection{Sausage renewal decomposition}
\label{subsec:sausage-def}

Fix a space-time point \((v,T)\in\mathbb Z^d\times[0,\infty)\), and expose its backward
history $\widehat{\mathcal H}(s):=\mathcal H_v(T-s)$, where \(s\) denotes backward time. Thus \(s=0\)
corresponds to real time \(T\). Let \(N_s:=|\widehat{\mathcal H}(s)|\) and, on \(\{N_s=1\}\), let \(V_s\) be
the unique vertex in \(\widehat{\mathcal H}(s)\).

We decompose the backward history into independent regeneration blocks, called \emph{sausages}
(see Figure~\ref{fig:phase_mechanics}).
Each sausage begins at an integer backward time when the cluster is a singleton.
From that time we run two competing mechanisms:
(i) the next Poisson ring at the current vertex; and
(ii) an independent ``stop coin'' checked at integer times.
If the coin succeeds before any ring, the sausage is a vertical one-lineage block.
If a ring occurs first, the history may branch; coin checks are suspended during the ensuing excursion,
and the sausage ends at the first later integer time at which the history has returned to a singleton.
If the history dies out before returning to a singleton, we record a cemetery outcome.

% ============================
% Sausages / regeneration blocks
% ============================
\begin{defn}[Sausages]\label{def:sausages}
Set \(\tau_0:=0\). Let \((A_n)_{n\ge1}\) be i.i.d. \(\mathrm{Bernoulli}(p_{\mathrm{stop}})\),
independent of the graphical construction. For \(k\ge1\), on
\(\{\tau_{k-1}<\infty,\ N_{\tau_{k-1}}=1\}\), define
\begin{align*}
\mathsf r_k&:=\inf\{s>\tau_{k-1}:\ \text{the Poisson clock at }V_{\tau_{k-1}}\text{ rings at backward time }s\},\\
\mathsf c_k&:=\inf\{n\in\mathbb Z_{\ge1}:\ n>\tau_{k-1},\ A_n=1\},
\end{align*}
and set
\[
\tau_k :=
\begin{cases}
\mathsf c_k, & \mathsf c_k\le \mathsf r_k,\\[2pt]
\inf\{n\in\mathbb Z_{\ge1}:\ n>\mathsf r_k,\ N_n=1\}, & \mathsf r_k<\mathsf c_k,
\end{cases}
\qquad(\inf\emptyset:=\infty).
\]
On \(\{\tau_k<\infty\}\), let \(x_k\) be the restriction of the backward history to the slab
\([\tau_{k-1},\tau_k]\), recentered by the translation sending
\((V_{\tau_{k-1}},\tau_{k-1})\) to \((0,0)\). Let \(\Xi\) be the set of all such recentered slabs, and adjoin a cemetery symbol \(\dagger\). On \(\{\tau_k=\infty\}\), set \(x_k:=\dagger\), and write \(\Xi^\dagger:=\Xi\cup\{\dagger\}\).

For $x\in\Xi$, define its length, displacement, and number of branch-out events:
\[
\ell(x):=\tau_k-\tau_{k-1}\in\mathbb{Z}_{\ge1},
\quad
y(x):=V_{\tau_k}-V_{\tau_{k-1}}\in\mathbb{Z}^d,
\quad
b(x):=\#\{\text{branch-out events inside }x\}\in\mathbb Z_{\ge 0},
\]
and set $\ell(\dagger)=0$, $y(\dagger)=0$, $b(\dagger)=0$.

Fix a distinguished color, say $1\in[q]$. Define the signed influence
\[
p(x):=\frac{q}{q-1}\left(
\mathbb P\!\left(
\sigma_{T-\tau_{k-1}}(V_{\tau_{k-1}})=1
\,\middle|\,
\sigma_{T-\tau_k}(V_{\tau_k})=1,\ x_k=x
\right)-\frac1q
\right)\in\Big[-\frac1{q-1},\,1\Big],
\quad p(\dagger):=0.
\]
\end{defn}

By the strong Markov property at the regeneration times and by independence of the stop coins, up to and including the first occurrence of $\dagger$, the sequence $(x_k)_{k\ge1}$ is i.i.d.\ with common law $\mu$ on $\Xi^\dagger$.

We use the notation
\[
    B_\infty(u,k):=\{z\in\mathbb Z^d:\|z-u\|_\infty\le k\}.
\]

%=======================================
%=============Sausage def Fig=============
%=======================================
\begin{figure}[htbp]
\centering
\begin{tikzpicture}[
    x=0.45cm, y=0.45cm, 
    grid/.style={gray!40, dashed, line width=0.45pt},
    hist/.style={black, line width=1.3pt, line cap=round, line join=round},
    death/.style={circle, fill=black, inner sep=1.0pt},
    regen line/.style={draw=red!80!black, thick},
    pinch point/.style={circle, fill=red!80!black, inner sep=1.8pt},
    label/.style={font=\scriptsize}, 
    annotation/.style={font=\tiny, align=left, text=blue!70!black} 
]

  % --- Time Grid ---
  \foreach \t in {1,...,16}{
    \draw[grid] (-4.5,\t) -- (5,\t);
    \node[left, gray!80, font=\tiny] at (-4.5,\t) {\t};
  }

  % =======================================================
  % SAUSAGE NAMES ON THE LEFT 
  % =======================================================
  \draw[decorate, decoration={brace, amplitude=6pt}, thick, black] 
    (-6, 10) -- (-6, 15) node[midway, left=8pt, align=right, font=\scriptsize\bfseries] 
    {$k$-th\\Sausage};

  \draw[decorate, decoration={brace, amplitude=6pt}, thick, black] 
    (-6, 2) -- (-6, 10) node[midway, left=8pt, align=right, font=\scriptsize\bfseries] 
    {$(k\!+\!1)$-th\\Sausage};

  % =======================================================
  % k-th SAUSAGE: ONLY COIN-TOSSING PHASE 
  % =======================================================
  \draw[regen line] (-4.5,15) -- (4,15) node[right, text=red!80!black, font=\tiny] {$\tau_{k-1}$};

  \draw[hist] (0,15) -- (0,10);
  \node[pinch point, label={[above, yshift=2pt] $V_{\tau_{k-1}}$}] at (0,15) {};
  \node[pinch point] at (0,10) {};
  \draw[regen line] (-4.5,10) -- (4,10) node[right, text=red!80!black, font=\tiny] {$\tau_{k}$};

  \node[annotation] at (2.7, 14) {Check $A=0$ (Fail)};
  \node[annotation] at (2.7, 13) {Check $A=0$ (Fail)};
  \node[annotation] at (2.7, 12) {Check $A=0$ (Fail)};
  \node[annotation] at (2.7, 11) {Check $A=0$ (Fail)};

  % FIXED: Added \tiny before \bfseries to prevent size reset
  \node[annotation, text=red!80!black, font=\tiny\bfseries] at (-0.4, 10.3) {Check $A=1$ (Success!)};

  \draw[->, gray, thin] (1.1, 14) -- (0.2, 14);
  \draw[->, gray, thin] (1.1, 13) -- (0.2, 13);
  \draw[->, gray, thin] (1.1, 12) -- (0.2, 12);
  \draw[->, gray, thin] (1.1, 11) -- (0.2, 11);

  \draw[decorate, decoration={brace, amplitude=5pt}, thick, blue!70!black] 
    (5.5, 15) -- (5.5, 10) node[midway, right=8pt, align=left, font=\scriptsize] 
    {Coin-tossing\\phase only\\$\mathsf c_k \le \mathsf r_k$};

  % =======================================================
  % (k+1)-th SAUSAGE: COIN-TOSSING + EXCURSION 
  % =======================================================
  \draw[hist] (0,10) -- (0,8.4);

  \node[annotation] at (2.7, 9) {Check $A=0$ (Fail)};
  \draw[->, gray, thin] (1.1, 9) -- (0.2, 9);

  \draw[decorate, decoration={brace, amplitude=3pt}, thick, blue!70!black] 
    (5.5, 10) -- (5.5, 8.4) node[midway, right=6pt, align=left, font=\scriptsize] 
    {Coin-tossing phase};

  \draw[->, red!70!black, thick] (-2.5, 8.4) node[left, font=\tiny, align=right] {$\mathsf r_{k+1}$: Clock rings before check $\mathsf c_{k+1}$} -- (-0.2, 8.4);

  \draw[hist] (-1,8.4) -- (1,8.4);
  \draw[hist] (1,8.4) -- (1,6.5);
  \draw[hist] (0,6.5) -- (2,6.5);
  \draw[hist] (2,6.5) -- (2,3.8); \node[death] at (2,3.8) {}; 
  \draw[hist] (0,6.5) -- (0,5.1); \node[death] at (0,5.1) {}; 
  \draw[hist] (-1,8.4) -- (-1,3.8);
  \draw[hist] (-2,3.8) -- (0,3.8);
  \draw[hist] (-2,3.8) -- (-2,2.8); \node[death] at (-2,2.8) {}; 
  \draw[hist] (0,3.8) -- (0,2.4);

  \draw[dashed, red!70!black, thick] (-4.5, 2.8) -- (0.6, 2.8);
  \node[annotation, text=red!70!black] at (2.5, 2.8) {$X_t$ returns to $1$};

  \draw[hist] (0,2.4) -- (0,2);
  \node[pinch point] at (0,2) {};
  \draw[regen line] (-4.5,2) -- (4,2) node[right, text=red!80!black, font=\tiny] {$\tau_{k+1}$};

  % FIXED: Added \tiny before \bfseries to prevent size reset
  \node[annotation, text=red!80!black, font=\tiny\bfseries] at (1.0, 1.6) {Wait for next integer};

  \draw[decorate, decoration={brace, amplitude=5pt}, thick, blue!70!black] 
    (5.5, 8.4) -- (5.5, 2) node[midway, right=8pt, align=left, font=\scriptsize] 
    {Excursion phase\\$\mathsf r_{k+1} < \mathsf c_{k+1}$};

  \draw[decorate, decoration={brace, mirror, amplitude=4pt}, thick, gray] 
    (-2.2, 8.4) -- (-2.2, 2.8) node[midway, left=3pt, align=right, font=\tiny] 
    {Cluster size\\$X_t > 1$};

\end{tikzpicture}
\caption{Mechanics of the two-phase regeneration rule. 
\textbf{Top ($k$-th sausage):} $\mathsf c_k\le \mathsf r_k$ (coin succeeds before the next ring), so $\tau_k=\mathsf c_k$.
\textbf{Bottom ($(k+1)$-th sausage):} $\mathsf r_{k+1}<\mathsf c_{k+1}$, so an excursion begins at $\mathsf r_{k+1}$ and $\tau_{k+1}$ is the first later integer time with $X_t=1$.
}
\label{fig:phase_mechanics}
\end{figure}
%=======================================
%=========Sausage Fig end=============
%=======================================

In the high-temperature  regime, the backward cluster spends most of its time at size one. The coin-toss filter prevents cutting at every such visit, producing i.i.d.\ renewal blocks.
In the influence expansions we sum only over genuine sausages $x\in\Xi$, since the cemetery outcome contributes zero.

\begin{rmk}[Signed weights]
For \(q\ge3\), the Potts heat-bath dynamics is not monotone, and the influence \(p(x)\) need
not be nonnegative. The point of the sausage decomposition is that, although individual weights
may be signed, their renewal structure can still be analyzed through Fourier estimates for signed
convolution kernels.
\end{rmk}

We record the elementary sausages that determine the leading behavior of the renewal weights.

\begin{lemma}[Canonical sausages]\label{lem:canonical-sausages}
Let
\[
\Xi_i:=\{x\in\Xi:\ b(x)=i\},
\qquad
\Xi_{\ge3}:=\{x\in\Xi:\ b(x)\ge3\}.
\]
Then:

\begin{enumerate}
\item If \(x\in\Xi_0\), then \(y(x)=0\) and \(p(x)=1\).

\item If \(x\in\Xi_1\), then \(p(x)=p_1\), where 
\[
p_1=\frac1{2d}+O_{d,q}(\beta)
=\frac1{2d}+O_{d,q}(\alpha).
\]
Thus, after decreasing \(\beta_0(d,q)\), we have \(1/(4d)<p_1<1\).

\item If \(x\in\Xi_2\), then
$p(x)\in\{p_1,\ p_1^2\}$,
and in particular
\begin{equation}\label{eq:p-two-branch}
0\le p(x)\le p_1.
\end{equation}
\end{enumerate}
\end{lemma}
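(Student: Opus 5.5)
The plan is to read off, for each small value of \(b(x)\), the exact shape of the space--time slab \(x\), and then compute \(p(x)\) by propagating the color information forward through the slab. One structural fact drives everything. Inside a sausage, every Poisson ring met by the exploration is either oblivious, which kills that branch, or non-oblivious, which is a branch-out event. Also, the revealed slab together with the bottom conditioning \(\sigma(V_{\tau_k})=1\) determines which spins enter each update. By independence of the graphical marks, conditioning on \(x_k=x\) only conditions each non-oblivious update to use the residual law
\[
\rho(\,\cdot\mid\text{nbhd})=\frac{\text{heat-bath}(\cdot\mid\text{nbhd})-p_{\mathrm{obl}}\operatorname{Unif}([q])}{\alpha},
\]
and each oblivious update to produce a fresh uniform spin.

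\textbf{Cases \(b=0\) and \(b=1\).} If \(b(x)=0\), the ring \(\mathsf r_k\) cannot have occurred inside the slab. A ring would be oblivious, giving \(\dagger\), or branching, giving \(b\ge1\). Hence \(x\) is a vertical segment with no updates, so \(y(x)=0\), the spin is copied unchanged, and \(p(x)=1\).

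If \(b(x)=1\), there is one non-oblivious update at \(v=V_{\tau_{k-1}}\) at the ring time. Exactly one of the \(2d\) neighbor branches survives to \(\tau_k\) without any ring, and the other \(2d-1\) each end at an oblivious ring. The spin at \(v\) is not touched between the update and the top of the slab, and the surviving neighbor's spin at the update time equals its bottom value \(1\). The other neighbors are independent uniform.

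So \(p(x)\) equals \(\tfrac q{q-1}\bigl(\rho(1\mid \text{one neighbor}=1,\ \text{rest iid uniform})-\tfrac1q\bigr)\). By lattice symmetry this does not depend on which neighbor survives, and we call it \(p_1\). Expanding the heat-bath law in \(\beta\) shows \(\rho(a\mid\cdot)=n_a/(2d)+O(\beta)\), where \(n_a\) is the number of neighbors of color \(a\). This gives \(P(1)=\tfrac1{2d}+\tfrac{2d-1}{2d}\cdot\tfrac1q+O(\beta)\), hence \(p_1=\tfrac1{2d}+O(\beta)\). Using \(\alpha=\Theta(\beta)\) converts this to \(O(\alpha)\), and shrinking \(\beta_0\) gives \(1/(4d)<p_1<1\).

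\textbf{A linearity observation.} Consider a non-oblivious update whose neighbors are independent, with all but one exactly uniform. By color symmetry of \(\rho\), the output law is affine in the law of the single non-uniform neighbor. Two consequences follow, and they are what I would isolate as the workhorse for case (3).
\begin{itemize}
\item If the non-uniform neighbor has normalized bias \(\gamma\), the output has bias \(p_1\gamma\).
\item If all neighbors are uniform, the output is exactly uniform, again by symmetry.
\end{itemize}

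\textbf{Case \(b=2\).} The first branch occurs at \(v\), and there is a second non-oblivious update at some \(w\). I would split according to whether the second event lies on the unique lineage that reaches \(\tau_k\).
\begin{itemize}
\item \emph{Second event on the surviving lineage (a chain).} The intermediate vertex gets bias \(p_1\) from the affine rule, and then \(v\) gets \(p_1\cdot p_1=p_1^2\).
\item \emph{Second event on a branch that dies.} All of \(w\)'s inputs are killed by oblivious updates, so \(w\)'s spin is exactly uniform. The update at \(v\) then again sees one biased neighbor and otherwise uniform ones, giving \(p_1\).
\end{itemize}
In both cases \(0\le p(x)\le p_1\), which is \eqref{eq:p-two-branch}.

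\textbf{Main obstacle.} The hard part is the bookkeeping of coincidences, since histories are sets and branches can land on the same site. Examples are a child of \(w\) being \(v\) itself, or coinciding with the surviving lineage's site at an overlapping time. I would handle this by checking, with only two branch events available, that any such coincidence either places \(w\) on the surviving path, which is the chain case, or feeds only oblivious-terminated and hence independent uniform spins into \(w\). The aim is to ensure that the "exactly one non-uniform independent input" hypothesis of the affine rule holds at every update. If some configuration breaks independence, I would verify directly that it forces \(b\ge3\) or yields \(\dagger\).
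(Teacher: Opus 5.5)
Your proposal is correct and follows essentially the same route as the paper: a case analysis on $b(x)$, with the bottom bias propagated along the unique surviving lineage and the side branches supplying exact uniform inputs, which gives $1$, then $p_1$, then $p_1$ or $p_1^2$. The differences are minor. You obtain $p_1=\frac1{2d}+O(\beta)$ from the first-order form $\rho(a\mid\cdot)=n_a/(2d)+O(\beta)$ of the residual law, where the paper Taylor-bounds $\mathbb E[F(\beta)]$. The coincidence bookkeeping you flag is immediate on $\mathbb Z^d$ by bipartiteness: the children of $w$ lie at even distance from $v$, so they cannot meet the other first-generation branches, and the child at $v$ is a fresh branch.
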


\begin{proof}
If \(b(x)=0\), the history remains a single vertical line throughout the sausage. Hence
\(y(x)=0\), and conditioning the bottom spin to be \(1\) forces the top spin to be \(1\), so \(p(x)=1\).

If $b(x)=1$, there is exactly one non-oblivious update on the unique ancestral line. Conditional on the event \(x_k=x\), this update sees one distinguished neighbor forced to be color \(1\), while the other \(2d-1\) neighbors are i.i.d.\ uniform. The conditional probability of color \(1\) under the residual law is
\[
F_{\mathrm{res}}(\beta)
:=
\frac{F(\beta)-(1-\alpha)/q}{\alpha},
\qquad
F(\beta):=\frac{e^{\beta(N_1+1)}}{e^{\beta(N_1+1)}+\sum_{i=2}^q e^{\beta N_i}},
\]
where
$(N_1,\dots,N_q)\sim\mathrm{Mult}(2d-1;1/q,\dots,1/q)$.
Since \((1-\alpha)/q+\alpha/q=1/q\), it follows that
\[
p_1
=
\frac{q}{q-1}\left(\mathbb E[F_{\mathrm{res}}(\beta)]-\frac1q\right)
=
\frac{q}{(q-1)\alpha}
\left(\mathbb E[F(\beta)]-\frac1q\right).
\]
Since \(F(0)=1/q\), 
and
\[
F'(0)=\frac{q(N_1+1)-2d}{q^2},
\qquad
\mathbb E[F'(0)]=\frac{q-1}{q^2},
\]
while the softmax Hessian is uniformly bounded by \(2(2d)^2\) on \([0,1]\), Taylor's theorem
gives 
\[
\frac{1}{\alpha(\beta)}
\left(
\frac{\beta}{q}-\frac{q}{q-1}(2d)^2\beta^2
\right)
\le
p_1
\le
\frac{1}{\alpha(\beta)}
\left(
\frac{\beta}{q}+\frac{q}{q-1}(2d)^2\beta^2
\right).
\]
Finally, \(\alpha(\beta)=2d\beta/q+O_{d,q}(\beta^2)\), which gives the stated expansion of \(p_1\).

If \(b(x)=2\), the sausage starts and ends with a singleton. There is a unique surviving
ancestral line. All side branches die at oblivious updates before the end of the sausage and
therefore contribute only uniform inputs. Along the surviving line there are either one or two
non-oblivious updates, contributing respectively \(p_1\) or \(p_1^2\). This proves the claim.
\end{proof}

%=========================================
%=============Sausage pic=================
%=========================================
\begin{figure}[htbp]
\centering

% Define global tikz styles to keep the code clean
\tikzset{
  grid/.style={gray!50, dashed, line width=0.45pt},
  hist/.style={black, line width=1.1pt, line cap=round, line join=round},
  nodept/.style={circle, fill=black, inner sep=0.9pt},
  death/.style={circle, fill=black, inner sep=0.9pt},
  label/.style={font=\scriptsize}
}

% =======================================================
% (a) Example 1: Straight line (length 6, Time 10 to 4)
% =======================================================
\begin{subfigure}[b]{0.22\textwidth}
\centering
\begin{tikzpicture}[x=0.45cm, y=0.6cm]
  \foreach \t in {4,...,10}{
    \draw[grid] (-1.5,\t) -- (1.5,\t);
    \node[left, gray!80, font=\tiny] at (-1.5,\t) {\t};
  }

  \draw[hist] (0,10) -- (0,4);
  \node[nodept] at (0,10) {};
  \node[nodept] at (0,4) {};

  \node[label, anchor=south, yshift=2pt] at (0,10) {start};

  % l(x) and p(x) next to each other
  \node[label] at (0,3.2) {$\ell(x)=6, \ p(x)=1$};

  % y(x) label at the end point
  \node[label, anchor=west] at (0.3, 4) {$y(x)=0$};
\end{tikzpicture}
\caption{$x\in\Xi_0$}
\end{subfigure}%
\hfill
% =======================================================
% (b) Example 2: Simple branch (length 5, Time 8 to 3)
% =======================================================
\begin{subfigure}[b]{0.22\textwidth}
\centering
\begin{tikzpicture}[x=0.45cm, y=0.6cm]
  \foreach \t in {3,...,8}{
    \draw[grid] (-1.5,\t) -- (1.5,\t);
    \node[left, gray!80, font=\tiny] at (-1.5,\t) {\t};
  }

  \draw[hist] (0,8) -- (0,6.7);
  \node[nodept] at (0,8) {};
  \node[nodept] at (0,6.7) {};
  \node[label, anchor=south, yshift=2pt] at (0,8) {start};

  % Branching out at t=6.7
  \draw[hist] (-1,6.7) -- (1,6.7);
  \draw[hist] (-1,6.7) -- (-1,3.6); \node[death] at (-1,3.6) {}; % Left dies
  \draw[hist] (1,6.7) -- (1,3);     \node[nodept] at (1,3) {};   % Right survives

  % l(x) and p(x) next to each other
  \node[label] at (0,2.2) {$\ell(x)=5, \ p(x)=p_1$};

  % y(x) label at the end point
  \node[label, anchor=west] at (1.2, 3) {$y(x)=1$};
\end{tikzpicture}
\caption{$x\in\Xi_1$}
\end{subfigure}%
\hfill
% =======================================================
% (c) Example 3: Two-level branch (length 5, Time 12 to 7)
% =======================================================
\begin{subfigure}[b]{0.22\textwidth}
\centering
\begin{tikzpicture}[x=0.45cm, y=0.6cm]
  \foreach \t in {7,...,12}{
    \draw[grid] (-1.5,\t) -- (2.5,\t);
    \node[left, gray!80, font=\tiny] at (-1.5,\t) {\t};
  }

  \draw[hist] (0,12) -- (0,11.4);
  \node[nodept] at (0,12) {};
  \node[nodept] at (0,11.4) {};
  \node[label, anchor=south, yshift=2pt] at (0,12) {start};

  % First split at t=11.4
  \draw[hist] (-1,11.4) -- (1,11.4);
  \draw[hist] (-1,11.4) -- (-1,9.3); \node[death] at (-1,9.3) {}; % Left dies

  % Right continues and splits again at t=10.7
  \draw[hist] (1,11.4) -- (1,10.7);
  \node[nodept] at (1,10.7) {};
  \draw[hist] (0,10.7) -- (2,10.7);

  \draw[hist] (2,10.7) -- (2,7.7); \node[death] at (2,7.7) {}; % Right dies
  \draw[hist] (0,10.7) -- (0,7);   \node[nodept] at (0,7) {};   % Left survives

  % l(x) and p(x) next to each other
  \node[label] at (0.5,6.2) {$\ell(x)=5, \ p(x)=p_1^2$};

  % y(x) label at the end point
  \node[label, anchor=west] at (0.3, 7) {$y(x)=0$};
\end{tikzpicture}
\caption{$x\in\Xi_2$}
\end{subfigure}%
\hfill
% =======================================================
% (d) Example 4: Complex branch (length 5, Time 6 to 1)
% =======================================================
\begin{subfigure}[b]{0.28\textwidth}
\centering
\begin{tikzpicture}[x=0.45cm, y=0.6cm]
  \foreach \t in {1,...,6}{
    \draw[grid] (-3.5,\t) -- (2.5,\t);
    \node[left, gray!80, font=\tiny] at (-3.5,\t) {\t};
  }

  \draw[hist] (0,6) -- (0,5.5);
  \node[nodept] at (0,6) {};
  \node[nodept] at (0,5.5) {};
  \node[label, anchor=south, yshift=2pt] at (0,6) {start};

  % Top split at t=5.5
  \draw[hist] (-1,5.5) -- (1,5.5);

  % --- Right Subtree (starts at 1) ---
  \draw[hist] (1,5.5) -- (1,3.0);
  \node[nodept] at (1,3.0) {};
  \draw[hist] (0,3.0) -- (2,3.0);
  \draw[hist] (0,3.0) -- (0,2.5); \node[death] at (0,2.5) {};
  \draw[hist] (2,3.0) -- (2,1.7); \node[death] at (2,1.7) {};

  % --- Left Subtree (starts at -1) ---
  \draw[hist] (-1,5.5) -- (-1,4.8);
  \node[nodept] at (-1,4.8) {};
  \draw[hist] (-2,4.8) -- (0,4.8);

  % Inner Left (starts at 0)
  \draw[hist] (0,4.8) -- (0,3.8);
  \node[nodept] at (0,3.8) {};
  \draw[hist] (-1,3.8) -- (1,3.8);
  \draw[hist] (-1,3.8) -- (-1,3.4); \node[death] at (-1,3.4) {};

  % Outer Left (starts at -2)
  \draw[hist] (-2,4.8) -- (-2,2.2);
  \node[nodept] at (-2,2.2) {};
  \draw[hist] (-3,2.2) -- (-1,2.2);
  \draw[hist] (-3,2.2) -- (-3,1.5); \node[death] at (-3,1.5) {};

  % Final survivor reaches the integer line t=1 at x=-1
  \draw[hist] (-1,2.2) -- (-1,1);   \node[nodept] at (-1,1) {}; 

  % Only l(x) as requested
  \node[label] at (-0.5,0.2) {$\ell(x)=5$};

  % y(x) label at the end point
  \node[label, anchor=west] at (-0.7, 1) {$y(x)=-1$};
\end{tikzpicture}
\caption{$x\in\Xi_{\geq 3}$}
\end{subfigure}

\caption{Four examples of 1D schematic sausages representing the backward history cluster on $\mathbb{Z}$.}
\label{fig:sausages_backward_shifted_labels}
\end{figure}
%=========================================
%===========Sausage pic end=================
%=========================================

\subsection{Branch-out and length estimates}
\label{subsec:branch-length}

We next collect the estimates on the length and complexity of a sausage.
These will show that sausages with many branch-outs are exponentially rare in the number of branch-outs, which is
what lets the higher-order terms \(Z^{(2)},Z^{(\ge3)}\) in Lemma~\ref{lem:Zp-bound} be treated as
corrections to the dominant one-branch term, and what later controls the Strong-Red mass in
Section~\ref{sec:cutoff}. Let \((N_s)_{s\ge0}\)
be the continuous-time branching process in which each particle lives an exponential time of rate
\(1\), and at death produces no offspring with probability \(1-\alpha\), and one child at each of
its \(2d\) neighboring sites with probability \(\alpha\). Its mean offspring number is $m:=2d\alpha<1$.
We work in the subcritical regime \(m<1\). Let \(J_s\) be the number of branch-out events up to
time \(s\).

\begin{lem}[Branch-out penalty]\label{lem:branch-penalty}
There exists a constant \(C_{\mathrm{bp}}\) such that the following holds. 
For $0<\alpha\le 1/({2C_{\mathrm{bp}}})$, set $\vartheta:=C_{\mathrm{bp}}\alpha\in(0,1/2]$. 
Then, for every integer \(t\ge1\) and every \(k\ge0\),
\begin{equation}\label{eq:staytwoJ}
\mathbb P\bigl(
N_1>1,\ldots,N_{t-1}>1,\ N_t\geq 1,\ J_t\ge k
\bigr)
\le
e^{1-\varepsilon}\,
\vartheta^k e^{-2(1-\varepsilon)t}.
\end{equation}
\end{lem}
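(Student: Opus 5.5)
The plan is an exponential-moment (Chernoff-type) argument with a supermartingale built from the branching process. Starting at \(N_0=1\), the event requires the population to be at least \(2\) at every integer time \(1,\dots,t-1\) and still alive at time \(t\). The guiding heuristic is that sustaining a population of size \(\ge2\) costs roughly \(e^{-2s}\) over a time span \(s\): each particle dies at rate \(1\), and with probability \(1-\alpha\) its death is oblivious and produces no offspring. Each branch-out costs a factor \(\alpha\) up to combinatorial constants. The loss \(\varepsilon\) in the exponent absorbs the constants needed to pass from "size \(\ge2\) at integer times" to a continuous-time statement, and to handle the entropy of branching.

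The concrete device is the functional
\[
\Phi_s:=\lambda^{J_s}\,\theta^{N_s}\,e^{\gamma s}\,\mathbf 1\{N_s\ge1\},
\]
with \(\gamma=2(1-\varepsilon)\), \(\lambda=\vartheta^{-1}\), and a suitable \(\theta\). We want \(\Phi\) (or a variant) to be a supermartingale while \(N_s\ge2\).

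\begin{itemize}
\item \emph{Generator computation.} The generator applied to \(\theta^{N}\lambda^{J}e^{\gamma s}\) has three contributions. Each oblivious death gives \((1-\alpha)N(\theta^{-1}-1)\). Each branching event gives \(\alpha N(\lambda\theta^{2d-1}-1)\), where \(2d-1\) is the net increase \(2d\) offspring minus the parent; the true number of distinct new sites may be smaller due to coalescence, so we use \(N\) as an upper bound via domination. Finally there is the time factor \(\gamma\).
\item \emph{Choice of parameters.} With \(\theta\) chosen so that \(\theta^{-1}-1=-(1-\varepsilon')\), i.e.\ \(\theta\) slightly larger than \(1\) to make \(\theta^{N}\) reward large \(N\) appropriately, the drift is at most \(N(\text{const}\cdot\alpha\lambda\theta^{2d}-c)+\gamma\). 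With \(N\ge2\) and \(\lambda\alpha=1/C_{\rm bp}\) small, this is negative.
\item \emph{Simpler alternative.} Rather than optimizing, I would more likely use the crude bound that on \([0,t]\) the population stays \(\ge2\) except possibly during short dips between integer times.
\end{itemize}

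The main obstacle is exactly that the constraint holds only at integer times: the population may drop to \(1\), or even to a state where the rate-\(2\) extinction penalty fails, between integers. I would handle this by a blockwise argument over the \(t\) unit intervals \([j-1,j]\), bounding the conditional probability of each block given \(\mathcal F_{j-1}\).

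Within the block \([j-1,j]\), we need \(N_j\ge1\) (and \(\ge2\) for \(j<t\)). The state at time \(j-1\) has size \(n\ge2\), or \(n=1\) for \(j=1\).

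\begin{itemize}
\item \emph{No branching in the block.} For \(N_j\ge2\) we need at least two lineages to avoid oblivious death. Dropping the oblivious deaths of all other lineages as a harmless upper bound, the probability that two specified lineages survive is about \(e^{-2}\) per pair. Summing over the \(\binom n2\) choices of pair loses an entropy factor \(n^2\).
\item \emph{Taming the entropy.} Large \(n\) is itself costly, since it requires many branchings, each costing a factor \(\alpha\). So I would carry a weight \(\theta^{N}\) and sum over pairs using \(\sum_n \binom n2\theta^{-n}\) being finite. The factor \(e^{\varepsilon\cdot}\) absorbs the resulting constant only if the weights are set up so that the per-block constant is at most \(e^{2\varepsilon}\). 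This is where smallness of \(\alpha\) is needed.
\item \emph{Branching in the block.} Each branch-out contributes a factor of at most \(C\alpha\) (rate \(\alpha\) per particle, times the number of particles, controlled through \(\theta^N\)), giving \(\vartheta^{k}\) after the tagging \(\lambda^{J}\).
\item \emph{The last step.} The final interval only needs \(N_t\ge1\), costing about \(e^{-1}\) relative to \(e^{-2}\). This is exactly the origin of the prefactor \(e^{1-\varepsilon}\).
\end{itemize}

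Multiplying the per-block bounds of \(e^{-2(1-\varepsilon)}\) gives \(e^{1-\varepsilon}\vartheta^k e^{-2(1-\varepsilon)t}\). The bound on \(J_t\ge k\) then follows by Markov's inequality applied to \(\lambda^{J_t}\), with \(\lambda=\vartheta^{-1}\) after enlarging \(C_{\rm bp}\) to absorb the per-branching constants.
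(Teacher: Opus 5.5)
\textbf{There is a genuine gap.} Your overall architecture is the right one: exponential weights in $N$ and $J$, unit-time blocks, Markov's inequality on $\vartheta^{-J_t}$, and the last block (which only needs $N_t\ge1$) producing the prefactor $e^{1-\varepsilon}$. What is missing is the step that carries the proof, a per-block contraction by $e^{-2(1-\varepsilon)}$ uniform in the current population, and the mechanism you propose for it fails. Already for $\alpha=0$, $\mathbb P(N_j\ge2\mid N_{j-1}=n)=\mathbb P(\mathrm{Bin}(n,e^{-1})\ge2)$. This exceeds $e^{-2(1-\varepsilon)}$ for $n\ge3$ and tends to $1$ as $n\to\infty$; your union bound $\binom n2e^{-2}$ is larger still. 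A summable weight such as $\sum_n\binom n2\theta^{-n}<\infty$ only yields \emph{some} finite per-block constant, hence a loss $C^t$. Smallness of $\alpha$ cannot bring that constant below $e^{2\varepsilon}$: a single branch-out already puts $2d\ge4$ particles in play, after which nothing further is charged to $\alpha$. Two smaller points: your continuous-time $\Phi_s$ fails for the reason you noted (its drift is positive during dips to $N_s=1$), and $\theta^{-1}-1=-(1-\varepsilon')$ forces $\theta=1/\varepsilon'$ to be large, not ``slightly larger than $1$''.

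The missing idea is to keep the multiplicative weight from your first bullet, replace the clock $e^{\gamma s}$ by integer-time occupation, and use independence of the particles. Write $u(s,t):=\mathbb E_1[s^{N_t}\vartheta^{-J_t}]$. The branching property gives $\mathbb E[s^{N_j}\vartheta^{-(J_j-J_{j-1})}\mid\mathcal F_{j-1}]=u(s,1)^{N_{j-1}}$, so everything reduces to the one-particle bound $u(s,1)\le e^{-(1-\varepsilon)}s$ for a single fixed $s$. Then $s^{N_j}\vartheta^{-J_j}e^{(1-\varepsilon)(N_0+\dots+N_{j-1})}$ is a supermartingale started at $s$. On the event one has $N_0+\dots+N_{t-1}\ge2t-1$, $s^{N_t}\ge s$ and $\vartheta^{-J_t}\ge\vartheta^{-k}$, which gives exactly $e^{1-\varepsilon}\vartheta^ke^{-2(1-\varepsilon)t}$.

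For $\alpha=0$ one has $u(s,1)=1-e^{-1}+e^{-1}s$, so the one-particle bound needs $s$ \emph{large}, namely $1-e^{-1}\le e^{-1}(e^{\varepsilon}-1)s$ with some room to spare. This is what pays for the $\varepsilon$: each dead particle is penalized by a factor $s^{-1}$, which a union bound over surviving pairs cannot see. Only afterwards is $C_{\mathrm{bp}}$ chosen large compared with $s^{2d}$. In the equation $\partial_tu=-u+(1-\alpha)+\alpha\vartheta^{-1}u^{2d}$ with $u(s,0)=s$, the branching term equals $u^{2d}/C_{\mathrm{bp}}$. A barrier argument then keeps $u\le s$, and integrating over $[0,1]$ gives the bound.

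This is the paper's proof, with $s=e^{\lambda+1-\varepsilon}$ and supermartingale $\exp\{\lambda N_t+(1-\varepsilon)\sum_{i\le t}N_i\}\vartheta^{-J_t}$. Your remark that each branch-out ``contributes $C\alpha$'' does not by itself control cascades of branchings inside a block under the weight $s^{N}\vartheta^{-J}$; the ODE barrier is what handles that.
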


\begin{proof}

For \(s\ge1\), \(\theta\ge1\), and \(t\ge0\), define
\[
u(s,\theta,t):=\mathbb E\big[s^{N_t}\theta^{J_t}\big].
\]
For a short time interval \([0,h]\), with probability \(1-h+o(h)\) nothing happens, with
probability \((1-\alpha)h+o(h)\) the initial particle dies with no offspring, and with probability
\(\alpha h+o(h)\) it branches to \(2d\) offspring, contributing an extra factor \(\theta\). By the
branching property, this gives
\[
u(s,\theta,t+h)
=
(1-h)u(s,\theta,t)
+
h\Bigl((1-\alpha)+\alpha\theta\,u(s,\theta,t)^{2d}\Bigr)
+
o(h).
\]
Subtracting \(u(s,\theta,t)\), dividing by \(h\), and letting \(h\downarrow0\), we obtain
\[
\partial_t u(s,\theta,t)
=
-u(s,\theta,t)+(1-\alpha)+\alpha\theta\,u(s,\theta,t)^{2d},
\qquad
u(s,\theta,0)=s.
\]

Choose \(\lambda>0\) so that
\[
    e^\lambda(1-e^{-\varepsilon})>1-e^{-1},
\]
and set \(s_\star:=e^{\lambda+1-\varepsilon}\).
For \(\alpha\) sufficiently small one may choose
\(\theta=\vartheta^{-1}\), with \(\vartheta=O(\alpha)\), so that
\[
    (1-\alpha)+\alpha\theta s_\star^{2d}-s_\star\le0.
\]
Indeed,
we can set $\vartheta:=C_{\mathrm{bp}}\alpha\in(0,1/2]$ with $C_{\mathrm{bp}}:=\max\Big\{\frac{s_\star^{2d}}{s_\star-1},\ \frac{(1-e^{-1})s_\star^{2d}}{e^\lambda(1-e^{-\varepsilon})-(1-e^{-1})}\Big\}$, which is available by taking $\alpha$ small enough.
Then,
\[
u(s_\star,\theta,t)\le s_\star
\]
for all $t\geq 0$ by a barrier condition for the ODE. Moreover, by integrating
the equation on \([0,1]\), we have
\[
u(s_\star ,\theta,1)\le 1+(s_\star-1)e^{-1}+\alpha(1-e^{-1})\big(\theta s_\star^{2d}-1\big),
\]
which implies
\[
    u(s_\star,\theta,1)\le e^\lambda.
\]

For integer \(t\ge0\), define
\[
    W_t:=\sum_{i=1}^t N_i,
    \qquad
    M_t:=\exp\{\lambda N_t+(1-\varepsilon)W_t\}\theta^{J_t}.
\]
The estimate \(u(s_\star,\theta,1)\le e^\lambda\) implies that \((M_t)_{t\in\mathbb N}\) is a
supermartingale and $\mathbb{E}[M_t]\le \mathbb{E}[M_0]= e^\lambda$.

On the event
\(\{N_1>1,\ldots,N_{t-1}>1,\ N_t\geq1,\ J_t\ge k\}\), one has \(W_t\ge2t-1\) and
\(\theta^{J_t}\ge\theta^k\). Therefore
\[
    \mathbb P\bigl(N_1>1,\ldots,N_{t-1}>1,\ N_t\geq1,\ J_t\ge k\bigr)
    \le
     e^{1-\varepsilon}\theta^{-k}e^{-2(1-\varepsilon)t}.
\]
Since \(\theta^{-1}=\vartheta\), this proves \eqref{eq:staytwoJ}.

\end{proof}

The stop-coin construction gives a geometric-exponential tail for vertical sausages, while Lemma~\ref{lem:branch-penalty} shows that branching sausages are smaller by powers of \(\alpha\).

\begin{prop}\label{prop:sausagetail}
Assume \(0<\alpha\le(2C_{\mathrm{bp}})^{-1}\) and recall
\(\vartheta=C_{\mathrm{bp}}\alpha\). There are constants
\(c_{\mathrm{len}},C_{\mathrm{len}},C_{\mathrm{jnt}}<\infty\) such that for all integers $\ell\ge1$ and $b\ge1$,
\begin{equation}\label{eq:mu-joint}
\mu\big(\ell(x_1)=\ell,\ b(x_1)\ge b\big)\le C_{\mathrm{jnt}}\,\vartheta^{\,b}\,e^{-\eta\ell}.
\end{equation}
Moreover, for every integer $\ell\ge1$,
\begin{equation}\label{eq:len-pmf}
c_{\mathrm{len}}\,e^{-\eta \ell}
\ \le\ \mu(\ell(x_1)=\ell)
\ \le\ c_{\mathrm{len}}\,e^{-\eta \ell}\,\bigl(1+C_{\mathrm{len}}\,\alpha\bigr),
\end{equation}
and hence
\begin{equation}\label{eq:len-tail}
\mu(\ell(x_1)>t)\ \le\ 
c_{\mathrm{tail}}\,e^{-\eta t},
\qquad
c_{\mathrm{tail}}:=\frac{c_{\mathrm{len}}\,e^{-\eta}(1+C_{\mathrm{len}}\alpha)}{1-e^{-\eta}}.
\end{equation}
\end{prop}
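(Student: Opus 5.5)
We split according to which of the two competing mechanisms in Definition~\ref{def:sausages} ends the sausage. Write \(\mathsf r:=\mathsf r_1\) for the first ring at the origin and \(\mathsf c:=\mathsf c_1\) for the first successful stop coin. The ring time \(\mathsf r\) is \(\mathrm{Exp}(1)\) and \(\mathsf c\) is \(\mathrm{Geom}(p_{\mathrm{stop}})\) on \(\{1,2,\dots\}\), and the two are independent.

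\emph{Vertical sausages} (\(\mathsf c\le\mathsf r\)) have \(b=0\) and length \(\ell=\mathsf c\). For these,
\[
\mu(\ell=\ell_0,\ \text{vertical})=p_{\mathrm{stop}}(1-p_{\mathrm{stop}})^{\ell_0-1}e^{-\ell_0}=2e^{-\eta\ell_0}\cdot\tfrac12 .
\]
Here we used \(p_{\mathrm{stop}}=\tfrac12\) and \(e^{-\eta}=e^{-1}/2\). This computation gives the lower bound in \eqref{eq:len-pmf} with \(c_{\mathrm{len}}=1\), up to the normalising constant. It also fixes the main term \(c_{\mathrm{len}}e^{-\eta\ell}\).

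\emph{Branching sausages} (\(\mathsf r<\mathsf c\)) are handled as follows.
\begin{enumerate}
\item Condition on \(\mathsf r=r\in(n-1,n)\). This event requires \(n-1\) failed coins and no ring before \(r\), so it costs at most \((1-p_{\mathrm{stop}})^{n-1}\) times the density \(e^{-r}\,dr\). Together these are of order \(e^{-\eta(n-1)}\).
\item If the ring at \(r\) is oblivious (probability \(1-\alpha\)), the history dies and gives \(\dagger\). So a genuine sausage needs a branch-out at \(r\), which costs \(\alpha\le\vartheta\). This gives the first factor \(\vartheta\) even when \(b\ge1\) is demanded only once.
\item After the branch-out, the history is a branching process started from \(2d\) particles. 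Its bottom is the first integer \(n'=n+m\) with \(N=1\). Apply the strong Markov property at time \(r\), and then at time \(n\) (first integer after \(r\)), to reduce to an event of the form of Lemma~\ref{lem:branch-penalty}. That event is \(N_1>1,\dots,N_{m-1}>1,\ N_m\ge1\) (in fact \(=1\)) with \(J\ge b-1\), for the process restarted at time \(n\) from \(N_n\) particles. The case \(N_n=1\) is the case \(m=0\).
\item Lemma~\ref{lem:branch-penalty} is stated for a single initial particle, so the main technical step is extending it to \(N_n\) initial particles. I would rerun its supermartingale argument. For \(k\) initial particles, \(\mathbb E[M_t]\le e^{\lambda k}\). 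We also need \(\mathbb E[e^{\lambda N_n}\theta^{J_n}]\) over the partial interval \((r,n]\) to be bounded by a constant when started from \(2d\) particles. That follows from \(u(s_\star,\theta,t)\le s_\star\), since \(e^{\lambda}\le s_\star\).
\item Combining, the contribution with \(\ell=n+m\) and \(b\) branch-outs is at most
\[
C\,\vartheta^{b}\,e^{-\eta(n-1)}e^{-2(1-\varepsilon)m}.
\]
Because \(2(1-\varepsilon)=7/4>\eta\), summing over the splittings \(n+m=\ell\) gives \(C\vartheta^b e^{-\eta\ell}\sum_m e^{-(7/4-\eta)m}\). This is \eqref{eq:mu-joint}.
\end{enumerate}

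For the upper bound in \eqref{eq:len-pmf}, add the vertical term to \eqref{eq:mu-joint} with \(b=1\). This gives \(c_{\mathrm{len}}e^{-\eta\ell}(1+C_{\mathrm{jnt}}\vartheta/c_{\mathrm{len}})\), with \(\vartheta=C_{\mathrm{bp}}\alpha\). Branching sausages with \(b=0\) do not exist, since the ring either kills the history or branches it.

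The tail bound \eqref{eq:len-tail} is the geometric sum of the upper bound in \eqref{eq:len-pmf} over \(\ell>t\). This gives exactly \(c_{\mathrm{len}}e^{-\eta}(1+C_{\mathrm{len}}\alpha)e^{-\eta t}/(1-e^{-\eta})\).

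I expect the main obstacle to be step 4: handling the partial interval before the first integer time and the multi-particle start within the supermartingale. The key point is that the exponent \(7/4\) strictly exceeds \(\eta\), so the excursion part never dominates the geometric coin rate.
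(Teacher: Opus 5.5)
Your argument is correct, but it restarts the process at a different point than the paper does.

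Both proofs split according to the integer part of the first ring time, \(\mathsf r_1\in(n-1,n)\), and both pay \(e^{-\eta(n-1)}\) for the singleton phase. You restart \emph{after} the ring, at \(\mathsf r_1\) and then at \(n\). This forces the two extra steps you flagged:
\begin{enumerate}
\item A multi-particle version of Lemma~\ref{lem:branch-penalty}. This is immediate: \(u(s_\star,\theta,1)\le e^{\lambda}\) makes \(M_t\) a supermartingale for any initial count, with \(\mathbb E[M_0]=e^{\lambda N_0}\).
\item A bound on \(\mathbb E_{2d}[e^{\lambda N}\theta^{J}]\) over the partial interval \((\mathsf r_1,n]\). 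You obtain this correctly from the monotonicity of \(u\) in \(s\) together with \(u(s_\star,\theta,t)\le s_\star\).
\end{enumerate}

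The paper instead restarts at the last integer time \(a=n-1\) \emph{before} the ring, where the history is still a single particle. Conditionally on no ring and no stop success up to \(a\) (probability \(e^{-\eta a}\)), the branching domination puts the remainder of the sausage in the event \(\{N_1>1,\dots,N_{\ell-a-1}>1,\ N_{\ell-a}\ge1,\ J_{\ell-a}\ge b\}\) for a fresh one-particle process. Lemma~\ref{lem:branch-penalty} then applies verbatim with \(t=\ell-a\), and the factor \(\alpha\) of the first branch-out is simply absorbed into \(J_{\ell-a}\ge b\). Summing \(e^{-\eta a}\vartheta^b e^{-2(1-\varepsilon)(\ell-a)}\) over \(a\), using \(\eta<2(1-\varepsilon)\), gives \eqref{eq:mu-joint} at once.

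So the paper's choice of restart time removes what you called the main obstacle. Your version reaches the same bound with a slightly worse constant (an extra factor \(s_\star^{2d}\)), and it isolates the first-branch factor \(\alpha\le\vartheta\) explicitly.

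The vertical term, the upper bound in \eqref{eq:len-pmf}, and \eqref{eq:len-tail} are handled exactly as in the paper. Note that \(c_{\mathrm{len}}=p_{\mathrm{stop}}/(1-p_{\mathrm{stop}})=1\) exactly, so no normalising constant is involved.
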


\begin{proof}
For the lower bound in \eqref{eq:len-pmf}, observe that if there is no clock ring up to time \(\ell\) and the stop coin first succeeds at time \(\ell\), then \(\ell(x_1)=\ell\) and \(b(x_1)=0\). This event has probability
\[
e^{-\ell}(1-p_{\mathrm{stop}})^{\ell-1}p_{\mathrm{stop}}
=
\frac{p_{\mathrm{stop}}}{1-p_{\mathrm{stop}}}
\bigl(e^{-1}(1-p_{\mathrm{stop}})\bigr)^\ell
=
c_{\mathrm{len}}e^{-\eta\ell},
\qquad
c_{\mathrm{len}}:=\frac{p_{\mathrm{stop}}}{1-p_{\mathrm{stop}}}.
\]

For \eqref{eq:mu-joint}, let \(a\in\{0,\ldots,\ell-1\}\) be the last integer time before
the first clock ring. 

The probability of no ring and no successful stop coin up to \(a\) is
\(e^{-\eta a}\). Conditional on this, the remainder of the sausage is dominated by the branching
process in Lemma~\ref{lem:branch-penalty}. On
\(\{\ell(x_1)=\ell,\ b(x_1)\ge b\}\), this domination gives
\(N_1>1,\ldots,N_{\ell-a-1}>1\), \(N_{\ell-a}\ge1\), and
\(J_{\ell-a}\ge b\). Thus

\[
\mu(\ell(x_1)=\ell,\ b(x_1)\ge b)
\le
\sum_{a=0}^{\ell-1}
e^{-\eta a}\,
e^{1-\varepsilon}\vartheta^b e^{-2(1-\varepsilon)(\ell-a)}
\le
C_{\mathrm{jnt}}\vartheta^b e^{-\eta\ell},\qquad
C_{\mathrm{jnt}}:=\frac{e^{1-\varepsilon}}{e^{2(1-\varepsilon)-\eta}-1},
\]
because \(\eta<2(1-\varepsilon)\). Taking \(b=1\) and adding the \(b=0\) contribution gives the
upper bound in \eqref{eq:len-pmf}. Summing over \(\ell>t\) gives
\eqref{eq:len-tail}.

\end{proof}

The next lemma supplies the matching lower bounds showing that one- and two-branch sausages occur with the expected orders in \(\alpha\).

\begin{lemma}[Lower bounds for $\Xi_1$ and $\Xi_2$]\label{lem:X1X2-lower-bound}
Assume $\alpha\le \tfrac12$.
There exist explicit constants $C_1=C_1(d,p_{\mathrm{stop}})>0$ and $C_2=C_2(d,p_{\mathrm{stop}})>0$ such that
for every integer $l\ge1$,
\begin{align}
\mu\big(\ell(x_1)=l,\ b(x_1)=1\big) &\ge C_1\,\alpha\,e^{-\eta l}, \label{eq:Xi-1-lower}\\
\mu\big(\ell(x_1)=l,\ b(x_1)=2\big) &\ge C_2\,\alpha^2\,e^{-\eta l}. \label{eq:Xi-2-lower}
\end{align}
\end{lemma}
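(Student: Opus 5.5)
We prove both bounds by exhibiting explicit events in the graphical construction, in the style of the lower bound in \eqref{eq:len-pmf}. Throughout, we work in backward time from the start of the first sausage, at vertex \(0\). We also use that a ring is non-oblivious with probability \(\alpha\) and oblivious with probability \(1-\alpha\), independently of everything else.

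For \eqref{eq:Xi-1-lower}, fix \(l\ge1\). The event has four parts.

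1. There is no ring at \(0\) on \([0,1/2]\), and the first ring at \(0\) occurs in \((1/2,1)\). This ring is non-oblivious, and its probability is at least \(e^{-1}(1-e^{-1/2})\alpha\). Since it precedes the first coin check at time \(1\), we are in the excursion case \(\mathsf r_1<\mathsf c_1\).
2. The history now consists of the \(2d\) neighbours \(e_1,\dots,e_{2d}\). For \(i\ge2\), the clock at \(e_i\) rings in \((\mathsf r_1,1)\) and that first ring is oblivious. Each of these has probability at least \((1-e^{-1/2})(1-\alpha)\ge (1-e^{-1/2})/2\), using \(\alpha\le\frac12\).
3. The clock at \(e_1\) does not ring in \((\mathsf r_1,l]\). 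This has probability at least \(e^{-l}\).
4. If \(l\ge2\), the stop coins \(A_1,\dots,A_{l-1}\) must not end the sausage early. No coin acts during the excursion, but \(\tau_1\) is the first integer at which \(N=1\). That integer is \(1\), so this construction yields \(\ell=1\), not \(l\).

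To obtain length \(l\), we therefore start the excursion near the end. There is no ring at \(0\) on \([0,l-1/2]\), the coins \(A_1,\dots,A_{l-1}\) all equal \(0\), and the first ring at \(0\) lies in \((l-1/2,l)\) and is non-oblivious. The side neighbours die obliviously before time \(l\), and \(e_1\) does not ring in \((\mathsf r_1,l]\). Then \(\tau_1=l\), \(b=1\), and the probability is at least
\[
e^{-(l-1/2)}(1-p_{\mathrm{stop}})^{l-1}(1-e^{-1/2})\,\alpha\,\bigl((1-e^{-1/2})/2\bigr)^{2d-1}e^{-1/2},
\]
which equals \(C_1\alpha e^{-\eta l}\) with an explicit \(C_1(d,p_{\mathrm{stop}})\), because \(e^{-1}(1-p_{\mathrm{stop}})=e^{-\eta}\).

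For \eqref{eq:Xi-2-lower}, we use the same skeleton with a second branching inserted inside the last unit interval \((l-1,l)\). The first ring at \(0\) lies in \((l-1,l-2/3)\) and is non-oblivious. The neighbours \(e_2,\dots,e_{2d}\) die obliviously before \(l\). The clock at \(e_1\) rings first in \((l-2/3,l-1/3)\) and is non-oblivious, producing \(2d\) particles around \(e_1\). All of these except one (say at \(e_1+e_1\)) ring obliviously in \((l-1/3,l)\), and the survivor does not ring before \(l\).

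The history has size at least \(2\) throughout the excursion until the last deaths, which occur before \(l\). No integer strictly between the first ring and \(l\) exists, so \(\tau_1=l\) and \(b=2\). One point must be handled. The particle at \(0\), which reappears as a child of \(e_1\), is just another neighbour of \(e_1\), so distinct particles sit at distinct sites and the clocks involved are independent. However, the site \(0\) has already used its clock, so after the first ring we constrain \(0\)'s next ring to be oblivious and in the stated window. This is fine because the events involve disjoint time windows of each clock.

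Each factor is a fixed positive constant depending only on \(d\). This gives \(C_2\alpha^2e^{-\eta l}\), with \((1-\alpha)\ge\frac12\) used repeatedly.

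The main care point is bookkeeping. We must ensure that the regeneration rule (coins checked only at integers before the first ring, and the return-to-singleton condition only at integers) gives exactly \(\ell=l\). We must also ensure that reused sites' clocks are constrained on disjoint windows, so that the probabilities factorize. Everything else is elementary Poisson computation.
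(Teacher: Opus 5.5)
Your strategy is the same as the paper's. You make the stop coins at $1,\dots,l-1$ fail and keep the clock at the origin silent until the last unit interval. You then place every ring and every death inside $(l-1,l)$, so $\tau_1=l$ is automatic because no integer lies strictly between $\mathsf r_1$ and $l$. Your $b=2$ construction works: every window you constrain has length at least $\tfrac13$. Your remark that the grandchild at $0$ only uses a stretch of the clock at $0$ disjoint from $[0,\mathsf r_1]$ is a valid point that the paper leaves implicit. (The claim that the history has size at least $2$ until the last deaths is not always true, e.g.\ if all side neighbours die before $e_1$ rings, but it is not needed.)

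The $b=1$ computation, however, contains a false step. You let $\mathsf r_1$ fall anywhere in $(l-\tfrac12,l)$ and require each of the $2d-1$ side neighbours to die obliviously in $(\mathsf r_1,l)$. You then bound each such factor below by $(1-e^{-1/2})(1-\alpha)$. Conditionally on $\mathsf r_1=r$, this probability is $(1-e^{-(l-r)})(1-\alpha)$, which is smaller and tends to $0$ as $r\uparrow l$, so the factor cannot be pulled out. The same slip occurs in your discarded first attempt with $\mathsf r_1\in(\tfrac12,1)$.

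Integrating over $r$, your event actually has probability $\frac{\alpha e^{-\eta l}}{1-p_{\mathrm{stop}}}\int_0^{1/2}\bigl[(1-e^{-w})(1-\alpha)\bigr]^{2d-1}\,dw$. At $\alpha=\tfrac12$ this is strictly smaller than the product you display, so your explicit $C_1$ is wrong. The order $\alpha e^{-\eta l}$, and hence the lemma, still survives.

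The clean repair is exactly what the paper does: separate the windows as you already did for $b=2$. Put the first ring in $[l-1,l-\tfrac34]$, let the distinguished child survive to $l$, and require the other children to die obliviously before $l-\tfrac12$. Every constrained window then has length at least $\tfrac14$, and the product bound is legitimate. One gets $C_1=e^{\eta}(1-e^{-1/4})e^{-1}\bigl((1-e^{-1/4})/2\bigr)^{2d-1}$.
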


\begin{proof}
Set
\[
c_*:=\frac{1-e^{-1/4}}{2}>0.
\]
Since \(e^{-\eta}=e^{-1}(1-p_{\mathrm{stop}})\),
\begin{equation}\label{eq:base-factor}
\mathbb P\big(\text{no stop at }1,\dots,\ell-1,\ \text{no ring on }[0,\ell-1)\big)
=
e^\eta e^{-\eta\ell}.
\end{equation}

\smallskip
\noindent\textbf{Case \(b(x_1)=1\).}
Let \(E^{(1)}_\ell\) be the event that

\begin{enumerate}
\item there is no stop success at times \(1,\dots,\ell-1\) and no ring on \([0,\ell-1)\);
\item the first ring in \(I_1:=[\ell-1,\ell-\tfrac34]\) occurs and is non-oblivious;
\item one distinguished child has no ring up to time \(\ell\);
\item each of the other \(2d-1\) children has a first ring before time \(\ell-\tfrac12\), and that first ring is oblivious.
\end{enumerate}

Then \(E^{(1)}_\ell\subset\{\ell(x_1)=\ell,\ b(x_1)=1\}\): there is exactly one non-oblivious update, and since it occurs after time \(\ell-1\), the first integer time \(>\mathsf r_1\) at which the history returns to a singleton is \(\ell\).

By \eqref{eq:base-factor}, item (1) has probability \(e^\eta e^{-\eta\ell}\). Conditioned on (1), item (2) has probability at least \((1-e^{-1/4})\alpha\). Given the first ring time \(s\in I_1\), item (3) has probability at least \(e^{-1}\), while each side child satisfies item (4) with probability at least
\[
(1-e^{-1/4})(1-\alpha)\ge c_*
\qquad(\alpha\le\tfrac12).
\]
Hence
\[
\mu\big(\ell(x_1)=\ell,\ b(x_1)=1\big)\ge \mathbb P(E^{(1)}_\ell)
\ge
C_1\,\alpha\,e^{-\eta\ell},
\]
with
\[
C_1:=e^\eta(1-e^{-1/4})e^{-1}c_*^{\,2d-1}.
\]

\smallskip
\noindent\textbf{Case \(b(x_1)=2\).}
Let \(E^{(2)}_\ell\) be the event that

\begin{enumerate}
\item there is no stop success at times \(1,\dots,\ell-1\) and no ring on \([0,\ell-1)\);
\item the first ring in \(I_1:=[\ell-1,\ell-\tfrac34]\) occurs and is non-oblivious;
\item one distinguished child has no ring up to time \(\ell-\tfrac12\), while each of the other \(2d-1\) children has a first ring before time \(\ell-\tfrac12\), and that first ring is oblivious;
\item the distinguished child has a first ring in \(I_2:=[\ell-\tfrac12,\ell-\tfrac14]\), and it is non-oblivious;
\item one distinguished grandchild has no ring up to time \(\ell\), while each of the other \(2d-1\) grandchildren has a first ring before time \(\ell\), and that first ring is oblivious.
\end{enumerate}

Then \(E^{(2)}_\ell\subset\{\ell(x_1)=\ell,\ b(x_1)=2\}\): exactly two non-oblivious updates occur, and again the first integer time after the first branch-out at which the history is a singleton is \(\ell\).

Using \eqref{eq:base-factor}, item (1) has probability \(e^\eta e^{-\eta\ell}\), items (2) and (4) contribute at least \((1-e^{-1/4})\alpha\) each, item (3) contributes at least
\[
e^{-1/2}c_*^{\,2d-1},
\]
and item (5) contributes at least
\[
e^{-1/2}c_*^{\,2d-1}.
\]
Therefore
\[
\mu\big(\ell(x_1)=\ell,\ b(x_1)=2\big)\ge \mathbb P(E^{(2)}_\ell)
\ge
C_2\,\alpha^2\,e^{-\eta\ell},
\]
with
\[
C_2:=e^\eta(1-e^{-1/4})^2e^{-1}c_*^{\,2(2d-1)}.
\]
This proves \eqref{eq:Xi-1-lower} and \eqref{eq:Xi-2-lower}.
\end{proof}

%===========================IK

\subsection{The signed renewal kernel and the decay rate}
\label{subsec:signed-kernel}
The renewal analysis below introduces an exponential-tilt parameter
\(\kappa\in(-1,0)\) for the sausage law, defines the decay rate
\(\mathfrak r:=1+\kappa\), and proves, as \(\beta\downarrow0\),
\[
-\kappa=\alpha+O_{d,q}(\alpha^2).
\]
Consequently,
\[
-\kappa
=\Theta_{d,q}(\alpha)
=\Theta_{d,q}(\beta).
\]

For $z\in\mathbb Z^d$ and integer $t\ge0$, define the influence kernel
\begin{equation}\label{eq:IK-def}
\IK(z,t):=
\sum_{r\ge 1}\ \sum_{x_1,\ldots,x_r\in\Xi}
\Big(\prod_{i=1}^{r}\mu(x_i)p(x_i)\Big)\,
\mathbf 1\Big\{\sum_{i=1}^r y(x_i)=z,\ \sum_{i=1}^r \ell(x_i)=t\Big\},
\end{equation}
and set $\IK(z,0):=\mathbf 1\{z=0\}$.
Define the length-weights
\begin{equation*}
w_\ell:=\sum_{x\in\Xi:\ \ell(x)=\ell}e^{\ell}p(x)\mu(x),
\qquad
Z_p:=\sum_{\ell\ge 1}w_\ell=\mathbb E_\mu[e^{\ell(x)}p(x)].
\end{equation*}

\begin{lemma}\label{lem:Zp-bound}
For every \(\ell\ge1\),
\begin{equation}\label{eq:w-ell}
    \frac{p_{\mathrm{stop}}}{2}(1-p_{\mathrm{stop}})^{\ell-1}
    \le
    w_\ell
    \le
    \frac{3p_{\mathrm{stop}}}{2}(1-p_{\mathrm{stop}})^{\ell-1}.
\end{equation}
Moreover,
\begin{equation}\label{eq:Zp}
    Z_p
    =
    1+\frac{\alpha}{p_{\mathrm{stop}}}+O_{d,q}(\alpha^2)=1+2\alpha+O_{d,q}(\alpha^2).
\end{equation}
In particular, when $\alpha$ is small enough, \(1<Z_p\le2\).
\end{lemma}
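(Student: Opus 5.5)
I would split \(w_\ell\) according to the number of branch-outs, \(w_\ell=w_\ell^{(0)}+w_\ell^{(1)}+w_\ell^{(2)}+w_\ell^{(\ge3)}\), where \(w_\ell^{(i)}:=\sum_{x\in\Xi_i,\ell(x)=\ell}e^\ell p(x)\mu(x)\). Correspondingly \(Z_p=Z^{(0)}+Z^{(1)}+Z^{(2)}+Z^{(\ge3)}\).

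\textbf{The vertical term.} By Lemma~\ref{lem:canonical-sausages}(1), \(p=1\) on \(\Xi_0\). A sausage in \(\Xi_0\) of length \(\ell\) is exactly the event that there is no ring on \([0,\ell]\) and the first coin success occurs at \(\ell\). Hence
\[
w_\ell^{(0)}=e^\ell e^{-\ell}(1-p_{\mathrm{stop}})^{\ell-1}p_{\mathrm{stop}}=p_{\mathrm{stop}}(1-p_{\mathrm{stop}})^{\ell-1}.
\]
This is the central value in \eqref{eq:w-ell}.

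\textbf{The remaining terms and \eqref{eq:w-ell}.} For \(b\ge1\) we have \(|p(x)|\le1\). Proposition~\ref{prop:sausagetail} gives
\[
\sum_{x:\ell(x)=\ell,\,b(x)\ge1}e^\ell\mu(x)\le C_{\mathrm{jnt}}\vartheta e^{(1-\eta)\ell}=C_{\mathrm{jnt}}\vartheta(1-p_{\mathrm{stop}})^{\ell},
\]
using \(e^{1-\eta}=1-p_{\mathrm{stop}}\). This error is \(O(\alpha)(1-p_{\mathrm{stop}})^{\ell-1}\). After decreasing \(\beta_0\) it is at most \(\tfrac12p_{\mathrm{stop}}(1-p_{\mathrm{stop}})^{\ell-1}\), which yields both sides of \eqref{eq:w-ell}. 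Summing the geometric series, \(Z^{(0)}=1\) and \(Z^{(2)}+Z^{(\ge3)}=O(\alpha^2)\). For the latter we use \(0\le p\le p_1\) on \(\Xi_2\) and the \(\vartheta^2\) bound on \(b\ge2\).

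\textbf{The one-branch term (main obstacle).} We need \(Z^{(1)}=p_1\sum_\ell e^\ell\mu(\ell(x)=\ell,\,b=1)=\alpha/p_{\mathrm{stop}}+O(\alpha^2)\). Since \(p_1=\tfrac1{2d}+O(\alpha)\), it suffices to show
\[
\mathbb E_\mu\bigl[e^{\ell}\mathbf 1\{b=1\}\bigr]=\frac{2d\,\alpha}{p_{\mathrm{stop}}}+O(\alpha^2).
\]
I would compute this exactly to first order. Condition on the last integer \(a\) before the first ring. The weight \(e^{a}\cdot e^{-\eta a}\) summed over \(a\) gives \(\sum_a(1-p_{\mathrm{stop}})^a=1/p_{\mathrm{stop}}\). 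The first ring then occurs at fractional offset \(u\in[0,1)\) with density \(e^{-u}\), is non-oblivious with probability \(\alpha\), and produces \(2d\) children.

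To leading order in \(\alpha\), all children are killed at oblivious updates and no further branching occurs. To have \(b=1\), all but one child must die and the survivor must reach the next integer, after which the sausage ends at the first integer at which the history is a singleton. I would write the resulting weight \(e^{(\text{remaining length})}\) as an integral over child clock times and sum over the survivor's choice. The claim to verify is that this equals \(2d\) times \(1+O(\alpha)\) after combining with \(e^{u}\), or some equally explicit constant.

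I expect this computation to be the delicate step. If the exact constant does not come out as \(2d\), I would recheck how the regeneration rule weights the excursion. Alternatively, I would derive \(Z_p\) through a generating-function identity: condition on the first ring and use the renewal structure, obtaining \(Z_p\) as one plus the expectation of an \(\alpha\)-order correction. All \(O(\alpha^2)\) errors are controlled by Lemma~\ref{lem:branch-penalty}. Finally, \(1<Z_p\le2\) follows for small \(\alpha\).
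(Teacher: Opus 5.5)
\textbf{Gap: the one-branch term is not computed.} Your decomposition by branch count is correct and matches the paper. So are the exact identity $w_\ell^{(0)}=p_{\mathrm{stop}}(1-p_{\mathrm{stop}})^{\ell-1}$, the derivation of \eqref{eq:w-ell} from \eqref{eq:mu-joint} via $e^{1-\eta}=1-p_{\mathrm{stop}}$, and the bound $Z^{(2)}+Z^{(\ge3)}=O(\alpha^2)$. The problem is $Z^{(1)}$, which is the only non-routine content of \eqref{eq:Zp}. You correctly reduce it to $\mathbb E_\mu[e^{\ell(x_1)}\mathbf 1\{b(x_1)=1\}]=2d\alpha/p_{\mathrm{stop}}+O(\alpha^2)$. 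You then leave this as ``the claim to verify'' and allow that the constant might not be $2d$. As written, the coefficient $\alpha/p_{\mathrm{stop}}$ in \eqref{eq:Zp} is not established. Your description of the excursion is also slightly off: the survivor must avoid ringing not merely until the next integer, but until the first integer at which all other $2d-1$ children are dead.

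\textbf{The missing idea is an exact cancellation.}
\begin{itemize}
\item Condition on the first ring at $a+u$, with $a\in\mathbb Z_{\ge0}$ and $u\in(0,1)$. With failed coins at $1,\dots,a$ and a non-oblivious mark, this has density $(1-p_{\mathrm{stop}})^a e^{-(a+u)}\alpha\,du$.
\item On $\{b=1\}$ the sausage ends at $a+j$ exactly when one of the $2d$ children has no ring up to elapsed time $j-u$. Each of the other $2d-1$ children must have an oblivious first ring, all by elapsed time $j-u$ but not all by $j-1-u$.
\item This has probability $2d(1-\alpha)^{2d-1}e^{-(j-u)}\bigl(B_j(u)-B_{j-1}(u)\bigr)$, where $B_j(u)=(1-e^{-(j-u)})^{2d-1}$ and $B_0:=0$.
\item The weight $e^{\ell}=e^{a+j}$ cancels $e^{-(a+u)}e^{-(j-u)}$ exactly, because the tilt rate equals the clock rate. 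What remains is $2d\sum_{j\ge1}\bigl(B_j(u)-B_{j-1}(u)\bigr)=2d$, for every $u$.
\item Summing over $a$ gives the exact identity $\mathbb E_\mu[e^{\ell(x_1)}\mathbf 1\{b(x_1)=1\}]=2d\alpha(1-\alpha)^{2d-1}/p_{\mathrm{stop}}$.
\end{itemize}
With $p_1=\frac1{2d}+O(\alpha)$ this yields $Z^{(1)}=\alpha/p_{\mathrm{stop}}+O(\alpha^2)$. No first-order approximation of the excursion is needed. Once this telescoping identity is in place, the rest of your argument goes through unchanged.
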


\begin{proof}
For \eqref{eq:w-ell}, the contribution from \(b(x)=0\) is exact:
\[
e^\ell
\sum_{x:\,\ell(x)=\ell,\ b(x)=0}p(x)\mu(x)
=
p_{\mathrm{stop}}(1-p_{\mathrm{stop}})^{\ell-1}.
\]
On the other hand, \(|p(x)|\le1\) and \eqref{eq:mu-joint} with \(b=1\) give
\[
e^\ell
\sum_{x:\,\ell(x)=\ell,\ b(x)\ge1}|p(x)|\mu(x)
\le
C_{\mathrm{jnt}}C_{\mathrm{bp}}\alpha
(1-p_{\mathrm{stop}})^\ell.
\]
After decreasing \(\beta_0(d,q)\), the last expression is at most
$\frac{p_{\mathrm{stop}}}{2}
(1-p_{\mathrm{stop}})^{\ell-1}$,
which proves \eqref{eq:w-ell}.

We now prove \eqref{eq:Zp}. The total contribution from \(b(x)=0\) is
\[
\sum_{\ell\ge1}
p_{\mathrm{stop}}(1-p_{\mathrm{stop}})^{\ell-1}
=1.
\]
Therefore, decomposing according to the number of branch-outs,
\[
Z_p-1=Z^{(1)}+Z^{(2)}+Z^{(\ge3)}.
\]

We first compute \(Z^{(1)}\). Condition on the first ring occurring at
time \(a+u\), where \(a\in\mathbb Z_{\ge0}\) and \(u\in(0,1)\).
The density of this event, together with failed stop coins before the
ring and a non-oblivious mark at the ring, is
\[
(1-p_{\mathrm{stop}})^a e^{-(a+u)}\alpha\,du.
\]
After the branch-out there are \(2d\) children. For \(j\ge1\), set
\[
    B_0(u):=0,
    \qquad
    B_j(u):=
    \bigl(1-e^{-(j-u)}\bigr)^{2d-1}.
\]
The probability that \(a+j\) is the first integer checkpoint with
exactly one surviving child is
\[
    2d\,e^{-(j-u)}
    \bigl(B_j(u)-B_{j-1}(u)\bigr).
\]
Indeed, choose the surviving child, require it to have no ring by time
\(j-u\), and require all other \(2d-1\) children to have rung by time
\(j-u\), but not all by the preceding checkpoint.

For the sausage to have exactly one branch-out, these \(2d-1\) rings
must all be oblivious, contributing the factor
\((1-\alpha)^{2d-1}\). Therefore
\begin{align*}
&\mathbb E_\mu\!\left[
e^{\ell(x_1)}\mathbf 1_{\{b(x_1)=1\}}
\right]\\
&\quad=
\alpha(1-\alpha)^{2d-1}
\sum_{a\ge0}(1-p_{\mathrm{stop}})^a
\int_0^1
\sum_{j\ge1}
e^{a+j}e^{-(a+u)}
2d\,e^{-(j-u)}
\bigl(B_j(u)-B_{j-1}(u)\bigr)\,du\\
&\quad=
2d\alpha(1-\alpha)^{2d-1}
\sum_{a\ge0}(1-p_{\mathrm{stop}})^a
\int_0^1
\sum_{j\ge1}
\bigl(B_j(u)-B_{j-1}(u)\bigr)\,du.
\end{align*}
Since \(B_j(u)\to1\), the sum over \(j\) telescopes to \(1\).
Consequently,
\[
\mathbb E_\mu\!\left[
e^{\ell(x_1)}\mathbf 1_{\{b(x_1)=1\}}
\right]
=
\frac{2d}{p_{\mathrm{stop}}}
\alpha(1-\alpha)^{2d-1}.
\]
Thus
\[
Z^{(1)}
=
p_1\,
\mathbb E_\mu\!\left[
e^{\ell(x_1)}\mathbf 1_{\{b(x_1)=1\}}
\right]
=
\frac{2d\,p_1}{p_{\mathrm{stop}}}
\alpha(1-\alpha)^{2d-1}
=
\frac{\alpha}{p_{\mathrm{stop}}}
+O_{d,q}(\alpha^2),
\]
where we used $p_1=\frac1{2d}+O_{d,q}(\alpha)$ from Lemma~\ref{lem:canonical-sausages} (2).

Finally, Lemma~\ref{lem:canonical-sausages} and
\eqref{eq:mu-joint} give
\[
0\le Z^{(2)}
\le
\sum_{\ell\ge1}e^\ell
\mu\bigl(\ell(x_1)=\ell,\ b(x_1)\ge2\bigr)
=
O_{d,q}(\alpha^2),
\]
and
\[
\bigl|Z^{(\ge3)}\bigr|
\le
\sum_{\ell\ge1}e^\ell
\mu\bigl(\ell(x_1)=\ell,\ b(x_1)\ge3\bigr)
=
O_{d,q}(\alpha^3).
\]
Therefore
\[
Z_p
=
1+\frac{\alpha}{p_{\mathrm{stop}}}
+O_{d,q}(\alpha^2)
=
1+2\alpha+O_{d,q}(\alpha^2),
\]
which proves \eqref{eq:Zp}. In particular, after decreasing
\(\beta_0(d,q)\), one has \(1<Z_p\le2\).
\end{proof}

For each \(\ell\ge1\), define the signed displacement kernel
\begin{equation*}
\varphi_\ell(z):=\frac{\sum_{x:\ \ell(x)=\ell}\mu(x)\,p(x)\,\mathbf 1\{y(x)=z\}}
{\sum_{x:\ \ell(x)=\ell}\mu(x)\,p(x)}\,,
\qquad z\in\mathbb Z^d.
\end{equation*}
By \eqref{eq:w-ell}, the denominator is positive. Also $\sum_{z\in\mathbb{Z}^d}\varphi_\ell(z)=1$.

Define the length-biased law
\[
\tilde{\mathbb P}(L=\ell):=\frac{w_\ell}{Z_p},
\qquad \ell\ge1.
\]
By \eqref{eq:w-ell} and \(Z_p\le2\),
\begin{equation}\label{eq:L-tilde-bound}
\frac{p_{\mathrm{stop}}}{4}(1-p_{\mathrm{stop}})^{\ell-1}
\le
\tilde{\mathbb P}(L=\ell)
\le
\frac{3p_{\mathrm{stop}}}{2}(1-p_{\mathrm{stop}})^{\ell-1}.
\end{equation}
Let \(L_1,L_2,\dots\) be i.i.d.\ with law \(\tilde{\mathbb P}\), and set \(S_r:=L_1+\cdots+L_r\). Write $\tilde{\mathbb E}$ for expectation under $\tilde{\mathbb P}$.
Given a length profile $\vs=(s_1,\dots,s_t)$ where $s_i:=\#\{k:L_k=i\}$ counts the number of length $i$ sausages,
subject to
$\sum_{i=1}^t s_i=r$ and $\sum_{i=1}^t i\,s_i=t$,
we call \(\vs\) an \emph{admissible profile} of length \(r\) and depth \(t\).
Define the mixed convolution kernel and its conditional expectation
\[
\Phi_{\vs}
:=\varphi_1^{\conv s_1}\conv\cdots\conv \varphi_t^{\conv s_t},
\qquad
K(z;r,t):=\tilde{\mathbb E}\!\left[\Phi_{\mathbf s}(z)\ \big|\ S_r=t\right].
\]
Then regrouping \eqref{eq:IK-def} by lengths gives
\begin{equation}\label{eq:IK-length-decomp}
\IK(z,t)=e^{-t}\sum_{r\ge1}Z_p^r\,\tilde{\mathbb P}(S_r=t)\,K(z;r,t).
\end{equation}

We now identify the exponential tilt that absorbs the factor \(Z_p^r\)
in \eqref{eq:IK-length-decomp}.

\begin{lem}[Renewal tilt and decay rate]\label{lem:kappa bound}
There exists a unique \(\kappa\in(-1,0)\) such that
\begin{equation}\label{eq:kappa}
    \tilde{\mathbb E}[e^{\kappa L_1}]=Z_p^{-1}.
\end{equation}
Set
\[
\mathfrak r:=1+\kappa\in(0,1).
\]
Then
\[
\kappa=-\alpha+O_{d,q}(\alpha^2),
\qquad
\mathfrak r=1-\alpha+O_{d,q}(\alpha^2).
\]
Consequently, after decreasing \(\beta_0(d,q)\),
\[
0<-\kappa<2d\alpha,
\qquad
1-2d\alpha<\mathfrak r<1.
\]
\end{lem}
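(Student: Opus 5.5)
Consider the function $g(\kappa):=\tilde{\mathbb E}[e^{\kappa L_1}]=Z_p^{-1}\sum_{\ell\ge1}w_\ell e^{\kappa\ell}$ on $\kappa\in(-\infty,\log\frac{1}{1-p_{\mathrm{stop}}})$. By \eqref{eq:L-tilde-bound} the series converges on this range, which contains $[-1,0]$ since $p_{\mathrm{stop}}=1/2$. The function $g$ is continuous and strictly increasing because $L_1\ge1$ a.s. We have $g(0)=1>Z_p^{-1}$, since $Z_p>1$ by Lemma~\ref{lem:Zp-bound}. At $\kappa=-1$, $g(-1)=Z_p^{-1}\sum_\ell w_\ell e^{-\ell}=Z_p^{-1}\sum_{x\in\Xi}p(x)\mu(x)$. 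So $g(-1)<Z_p^{-1}$ is equivalent to $\sum_{x\in\Xi}p(x)\mu(x)<1$. To see this, note that the $b=0$ part equals $\mu(\Xi_0)=\sum_\ell c_{\mathrm{len}}e^{-\eta\ell}=p_{\mathrm{stop}}e^{-1}/(1-(1-p_{\mathrm{stop}})e^{-1})<1$, a fixed constant. The $b\ge1$ part is $O(\alpha)$ by \eqref{eq:mu-joint}. Hence, after decreasing $\beta_0$, the strict inequality holds. (Alternatively, use $|p|\le1$ and $\mu(\Xi)\le1$ together with strict positivity of $\mu(\dagger)$, but the explicit bound is cleaner.) The intermediate value theorem and strict monotonicity then give a unique $\kappa\in(-1,0)$ solving \eqref{eq:kappa}.

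For the asymptotics, rewrite \eqref{eq:kappa} as $h(\kappa):=\sum_\ell w_\ell e^{\kappa\ell}=1$. Then expand $h(\kappa)=Z_p+\kappa\sum_\ell \ell w_\ell+R(\kappa)$, where $|R(\kappa)|\le \frac{\kappa^2}{2}\sum_\ell \ell^2 w_\ell$ for $\kappa\in[-1,0]$, using $e^{\kappa\ell}\le1$ in the Taylor remainder. By \eqref{eq:w-ell}, $\sum_\ell\ell^2w_\ell=O(1)$. For the mean, $\sum_\ell \ell w_\ell=\sum_\ell \ell p_{\mathrm{stop}}(1-p_{\mathrm{stop}})^{\ell-1}+O(\alpha)=1/p_{\mathrm{stop}}+O(\alpha)=2+O(\alpha)$, because the $b=0$ weights are exactly geometric and the $b\ge1$ weights are $O(\alpha)(1-p_{\mathrm{stop}})^{\ell}$, as in the proof of Lemma~\ref{lem:Zp-bound}.

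First we need an a priori bound $\kappa=O(\alpha)$. From $h(\kappa)=1$ and $h(0)=Z_p=1+O(\alpha)$, together with $h'(\kappa)\ge\sum_\ell \ell w_\ell e^{-\ell}\ge c>0$ uniformly on $[-1,0]$ (using the lower bound in \eqref{eq:w-ell}), the mean value theorem gives $|\kappa|\le (Z_p-1)/c=O(\alpha)$. Substituting this into the expansion yields $0=Z_p-1+\kappa(2+O(\alpha))+O(\alpha^2)$. With $Z_p-1=2\alpha+O(\alpha^2)$ from \eqref{eq:Zp}, this gives $\kappa=-\alpha+O_{d,q}(\alpha^2)$, and $\mathfrak r=1+\kappa$ follows.

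Finally, since $-\kappa=\alpha+O(\alpha^2)$ and $2d\ge4>1$, shrinking $\beta_0$ gives $0<-\kappa<2d\alpha$ (positivity was already established), and hence $1-2d\alpha<\mathfrak r<1$. The only delicate point is keeping the $O(\alpha)$ corrections uniform in $\ell$ so that the Taylor and mean-value steps are legitimate. The bounds \eqref{eq:w-ell} and \eqref{eq:mu-joint} supply this uniformity, since they decay geometrically with rate $1-p_{\mathrm{stop}}$, strictly inside the radius $e^{\kappa}\le1$ that we need.
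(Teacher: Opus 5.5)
Your proof is correct and takes essentially the same route as the paper: strict monotonicity of $\theta\mapsto\tilde{\mathbb E}[e^{\theta L_1}]$ plus the intermediate value theorem for existence and uniqueness, then an a priori bound $-\kappa=O(\alpha)$ from the mean value theorem using $h'\ge e^{-1}w_1$, then a second-order expansion with $\sum_\ell \ell w_\ell=1/p_{\mathrm{stop}}+O(\alpha)$ and $Z_p-1=2\alpha+O(\alpha^2)$. The differences are cosmetic: at $\kappa=-1$ the paper uses $\tilde{\mathbb E}[e^{-L_1}]\le e^{-1}<\tfrac12\le Z_p^{-1}$ (from $L_1\ge1$ and $Z_p\le2$) rather than computing $\sum_{x}p(x)\mu(x)$, and in the expansion it uses a uniformly bounded $G_\alpha''$ where you use an explicit Taylor remainder.
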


\begin{proof}
Set
\[
f(\theta):=\tilde{\mathbb E}[e^{\theta L_1}],
\qquad
\theta<-\log(1-p_{\mathrm{stop}}).
\]
By \eqref{eq:L-tilde-bound}, \(L_1\ge1\) and \(L_1\) has exponential tails, so \(f\) is finite and \(C^1\) on
\((-\infty,-\log(1-p_{\mathrm{stop}}))\). Since \(L_1\ge1\) a.s., \(f\) is strictly increasing. Also
\(f(0)=1\), and \(\lim_{\theta\to-\infty}f(\theta)=0\).

By \eqref{eq:Zp} and Lemma~\ref{lem:Zp-bound}, $Z_p^{-1}\in[1/2,1)$. On the other hand,
\[
f(-1)=\tilde{\mathbb E}[e^{-L_1}]\le e^{-1}<\frac12\le Z_p^{-1}.
\]
Hence there exists a unique \(\kappa\in(-1,0)\) such that \(f(\kappa)=Z_p^{-1}\), proving \eqref{eq:kappa}.

For the asymptotic estimate, define
\[
G_\alpha(\theta):=\sum_{\ell\ge1}w_\ell e^{\theta\ell}=Z_p f(\theta),\qquad
G_\alpha(\kappa)=1,\qquad
G_\alpha(0)=Z_p.
\]
The non-branching weights are
$w_\ell^{(0)}
:=
p_{\mathrm{stop}}(1-p_{\mathrm{stop}})^{\ell-1}$.
Consequently,
\[
G_\alpha'(0)
=
\sum_{\ell\ge1}\ell w_\ell
=
\sum_{\ell\ge1}
\ell p_{\mathrm{stop}}
(1-p_{\mathrm{stop}})^{\ell-1}
+O_{d,q}(\alpha)
=
\frac1{p_{\mathrm{stop}}}+O_{d,q}(\alpha).
\]
Also, \eqref{eq:w-ell} implies \(G_\alpha''\) is uniformly bounded on \([-1,0]\). 
By the mean-value theorem, for some \(\xi\in(\kappa,0)\),
\begin{equation}\label{eq:kappa-mvt}
    Z_p-1
    =
    G_\alpha(0)-G_\alpha(\kappa)
    =
    (-\kappa)G_\alpha'(\xi).
\end{equation}
Since \(w_1\ge p_{\mathrm{stop}}/2\) by \eqref{eq:w-ell}, $G_\alpha'(\xi)
    \ge
    e^{-1}w_1
    \ge
    \frac{p_{\mathrm{stop}}}{2e}$.
Together with \(Z_p-1=O_{d,q}(\alpha)\), this first gives
\[
    -\kappa=O_{d,q}(\alpha).
\]
Hence \(|\xi|=O_{d,q}(\alpha)\), and the uniform bound on
\(G_\alpha''\) yields
\[
    G_\alpha'(\xi)
    =
    G_\alpha'(0)+O_{d,q}(|\xi|)
    =
    \frac1{p_{\mathrm{stop}}}
    +O_{d,q}(\alpha).
\]
Substituting this and \eqref{eq:Zp} into
\eqref{eq:kappa-mvt}, we obtain
\[
\begin{aligned}
-\kappa
=
\frac{Z_p-1}{G_\alpha'(\xi)}
=
\frac{\alpha/p_{\mathrm{stop}}+O_{d,q}(\alpha^2)}
     {1/p_{\mathrm{stop}}+O_{d,q}(\alpha)}=
\alpha+O_{d,q}(\alpha^2).
\end{aligned}
\]
\end{proof}

Let \(\tilde{\mathbb P}_\kappa\) denote the \(\kappa\)-tilt of \(\tilde{\mathbb P}\):
\[
\tilde{\mathbb P}_\kappa(L=\ell)
:=
\frac{e^{\kappa\ell}\tilde{\mathbb P}(L=\ell)}
{\tilde{\mathbb E}[e^{\kappa L_1}]},
\qquad \ell\ge1.
\]
Then \eqref{eq:IK-length-decomp} becomes

\begin{equation}\label{eq:IK-tilt}
\IK(z,t)=e^{-(1+\kappa)t}\sum_{r\ge 1}\tilde{\mathbb P}_\kappa(S_r=t)\,K(z;r,t),
\qquad S_r:=\sum_{i=1}^r L_i,\qquad t\ge1.
\end{equation}

\begin{lem}\label{lem:LDP}
Let $\bar\ell_\kappa:=\tilde{\mathbb E}_\kappa[L_1]\in(0,\infty)$.
Fix $\delta\in(0,\bar\ell_\kappa/2)$ and define
\begin{equation*} 
a_-:=\frac{1}{\bar\ell_\kappa+\delta},
\qquad
a_+:=\frac{1}{\bar\ell_\kappa-\delta},
\qquad
\mathrm{SP}_t:=\big\{r\in\mathbb N:\ a_-t\le r\le a_+t\big\}.
\end{equation*}
Then there exists \(c_{\mathrm{ld}}(\delta)>0\) such that, for all large \(t\),
\[
    \sum_{r\notin\mathrm{SP}_t}
    \widetilde{\mathbb P}_\kappa(S_r=t)
    \le e^{-c_{\mathrm{ld}}t}.
\]
Moreover,
\[
\sum_{r\ge1}\tilde{\mathbb P}_\kappa(S_r=t)\longrightarrow \frac{1}{\bar\ell_\kappa}
\qquad\text{as }t\to\infty.
\]
In particular, for every fixed $\delta\in(0,\bar\ell_\kappa/2)$ there exists $t_{\mathrm{ren}}(\delta)\in\mathbb N$ such that for all $t\ge t_{\mathrm{ren}}(\delta)$,
\begin{equation}\label{eq:renewal-window}
\sum_{r\in\mathrm{SP}_t}\tilde{\mathbb P}_\kappa(S_r=t)\ge \frac{1}{3\bar\ell_\kappa}.
\end{equation}
\end{lem}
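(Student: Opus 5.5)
The plan is to treat $(S_r)$ as an ordinary renewal process under the tilted law $\tilde{\mathbb P}_\kappa$, and to get both claims from standard large-deviation and renewal estimates. For this to work, $\tilde{\mathbb P}_\kappa$ must be a genuine probability law on $\{1,2,\dots\}$ with exponential moments. This follows from \eqref{eq:L-tilde-bound}: $\tilde{\mathbb P}(L=\ell)\le \tfrac32 p_{\mathrm{stop}}(1-p_{\mathrm{stop}})^{\ell-1}$, and $\kappa<0$, so $\tilde{\mathbb E}_\kappa[e^{\theta L_1}]<\infty$ for all $\theta<-\log(1-p_{\mathrm{stop}})-\kappa$, a neighbourhood of $0$. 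In particular $\bar\ell_\kappa\in[1,\infty)$. Aperiodicity is also immediate: \eqref{eq:L-tilde-bound} gives $\tilde{\mathbb P}_\kappa(L=1)>0$, so the span is $1$.

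For the large-deviation bound I will split into $r<a_-t$ and $r>a_+t$ and use Chernoff bounds.

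\emph{Case $r>a_+t$.} Here $t/r<\bar\ell_\kappa-\delta$. Take $\theta>0$ and write
\[
\tilde{\mathbb P}_\kappa(S_r=t)\le \tilde{\mathbb P}_\kappa(S_r\le t)\le e^{\theta t}\,\bigl(\tilde{\mathbb E}_\kappa e^{-\theta L_1}\bigr)^r = \exp\{-r\,\psi(\theta)\},
\]
where $\psi(\theta):=-\log \tilde{\mathbb E}_\kappa e^{-\theta L_1}-\theta t/r\ge -\log \tilde{\mathbb E}_\kappa e^{-\theta L_1}-\theta(\bar\ell_\kappa-\delta)$. This lower bound equals $\theta\delta+O(\theta^2)$, so it is positive for small fixed $\theta$. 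Summing over $r>a_+t$ gives a geometric series bounded by $e^{-ct}$.

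\emph{Case $r<a_-t$.} Here $t/r>\bar\ell_\kappa+\delta$. Use the exponential moment with $\theta>0$ small:
\[
\tilde{\mathbb P}_\kappa(S_r=t)\le e^{-\theta t}\bigl(\tilde{\mathbb E}_\kappa e^{\theta L_1}\bigr)^r\le \exp\{-t(\theta-a_-\Lambda(\theta))\},
\]
where $\Lambda(\theta)=\log\tilde{\mathbb E}_\kappa e^{\theta L_1}$. This uses $\Lambda\ge0$ for $\theta>0$ and $r\le a_-t$. Since $a_-\Lambda(\theta)=\theta\bar\ell_\kappa/(\bar\ell_\kappa+\delta)+O(\theta^2)$, the exponent is $\ge c\theta t$. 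There are at most $t$ such $r$, so the total is $\le t e^{-c\theta t}$. Shrinking the constant absorbs the factor $t$ for large $t$. Together the two cases give $c_{\mathrm{ld}}(\delta)$.

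For the second statement, note that $\sum_{r\ge1}\tilde{\mathbb P}_\kappa(S_r=t)$ is the renewal mass $u_t$ at $t$ for an aperiodic renewal with finite mean. The Erdős--Feller--Pollard renewal theorem then gives $u_t\to1/\bar\ell_\kappa$. Finally, \eqref{eq:renewal-window} follows by subtracting the large-deviation bound: for large $t$, $u_t\ge 1/(2\bar\ell_\kappa)$, and the terms with $r\notin\mathrm{SP}_t$ contribute at most $e^{-c_{\mathrm{ld}}t}\le 1/(6\bar\ell_\kappa)$.

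The only mild obstacle is the uniformity in the Chernoff constants. They depend on $\kappa$, but $\kappa$ is fixed once $\beta$ is fixed, so this causes no difficulty.
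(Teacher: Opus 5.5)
Your proof is correct and takes essentially the same route as the paper. Like the paper, you use Chernoff bounds on $\{S_r\ge t\}$ for $r<a_-t$ and on $\{S_r\le t\}$ for $r>a_+t$, then the arithmetic (Erd\H{o}s--Feller--Pollard) renewal theorem with aperiodicity from $\tilde{\mathbb P}_\kappa(L_1=1)>0$, and finally subtract the off-window mass to get the $1/(3\bar\ell_\kappa)$ bound; you also make explicit the summation of the per-$r$ bounds over $r$, which the paper leaves implicit.
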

\begin{proof}
Since \(\tilde{\mathbb P}_\kappa\) has exponential moments in a neighborhood of \(0\), its log-moment generating function
\[
\Lambda_\kappa(\theta):=\log \tilde{\mathbb E}_\kappa[e^{\theta L_1}]
\]
is finite and \(C^2\) near \(0\), with \(\Lambda_\kappa'(0)=\bar\ell_\kappa\).

Choose \(\theta_+>0\) so small that \(\Lambda_\kappa(\theta_+)\le \theta_+(\bar\ell_\kappa+\delta/2)\). Then for \(r<a_-t\),
\[
\tilde{\mathbb P}_\kappa(S_r=t)\le \tilde{\mathbb P}_\kappa(S_r\ge t)
\le
\exp\!\Big(-\theta_+ t+r\Lambda_\kappa(\theta_+)\Big)
\le
\exp\!\Big(-\frac{\theta_+\delta}{2(\bar\ell_\kappa+\delta)}\,t\Big).
\]
Similarly, choose \(\theta_->0\) so small that \(\Lambda_\kappa(-\theta_-)\le -\theta_-(\bar\ell_\kappa-\delta/2)\). Then for \(r>a_+t\),
\[
\tilde{\mathbb P}_\kappa(S_r=t)\le \tilde{\mathbb P}_\kappa(S_r\le t)
\le
\exp\!\Big(\theta_- t+r\Lambda_\kappa(-\theta_-)\Big)
\le
\exp\!\Big(-\frac{\theta_-\delta}{2(\bar\ell_\kappa-\delta)}\,t\Big).
\]
This proves the first claim.

Also \(\tilde{\mathbb P}_\kappa(L_1=1)>0\) by \eqref{eq:L-tilde-bound}, so the arithmetic renewal theorem applies and yields
\[
\sum_{r\ge0}\tilde{\mathbb P}_\kappa(S_r=t)\longrightarrow \frac{1}{\bar\ell_\kappa}.
\]
Since \(t\ge1\), the \(r=0\) term vanishes, proving the convergence. Combining this with the large-deviation estimate yields \eqref{eq:renewal-window}.
\end{proof}

Remark that
\[
\bar\ell_\kappa
=
Z_p\tilde{\mathbb E}[L_1e^{\kappa L_1}]
\le
Z_p\tilde{\mathbb E}[L_1]
\le
2\sum_{\ell\ge1}\ell\cdot \frac{3p_{\mathrm{stop}}}{2}(1-p_{\mathrm{stop}})^{\ell-1}
=
\frac{3}{p_{\mathrm{stop}}},
\]
using \eqref{eq:L-tilde-bound} and \(Z_p\le2\). 
Consequently, by the renewal convergence in Lemma~\ref{lem:LDP}, there exists
\(t_{\mathrm{ren}}<\infty\) such that, for all integers \(t\ge t_{\mathrm{ren}}\),
\begin{equation}\label{eq:renewal-lb-ps}
\sum_{r\ge1}\tilde{\mathbb P}_\kappa(S_r=t)
\ge \frac{1}{2\bar\ell_\kappa}
\ge \frac{p_{\mathrm{stop}}}{6}.
\end{equation}

\subsection{Fourier input and the \texorpdfstring{\(\ell^1\)}{l1} bound}
\label{subsec:fourier-input}

The required Fourier estimates for the individual signed displacement
kernels are established in Lemma~\ref{lem:varphi-input} of the
Appendix~\ref{sec:appendix}. That lemma gives uniform moment
and spectral-gap estimates, together with the expansion
\[
\log \widehat\varphi_\ell(\xi)
=
-\frac{\sigma_\ell}{2}|\xi|^2
+\mathfrak a_\ell|\xi|^4
+\mathfrak b_\ell\sum_{j=1}^d\xi_j^4
+O(|\xi|^6),
\qquad \xi\to0,
\]
uniformly in \(\ell\), where there exist constants
$\sigma_\pm>0$, $A_4>0$ such that 
\[
0<\sigma_-\le \sigma_\ell\le\sigma_+,
\qquad
|\mathfrak a_\ell|+|\mathfrak b_\ell|\le A_4.
\]
Although $\varphi_\ell$ may be signed, it provides the contractive Fourier control
needed for local limit expansions and pointwise bounds on convolutions to prove the Proposition below.

\smallskip
For a length profile $\vs=(s_1,\ldots,s_t)$ with $\sum s_i=r$ and $\sum i s_i=t$, define the effective coefficients
\begin{equation*}
\sigma_\vs:=\frac1r\sum_{i=1}^t s_i\sigma_i,\qquad
\mathfrak{a}_\vs:=\frac1r\sum_{i=1}^t s_i\mathfrak{a}_i,\qquad
\mathfrak{b}_\vs:=\frac1r\sum_{i=1}^t s_i\mathfrak{b}_i.
\end{equation*}
Then $\sigma_-\le \sigma_{\vs}\le \sigma_+$ and $\abs{\mathfrak{a}_{\vs}}+\abs{\mathfrak{b}_{\vs}}\le A_4$. Let
\begin{equation*}
P_{\vs}(\xi):=\frac{\sigma_{\vs}}{2}\abs{\xi}^2,
\qquad
Q_{4,\vs}(\xi)
:=\mathfrak{a}_\vs|\xi|^4+\mathfrak{b}_\vs\sum_{j=1}^d \xi_j^4,
\end{equation*}
and let the corresponding Gaussian kernel be
\begin{equation*}
H_{\vs}(x)
:=\frac{1}{(2\pi r\sigma_\vs)^{d/2}}
\exp\!\Big(-\frac{|x|^2}{2r\sigma_\vs}\Big),
\qquad x\in\mathbb{Z}^d.
\end{equation*}

For a polynomial $Q(\xi)=\sum_\alpha c_\alpha \xi^\alpha$, define $Q(i\nabla_x):=\sum_\alpha c_\alpha i^{\abs{\alpha}}\partial_x^\alpha$.

\begin{prop}\label{prop:Fourier-six-exp}
There exists \(C_{\mathrm F}<\infty\) such that, for every admissible profile \(\vs\) and every \(x\in\mathbb Z^d\),
\begin{equation}\label{eq:main-expansion}
\Phi_\vs(x)
=
H_\vs(x)
+
rQ_{4,\vs}(i\nabla_x)H_\vs(x)
+
\mathrm{Err}_\vs(x),
\end{equation}
where
\begin{equation}\label{eq:ErrBounds}
\|\mathrm{Err}_\vs\|_{\ell^1(\mathbb Z^d)}
\le C_{\mathrm F}r^{-2},
\qquad
\|\mathrm{Err}_\vs\|_{\ell^\infty(\mathbb Z^d)}
\le C_{\mathrm F}r^{-2-d/2}.
\end{equation}
\end{prop}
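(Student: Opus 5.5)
The plan is a standard Edgeworth-type local limit argument, carried out in Fourier space and driven by the uniform inputs of Lemma~\ref{lem:varphi-input}. By Fourier inversion on the torus $[-\pi,\pi]^d$ we write
\[
\Phi_\vs(x)=(2\pi)^{-d}\int_{[-\pi,\pi]^d}\prod_{i=1}^t\widehat\varphi_i(\xi)^{s_i}\,e^{-i\langle x,\xi\rangle}\,d\xi .
\]
We then split the integral into an inner region $|\xi|\le\delta_0$ and an outer region. On the outer region the spectral-gap part of Lemma~\ref{lem:varphi-input} gives $\sup_{|\xi|\ge\delta_0}|\widehat\varphi_\ell(\xi)|\le 1-c$ uniformly in $\ell$. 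The outer contribution to the pointwise value is therefore at most $(1-c)^r$, which is $O(r^{-N})$ for any $N$.

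On the inner region, summing the uniform expansion of $\log\widehat\varphi_\ell$ over the profile gives
\[
\sum_i s_i\log\widehat\varphi_i(\xi)=-rP_\vs(\xi)+rQ_{4,\vs}(\xi)+rO(|\xi|^6).
\]
Hence
\[
\widehat\Phi_\vs(\xi)=e^{-rP_\vs(\xi)}\bigl(1+rQ_{4,\vs}(\xi)+E(\xi)\bigr),
\]
where, for $|\xi|\le\delta_0$ and $\delta_0$ chosen small relative to $\sigma_-$,
\[
|E(\xi)|\lesssim \bigl(r|\xi|^6+r^2|\xi|^8\bigr)e^{r\sigma_-|\xi|^2/4}.
\]
The main term $e^{-rP_\vs}(1+rQ_{4,\vs})$, integrated over all of $\mathbb R^d$, inverts exactly to $H_\vs+rQ_{4,\vs}(i\nabla)H_\vs$, with the continuous Fourier transform matching the sign convention defining $Q(i\nabla_x)$. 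The tail $|\xi|>\delta_0$ of that Gaussian integral is exponentially small in $r$. The $\ell^\infty$ bound then follows by integrating $|E|e^{-rP_\vs}$. After scaling $\xi=\eta/\sqrt r$, both $r|\xi|^6$ and $r^2|\xi|^8$ become $r^{-2}$ times polynomials in $\eta$, and the Jacobian contributes $r^{-d/2}$, giving $C_{\mathrm F}r^{-2-d/2}$.

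For the $\ell^1$ bound we use the usual weighted-$\ell^2$ trick. We bound $\sum_x(1+|x|^2/r)^{m}|\mathrm{Err}_\vs(x)|^2$ with $m>d/2$ by Plancherel applied to derivatives of the Fourier-side error. Cauchy--Schwarz then gives
\[
\|\mathrm{Err}\|_{\ell^1}\le\Bigl(\sum_x(1+|x|^2/r)^{-m}\Bigr)^{1/2}(\cdots)^{1/2},
\]
where the first factor is of size $r^{d/4}$. This requires controlling $\partial_\xi^\gamma$ of $\widehat\Phi_\vs-e^{-rP}(1+rQ_4)$ for $|\gamma|\le m$, both in the inner region and, for the outer region, using uniform bounds on derivatives of $\widehat\varphi_\ell$. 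Those derivative bounds come from the uniform exponential moment estimates in Lemma~\ref{lem:varphi-input}. In the outer region each derivative of a product of $r$ factors costs at most a polynomial in $r$, which is absorbed by $(1-c)^{r}$. In the inner region, derivatives of $e^{rG(\xi)}$, with $G$ the averaged log-symbol, produce factors $r|\xi|$ and $r$, which scale exactly as $\sqrt r$ per derivative. Each $\xi$-derivative thus matches a factor $|x|/\sqrt r$ on the physical side, and the bound $r^{-2-d/2}\cdot r^{d/2}=r^{-2}$ comes out after the scaling.

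The main obstacle is this derivative-weighted error estimate. Because the $\varphi_\ell$ are signed and not identically distributed, we cannot invoke a classical Edgeworth theorem. We must check by hand that the higher Taylor remainders of $\log\widehat\varphi_\ell$, including the sixth-order remainder and its $\xi$-derivatives, are uniform in $\ell$, and that the factor $|\widehat\varphi_\ell|\le e^{-\sigma_-|\xi|^2/4}$ holds uniformly near $0$ despite signs. If the first attempt is too lossy, we would fall back on the alternative route: bound $\|\mathrm{Err}\|_{\ell^1}$ separately on the ball $|x|\le r^{1/2+\epsilon}$, using the $\ell^\infty$ bound, which costs only $r^{d\epsilon}$, and on its complement, using exponential moment tail bounds for signed convolutions, namely
\[
\sum_x e^{\lambda|x|/\sqrt r}|\Phi_\vs(x)|\le\prod_i\Bigl(\sum_x e^{\lambda|x|/\sqrt r}|\varphi_i(x)|\Bigr)^{s_i}.
\]
The complication on this route is that $\|\varphi_i\|_{\ell^1}$ may exceed $1$, so this product must be shown to be $e^{O(r)}\cdot$ small on the complement. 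That is why the Fourier weighted-$\ell^2$ route is preferred.
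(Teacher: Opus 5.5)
Your proposal is correct and follows the paper's proof in all essentials. Both use Fourier inversion and a split of frequency space at a small radius. Away from $\xi=0$ both use the spectral gap plus Leibniz' rule, which gives $r^{|\alpha|}(1-\gamma)^{r-|\alpha|}$. Near $\xi=0$ both use the cumulant expansion in the rescaled variable $\eta=\sqrt r\,\xi$, with the uniform bounds $|\partial^\alpha R_{\ell,6}(\xi)|\le C|\xi|^{(6-|\alpha|)_+}$. Finally, both note that the Gaussian tail is exponentially small.

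Two small points on inputs. The derivative bounds come from the polynomial moment bound $M_{d+7}<\infty$ via Fa\`a di Bruno, not from exponential moments. Also, a gap outside a radius $\delta_0$ smaller than $\rho$ is not item (3) of the lemma as stated. It needs the global bound $1-\widehat\varphi_\ell(\xi)\ge c_0\alpha|\xi|^2$ from the lemma's proof, as the paper remarks when it shrinks $\rho$.

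The only real difference is how you get the $\ell^1$ bound. The paper uses a smooth cutoff $\chi$ and the weighted estimate bounding $(1+|x|)^N$ times the inverse transform of $F$ by $C\sum_{|\alpha|\le N}\|\partial^\alpha F\|_{L^1}$, with $N=d+1$. In the rescaled variable this gives $|E_{\mathrm{in}}(x)|\le Cr^{-d/2-2}(1+|x|/\sqrt r)^{-N}$, and hence both norms at once.

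Your weighted-$\ell^2$/Cauchy--Schwarz route needs only $m>d/2$ derivatives. However, it must be applied to a globally smooth periodic symbol, not to pieces of your sharp split at $|\xi|=\delta_0$. Two choices work: a smooth cutoff, or the torus symbol of $\mathrm{Err}_\vs$,
\[
\widehat\Phi_\vs(\xi)-\sum_{k\in\mathbb Z^d}e^{-rP_\vs(\xi+2\pi k)}\bigl(1+rQ_{4,\vs}(\xi+2\pi k)\bigr).
\]
Otherwise the derivatives in Plancherel pick up boundary terms.
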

\noindent
The proof is given in \hyperref[sec:appendix]{the Appendix}.

\begin{lemma}\label{lem:derivative-bound}
There exists \(C_{\mathrm{der}}<\infty\), depending only on \(d,\sigma_\pm,A_4\), such that for every admissible profile \(\mathbf s\),
\[
\big|rQ_{4,\mathbf s}(i\nabla_x)H_{\mathbf s}(x)\big|
\le
\frac{C_{\mathrm{der}}}{r}\left(1+\frac{|x|^4}{r^2}\right)H_{\mathbf s}(x)
\qquad\text{for all }x\in\mathbb Z^d.
\]
\end{lemma}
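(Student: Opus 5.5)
The bound follows from an explicit computation of fourth-order derivatives of the Gaussian $H_{\mathbf s}$. Write $\Sigma:=r\sigma_{\mathbf s}$, so that $H_{\mathbf s}(x)=(2\pi\Sigma)^{-d/2}e^{-|x|^2/(2\Sigma)}$, and note that $\sigma_-\le\sigma_{\mathbf s}\le\sigma_+$ gives $\Sigma\asymp r$ uniformly in the profile. Since $Q_{4,\mathbf s}$ is homogeneous of degree four, $Q_{4,\mathbf s}(i\nabla_x)=\mathfrak a_{\mathbf s}\Delta^2+\mathfrak b_{\mathbf s}\sum_{j=1}^d\partial_j^4$ (because $i^4=1$). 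It therefore suffices to bound $\Delta^2H_{\mathbf s}$ and each $\partial_j^4H_{\mathbf s}$ pointwise, and then use $|\mathfrak a_{\mathbf s}|+|\mathfrak b_{\mathbf s}|\le A_4$.

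For the one-dimensional derivatives, I would use the Hermite representation. Each $\partial_j$ acts only on the factor $e^{-x_j^2/(2\Sigma)}$, so
\[
\partial_j^4H_{\mathbf s}(x)=\Sigma^{-2}\,\mathrm{He}_4\!\bigl(x_j/\sqrt{\Sigma}\bigr)H_{\mathbf s}(x),\qquad \mathrm{He}_4(u)=u^4-6u^2+3.
\]
For the bilaplacian, I would expand $\Delta^2=\sum_{j,k}\partial_j^2\partial_k^2$. The diagonal terms are of the form just computed. The off-diagonal terms ($j\ne k$) equal $\Sigma^{-2}\mathrm{He}_2(x_j/\sqrt\Sigma)\mathrm{He}_2(x_k/\sqrt\Sigma)H_{\mathbf s}$, where $\mathrm{He}_2(u)=u^2-1$. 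In every case we obtain $\Sigma^{-2}P(x/\sqrt\Sigma)H_{\mathbf s}(x)$, with $P$ a polynomial of degree at most $4$ whose coefficients depend only on $d$.

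The elementary inequality $|P(u)|\le C_d(1+|u|^4)$ then gives
\[
|Q_{4,\mathbf s}(i\nabla_x)H_{\mathbf s}(x)|\le C A_4\,\Sigma^{-2}\Bigl(1+\frac{|x|^4}{\Sigma^2}\Bigr)H_{\mathbf s}(x).
\]
Multiplying by $r$ and inserting $\Sigma\ge\sigma_-r$, we get $r\,\Sigma^{-2}\le\sigma_-^{-2}r^{-1}$ and $|x|^4/\Sigma^2\le\sigma_-^{-2}|x|^4/r^2$. This yields the claim with $C_{\mathrm{der}}$ depending only on $d,\sigma_-,A_4$.

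There is no genuine obstacle here. The only point needing care is that every constant must be uniform over admissible profiles $\mathbf s$, and this holds because $\sigma_{\mathbf s}$ and $Q_{4,\mathbf s}$ enter only through the bounds $\sigma_-\le\sigma_{\mathbf s}$ and $|\mathfrak a_{\mathbf s}|+|\mathfrak b_{\mathbf s}|\le A_4$. Note also that $H_{\mathbf s}$ is treated as a smooth function on $\mathbb R^d$ evaluated at lattice points, so no discrete-derivative issues arise.
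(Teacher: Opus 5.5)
Your proposal is correct and follows essentially the same argument as the paper: every fourth-order derivative of $H_{\mathbf s}$ is a degree-$4$ polynomial in $x/\sqrt{r}$ times $\Sigma^{-2}H_{\mathbf s}(x)$, and the prefactor $r$ cancels one of the two powers $\Sigma^{-1}\asymp r^{-1}$. Your explicit Hermite computation, the identity $Q_{4,\mathbf s}(i\nabla_x)=\mathfrak a_{\mathbf s}\Delta^2+\mathfrak b_{\mathbf s}\sum_j\partial_j^4$, and the use of $\Sigma\ge\sigma_- r$ simply spell out the paper's one-line sketch.
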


\begin{proof}
Every fourth-order derivative of the Gaussian kernel \(H_{\mathbf s}\) is a polynomial of degree \(4\) in
\(x/\sqrt r\), multiplied by \(H_{\mathbf s}\), with coefficients bounded uniformly for
\(\sigma_{\mathbf s}\in[\sigma_-,\sigma_+]\). The coefficients of \(Q_{4,\mathbf s}\) are bounded by \(A_4\), and the prefactor \(r\) cancels one power of \(r^{-1}\) from the four derivatives. This yields the stated bound.
\end{proof}

\begin{lemma}[Uniform \(\ell^1\) bound on \(\Phi_{\mathbf s}\)]\label{lem:Phi-l1}
There exists \(C_{\ell^1}<\infty\) such that for every admissible profile \(\mathbf s\),
\[
\|\Phi_{\mathbf s}\|_{\ell^1(\mathbb Z^d)}\le C_{\ell^1}.
\]
Consequently, for every \(r\) and \(t\),
\[
\|K(\cdot;r,t)\|_{\ell^1(\mathbb Z^d)}\le C_{\ell^1}.
\]
\end{lemma}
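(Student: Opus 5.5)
The plan is to read the $\ell^1$ bound directly off the expansion in Proposition~\ref{prop:Fourier-six-exp}, and to treat the second claim as a formal consequence of the first. For an admissible profile $\vs$, the triangle inequality applied to \eqref{eq:main-expansion} gives
\[
\|\Phi_\vs\|_{\ell^1}\le \|H_\vs\|_{\ell^1}+\|rQ_{4,\vs}(i\nabla_x)H_\vs\|_{\ell^1}+\|\mathrm{Err}_\vs\|_{\ell^1}.
\]
The last term is at most $C_{\mathrm F}r^{-2}\le C_{\mathrm F}$ by \eqref{eq:ErrBounds}, since $r\ge1$.

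For the middle term, Lemma~\ref{lem:derivative-bound} bounds it pointwise by $C_{\mathrm{der}}r^{-1}(1+|x|^4/r^2)H_\vs(x)$. It therefore suffices to control the lattice sums
\[
\sum_{x\in\mathbb Z^d}H_\vs(x)
\qquad\text{and}\qquad
\sum_{x\in\mathbb Z^d}\frac{|x|^4}{r^2}H_\vs(x)
\]
uniformly in $r\ge1$ and $\sigma_\vs\in[\sigma_-,\sigma_+]$. Here $H_\vs$ is a Gaussian density of variance $r\sigma_\vs$ evaluated on $\mathbb Z^d$, so the standard comparison of a Riemann sum with an integral applies. Using monotonicity of the Gaussian in each coordinate, or bounding each term by the integral over a unit cube with a shifted argument, the first sum is $1+O(1)$ uniformly in $r$. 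The second sum is comparable to $r^{-2}\cdot O\bigl((r\sigma_+)^2\bigr)=O(1)$, the fourth Gaussian moment.

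The main obstacle is the small-$r$ regime. There $H_\vs$ is sharply peaked and the lattice sum differs from the integral by a bounded, not vanishing, amount. It still stays bounded: for $r\sigma_\vs\ge\sigma_->0$ the sum is at most $(2\pi r\sigma_-)^{-d/2}\sum_x e^{-|x|^2/(2r\sigma_+)}$. The sum $\sum_x e^{-|x|^2/(2r\sigma_+)}$ is at most $\prod_j\bigl(1+\sqrt{2\pi r\sigma_+}\bigr)$, so the whole expression is at most $C(1+r^{-d/2})\le C'$. The same product-over-coordinates argument, applied to $|x|^4\le d^2\sum_j x_j^4$, handles the fourth-moment sum. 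Collecting the three bounds gives a constant $C_{\ell^1}$ depending only on $d,\sigma_\pm,A_4,C_{\mathrm F}$.

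For the consequence, $K(\cdot;r,t)=\tilde{\mathbb E}[\Phi_{\mathbf s}\mid S_r=t]$ is an average over admissible profiles. Minkowski's inequality, in the form of convexity of the norm under conditional expectation, gives
\[
\|K(\cdot;r,t)\|_{\ell^1}\le\tilde{\mathbb E}\bigl[\|\Phi_{\mathbf s}\|_{\ell^1}\mid S_r=t\bigr]\le C_{\ell^1}.
\]
If $\tilde{\mathbb P}(S_r=t)=0$, then $K$ is defined as $0$ and the bound is trivial.
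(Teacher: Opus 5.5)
Your proposal is correct and follows essentially the same route as the paper. Both arguments apply the triangle inequality to \eqref{eq:main-expansion}, bound the error term by $C_{\mathrm F}r^{-2}$, control the middle term via Lemma~\ref{lem:derivative-bound} plus uniform Gaussian moment bounds for $\sigma_{\mathbf s}\in[\sigma_-,\sigma_+]$, and pass to $K(\cdot;r,t)$ by conditional averaging. The only difference is that you actually verify the lattice-sum bounds for $H_{\mathbf s}$ uniformly in $r\ge1$, including small $r$, whereas the paper simply asserts them.
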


\begin{proof}
From Proposition~\ref{prop:Fourier-six-exp},
\[
\|\Phi_{\mathbf s}\|_1
\le
\|H_{\mathbf s}\|_1+\|rQ_{4,\mathbf s}(i\nabla)H_{\mathbf s}\|_1+\|\mathrm{Err}_{\mathbf s}\|_1.
\]
The error term is bounded by \(C_{\mathrm F}r^{-2}\). By Lemma~\ref{lem:derivative-bound},
\[
\|rQ_{4,\mathbf s}(i\nabla)H_{\mathbf s}\|_1
\le
\frac{C_{\mathrm{der}}}{r}
\sum_{x\in\mathbb Z^d}\left(1+\frac{|x|^4}{r^2}\right)H_{\mathbf s}(x).
\]
The right-hand side is uniformly bounded, since it is a Gaussian fourth moment and
\(\sigma_{\mathbf s}\in[\sigma_-,\sigma_+]\).
Also $\|H_{\vs}\|_1$ is uniformly bounded.
Thus $\|\Phi_{\vs}\|_1\le C_{\ell^1}$ for some constant depending only on $(d,\sigma_\pm,A_4,C_{\mathrm F},C_{\mathrm{der}})$.

Finally,
\[
\|K(\cdot;r,t)\|_1
=
\sum_x\left|\tilde{\mathbb E}\!\left[\Phi_{\mathbf s}(x)\,\middle|\,S_r=t\right]\right|
\le
\tilde{\mathbb E}\!\left[\|\Phi_{\mathbf s}\|_1\,\middle|\,S_r=t\right]
\le
C_{\ell^1}.
\]
\end{proof}

\begin{lemma}[Uniform \(\ell^1\) bound for \(\IK\)]\label{lem:IK-l1}
There exists \(C_{\IK,1}<\infty\) such that for all integers \(t\ge0\),
\begin{equation*}
\sum_{z\in\mathbb Z^d}|\IK(z,t)|\le C_{\IK,1}e^{-(1+\kappa)t}.
\end{equation*}
\end{lemma}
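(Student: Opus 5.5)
The plan is to start from the tilted representation \eqref{eq:IK-tilt} and take $\ell^1$ norms in $z$ term by term. For $t=0$ the claim is trivial: $\IK(\cdot,0)=\mathbf 1\{\cdot=0\}$ has norm $1$. Fix now an integer $t\ge1$.

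By the triangle inequality, \eqref{eq:IK-tilt} gives
\[
\sum_{z\in\mathbb Z^d}|\IK(z,t)|
\le
e^{-(1+\kappa)t}\sum_{r\ge1}\tilde{\mathbb P}_\kappa(S_r=t)\,\|K(\cdot;r,t)\|_{\ell^1(\mathbb Z^d)}.
\]
We then invoke Lemma~\ref{lem:Phi-l1}, which bounds $\|K(\cdot;r,t)\|_{\ell^1}\le C_{\ell^1}$ uniformly in $r,t$. This reduces the claim to the bound
\[
\sum_{r\ge1}\tilde{\mathbb P}_\kappa(S_r=t)\le 1 .
\]
This holds because $L_i\ge1$ almost surely, so the partial sums $S_r$ are strictly increasing in $r$. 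Hence the events $\{S_r=t\}$, $r\ge1$, are pairwise disjoint, and their probabilities sum to at most $1$. (One could instead use the renewal convergence of Lemma~\ref{lem:LDP}, but disjointness gives the cleaner bound, valid for every $t$.) Thus the statement holds with $C_{\IK,1}:=\max\{1,C_{\ell^1}\}$.

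The step that needs care is justifying the representation \eqref{eq:IK-tilt} together with the interchange of the sums over $r$, over sausages, and over $z$. I expect this to be the main obstacle, though it is routine. The series \eqref{eq:IK-def} converges absolutely for fixed $t$: only finitely many $r\le t$ contribute, since each $\ell(x_i)\ge1$, and $\sum_x\mu(x)|p(x)|\le1$. So regrouping by the length profile is legitimate. Regrouping gives $\IK(z,t)=\sum_r\sum_{\ell_1+\dots+\ell_r=t}\prod_i(w_{\ell_i}e^{-\ell_i})\,\bigl(\varphi_{\ell_1}\conv\cdots\conv\varphi_{\ell_r}\bigr)(z)$, and this is exactly \eqref{eq:IK-length-decomp}. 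The tilt \eqref{eq:kappa} then converts $Z_p^r\,\tilde{\mathbb P}(S_r=t)$ into $e^{-\kappa t}\tilde{\mathbb P}_\kappa(S_r=t)$, because on $\{S_r=t\}$ the product of the tilt factors is $e^{\kappa t}\tilde{\mathbb E}[e^{\kappa L_1}]^{-r}=e^{\kappa t}Z_p^r$.

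Note that the signed nature of $\varphi_\ell$ enters only through Lemma~\ref{lem:Phi-l1}, which already controls $\|\Phi_{\mathbf s}\|_{\ell^1}$ via the Fourier expansion of Proposition~\ref{prop:Fourier-six-exp}. No further cancellation is needed.
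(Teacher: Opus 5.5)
Your proof is correct and follows the paper's argument: apply the triangle inequality to the tilted representation \eqref{eq:IK-tilt}, then use the uniform bound $\|K(\cdot;r,t)\|_{\ell^1}\le C_{\ell^1}$ from Lemma~\ref{lem:Phi-l1}. The one difference is that you bound the renewal mass $\sum_{r\ge1}\tilde{\mathbb P}_\kappa(S_r=t)\le 1$ directly, via disjointness of the events $\{S_r=t\}$ (since $L_i\ge1$), whereas the paper appeals to the renewal convergence in Lemma~\ref{lem:LDP}; your route is more elementary and gives an explicit constant valid for every $t$.
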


\begin{proof}
The case \(t=0\) is immediate. For \(t\ge1\), \eqref{eq:IK-tilt} and Lemma~\ref{lem:Phi-l1} give
\[
\sum_{z\in\mathbb Z^d}|\IK(z,t)|
\le
e^{-(1+\kappa)t}\sum_{r\ge1}\tilde{\mathbb P}_\kappa(S_r=t)\|K(\cdot;r,t)\|_1
\le
C_{\ell^1}e^{-(1+\kappa)t}\sum_{r\ge1}\tilde{\mathbb P}_\kappa(S_r=t).
\]
By Lemma~\ref{lem:LDP}, the renewal mass \(\sum_{r\ge1}\tilde{\mathbb P}_\kappa(S_r=t)\) converges, hence is bounded to prove the claim.
\end{proof}

\smallskip

\subsection{Magnetization bounds}
\label{subsec:mag-analysis}

Let \(\sigma_t\) denote the Potts configuration at real time \(t\). For an initial condition
\(\sigma_0\in[q]^{\mathbb Z^d}\) and a color \(c\in[q]\), define the one-site bias on $\mathbb{Z}^d$, which correspond to \eqref{eq:magnetization-def}:
\[
    \mathfrak m_t(\sigma_0,c)
    :=
    \mathbb P_{\sigma_0}(\sigma_t(o)=c)-\frac1q.
\]

To account for the boundary at real time \(0\), we isolate the bottom-most, possibly partial,
sausage. Let \(x_0\) denote the full sausage whose slab contains time \(0\), and let
\(s\in[0,\ell(x_0))\) be the cut depth inside \(x_0\), measured in backward time from the top of
\(x_0\). Define
\[
\nu_{x_0,\sigma_0,c}(z,s)
:=
\mathbb P\bigl(\sigma_s(z)=c \,\big|\, \sigma_0,\ \text{terminal sausage geometry is }x_0,\ \text{cut depth is }s\bigr),
\]
(see Figure~\ref{fig:nu_definition}), and the centered, rescaled bias
\begin{equation*}
g_{x_0,\sigma_0,c}(z,s):=\frac{q}{q-1}\Big(\nu_{x_0,\sigma_0,c}(z,s)-\frac1q\Big)\in\Big[-\frac1{q-1},\,1\Big].
\end{equation*}
The quantity \(g_{x_0,\sigma_0,c}(z,s)\) is the interface bias fed from the truncated
terminal sausage at depth \(s\) into the upper influence kernel in \eqref{eq:mag-sausage}; the only
general fact used below is that \(|g_{x_0,\sigma_0,c}(z,s)|\le1\) for all \(z,s\). For later
use, let
\[
\tilde b_s(x_0):=\#\{\text{branch-out events in the restriction of }x_0\text{ to depth }[0,s]\}.
\]
For example, if \(\tilde b_s(x_0)=0\), then the truncated terminal sausage contains no non-oblivious update and
therefore acts as the identity at depth \(s\): for monochromatic \(\mathfrak c\equiv c\), $g_{x_0,\mathfrak c,c}(z,s)=1$ for every $z\in\mathbb{Z}^d$.

%===============================================
% =========Terminal sausage
% ==============================================
\begin{figure}[htbp]
\centering
% LEFT PANEL: Full terminal sausage
% ==============================================
\begin{subfigure}[b]{0.46\textwidth}
\centering
\begin{tikzpicture}[
    x=0.55cm, y=0.45cm,
    every node/.style={font=\tiny},
    grid/.style={gray!50, dashed, line width=0.5pt},
    hist/.style={black, line width=1.1pt, line cap=round, line join=round},
    death/.style={circle, fill=black, inner sep=1pt},
    nodept/.style={circle, fill=black, inner sep=1pt},
    initpt/.style={circle, fill=teal!80!black, inner sep=1.2pt}
]
  % Grid
  \foreach \y in {0, 1, 2, 3, 4, 6, 7} {
    \draw[grid] (-0.5, \y) -- (6, \y);
  }
  \node at (2.1, 5.2) {$\vdots$};

  % Y-axis
  \draw[thick] (-0.5, 0) -- (-0.5, 7);
  \node[left] at (-0.5, 0) {$0$};
  \node[left] at (-0.5, 3) {$\tau$};
  \node[left] at (-0.5, 7) {$t$};
  \node[nodept] at (-0.5, 0) {};
  \node[nodept] at (-0.5, 3) {};
  \node[nodept] at (-0.5, 7) {};

  % Spatial Axis Label (Z^d)
  \node[right, font=\scriptsize] at (6, 0) {$\mathbb{Z}^d$};

  % Init Config (sigma_0)
  \foreach \x in {0.5, 1.3, 2.1, 2.9, 3.7, 4.5, 5.3} {
    \node[initpt] at (\x, 0) {};
  }
  \node[below right, text=teal!80!black] at (4.5, -0.1) {$\sigma_0$};

  \node[below, text=teal!80!black] at (3.7,0) {$c$};

  % Root Point (z, \tau)
  \coordinate (ZTau) at (2.9, 3);
  \node[above] at (ZTau) {$(z,\tau)$};
  \node[nodept] at (ZTau) {};
  \draw[thick, dotted] (ZTau) -- (2.9, 0) node[below] {$z$};

  % Branching Process
  \draw[hist] (ZTau) -- (2.9, 1.8);
  \draw[hist] (2.1, 1.8) -- (3.7, 1.8);

  % Left leg (dies)
  \draw[hist] (2.1, 1.8) -- (2.1, 0.8); 
  \node[death] at (2.1, 0.8) {};

  % Right leg (survives to 0)
  \draw[hist] (3.7, 1.8) -- (3.7, 0);

  % Brace for x_0
  \draw[decorate, decoration={brace, mirror, amplitude=4pt}, thick]
    (4.1, 0.1) -- (4.1, 2.9) node[midway, right=6pt] {$x_0$};

  % Equation Text (Updated to p_1 + 1/q)
  \node[right] at (4.4, 2.8) {$g_{x_0,\sigma_0,c}(z,\tau)= p_1$};

\end{tikzpicture}
\caption{Fully coalesced bottom-most sausage}
\end{subfigure}%
\hspace{-0.02\textwidth}%
% RIGHT PANEL: Partial terminal sausage
% =============================================
\begin{subfigure}[b]{0.46\textwidth}
\centering
\begin{tikzpicture}[
    x=0.55cm, y=0.45cm,
    every node/.style={font=\tiny},
    grid/.style={gray!50, dashed, line width=0.5pt},
    hist/.style={black, line width=1.1pt, line cap=round, line join=round},
    death/.style={circle, fill=black, inner sep=1pt},
    nodept/.style={circle, fill=black, inner sep=1pt},
    initpt/.style={circle, fill=teal!80!black, inner sep=1.2pt}
]
  % Grid
  \foreach \y in {0, 1, 2, 3, 4, 6, 7} {
    \draw[grid] (-0.5, \y) -- (6, \y);
  }
  \node at (2.1, 5.2) {$\vdots$};

  % Y-axis
  \draw[thick] (-0.5, 0) -- (-0.5, 7);
  \node[left] at (-0.5, 0) {$0$};
  \node[left] at (-0.5, 3) {$\tau$};
  \node[left] at (-0.5, 7) {$t$};
  \node[nodept] at (-0.5, 0) {};
  \node[nodept] at (-0.5, 3) {};
  \node[nodept] at (-0.5, 7) {};

  % Spatial Axis Label (Z^d)
  \node[right, font=\scriptsize] at (6, 0) {$\mathbb{Z}^d$};

  % Init Config (sigma_0)
  \foreach \x in {0.5, 1.3, 2.1, 2.9, 3.7, 4.5, 5.3} {
    \node[initpt] at (\x, 0) {};
  }
  \node[below right, text=teal!80!black] at (4.5, -0.1) {$\sigma_0$};

  % Root Point (z, \tau)
  \coordinate (ZTau) at (2.9, 3);
  \node[above] at (ZTau) {$(z,\tau)$};
  \node[nodept] at (ZTau) {};
  \draw[thick, dotted] (ZTau) -- (2.9, 0) node[below] {$z$};

  % Branching Process
  \draw[hist] (ZTau) -- (2.9, 1.8);
  \draw[hist] (2.1, 1.8) -- (3.7, 1.8);

  % Left outer leg
  \draw[hist] (2.1, 1.8) -- (2.1, 0);

  % Right branch splits again
  \draw[hist] (3.7, 1.8) -- (3.7, 1.0);
  \draw[hist] (2.9, 1.0) -- (4.5, 1.0);

  % Inner left leg
  \draw[hist] (2.9, 1.0) -- (2.9, 0);
  % Outer right leg
  \draw[hist] (4.5, 1.0) -- (4.5, 0);

  % Brace for partial x_0
  \draw[decorate, decoration={brace, mirror, amplitude=4pt}, thick]
    (5.2, 0.1) -- (5.2, 2.9) node[midway, right=6pt, align=left] {partial part of\\sausage $x_0$};

\end{tikzpicture}
\caption{Truncated partial bottom-most sausage}
\end{subfigure}

\caption{Effect of the initial configuration $\sigma_0$ through the bottom-most sausage $x_0$ (the slab intersecting time $0$).
\textbf{Left:} The history has coalesced to a singleton before depth $\tau$, so the terminal contribution reduces to the usual one-lineage bias (here $g=p_1$).
\textbf{Right:} The boundary at time $0$ truncates the slab before coalescence, producing a partial sausage in which several sites of $\sigma_0$ can influence $(z,\tau)$.
}
\label{fig:nu_definition}
\end{figure}
%=========Terminal sausage end============
% ==============================================

For \(t\ge0\), write
\[
n_t:=\lfloor t\rfloor,
\qquad
\theta_t:=t-n_t\in[0,1).
\]

By the strong Markov property at the regeneration times, the complete
sausages above the terminal partial sausage form an i.i.d.\ concatenation.
By color symmetry, the transition kernel associated with a complete sausage
\(x\) preserves the uniform law and acts as multiplication by \(p(x)\) on 
centered color functions, while the terminal partial sausage contributes
\(g_{x_0,\sigma_0,c}\). Since \(g\) is normalized by the factor \(q/(q-1)\),
converting back to the unscaled bias \(\mathfrak m_t\) contributes the factor
\((q-1)/q\). Summing over the total length and displacement of the complete
sausages therefore gives, for every \(t\ge0\),
\begin{equation}\label{eq:mag-sausage}
    \mathfrak m_t(\sigma_0,c)
    =
    \frac{q-1}{q}
    \sum_{\tau=0}^{n_t}
    \sum_{x_0\in\Xi}
    \mu(x_0)\mathbf 1_{\{\ell(x_0)>\theta_t+\tau\}}
    \sum_{z\in\mathbb Z^d}
    \IK(z,n_t-\tau)
    g_{x_0,\sigma_0,c}(z,\theta_t+\tau).
\end{equation}
The spatial sum is absolutely convergent by
Lemma~\ref{lem:IK-l1}.

\begin{lemma}[Terminal-sausage tails]\label{lem:terminal-sausage-tails}
For every integer \(\tau\ge0\), every \(\theta\in[0,1)\), and every \(j\ge1\),
\[
\mu\bigl(\ell(x_0)>\tau+\theta,\ \tilde b_{\tau+\theta}(x_0)=0\bigr)
\ge c_0e^{-\eta\tau},
\]
and
\[
\mu\bigl(\ell(x_0)>\tau+\theta,\ \tilde b_{\tau+\theta}(x_0)\ge j\bigr)
\le
\frac{C_{\mathrm{jnt}}e^{-\eta}}{1-e^{-\eta}}\,
\vartheta^{\,j}e^{-\eta\tau},
\]
with $c_0:=
    \frac{c_{\mathrm{len}}e^{-\eta}}{1-e^{-\eta}}.$ In particular,
\[
\mu\bigl(\ell(x_0)>\tau+\theta,\ \tilde b_{\tau+\theta}(x_0)\ge1\bigr)
\le
\frac{C_{\mathrm{jnt}}C_{\mathrm{bp}}e^{-\eta}}{1-e^{-\eta}}\,
\alpha\,e^{-\eta\tau}.
\]
\end{lemma}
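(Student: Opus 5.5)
Both bounds come from summing the sausage-level estimates of Proposition~\ref{prop:sausagetail} over the full length of the terminal sausage. The event $\{\ell(x_0)>\tau+\theta\}$ is a union over the actual length $\ell\ge\tau+1$ (lengths are integers and $\theta<1$, so $\ell>\tau+\theta$ iff $\ell\ge\tau+1$). Under $\mu$, $x_0$ is just a sausage sampled from the common law (its restriction to depth $[0,\tau+\theta]$ is what enters $\tilde b$), so I would compute with $x_1\sim\mu$ and decompose according to $\ell(x_1)=\ell$.

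\emph{Lower bound.} I would use the vertical sausages exhibited in the proof of \eqref{eq:len-pmf}. On the event that there is no clock ring up to time $\ell$ and the first stop coin succeeds at time $\ell$, we have $b(x_1)=0$, hence $\tilde b_s(x_1)=0$ for every $s$. This event has probability exactly $c_{\mathrm{len}}e^{-\eta\ell}$. Summing over $\ell\ge\tau+1$ gives
\[
\sum_{\ell\ge\tau+1}c_{\mathrm{len}}e^{-\eta\ell}=\frac{c_{\mathrm{len}}e^{-\eta}}{1-e^{-\eta}}\,e^{-\eta\tau}=c_0e^{-\eta\tau}.
\]

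\emph{Upper bound.} The branch-out events in the truncated slab are among those of the whole sausage, so $\tilde b_{\tau+\theta}(x_1)\le b(x_1)$. Hence $\{\ell(x_1)=\ell,\ \tilde b\ge j\}\subset\{\ell(x_1)=\ell,\ b(x_1)\ge j\}$. By \eqref{eq:mu-joint} each such event has mass at most $C_{\mathrm{jnt}}\vartheta^je^{-\eta\ell}$. Summing the geometric series over $\ell\ge\tau+1$ gives the factor $e^{-\eta}/(1-e^{-\eta})\cdot e^{-\eta\tau}$, which is the stated bound. The ``in particular'' is the case $j=1$ with $\vartheta=C_{\mathrm{bp}}\alpha$.

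The only step needing care is the identification of the law of the terminal sausage $x_0$ with $\mu$, together with the monotonicity $\tilde b\le b$. For the first, I would note that in \eqref{eq:mag-sausage} the weight is literally $\mu(x_0)\mathbf 1\{\ell(x_0)>\theta_t+\tau\}$: by the strong Markov property the sausage straddling time $0$ is a fresh $\mu$-sample started at a regeneration time, and the time-$0$ boundary only truncates it. So no size-biasing occurs, and the estimates above apply verbatim.
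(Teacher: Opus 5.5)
Your proposal is correct and is essentially the paper's proof. Both reduce $\{\ell(x_0)>\tau+\theta\}$ to $\{\ell(x_0)\ge\tau+1\}$, use the vertical sausages of mass $c_{\mathrm{len}}e^{-\eta\ell}$ for the lower bound, and sum \eqref{eq:mu-joint} over $\ell\ge\tau+1$ (via $\tilde b_{\tau+\theta}\le b$) for the upper bound. Your inclusion argument for the lower bound is slightly more careful than the paper's, which asserts $\mu(\ell(x_0)=\ell,\ \tilde b_{\tau+\theta}(x_0)=0)=c_{\mathrm{len}}e^{-\eta\ell}$ as an equality. That equality only holds as $\ge$, since a sausage may first branch at depth greater than $\tau+\theta$.
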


\begin{proof}
Since \(\ell(x_0)\in\mathbb Z_{\ge1}\), the event \(\{\ell(x_0)>\tau+\theta\}\) is exactly
\(\{\ell(x_0)\ge\tau+1\}\). For \(\tilde b_{\tau+\theta}(x_0)=0\), the proof of
Proposition~\ref{prop:sausagetail} shows that
\[
\mu\bigl(\ell(x_0)=\ell,\ \tilde b_{\tau+\theta}(x_0)=0\bigr)=c_{\mathrm{len}}e^{-\eta\ell}
\qquad(\ell\ge \tau+1).
\]
Summing over \(\ell\ge\tau+1\) yields the first identity. Likewise, summing the joint estimate
\eqref{eq:mu-joint} over \(\ell\ge\tau+1\) gives the second display.
\end{proof}

\begin{prop}[Monochromatic lower bound]\label{prop:all-one-magnetization-bound}
After decreasing \(\beta_0(d,q)\), there exist constants
\(c_{\mathrm{all}}>0\)  and \(t_{\mathrm{all}}<\infty\),
depending only on \(d,q,\beta\), such that, for the monochromatic initial configuration \(\mathfrak c\equiv c\),
\[
    \mathfrak m_t(\mathfrak c,c)
    \ge
    c_{\mathrm{all}}e^{-(1+\kappa)t}
    \qquad\text{for all }t\ge t_{\mathrm{all}}.
\]
\end{prop}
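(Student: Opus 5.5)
We start from the representation \eqref{eq:mag-sausage} with \(\sigma_0=\mathfrak c\equiv c\). Because the initial configuration is monochromatic, the interface bias \(g_{x_0,\mathfrak c,c}(z,s)\) does not depend on \(z\): by translation invariance it is a function \(g(x_0,s)\) only. The spatial sum therefore factorizes as \(\sum_z\IK(z,n_t-\tau)\, g(x_0,\theta_t+\tau)\). We will show that the total spatial mass \(\bar\IK(u):=\sum_z\IK(z,u)\) is positive and of exact order \(e^{-(1+\kappa)u}\). We will then show that the terminal-sausage factor is dominated by the non-branching configurations, for which \(g=1\).

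\textbf{Step 1 (total mass of \(\IK\)).} Summing \eqref{eq:IK-tilt} over \(z\) and using \(\sum_z\varphi_\ell(z)=1\) gives \(\sum_z\Phi_{\mathbf s}(z)=1\) for every profile, hence \(\sum_z K(z;r,t)=1\). Therefore
\[
\bar\IK(u)=e^{-(1+\kappa)u}\sum_{r\ge1}\tilde{\mathbb P}_\kappa(S_r=u)
\]
exactly. By \eqref{eq:renewal-lb-ps} this is at least \(\tfrac{p_{\mathrm{stop}}}{6}e^{-(1+\kappa)u}\) for \(u\ge t_{\mathrm{ren}}\). For every \(u\ge0\) it is nonnegative, and it is bounded above by \(C e^{-(1+\kappa)u}\) by Lemma~\ref{lem:IK-l1}. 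The positivity holds because each \(\tilde{\mathbb P}_\kappa(S_r=u)\ge0\); here the signed weights enter only through \(Z_p>0\) and the positive length weights \(w_\ell\) of \eqref{eq:w-ell}. So the sign issue disappears for monochromatic data.

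\textbf{Step 2 (splitting the terminal sausage).} Write \(\mathfrak m_t=\frac{q-1}{q}\sum_{\tau=0}^{n_t}\bar\IK(n_t-\tau)\,E_\tau\), where
\[
E_\tau:=\sum_{x_0}\mu(x_0)\mathbf 1\{\ell(x_0)>\theta_t+\tau\}\,g(x_0,\theta_t+\tau).
\]
Split \(E_\tau\) according to whether \(\tilde b_{\theta_t+\tau}(x_0)=0\), in which case \(g=1\), or \(\tilde b\ge1\), in which case \(|g|\le1\). Lemma~\ref{lem:terminal-sausage-tails} then gives
\[
E_\tau\ge (c_0-C'\alpha)e^{-\eta\tau}\ge \tfrac{c_0}{2}e^{-\eta\tau}
\]
after decreasing \(\beta_0\). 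The constant \(c_0\) depends only on \(p_{\mathrm{stop}}\), while \(C'=C_{\mathrm{jnt}}C_{\mathrm{bp}}e^{-\eta}/(1-e^{-\eta})\) is an absolute constant times \(\alpha\). Hence every term of the sum is nonnegative, and it suffices to keep the single term \(\tau=0\). For \(n_t\ge t_{\mathrm{ren}}\), that term gives
\[
\mathfrak m_t\ge \frac{q-1}{q}\cdot\frac{p_{\mathrm{stop}}}{6}e^{-(1+\kappa)n_t}\cdot\frac{c_0}{2}\ge c_{\mathrm{all}}e^{-(1+\kappa)t},
\]
using \(n_t\le t\). This is the claim, with \(t_{\mathrm{all}}=t_{\mathrm{ren}}+1\).

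\textbf{Main obstacle.} The delicate point is justifying the \(z\)-independence of \(g\) and the exact identity \(\sum_z K=1\), that is, interchanging the \(z\)-sums in \eqref{eq:mag-sausage}. Absolute convergence of these sums is supplied by Lemma~\ref{lem:IK-l1}. The other point to check is the smallness comparison \(C'\alpha<c_0/2\): it must hold with constants that are uniform in \(\beta\), and this is exactly what the "after decreasing \(\beta_0\)" clause provides. If any of the \(\tau\ge1\) terms were not provably nonnegative, we could instead bound their absolute contribution. The worst case is \(\sum_\tau C e^{-(1+\kappa)(n_t-\tau)}\cdot C'\alpha e^{-\eta\tau}\), which is \(O(\alpha)e^{-(1+\kappa)n_t}\) because \(\eta>1+\kappa\). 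This is still absorbed by the main term.
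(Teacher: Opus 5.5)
Your proof is correct, but it handles the branching terminal sausages differently from the paper. The paper splits \eqref{eq:mag-sausage} into $\Sigma_0$ (where $\tilde b_{\theta_t+\tau}(x_0)=0$ and $g\equiv1$) and $\Sigma_{\ge1}$. It uses $\sum_z K(z;r,s)=1$ to see that the $\Sigma_0$ summands are nonnegative, and keeps only $\tau=0$. It then controls $\Sigma_{\ge1}$ only in absolute value, using $|g|\le1$, Lemma~\ref{lem:terminal-sausage-tails}, and the signed-kernel estimate $\sum_z|\IK(z,s)|\le C_{\IK,1}e^{-(1+\kappa)s}$ of Lemma~\ref{lem:IK-l1}. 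Summing over $\tau$ with $\eta>1+\kappa$ gives $\Sigma_{\ge1}\ge-C_{\rm neg}\alpha e^{-(1+\kappa)n_t}$, which is exactly your fallback.

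You instead use translation invariance of the monochromatic start to make $g_{x_0,\mathfrak c,c}(\cdot,s)$ independent of $z$. Each $\tau$-term then factors as $E_\tau\sum_z\IK(z,n_t-\tau)$ with nonnegative total mass, so you get nonnegativity term by term. This is the same mechanism the paper uses later for Theorem~\ref{thm:mono-extremal}, where summing over observation sites collapses the kernel to its total mass.

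Your route never needs the Fourier-based $\ell^1$ bound on the signed kernel. The interchange of sums only requires absolute convergence, which already follows from $|p|\le1$: $\sum_z|\IK(z,u)|\le\sum_{r\ge1}\mu^{\otimes r}\bigl(\ell(x_1)+\cdots+\ell(x_r)=u\bigr)\le1$. The paper's route uses nothing about $g$ on branching sausages beyond $|g|\le1$. It is therefore the same template as Proposition~\ref{prop:magnetization-bound}, and would also apply to initial data that are not translation invariant, at the price of the Fourier input.
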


\begin{proof}
Fix \(t\ge0\), and for \(0\le \tau\le n_t\) write
\[
s_\tau:=\theta_t+\tau.
\]
We split \eqref{eq:mag-sausage} according to whether the truncated terminal sausage up to depth
\(s_\tau\) branches:
\[
\mathfrak m_t(\mathfrak c,c)
=
\frac{q-1}{q}\bigl(\Sigma_0(t)+\Sigma_{\ge1}(t)\bigr).
\]

If \(\tilde b_{s_\tau}(x_0)=0\), then the truncated terminal sausage is vertical and acts as the
identity at depth \(s_\tau\). Since the initial condition is monochromatic, we therefore have
\[
g_{x_0,\mathfrak c,c}(z,s_\tau)=1
\qquad\text{for every }z\in\mathbb Z^d.
\]
Hence
\[
\Sigma_0(t)
=
\sum_{\tau=0}^{n_t}\mu(\ell(x_0)> s_\tau,\ \tilde b_{s_\tau}(x_0)=0)\sum_{z\in\mathbb Z^d}\IK(z,n_t-\tau).
\]
By Lemma~\ref{lem:terminal-sausage-tails},
\[
\mu(\ell(x_0)> s_\tau,\ \tilde b_{s_\tau}(x_0)=0)=c_0e^{-\eta\tau}.
\]
For \(s\ge1\), \(\sum_z K(z;r,s)=1\), so by \eqref{eq:IK-tilt},
\[
\sum_{z\in\mathbb Z^d}\IK(z,s)
=
e^{-(1+\kappa)s}\sum_{r\ge1}\tilde{\mathbb P}_\kappa(S_r=s).
\]
Together with \(\sum_z\IK(z,0)=1\), this shows that every summand in
\(\Sigma_0(t)\) is nonnegative. Keeping only the \(\tau=0\) term and using
\eqref{eq:renewal-lb-ps}, for all \(n_t\ge t_{\mathrm{ren}}\),
\begin{equation}\label{eq:Sigma0-lb}
\Sigma_0(t)\ge c_0e^{-(1+\kappa)n_t}
\sum_{r\ge1}\tilde{\mathbb P}_\kappa(S_r=n_t)
\ge \frac{c_0p_{\mathrm{stop}}}{6}e^{-(1+\kappa)n_t}.
\end{equation}

For the remainder, use \(|g_{x_0,\mathfrak c,c}|\le1\) together with
Lemma~\ref{lem:terminal-sausage-tails}:
\begin{align*}
\Sigma_{\ge1}(t)
&\ge
-
\sum_{\tau=0}^{n_t}\mu(\ell(x_0)> s_\tau,\ \tilde b_{s_\tau}(x_0)\ge1)
\sum_{z\in\mathbb Z^d}|\IK(z,n_t-\tau)| \\
&\ge
-
\frac{C_{\mathrm{jnt}}C_{\mathrm{bp}}e^{-\eta}}{1-e^{-\eta}}\,
\alpha\sum_{\tau=0}^{n_t}e^{-\eta\tau}\sum_{z\in\mathbb Z^d}|\IK(z,n_t-\tau)|.
\end{align*}
Using Lemma~\ref{lem:IK-l1}, we obtain
\begin{align}\label{eq:Sigma-ge1-lb}
\Sigma_{\ge1}(t)
\ge
-
C_{\mathrm{neg}}\alpha\,e^{-(1+\kappa)n_t}.
\end{align}

Combining \eqref{eq:Sigma0-lb} and \eqref{eq:Sigma-ge1-lb},
\[
\mathfrak m_t(\mathfrak c,c)
\ge
\frac{q-1}{q}
e^{-(1+\kappa)n_t}
\left(\frac{c_0p_{\mathrm{stop}}}{6}-C_{\mathrm{neg}}\alpha\right).
\]
Decreasing \(\beta_0\) so that
\(\alpha\) is small enough, the negative term is at most half of the positive contribution.
Absorbing the factor \((q-1)/q\) into the constant, we obtain
\[
    \mathfrak m_t(\mathfrak c,c)
    \ge c e^{-(1+\kappa)n_t}
    \ge c_{\mathrm{all}}e^{-(1+\kappa)t}
\]
for all sufficiently large \(t\).
\end{proof}

\begin{prop}[Uniform exponential bound on one-site bias]\label{prop:magnetization-bound}
There exists \(C_{\mathrm m}<\infty\) such that, for every initial condition
\(\sigma_0\), every color \(c\), and every \(t\ge0\),
\[
    |\mathfrak m_t(\sigma_0,c)|
    \le
    C_{\mathrm m}e^{-(1+\kappa)t}.
\]
\end{prop}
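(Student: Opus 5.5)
The plan is to start from the sausage representation \eqref{eq:mag-sausage}, which holds for every initial condition \(\sigma_0\) and color \(c\), and bound it termwise in absolute value. The terminal-sausage factor has no sign control for a general \(\sigma_0\), so we use only the crude bound \(|g_{x_0,\sigma_0,c}(z,s)|\le1\). This gives
\[
|\mathfrak m_t(\sigma_0,c)|
\le
\frac{q-1}{q}\sum_{\tau=0}^{n_t}\mu\bigl(\ell(x_0)>\theta_t+\tau\bigr)\sum_{z\in\mathbb Z^d}|\IK(z,n_t-\tau)|.
\]
The resulting bound is uniform in \(\sigma_0\) and \(c\), as the statement requires.

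Next we bound each factor separately. For the terminal sausage, \(\{\ell(x_0)>\theta_t+\tau\}=\{\ell(x_0)\ge\tau+1\}\). The tail estimate \eqref{eq:len-tail}, or equivalently the sum of the two bounds in Lemma~\ref{lem:terminal-sausage-tails}, gives
\[
\mu\bigl(\ell(x_0)\ge\tau+1\bigr)\le C e^{-\eta\tau}.
\]
For the kernel, Lemma~\ref{lem:IK-l1} gives \(\sum_z|\IK(z,n_t-\tau)|\le C_{\IK,1}e^{-(1+\kappa)(n_t-\tau)}\). Substituting both bounds,
\[
|\mathfrak m_t(\sigma_0,c)|\le C\,C_{\IK,1}\,e^{-(1+\kappa)n_t}\sum_{\tau\ge0}e^{-(\eta-(1+\kappa))\tau}.
\]

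The main point to check is that this geometric series converges. We have \(\eta=1+\log2>1>1+\kappa=\mathfrak r\), because \(\kappa<0\) by Lemma~\ref{lem:kappa bound}. Hence the series is bounded by \((1-e^{-(\eta-\mathfrak r)})^{-1}\), a constant independent of \(t\). This is exactly why the stop-coin parameter was chosen with \(\eta>1\): the terminal sausage then decays faster than the bulk kernel, so it cannot shift the exponential rate.

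Finally, we pass from \(n_t\) to \(t\). Since \(t-n_t=\theta_t<1\) and \(1+\kappa<1\), we have \(e^{-(1+\kappa)n_t}\le e^{1+\kappa}e^{-(1+\kappa)t}\le e\,e^{-(1+\kappa)t}\). For small \(t\), for instance \(n_t=0\), the same computation still applies, with \(\IK(z,0)=\mathbf 1\{z=0\}\) covered by Lemma~\ref{lem:IK-l1}. Collecting the constants gives \(C_{\mathrm m}\), which depends only on \(d,q,\beta\).
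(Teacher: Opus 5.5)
Your proposal is correct and follows the paper's own proof essentially step for step: you bound \eqref{eq:mag-sausage} termwise using $|g_{x_0,\sigma_0,c}|\le 1$, the length tail \eqref{eq:len-tail} and Lemma~\ref{lem:IK-l1}, sum the geometric series using $\eta>1+\kappa$, and absorb $\theta_t<1$ into the constant. One minor wording point: the first estimate in Lemma~\ref{lem:terminal-sausage-tails} is stated only as a lower bound (its proof gives equality), so \eqref{eq:len-tail}, which you cite first, is the correct justification for the upper bound.
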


\begin{proof}
Fix \(t\ge0\). By \eqref{eq:mag-sausage},
\[
|\mathfrak m_t(\sigma_0,c)|
\le
\sum_{\tau=0}^{n_t}\mu(\ell(x_0)\ge \tau+\theta_t)\sum_{z\in\mathbb Z^d}|\IK(z,n_t-\tau)|.
\]
Since \(\ell(x_0)\in\mathbb Z_{\ge1}\), the event \(\{\ell(x_0)>\tau+\theta_t\}\) is contained in
\(\{\ell(x_0)>\tau\}\). Therefore Proposition~\ref{prop:sausagetail} and Lemma~\ref{lem:IK-l1}
yield
\begin{align*}
|\mathfrak m_t(\sigma_0,c)|
&\le
\sum_{\tau=0}^{n_t}c_{\mathrm{tail}}e^{-\eta\tau}C_{\IK,1}e^{-(1+\kappa)(n_t-\tau)}\\
&=
c_{\mathrm{tail}}C_{\IK,1}e^{-(1+\kappa)n_t}
\sum_{\tau=0}^{n_t}e^{-(\eta-(1+\kappa))\tau}
\le
C\,e^{-(1+\kappa)n_t}.
\end{align*}
Since \(n_t\ge t-1\), this is at most \(C_{\mathrm m}e^{-(1+\kappa)t}\).
\end{proof}

\paragraph{Transfer to a finite torus.}
The two propositions above concern the i.i.d.\ graphical construction on
\(\mathbb Z^d\). To transfer them to the logarithmic time scale on
\(\Lambda_n\), we first record a spatial-range estimate for the non-coalescing
branching walk from Subsection~\ref{subsec:branch-length}. Let \(G\) be either
\(\mathbb Z^d\) or a torus \(\Lambda_n\), write \(d_G\) for its graph distance,
and let \(Z_u^x(y)\) be the number of particles at \(y\) at elapsed time \(u\)
when the walk starts from one particle at \(x\). Recall that its mean offspring
number is \(m=2d\alpha<1\).

\begin{lemma}[Deviation for the dominating branching walk]
\label{lem:brw-deviation}
Fix \(\theta>0\) such that \(m\cosh\theta<1\), and, for \(c\ge0\), set
\[
        \lambda(c):=1+\theta c-m\cosh\theta>0.
\]
Then, uniformly over \(G=\mathbb Z^d\) and \(G=\Lambda_n\), over \(x\in G\), and over
\(r\ge0\),
\[
\mathbb P\left(
    \exists u\ge r,\ \exists y\in G:
    d_G(x,y)>cu,\ Z_u^x(y)>0
\right)
\le
2^d e^{-\lambda(c)r}.
\]
For \(G=\Lambda_n\), lift every genealogical path from a fixed lift
\(\tilde x\) of \(x\), and denote the resulting particle counts on
\(\mathbb Z^d\) by \(\widetilde Z_u^{\tilde x}(z)\); for
\(G=\mathbb Z^d\), use the process itself. Then, for every \(R\ge0\),
\[
\mathbb P\left(
    \exists u\ge0,\ \exists z\in\mathbb Z^d:
    \widetilde Z_u^{\tilde x}(z)>0,\
    \|z-\tilde x\|_\infty>R
\right)
\le
2^d e^{-\theta R}.
\]
\end{lemma}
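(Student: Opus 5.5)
The natural approach is an exponential-martingale (many-to-one) argument for the branching random walk, reduced to a first-moment bound over integer times and coordinates. First I will reduce the multidimensional displacement to one-dimensional ones. For \(G=\mathbb Z^d\), \(d_G(x,y)=\|y-x\|_1\); for \(G=\Lambda_n\), the torus distance is at most the \(\ell^1\) norm of any lift. So in both cases it suffices to work with the lifted process \(\widetilde Z\) on \(\mathbb Z^d\), which is itself a branching random walk: each particle waits an \(\mathrm{Exp}(1)\) time and is then either killed (probability \(1-\alpha\)) or replaced by one child at each of the \(2d\) neighbours (probability \(\alpha\)). Its law does not depend on \(n\), which gives the uniformity over \(G\). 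For the first claim I will compare with a weighted sum of coordinate-wise exponentials. Then I will handle the second claim by a union bound over the \(2d\) signed directions, or more economically via \(\prod_j\) or \(\sum_j\) of \(\cosh\) weights; this is where the \(2^d\) prefactor should come from.

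The key step is the exponential functional. For \(\theta\) as in the statement, consider
\[
\mathcal M_u:=\sum_z \widetilde Z_u(z)\,\prod_{j=1}^d \cosh\bigl(\theta(z_j-\tilde x_j)\bigr)
\]
or, more simply, the sum over signs \(\epsilon\in\{\pm1\}^d\) of \(\sum_z\widetilde Z_u(z)e^{\theta\epsilon\cdot(z-\tilde x)}\). Applying the generator to \(e^{\theta\epsilon\cdot z}\) for a single particle gives rate
\[
-1+\alpha\sum_{y\sim 0}e^{\theta\epsilon\cdot y}=-1+2\alpha\sum_j\cosh\theta\le -1+m\cosh\theta .
\]
Hence \(e^{(1-m\cosh\theta)u}\sum_z\widetilde Z_u(z)e^{\theta\epsilon\cdot(z-\tilde x)}\) is a nonnegative supermartingale with initial value \(1\). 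On the event that some particle at time \(u\) has \(\|z-\tilde x\|_1>cu\), some sign vector \(\epsilon\) has \(\epsilon\cdot(z-\tilde x)>cu\). So the sum over \(\epsilon\) of these supermartingales exceeds \(e^{(1-m\cosh\theta)u+\theta cu}=e^{\lambda(c)u}\).

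Next I will control the supremum over all \(u\ge r\). For this I apply the maximal (Ville) inequality to the process
\[
Y_u:=\sum_\epsilon e^{(1-m\cosh\theta)u}\sum_z\widetilde Z_u(z)e^{\theta\epsilon\cdot(z-\tilde x)},
\]
which is a supermartingale with \(Y_0=2^d\). The difficulty is that the threshold \(e^{\lambda(c)u}\) grows with \(u\). I will handle this by using \(e^{\theta c u}\ge e^{\theta c r}\) together with the monotonicity of \(e^{(1-m\cosh\theta)u}\): on the event, \(Y_u\ge e^{(1-m\cosh\theta)u}e^{\theta cu}\ge e^{\lambda(c) r}\) for \(u\ge r\). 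Since \(\mathbb E Y_r\le 2^d\), Ville's inequality applied from time \(r\) gives \(\mathbb P(\sup_{u\ge r}Y_u\ge e^{\lambda(c)r})\le 2^d e^{-\lambda(c)r}\).

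The second claim is the same argument with the time-discount dropped. The process \(\sum_\epsilon\sum_z\widetilde Z_u(z)e^{\theta\epsilon\cdot(z-\tilde x)}\) is itself a supermartingale, since \(m\cosh\theta<1\) makes the drift negative, and it starts at \(2^d\). A particle with \(\|z-\tilde x\|_\infty>R\) has some coordinate exceeding \(R\) in absolute value, so for the matching sign vector the summand is at least \(e^{\theta R}\), up to the nonnegative product structure (I would use the product \(\prod_j\cosh\)-type weight to make this clean). Ville's inequality then gives \(2^de^{-\theta R}\).

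I expect the main obstacle to be the bookkeeping that makes the weights simultaneously exact supermartingales and pointwise lower bounds with the stated constant \(2^d\) and rate \(\lambda(c)\). Using \(\prod_j\cosh(\theta(z_j-\tilde x_j))\), the single-particle drift \(\alpha\sum_{y\sim0}\prod_j\cosh\) would need to be compared to \(m\cosh\theta\); this is fine because a neighbour step changes exactly one coordinate, and \(\cosh(a\pm\theta)\) averages to \(\cosh a\cosh\theta\). For the first claim I would also need \(\prod_j\cosh(\theta w_j)\ge 2^{-d}e^{\theta\|w\|_1}\), which holds coordinatewise. The other point to check is that the supermartingale property holds for the continuous-time process, including at the jump times. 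Integrability there is fine since \(m\cosh\theta<1\) bounds the expected population.
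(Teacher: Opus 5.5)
Your proposal is correct and is essentially the paper's proof. For each sign vector $\varepsilon\in\{\pm1\}^d$, the process $e^{(1-m\cosh\theta)u}\sum_z\widetilde Z_u^{\tilde x}(z)e^{\theta\varepsilon\cdot(z-\tilde x)}$ is a nonnegative mean-one martingale; the paper gets this from the many-to-one formula, you from the generator. Choosing $\varepsilon$ to match the coordinate signs makes the weight $e^{\theta\|z-\tilde x\|_1}\ge e^{\theta d_G(x,y)}$, and Ville's inequality gives both bounds. Your sum of the $2^d$ processes (equivalently, the $2^d\prod_j\cosh$ weight) plays the role of the paper's union bound over $\varepsilon$.

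Two small points to tidy up. The growing threshold is no obstacle: on the event the process exceeds $e^{\lambda(c)u}\ge e^{\lambda(c)r}$, so Ville from time $0$ already suffices. In the second claim, match all coordinate signs, so that $\varepsilon\cdot(z-\tilde x)=\|z-\tilde x\|_1\ge\|z-\tilde x\|_\infty>R$; no product structure is then needed.
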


\begin{proof}
Use the lifted particle counts from the statement. For each
\(\varepsilon\in\{\pm1\}^d\), define
\[
W_u^\varepsilon
:=
\sum_{z\in\mathbb Z^d}
\widetilde Z_u^{\tilde x}(z)
e^{\theta\varepsilon\cdot(z-\tilde x)},
\qquad
\mathcal M_u^\varepsilon
:=
e^{-(m\cosh\theta-1)u}W_u^\varepsilon.
\]
The many-to-one formula \cite{ManyToOne} gives
\[
\mathbb E[W_u^\varepsilon]
=
e^{(m-1)u}
\mathbb E_{\tilde x}
\left[e^{\theta\varepsilon\cdot(S_u^{(m)}-\tilde x)}\right]
=
e^{(m\cosh\theta-1)u},
\]
where \(S^{(m)}\) is continuous-time simple random walk of total jump rate \(m\).
The branching property therefore makes \((\mathcal M_u^\varepsilon)_{u\ge0}\) a nonnegative
mean-one martingale.

Suppose that the event in the statement occurs at time \(u\), and choose a lifted particle
\(z\) above \(y\). Then
\[
        |z-\tilde x|_1\ge d_G(x,y)>cu.
\]
Choose \(\varepsilon_j=1\) when \(z_j-\tilde x_j\ge0\) and
\(\varepsilon_j=-1\) otherwise. For this choice,
\[
\mathcal M_u^\varepsilon
\ge
e^{-(m\cosh\theta-1)u}e^{\theta|z-\tilde x|_1}
>
e^{\lambda(c)u}
\ge
e^{\lambda(c)r}.
\]
Ville's inequality followed by a union bound over the \(2^d\) choices of
\(\varepsilon\) proves the first claim.

For the second claim, suppose that a lifted particle \(z\) satisfies
\(\|z-\tilde x\|_\infty>R\) at time \(u\), and again choose
\(\varepsilon\) according to the coordinatewise signs of \(z-\tilde x\).
Since \(m\cosh\theta<1\),
\[
\mathcal M_u^\varepsilon
\ge
e^{(1-m\cosh\theta)u}e^{\theta|z-\tilde x|_1}
>
e^{\theta R}.
\]
Another application of Ville's inequality and the same union bound proves the
radial estimate. Both bounds are uniform in \(n\).
\end{proof}

In all subsequent applications of Lemma~\ref{lem:brw-deviation}, we take its
tilt parameter to be \(\theta=1\). After decreasing \(\beta_0(d,q)\) if
necessary, this choice satisfies \(m\cosh 1<1\), and we write
\[
        \lambda(c):=1+c-m\cosh 1.
\]

\begin{cor}[Magnetization bounds on the torus]
\label{cor:torus-magnetization-bounds}
Fix \(0<A<\infty\). For all sufficiently large \(n\), uniformly over
\(0\le t\le A\log n\), \(\sigma_0\in[q]^{\Lambda_n}\), and \(c\in[q]\),
\[
    \left|\mathfrak m_t^{(n)}(\sigma_0,c)\right|
    \le 2C_{\mathrm m}e^{-(1+\kappa)t}.
\]
Moreover, if \(\mathfrak c_n\equiv c\), then, uniformly over
\(t_{\mathrm{all}}\le t\le A\log n\),
\[
    \mathfrak m_t^{(n)}(\mathfrak c_n,c)
    \ge \frac{c_{\mathrm{all}}}{2}e^{-(1+\kappa)t}.
\]
\end{cor}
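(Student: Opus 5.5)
The plan is to couple the torus dynamics with the infinite-volume dynamics on $\mathbb Z^d$ in a neighborhood of the origin, and then transfer Propositions~\ref{prop:magnetization-bound} and~\ref{prop:all-one-magnetization-bound} with an additive error that is negligible against $e^{-(1+\kappa)t}$.

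Fix $t\le A\log n$. Lift $\sigma_0\in[q]^{\Lambda_n}$ periodically to $\tilde\sigma_0\in[q]^{\mathbb Z^d}$; for $\mathfrak c_n$ the lift is the monochromatic $\mathfrak c$. Take the graphical construction on $\mathbb Z^d$ and restrict its marks to the box $B_\infty(o,n/3)$. The quotient map is injective on this box, so these marks induce marks on the corresponding region of $\Lambda_n$. Marks elsewhere on the torus are independent.

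On the event
\[
E_n:=\{\text{the backward history of }(o,t)\text{ stays in }B_\infty(o,n/3)\},
\]
the two backward histories (on $\mathbb Z^d$ and on $\Lambda_n$) coincide. The two spin values at $(o,t)$ are then functions of the same marks and of the same initial spins, since the lift agrees with $\sigma_0$ under the quotient. Hence
\[
\bigl|\mathfrak m_t^{(n)}(\sigma_0,c)-\mathfrak m_t(\tilde\sigma_0,c)\bigr|\le \mathbb P(E_n^c),
\]
where $E_n^c$ may be computed in either construction.

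The history exploration is dominated by the branching walk of Subsection~\ref{subsec:branch-length}. The lifted radial estimate in Lemma~\ref{lem:brw-deviation} with $\theta=1$ and $R=n/3$ gives
\[
\mathbb P(E_n^c)\le 2^d e^{-n/3}.
\]
This bound needs no time restriction, and the $\theta=1$ choice is already fixed in the paper.

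Since $1+\kappa<1$ and $t\le A\log n$, we have $e^{-(1+\kappa)t}\ge n^{-A}$. Therefore $2^de^{-n/3}\le \min(C_{\mathrm m},c_{\mathrm{all}}/2)\,e^{-(1+\kappa)t}$ for all large $n$, uniformly in the stated range. Combining this with Proposition~\ref{prop:magnetization-bound} gives the upper bound $2C_{\mathrm m}e^{-(1+\kappa)t}$. Combining it with Proposition~\ref{prop:all-one-magnetization-bound} for $t\ge t_{\mathrm{all}}$ gives the lower bound $c_{\mathrm{all}}e^{-(1+\kappa)t}-\tfrac{c_{\mathrm{all}}}2e^{-(1+\kappa)t}$, as claimed.

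The main point needing care is the coupling step: that on $E_n$ the spin at $(o,t)$ really is determined by the identical local data in both systems. This holds because heat-bath updates read only neighbor spins along the history, and every branch of the history, together with its neighbors, lies inside the injectivity region.
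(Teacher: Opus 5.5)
Your proposal is correct and follows essentially the same route as the paper: both couple the torus dynamics with the $\mathbb Z^d$ dynamics started from the periodic lift by sharing graphical marks on a box where the quotient map is injective, bound the discrepancy by the lifted radial estimate of Lemma~\ref{lem:brw-deviation} with $\theta=1$, and absorb the resulting $2^d e^{-cn}$ error using $e^{-(1+\kappa)t}\ge n^{-A}$ for $t\le A\log n$. The only cosmetic difference is that the paper uses radius $r_n=\lfloor n/4\rfloor-1$ with marks shared on $B_\infty(0,r_n+1)$, whereas your radius $n/3$ works equally well for large $n$.
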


\begin{proof}
Let
\(\pi_n:\mathbb Z^d\to\Lambda_n\) be the quotient map, put
\(r_n:=\lfloor n/4\rfloor-1\), and let
\(\sigma_0^\sharp:=\sigma_0\circ\pi_n\) be the periodic lift of \(\sigma_0\).
Couple the torus dynamics and the infinite-volume dynamics started from
\(\sigma_0^\sharp\) by using the same graphical marks in
\(B_\infty(0,r_n+1)\). On this ball, the quotient map is a nearest-neighbor
graph isomorphism onto its image. The two backward explorations are therefore
identical up to their common first exit from \(B_\infty(0,r_n)\). Let
\(\mathcal E_{n,t}\) be the event that this common history exits before the
exploration reaches real time \(0\). On
\(\mathcal E_{n,t}^{\mathsf c}\), the two terminal spins agree.

The one-site history is dominated by the non-coalescing branching walk.
Consequently, the radial estimate in Lemma~\ref{lem:brw-deviation} gives,
uniformly in \(t\ge0\), \(\sigma_0\), and \(c\),
\[
\mathbb P(\mathcal E_{n,t})
\le
2^d e^{- r_n}.
\]
It follows that
\begin{equation}\label{eq:torus-Zd-mag-comparison}
\left|
\mathfrak m_t^{(n)}(\sigma_0,c)
-
\mathfrak m_t(\sigma_0^\sharp,c)
\right|
\le
2^d e^{- r_n}
\le
C_{\mathrm{lift}}e^{-c_{\mathrm{lift}}n}.
\end{equation}
Proposition~\ref{prop:magnetization-bound} now yields
\[
\left|\mathfrak m_t^{(n)}(\sigma_0,c)\right|
\le
C_{\mathrm m}e^{-(1+\kappa)t}
+
C_{\mathrm{lift}}e^{-c_{\mathrm{lift}}n}.
\]
The periodic lift of \(\mathfrak c_n\) is the monochromatic configuration
\(\mathfrak c\) on \(\mathbb Z^d\). Hence
Proposition~\ref{prop:all-one-magnetization-bound} and
\eqref{eq:torus-Zd-mag-comparison} give, for \(t\ge t_{\mathrm{all}}\),
\[
\mathfrak m_t^{(n)}(\mathfrak c_n,c)
\ge
c_{\mathrm{all}}e^{-(1+\kappa)t}
-
C_{\mathrm{lift}}e^{-c_{\mathrm{lift}}n}.
\]
For \(t\le A\log n\), one has
\(e^{-(1+\kappa)t}\ge n^{-A(1+\kappa)}\). Thus the exponentially small
coupling error is \(o(e^{-(1+\kappa)t})\), uniformly in this range, and can be
absorbed into the two main terms.
\end{proof}
\subsection{Cone-confined law at a separation tip}
\label{subsec:cone-confined}

We prove the one-site estimate required by the Yellow/Strong-Red splitting in
Subsection~\ref{subsec:yellowSR}. 

Fix \(\Lambda_n\) and a terminal horizon \(T\) such that
\begin{equation}\label{eq:cone-embedded}
2\bigl(\lceil c_{\rm cone}T\rceil+1\bigr)<n.
\end{equation}
For the cutoff application \(T=t_\star+s=O(\log|\Lambda_n|)\) for every fixed
offset \(s\), so \eqref{eq:cone-embedded} holds for all sufficiently large
\(n\).
Let \(\pi_n:\mathbb Z^d\to\Lambda_n\) denote the quotient map. Writing
\(R_T:=\lceil c_{\rm cone}T\rceil\), condition
\eqref{eq:cone-embedded} makes \(\pi_n\) injective on
\(\{z:|z|_1\le R_T+1\}\), and it identifies the full nearest-neighbor
neighborhood of every site in \(\{z:|z|_1\le R_T\}\) with its torus
neighborhood. Put
\[
\mathcal K(c):=
\{(z,u)\in\mathbb Z^d\times[0,\infty):|z|_1\le cu\}.
\]
For a sausage \(x\), let \(\operatorname{Supp}(x)\subset
\mathbb Z^d\times[0,\ell(x)]\) be its recentered space--time support. Given
\(\mathbf x=(x_1,\ldots,x_r)\), set \(L_0=0\), \(S_0=0\), and
\[
L_j:=\sum_{i=1}^j\ell(x_i),
\qquad
S_j:=\sum_{i=1}^j y(x_i).
\]
Its concatenated support in the lifted cone is
\[
\operatorname{Supp}(\mathbf x)
:=
\bigcup_{i=1}^r
\left\{
\bigl(S_{i-1}+z,L_{i-1}+u\bigr):
(z,u)\in\operatorname{Supp}(x_i)
\right\},
\]
and we define
\[
\operatorname{Conf}(\mathbf x)
:=
\mathbf 1\{\operatorname{Supp}(\mathbf x)\subset\mathcal K(c_{\rm cone})\},
\qquad
\operatorname{Conf}(\varnothing):=1.
\]

For \(\bar z\in\Lambda_n\) and \(1\le t\le T\), define
\[
\IK_{\Lambda_n}^{\rm cone}(\bar z,t)
:=
\sum_{r\ge1}
\sum_{x_1,\ldots,x_r\in\Xi}
\left(\prod_{i=1}^r\mu(x_i)p(x_i)\right)
\mathbf 1\{L_r=t,\ \pi_n(S_r)=\bar z\}
\operatorname{Conf}(x_1,\ldots,x_r),
\]
and set
\(\IK_{\Lambda_n}^{\rm cone}(\bar z,0):=\mathbf 1_{\{\bar z=\bar0\}}\).
By \eqref{eq:cone-embedded}, this is precisely the complete-sausage kernel of
the torus history confined to the cone: all graphical marks seen by that
history, including the update neighborhoods it queries, lie in the locally
identified region and therefore have the same joint law as the corresponding
marks in the i.i.d.\ \(\mathbb Z^d\) construction; in particular, history
coalescences coincide under this identification.

\begin{lemma}[Cone-restricted torus kernel]
\label{lem:IK-cone-l1}
After decreasing \(\beta_0(d,q)\), there is \(C_{\rm cone}<\infty\) such that, whenever
\eqref{eq:cone-embedded} holds,
\[
        \sum_{\bar z\in\Lambda_n}|\IK_{\Lambda_n}^{\rm cone}(\bar z,t)|
        \le
        C_{\rm cone}e^{-(1+\kappa)t}
        \qquad(0\le t\le T,\ t\in\mathbb Z).
\]
\end{lemma}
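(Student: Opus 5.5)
The plan is to split the confinement indicator as \(\operatorname{Conf}=1-(1-\operatorname{Conf})\). The term with \(1\) is controlled by the unconstrained infinite-volume kernel, and the remainder, consisting of sausage strings that leave the cone, will be shown to be exponentially smaller than \(e^{-(1+\kappa)t}\). Concretely, for \(t\ge1\),
\[
\IK_{\Lambda_n}^{\rm cone}(\bar z,t)
=\sum_{z:\pi_n(z)=\bar z}\IK(z,t)
-\sum_{r\ge1}\sum_{\mathbf x}\Big(\prod_{i}\mu(x_i)p(x_i)\Big)\mathbf 1\{L_r=t,\ \pi_n(S_r)=\bar z\}\bigl(1-\operatorname{Conf}(\mathbf x)\bigr).
\]
Summing \(|\cdot|\) over \(\bar z\in\Lambda_n\), the first term is at most \(\sum_{z\in\mathbb Z^d}|\IK(z,t)|\le C_{\IK,1}e^{-(1+\kappa)t}\) by Lemma~\ref{lem:IK-l1}. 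Here the signed cancellation is used exactly once, through the Fourier input. The second term is bounded crudely, putting absolute values inside and using \(|p|\le1\). The loss of cancellation is paid for by the large spatial excursion that an exit from \(\mathcal K(c_{\rm cone})\) forces.

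\textbf{Tilted absolute kernel.} For the exiting strings I will introduce, for a small fixed \(\theta>0\) (say \(\theta=1\), as in the convention after Lemma~\ref{lem:brw-deviation}), the tilted absolute weight
\[
Z_{|p|}(\theta):=\mathbb E_\mu\bigl[e^{(1+\kappa)\ell(x)}\,|p(x)|\,e^{\theta\,\mathrm{span}(x)}\bigr],
\]
where \(\mathrm{span}(x)\) is the maximal \(\ell^1\) distance of \(\operatorname{Supp}(x)\) from its starting vertex. The claim I want is
\[
Z_{|p|}(\theta)\le 1+C\alpha .
\]
This should follow the proof of Lemma~\ref{lem:Zp-bound}. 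Vertical sausages have \(\mathrm{span}=0\) and \(|p|=p\), and they contribute
\[
\sum_\ell p_{\mathrm{stop}}(1-p_{\mathrm{stop}})^{\ell-1}e^{\kappa\ell}=1-\Theta(\alpha).
\]
Branching sausages carry weight \(O(\vartheta)\) by \eqref{eq:mu-joint}. Including \(e^{\theta\,\mathrm{span}}\) costs only a constant factor, by the exponential-moment martingale \(\mathcal M^\varepsilon\) in the proof of Lemma~\ref{lem:brw-deviation} (used inside one sausage, whose length has tail \(e^{-\eta\ell}\) with \(\eta>1+\kappa\)). Hence
\[
Z_{|p|}(\theta)=1+O(\alpha),
\qquad\text{and in fact}\qquad
Z_{|p|}(\theta)\le e^{C'\alpha}.
\]

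\textbf{Bounding exiting strings.} If \(\mathbf x\) exits the cone, let \(i\) be the first sausage whose support leaves \(\mathcal K(c_{\rm cone})\), at some depth \(u\in[L_{i-1},L_i]\). At that point \(|S_{i-1}|_1+\mathrm{span}(x_i)>c_{\rm cone}u\). Bounding \(|S_{i-1}|_1\le\sum_{j<i}\mathrm{span}(x_j)\), we get
\[
\mathbf 1\{\text{exit at sausage }i\}
\le e^{-\theta c_{\rm cone}L_{i-1}}\prod_{j\le i}e^{\theta\,\mathrm{span}(x_j)} .
\]
Multiply by \(e^{(1+\kappa)t}=\prod_j e^{(1+\kappa)\ell(x_j)}\) and sum over all strings, with the prefix \((x_1,\dots,x_i)\) tilted and the suffix taken in absolute value without tilt. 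The suffix sum is a renewal sum of \(\mathbb E_\mu[e^{(1+\kappa)\ell}|p|]\)-weights. That quantity is \(\le1+O(\alpha^3)\), since \(|p|\ne p\) only when \(b\ge3\), but it is still \(\ge1\), which is the delicate point. To avoid it, I will tilt the whole string instead of splitting at \(i\). The total weight of strings of total length \(t\) is then at most
\[
\sum_r \text{(renewal mass at }t)\cdot e^{C'\alpha t}\,e^{-\theta c_{\rm cone}u}.
\]
This is summable and \(O(1)\) only when \(u\) is comparable to \(t\). For exits near the tip we instead use that \(L_{i-1}\) small forces the exiting sausage itself to have \(\mathrm{span}(x_i)\gtrsim c_{\rm cone}(u-L_{i-1})\) or to be long, and its tail beats \(e^{C'\alpha\ell}\).

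\textbf{Main obstacle.} The main obstacle is the loss of cancellation: the absolute-value renewal grows like \(e^{(1+\kappa+O(\alpha^3))}\)-type rates rather than \(e^{(1+\kappa)}\). This must be compensated by the geometric gain \(e^{-\theta c_{\rm cone}u}\) with \(c_{\rm cone}=2\), against which an \(O(\alpha)\) rate loss is negligible once \(\beta_0\) is decreased. The step I would work out first, and where I expect the difficulty, is making the tilted inequality uniform along the string without double-counting the tip region. The fallback is to keep the prefix before the exit confined and signed, run an induction on \(t\) (the statement at times \(<t\) bounds the confined prefix), take the exiting sausage in absolute value with the span tilt, and bound the unconstrained signed suffix by Lemma~\ref{lem:IK-l1}. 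This yields a convolution inequality
\[
a_t\le C+\sum_{s<t}a_s\,\epsilon_{t-s},
\qquad a_t:=e^{(1+\kappa)t}\sum_{\bar z}|\IK_{\Lambda_n}^{\rm cone}(\bar z,t)|,
\]
with \(\sum_k\epsilon_k<1/2\) for small \(\alpha\), which closes.
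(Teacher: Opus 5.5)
There is a genuine gap, and it sits exactly at the point you call delicate. In your main line you bound the exiting strings with absolute values on every sausage, including the entire suffix after the first exit. Since $\sum_x\mu(x)p(x)e^{(1+\kappa)\ell(x)}=1$, the unsigned renewal with weights $\mu|p|$ has $\mathbb E_\mu[e^{(1+\kappa)\ell}|p|]\ge1$. The inequality is strict whenever negative weights carry mass, which you cannot exclude for $q\ge3$, and then this renewal decays at some rate $1+\kappa'<1+\kappa$. Now consider strings that leave $\mathcal K(c_{\rm cone})$ at once: a non-oblivious ring at depth $u<1/2$ in the first sausage already puts a particle at $|z|_1=1>2u$. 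These strings have total weight of order $\alpha$ and enjoy no geometric gain $e^{-\theta c_{\rm cone}u}$. So the unsigned sum you are trying to bound is itself at least of order $\alpha e^{-(1+\kappa')t}$. No estimate of it can be $O(e^{-(1+\kappa)t})$ uniformly over $t\le T\asymp\log|\Lambda_n|$. Tilting the whole string only replaces the loss $e^{(\kappa-\kappa')t}$ by $e^{C'\alpha t}$. Your remark on exits near the tip controls the exiting sausage, not the long suffix that follows it.

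The fallback has the complementary defect. Keeping the suffix signed is right. But once the confined prefix enters only through its $\ell^1$ norm $a_se^{-(1+\kappa)s}$, the exiting sausage must be bounded uniformly over admissible prefix endpoints. These include endpoints with $|z|_1=c_{\rm cone}s$ on the cone boundary, from which a short one-branch sausage exits with probability of order $\alpha$; the span tilt gains nothing there. Moreover, $e^{(1+\kappa)k}\|\IK(\cdot,k)\|_1$ does not decay in $k$. Hence each prefix depth $s<t$ contributes of order $\alpha a_s$ to $a_t$, whatever $t-s$ is. The recursion you actually obtain is $a_t\le C+C\alpha\sum_{s<t}a_s$, so $\epsilon_k\asymp\alpha$ for every $k$, and only $a_t\le C(1+C\alpha)^t$ follows.

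The repair is to combine the good halves of your two versions, which is what the paper does. Write $\IK^{\rm cone}(\cdot,t)=\IK(\cdot,t)-\sum_{s=1}^{t}\IK^{\rm exit}(\cdot,s)*\IK(\cdot,t-s)$, where $\IK^{\rm exit}(\cdot,s)$ carries the confined prefix together with the first exiting sausage, ending at depth $s$. Bound the signed suffix by Lemma~\ref{lem:IK-l1}. Bound $\|\IK^{\rm exit}(\cdot,s)\|_1$ without signs by the probability that the sausage concatenation first leaves the cone during its last sausage. That probability decays at a rate strictly above $1+\kappa$, because prefix and exit are paid for jointly. If the last sausage has length $l\ge s/2$, the cost is $e^{-(1-m)(s-l)}e^{-\eta l}$ with $\eta=1+\log2$. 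If $l<s/2$, the exit happens at depth beyond $s/2$, which Lemma~\ref{lem:brw-deviation} bounds by $2^de^{-\lambda(c_{\rm cone})s/2}$. Your span-tilted Chernoff bound on prefix plus exiting sausage would do the same job. The convolution $\sum_s e^{-(1+\kappa+\delta)s}e^{-(1+\kappa)(t-s)}$, for some $\delta>0$, is then $O(e^{-(1+\kappa)t})$.
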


\begin{proof}
Set \(\delta_0:=(\eta-1)/8\). Note that $(1+c_{\rm cone})/2>1+2\delta_0$ as $c_{\rm cone}=2$.
We decrease \(\beta_0\) so that
\[
        m\cosh\theta<1,
        \qquad
        m\le(\eta-1)/2,
        \qquad
        m\cosh\theta/2<\delta_0,
\]
with $\theta=1$.
For the proof, let \(\IK^{\rm cone}(z,t)\) be the same signed sum as
\(\IK_{\Lambda_n}^{\rm cone}(\bar z,t)\), but with the endpoint condition
\(S_r=z\). A confined sequence of total depth \(t\le T\) has
\(|S_r|_1\le c_{\rm cone}T\). Hence \eqref{eq:cone-embedded} makes its endpoint
lift unique and gives
\begin{equation}\label{eq:cone-lift-l1}
\sum_{\bar z\in\Lambda_n}|\IK_{\Lambda_n}^{\rm cone}(\bar z,t)|
=
\sum_{z\in\mathbb Z^d}|\IK^{\rm cone}(z,t)|.
\end{equation}

For \(s\ge1\), define the lifted first-exit kernel by
\[
\begin{aligned}
\IK^{\rm exit}(z,s)
:={}&
\sum_{r\ge1}
\sum_{x_1,\ldots,x_r\in\Xi}
\left(\prod_{i=1}^r\mu(x_i)p(x_i)\right)
\mathbf 1\{L_r=s,\ S_r=z\}\\
&\qquad\times
\mathbf 1\{\operatorname{Conf}(x_1,\ldots,x_{r-1})=1,
\ \operatorname{Conf}(x_1,\ldots,x_r)=0\}.
\end{aligned}
\]
Splitting every non-confined sequence at the first sausage that leaves the cone gives, with
convolution on \(\mathbb Z^d\),
\[
\IK^{\rm cone}(\cdot,t)
=
\IK(\cdot,t)
-
\sum_{s=1}^t
\IK^{\rm exit}(\cdot,s)*\IK(\cdot,t-s).
\]
Consequently,
\begin{equation}\label{eq:cone-first-exit-convolution}
\|\IK^{\rm cone}(\cdot,t)\|_1
\le
\|\IK(\cdot,t)\|_1
+
\sum_{s=1}^t
\|\IK^{\rm exit}(\cdot,s)\|_1
\|\IK(\cdot,t-s)\|_1.
\end{equation}

We now bound the lifted first-exit kernel. Define
\[
U_j
:=
\mathbf 1_{\{j=0\}}
+
\sum_{r\ge1}
\sum_{x_1,\ldots,x_r\in\Xi}
\left(\prod_{i=1}^r\mu(x_i)\right)
\mathbf 1\{L_r=j\}.
\]
Because the history is
dominated by the non-coalescing branching walk, we have $U_j\leq e^{-(1-m)j}$. 
Take absolute values in \(\IK^{\rm exit}\) and use \(|p(x)|\le1\). The resulting
unsigned sum is the probability, in one i.i.d. sausage construction, of first leaving the cone
during the final sausage of a concatenation ending at depth \(s\). Let \(l\) be the length of
that final sausage. For \(l\ge s/2\), the bound on \(U_j\) and
\eqref{eq:len-pmf} give
\begin{align*}
\|\IK^{\rm exit}(\cdot,s)\|_{1;\,l\ge s/2}
&\le
\sum_{l=\lceil s/2\rceil}^s U_{s-l}\,\mu(\ell(x)=l)\\
&\le
C\sum_{l=\lceil s/2\rceil}^s
e^{-(1-m)(s-l)}e^{-\eta l}
\le
Ce^{-c_{\rm long}s},
\end{align*}
where $c_{\rm long}:=\frac{\eta+1-m}{2}$. 
Our choice of \(\beta_0\) ensures \(c_{\rm long}\ge1+2\delta_0\).

If \(l<s/2\), the first exit occurs during the final sausage, whose top is at depth
\(s-l>s/2\). The unsigned event just identified is therefore contained in the event that the
dominating branching walk has a particle outside its cone at some elapsed time \(u>s/2\).
Lemma~\ref{lem:brw-deviation} yields
\[
        \|\IK^{\rm exit}(\cdot,s)\|_{1;\,l<s/2}
        \le
        2^d e^{-\lambda(c_{\rm cone})s/2}.
\]
Our choices give \(\lambda(c_{\rm cone})/2>1+\delta_0\). Since \(\kappa<0\),
\begin{equation}\label{eq:cone-exit-bound}
        \|\IK^{\rm exit}(\cdot,s)\|_1
        \le
        Ce^{-((1+\kappa)+\delta_0)s}.
\end{equation}
For the unrestricted kernel, Lemma~\ref{lem:IK-l1} gives
\[
        \|\IK(\cdot,t)\|_1\le C_{\IK,1}e^{-(1+\kappa)t}.
\]
Substituting this and \eqref{eq:cone-exit-bound} into
\eqref{eq:cone-first-exit-convolution}, and then using
\eqref{eq:cone-lift-l1}, proves the result. The case \(t=0\) is immediate.
\end{proof}

We now prove the residual-mass estimate. Recall that
\(\mathcal H^{v,t}\) is the one-site history from \((v,t)\) generated by the
independent copy of the graphical construction introduced in
Subsection~\ref{subsec:yellowSR}, that
\(\mathsf{Bot}_t(v)\) is the event that this history reaches time \(0\)
without leaving its cone, and that
\(M_t=\mathbb P(\mathsf{Bot}_t(v))\).

\begin{prop}[Residual weight of an isolated cone]\label{prop:isolated-cone-residual}
Under \eqref{eq:cone-embedded}, there exists \(C<\infty\) such that, for every
initial configuration \(\sigma_0\in[q]^{\Lambda_n}\) and every
\((v,t)\in\Lambda_n\times[0,T]\),
\[
\mathbb{P}_{\sigma_0}\left(
\mathsf{Bot}_t(v),\,J_{(v,t)}>p_t(v)
\right)\leq C e^{-(1+\kappa)t}.
\]
\end{prop}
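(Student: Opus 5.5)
The plan is to reduce the residual mass to a centered one-site bias on the event \(\mathsf{Bot}_t(v)\), and then to bound that bias by a cone-restricted version of the sausage expansion \eqref{eq:mag-sausage}.

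\textbf{Reduction to a centered bias.} By definition \(p_t(v)=q\inf_{\sigma_0}\min_c\mathsf S(t,v,\sigma_0)(c)\). Since \(\sum_c\mathsf S(c)=1\), we have \(1-p_t(v)=q\sup_{\sigma_0,c}\bigl(\tfrac1q-\mathsf S(t,v,\sigma_0)(c)\bigr)\le q\sup_{\sigma_0,c}|\mathsf S(t,v,\sigma_0)(c)-\tfrac1q|\). Multiplying by \(M_t\) gives
\[
\mathbb P_{\sigma_0}\bigl(\mathsf{Bot}_t(v),\,J_{(v,t)}>p_t(v)\bigr)=M_t(1-p_t(v))\le q\sup_{\sigma_0,c}\Bigl|\mathbb P_{\sigma_0}\bigl(\sigma_t(v)=c,\ \mathsf{Bot}_t(v)\bigr)-\tfrac1q\,\mathbb P\bigl(\mathsf{Bot}_t(v)\bigr)\Bigr|.
\]
It therefore suffices to bound this centered restricted bias by \(Ce^{-(1+\kappa)t}\), uniformly in \(\sigma_0\) and \(c\). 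The case \(t<1\) is trivial, since the bound is then just a constant.

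\textbf{Cone-restricted sausage expansion.} By translation invariance I take \(v=\bar0\) and lift to \(\mathbb Z^d\). Condition \eqref{eq:cone-embedded} makes the cone region locally isomorphic to \(\mathbb Z^d\). In backward time \(u=t-s\), the cone \(\mathsf C_{T-t}(v)\) is exactly \(\mathcal K(c_{\rm cone})\). Now decompose the backward history of \((v,t)\) into complete sausages \(x_1,\dots,x_r\) followed by the terminal partial sausage \(x_0\), cut at depth \(\theta_t+\tau\), exactly as in \eqref{eq:mag-sausage}. The indicator of \(\mathsf{Bot}_t(v)\) factors as
\[
\operatorname{Conf}(x_1,\dots,x_r)\times\mathbf 1\{\text{the shifted truncated }x_0\text{ stays in }\mathcal K(c_{\rm cone})\text{ and reaches time }0\}.
\]
Cemetery outcomes contribute nothing, because they are excluded by the survival requirement. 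Given the geometry, color symmetry still shows that each complete sausage preserves the uniform law and acts on centered color functions as multiplication by \(p(x)\). The reason is that the confinement and survival events are functions of the geometry alone, not of the auxiliary update marks that determine colors. Hence
\[
\mathbb P_{\sigma_0}(\sigma_t(v)=c,\mathsf{Bot}_t)-\tfrac1q M_t=\tfrac{q-1}{q}\sum_{\tau=0}^{n_t}\sum_{x_0}\mu(x_0)\sum_{z}\IK^{\rm cone}(z,n_t-\tau)\,\mathbf 1\{\text{terminal confined, survives}\}\,g_{x_0,\sigma_0,c}(z,\theta_t+\tau).
\]
Here the terminal indicator may depend on \(z\) and on \(L_r\), but it is bounded by \(\mathbf 1\{\ell(x_0)>\theta_t+\tau\}\).

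\textbf{Bounding the expansion.} Using \(|g|\le1\) and \eqref{eq:cone-lift-l1}, Lemma~\ref{lem:IK-cone-l1} bounds each spatial sum by \(C_{\rm cone}e^{-(1+\kappa)(n_t-\tau)}\). The tail bound \eqref{eq:len-tail} gives \(\mu(\ell(x_0)>\tau)\le c_{\rm tail}e^{-\eta\tau}\). Summing over \(\tau\) exactly as in the proof of Proposition~\ref{prop:magnetization-bound} uses \(\eta>1>1+\kappa\) and yields \(Ce^{-(1+\kappa)n_t}\le Ce^{-(1+\kappa)t}\).

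The main obstacle is the middle step: making rigorous that the confined, surviving history admits the signed product expansion. Two points need care. First, the confinement indicator must factor across the regeneration decomposition, and conditioning on it must not destroy the multiplication-by-\(p(x)\) action; I would handle both by conditioning on the full geometry, on which \(\mathsf{Bot}\) is measurable. Second, the nonuniform dependence of the terminal indicator on \((z,L_r)\) must be absorbed; the crude absolute-value bound handles this because Lemma~\ref{lem:IK-cone-l1} already controls the signed cone kernel in \(\ell^1\).
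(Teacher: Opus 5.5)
Your proposal is correct and follows the paper's proof essentially step for step. You rewrite $M_t\bigl(1-p_t(v)\bigr)$ as $q$ times a supremum over $\sigma_0$ and colors of the centered restricted bias $M_t\bigl(\mathsf S(t,v,\sigma_0)(c)-\tfrac1q\bigr)$, expand that bias through the cone-confined complete-sausage kernel $\IK^{\rm cone}_{\Lambda_n}$, drop the terminal confinement indicator after taking absolute values with $|g|\le1$, and conclude with Lemma~\ref{lem:IK-cone-l1}, the length tail \eqref{eq:len-tail}, and $\eta>1>1+\kappa$, which is exactly the paper's argument.
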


\begin{proof}

The case \(t=0\) follows from the preceding conventions after increasing \(C\) if necessary.
Assume henceforth that \(t>0\).
By translation invariance, take \(v=0\) and translate \(\sigma_0\) accordingly.
Let \(\sigma_0^\sharp:=\sigma_0\circ\pi_n\) be its periodic lift.

First observe that, for every \(a\in[q]\),
\[
M_t\left(\mathsf S(t,v,\sigma_0)(a)-\frac{1}{q}\right)
=
\mathbb E_{\sigma_0}\left[
\mathbf 1_{\mathsf{Bot}_t(v)}
\left(\mathbf 1_{\{\sigma_t(v)=a\}}-\frac1q\right)
\right].
\]
On \(\mathsf{Bot}_t(v)\), the surviving history has a unique terminal partial
sausage. If the complete sausages above it have total depth
\(n_t-\tau\), then the terminal cut depth is \(\theta_t+\tau\).
Under \eqref{eq:cone-embedded}, lift this truncated history to
\(\mathbb Z^d\) and complete its terminal sausage below time \(0\) with
independent marks, obtaining \(x_0\sim\mu\). After taking absolute values,
discard the terminal confinement condition and use
\(\lvert g_{x_0,\sigma_0^\sharp,a}\rvert\le1\). Thus
Proposition~\ref{prop:sausagetail} and Lemma~\ref{lem:IK-cone-l1} give

\[
M_t\left|
    {\mathsf S}(t,v,\sigma_0)(a)
    -\frac{1}{q}
    \right|
    \le
    \sum_{\tau=0}^{n_t}
    \mu(\ell(x_0)>\theta_t+\tau)
    \|\IK_{\Lambda_n}^{\rm cone}(\cdot,n_t-\tau)\|_1
    \leq
    C\sum_{\tau=0}^{n_t}
    e^{-\eta\tau}
    e^{-(1+\kappa)(n_t-\tau)}
    \leq C' e^{-(1+\kappa)t},
\]
because \(\eta>1>1+\kappa\) and \(n_t\ge t-1\). Therefore,
\[
\begin{aligned}
\mathbb{P}_{\sigma_0}\left(
\mathsf{Bot}_t(v),\,J_{(v,t)}>p_t(v)
\right)
=M_t\bigl(1-p_t(v)\bigr)
=M_t\left(
1-q\inf_{\xi_0\in[q]^{\Lambda_n}}
\min_{a\in[q]}\mathsf S(t,v,\xi_0)(a)
\right)
\leq qC' e^{-(1+\kappa)t}.
\end{aligned}
\]

\end{proof}

Collecting the preceding estimates, fix
\(\alpha_4=\alpha_4(d,q)>0\) sufficiently small that all the
smallness conditions in this section hold whenever
\(0<\alpha\le\alpha_4\).

%%%%%%%%%%%%%%%%%%%%%%%%%%%%%%%%%%%%%%%%%%%%%%%%%%%%%%%%%%%%%%%%%%%%%%%%%%%%%%%%%%%%%%%%%%%%%%%%%%%%%%%%%%%%%%%%%%%%%%%%%%%%%%%%%%%%%%%%%%%%%%%%%%%%%%%%%%%%%%%%%%%%%%%%%%%%%%%%%%%%%%%%%%%%%%%%%%%%%%%%%%%%%%%%%%%%%%%%%%%%%%%%%%%%%%%%%%%%%%%%

\medskip
\section{Cutoff with a constant window}\label{sec:cutoff}

We retain the restriction \(0<\alpha\le\alpha_4\). 
All auxiliary
penalty parameters used below (\(\lambda_{\rm P}\), \(\lambda_{\rm SR}\) and \(\lambda_0\)) are  fixed independently of \(\beta\). Once these parameters have been fixed, choose \(\beta_0=\beta_0(d,q)>0\)
sufficiently small that \(\alpha(\beta_0)\le\alpha_4\) and all the estimates in this section hold.

\subsection{Upper bound}\label{subsec:cutoff-upper}
Throughout this subsection, \(\Lambda=\Lambda_n\) and
\[
        T=t_\star+s,
        \qquad s\ge0,
        \qquad
        t_\star=\frac{1}{2(1+\kappa)}\log|\Lambda|.
\]

By Proposition~\ref{prop:IP-overlap}, it is enough to prove, after decreasing
\(\beta_0(d,q)\) if necessary, that
\[
\mathbb E\left[
q^{|V_{\mathrm P}^{(1)}\cap V_{\mathrm P}^{(2)}|}-1
\right]
=
o(e^{-2(1+\kappa)s}),
\qquad
\mathbb E\left[
q^{|V_{\mathrm{SR}}^{(1)}\cap V_{\mathrm{SR}}^{(2)}|}-1
\right]
=
O(e^{-2(1+\kappa)s}).
\]
The next two subsubsections establish these estimates for the Purple and
Strong--Red sets, respectively.

For every nonempty $A\subset\Lambda$, fix deterministically a connected set
$\operatorname{Hull}(A)\subset\Lambda$ of minimal cardinality such that
$A\subset\operatorname{Hull}(A)$, and set
\[
        W(A):=|\operatorname{Hull}(A)|.
\]
We abbreviate
\[
        W(A,B):=W(A\cup B).
\]

\subsubsection{Purple overlap} 
We prove the Purple-overlap estimate
\[
\mathbb E\left[
q^{|V_{\mathrm{P}}^{(1)}\cap V_{\mathrm{P}}^{(2)}|}-1
\right],
\]
where \(V_{\mathrm{P}}^{(1)}:=V_{\mathrm{P}}\) and \(V_{\mathrm{P}}^{(2)}\) is a conditionally independent copy of \(V_{\mathrm{P}}^{(1)}\) given
\(\mathcal H_{G,Y}\vee\mathcal H_{\mathrm{SR}}\).
Let
\[
        \mathcal K:=\mathcal H_{G,Y}\vee\mathcal H_{\mathrm{SR}} .
\]
We write $A\in\mathrm{Pur}^{(a)}$ when $A$ is the top set of a Purple cluster
in copy \(a\), and \(A\in\mathrm{Red}^{(a)}\) when \(A\) is the top set of a Red cluster in copy \(a\).
For a finite nonempty \(D\subseteq\Lambda\), define

\[
\mathsf T_D^{(a)}
        :=
        \{D\in\mathrm{Red}^{(a)}\}
        \cup
        \{D\subset V_{\mathrm{Blue}}^{(a)}\}.
\]
and for a finite set \(Y\subseteq\Lambda\), write
\[
        \mathcal H_D\rightsquigarrow Y
        \quad\Longleftrightarrow\quad
        Y\subseteq\operatorname{Span}(\mathcal H_D).
\]

\paragraph{The cost of bad Red geometry.}

Purple clusters are Red clusters whose geometry is bad. We begin with the estimates that quantify the relevant costs.

\begin{lemma}\label{lem:single_bound}
Fix \(\lambda_{\rm P}>0\). After decreasing \(\beta_0(d,q)\) if necessary, there
exists \(C_{\rm h}<\infty\) such that the following holds.

Let \(D\subset\Lambda\) be finite and nonempty, and let \(Y\subset\Lambda\) be
finite. For a realization \(X\) of
\(\mathcal H_D^-\), write \(X\sim\mathcal K\) if \(X\) is consistent with the
Green, Yellow, and Strong-Red data revealed by \(\mathcal K:=\mathcal H_{G,Y}\vee\mathcal H_{\mathrm{SR}}\). Then,
\[
        \Psi(D;Y):=\sup_{\mathcal H_D^-\sim\mathcal K}
        \mathbb P\left(
        D\in\mathrm{Red},\,
        \mathcal H_D\rightsquigarrow Y
        \,\middle|\,
        \mathcal H_D^-,\,
        \mathsf T_D
        \right)
        \le
        C_{\rm h} e^{-(1-m)T}e^{-\lambda_{\rm P} W(D,Y)} ,
\]
where the supremum is over conditioning events of positive probability.
\end{lemma}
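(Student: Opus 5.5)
We reduce the conditional estimate to an unconditional bound for the dominating branching walk, following the standard information-percolation argument of \cite{Universality,Exposition}. Given $\mathcal H_D^-$ and $\mathsf T_D$, the history $\mathcal H_D$ is generated from the marks not already revealed by $\mathcal H_D^-$. It is conditioned only on not intersecting $\mathcal H_D^-$ (so that $D$ is exactly a cluster top set or consists of Blue singletons), plus the event $\mathsf T_D$. The first step is to check that the Green, Yellow and Strong-Red data in $\mathcal K$ enter only through $\mathcal H_D^-$. Yellow and Strong Red labels are decided by the auxiliary variables $J$ and the lower-cone marks of other clusters, which are disjoint from $\mathcal H_D$ on the event in question. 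Hence it suffices to bound uniformly over admissible $\mathcal H_D^-$.

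Next we deal with the conditioning. Non-intersection with $\mathcal H_D^-$ is a decreasing event in the history of $D$: adding updates to $\mathcal H_D$ only increases the chance of touching $\mathcal H_D^-$. The target event $\{D\in\mathrm{Red},\ \mathcal H_D\rightsquigarrow Y\}$ is increasing. By the FKG or BK-type argument used in \cite[Lemma~2.?]{Universality} for the product measure on Poisson marks, conditioning on the decreasing event can only lower the probability of the increasing one. The event $\mathsf T_D$ has probability bounded below by $e^{-C|D|}$ or similar. Rather than divide by it, we follow \cite{Universality}: the joint probability of $\mathsf T_D$ and the target event, divided by $\mathbb P(\mathsf T_D)$, is bounded by $\mathbb P(\text{target for the free history})\cdot e^{O(\alpha)W(D)}$. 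This works because $\{D\subset V_{\rm Blue}\}$ differs from the Red alternative only through histories on $[T-1,T]$, and the cost of the $\mathsf T_D$ normalization on that unit interval is at most a factor $C^{|D|}$. That factor is absorbed into $e^{-\lambda_{\rm P}W}$ once $\alpha$ is small, using $|D|\le W(D,Y)$. This step, making the conditioning on $\mathsf T_D$ cost no more than a penalty linear in $W$, is the main obstacle. I would first try to reproduce the Universality argument that $\mathbb P(\cdot\mid\mathsf T_D)$ stochastically dominates as above. If that fails, I would bound $\mathbb P(\mathsf T_D)\ge \mathbb P(D\subset V_{\rm Blue})\ge c^{|D|}$ directly.

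Finally, we bound the free probability. $D\in\mathrm{Red}$ together with $Y\subseteq\operatorname{Span}(\mathcal H_D)$ forces the union of histories from $D$ to be connected and to span $D\cup Y$, and hence to have total space-time length at least of order $W(D,Y)$ counting branchings. It must also survive to time $0$. Dominate by the non-coalescing branching walk and use the exponential martingale $e^{(1-m)u}\cdot(\text{particle count})$, together with a branching-penalty weight $\theta^{J}$ as in Lemma~\ref{lem:branch-penalty}. The survival of some particle from time $T$ to $0$ costs $e^{-(1-m)T}$ by the many-to-one formula. Connecting $W(D,Y)$ sites requires at least order $W(D,Y)$ branch-outs, or horizontal spread, each costing a factor $O(\alpha)$. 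Choosing $\alpha$ so small that $C\alpha\le e^{-\lambda_{\rm P}-1}$ gives $C_{\rm h}e^{-(1-m)T}e^{-\lambda_{\rm P}W(D,Y)}$. A union bound over the tree skeletons connecting $D\cup Y$ has entropy $C^{W}$, which is absorbed into the same choice of $\alpha$.
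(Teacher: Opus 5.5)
\paragraph{The gap: the survival cost and the branch-out penalty cannot be multiplied.}
Your last step treats the survival cost $e^{-(1-m)T}$ and a factor $O(\alpha)$ per branch-out needed to span $D\cup Y$ as independent costs. But the rate $1-m=1-2d\alpha$ already pays for branching along the surviving lineage. The gain $e^{mT}$ over $e^{-T}$ is exactly the sum over lineages with about $mT$ branch-outs, since $\sum_k e^{-T}(mT)^k/k!=e^{-(1-m)T}$. Branch-outs on that lineage therefore carry no additional factor $\alpha$.

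Correspondingly, your tilted martingale does not exist. For $f(n,J)=n\theta^J$ the drift of $e^{(1-m)u}N_u\theta^{J_u}$ equals $e^{(1-m)u}\theta^{J}\alpha(\theta-1)N(N+2d-1)>0$ whenever $\theta>1$. In Lemmas~\ref{lem:branch-penalty} and~\ref{lem:stopped-weighted-red} the tilt is paid for by weakening the time exponent.

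\paragraph{The inequality fails for general $Y$.}
Take $D=\{v\}$, a transcript $X$ in which every other history dies at an oblivious first ring in $(T-\epsilon,T]$, and $Y=\{v+ie_1:1\le i\le W\}$. Consider the event that the lineage from $v$ has exactly $W$ updates in $[0,T-\epsilon)$, all non-oblivious, each followed along the next site of $Y$. Its update times are governed by independent fresh clocks, so
\[
\mathbb P\bigl(D\in\mathrm{Red},\ \mathcal H_D\rightsquigarrow Y\,\big|\,\mathcal H_D^-=X,\ \mathsf T_D\bigr)\ \ge\ c\,e^{-T}\frac{(\alpha T)^W}{W!}.
\]
For $W\approx\alpha e^{\lambda_{\rm P}}T$ this exceeds $C_{\rm h}e^{-(1-m)T}e^{-\lambda_{\rm P}(W+1)}$ by a factor $\exp\{\alpha(e^{\lambda_{\rm P}}-2d)T-O(\log T)\}$. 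That factor is unbounded once $\lambda_{\rm P}>\log(2d)$, which is the large-$\lambda_{\rm P}$ regime used in Lemma~\ref{lem:p-local-activity}. So this step cannot be completed with the sharp exponent.

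\paragraph{Comparison with the paper, and what your argument does prove.}
The paper's proof has the same gap. After the same reduction, it quotes the Red/Blue lemma of the Exposition with a lattice-animal count to get $e^{-\lambda_{\rm P}W(D,Y)}$, takes $e^{-(1-m)T}$ separately from a submultiplicativity bound, and multiplies the two.

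Your strategy does work if you give up the sharp rate. Use a functional as in Lemma~\ref{lem:stopped-weighted-red} with $\sigma=T$, and penalize the spatial length $\chi\ge W(D,Y)-1$, which holds because a Red cluster has connected spatial support containing $D\cup Y$. This gives $Ce^{-(1-K\beta)T}e^{-\lambda_{\rm P}W(D,Y)}$ for all $Y$, with $K=K(\lambda_{\rm P})$. That weaker rate is all that Lemma~\ref{lem:p-local-activity} and Proposition~\ref{prop:purple} need, since $4(1-K\beta)>2(1+\kappa)$ after decreasing $\beta_0$ with $K$ fixed.

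\paragraph{Two smaller points in your reduction.}
First, Step~1 is not needed for the inequality as written, which conditions only on $\mathcal H_D^-$ and $\mathsf T_D$. Its justification is also inaccurate: whether a nearby Red cluster is Purple or Yellow/Strong Red depends on whether $D$ is Red.

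Second, $\mathbb P(D\subset V_{\rm Blue}\mid\mathcal H_D^-=X)\ge c^{|D|}$ fails uniformly in $X$. If $X\ni(u,s_u)$ with $u\in D$ and $s_u$ close to $T$, Blue needs a ring at $u$ in $(s_u,T]$. You must first divide out the top-unit event $\mathcal U_D$, which numerator and denominator share, as in the proof of Proposition~\ref{prop:one-block-SR}.
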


\begin{proof}
If $D$ overlaps with the top set of a Green, Yellow, or Strong-Red
cluster revealed by \(\mathcal K\), $D$ cannot be a Red cluster, so the probability becomes zero. We only consider the cases when it does not overlap.

Fix \(\mathcal K\) and an outside transcript \(X\sim\mathcal K\). 
Conditional on \(\mathcal H_D^-=X\), the additional information in \(\mathcal K\) can only rule out realizations of the \(D\)-history that are incompatible with the revealed Yellow, and Strong-Red data. 
Every realization in which \(D\subset V_{\mathrm{Blue}}\) remains compatible with \(\mathcal K\) if it was compatible with $X$. 
However, it may restrict $D\in\Red$ because $D$ should be $R$-separated from realized Strong Red and Yellow set, and geometrically bad in addition to being Red. 
Therefore, on conditioning events of positive probability,
\[
\mathbb P\left(
D\in\mathrm{Red},\,\mathcal H_D\rightsquigarrow Y
\,\middle|\,
\mathcal H_D^-=X,\mathsf T_D
\right) 
\le
\frac{
\mathbb P\left(
D\in\mathrm{Red},\,\mathcal H_D\rightsquigarrow Y
\,\middle|\,
\mathcal H_D^-=X
\right)}
{
\mathbb P\left(
D\subset V_{\mathrm{Blue}}
\,\middle|\,
\mathcal H_D^-=X
\right)
}.
\]

The right-hand side is exactly the Red/Blue ratio bounded in Lemma 2.1 of \cite{Exposition}, with the extra support constraint \(\mathcal H_D\rightsquigarrow Y\). The extra condition \(\mathcal H_D\rightsquigarrow Y\) forces the spatial support of \(\mathcal H_D\) to contain a connected lattice animal spanning \(D\cup Y\), so the same lattice-animal enumeration and exponential-moment estimate as in
Lemma 2.1 gives the factor \(e^{-\lambda_{\rm P} W(D,Y)}\), after decreasing \(\beta_0(d,q)\). 
Finally, we get $e^{-(1-m)T}$ from the submultiplicativity of the magnetization  $m_{T}\leq m_0 e^{-(1-m)T}$ from (2.10) in \cite{Exposition} to prove the claim.
\end{proof}

\begin{lemma}[Multi-cluster history bound]
\label{lem:multi-red-bound}
Let \(a\in\{1,2\}\). 
Let \(D_1,\ldots,D_r\subseteq\Lambda\) be pairwise disjoint finite nonempty sets,
each of which is disjoint from every top set of a Green, Yellow, or Strong-Red
cluster revealed by \(\mathcal K\).
Let
\(Y_1,\ldots,Y_r\subseteq\Lambda\) be finite. Then under the condition of Lemma~\ref{lem:single_bound},
\[
\mathbb P\left(
        \bigcap_{i=1}^r
        \{D_i\in{\rm Red}^{(a)},\,
          \mathcal H_{D_i}^{(a)}\rightsquigarrow Y_i\}
        \,\middle|\,\mathcal K
\right)
\le
C_{\rm h}^r e^{-r(1-m)T}
\prod_{i=1}^r e^{-\lambda_{\rm P} W(D_i,Y_i)} .
\]
\end{lemma}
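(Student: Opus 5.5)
We will derive the multi-cluster bound by iterating Lemma~\ref{lem:single_bound} one cluster at a time, using the chain rule for conditional probabilities. Order the sets \(D_1,\ldots,D_r\) and write \(E_i:=\{D_i\in\mathrm{Red}^{(a)},\,\mathcal H^{(a)}_{D_i}\rightsquigarrow Y_i\}\). We then have
\[
\mathbb P\Bigl(\bigcap_{i=1}^r E_i\,\Big|\,\mathcal K\Bigr)
=\prod_{i=1}^r \mathbb P\Bigl(E_i\,\Big|\,\mathcal K,\ E_1,\ldots,E_{i-1}\Bigr),
\]
so it suffices to bound each factor by \(C_{\rm h}e^{-(1-m)T}e^{-\lambda_{\rm P}W(D_i,Y_i)}\).

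Fix \(i\). The plan is to refine the conditioning in the \(i\)-th factor until it has exactly the form treated in Lemma~\ref{lem:single_bound}. On \(E_i\) we have \(D_i\in\mathrm{Red}\), so the event \(\mathsf T_{D_i}\) holds automatically. Moreover, \(E_i\subset\mathsf T_{D_i}\). Next we condition further on the full outside history \(\mathcal H^-_{D_i}\), i.e.\ the union of the histories started from \(\Lambda\setminus D_i\). The events \(E_1,\ldots,E_{i-1}\) concern the histories of \(D_1,\ldots,D_{i-1}\), which are disjoint from \(D_i\).

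The key point is to check that, on \(\{D_i\in\mathrm{Red}\}\cup\{D_i\subset V_{\rm Blue}\}\), the event \(E_1\cap\cdots\cap E_{i-1}\) is measurable with respect to \(\mathcal H^-_{D_i}\) together with \(\mathsf T_{D_i}\). This holds because on \(\mathsf T_{D_i}\) the cluster of \(D_i\) is a union of clusters of its own, namely a single Red cluster or Blue singletons. Consequently the histories of the other \(D_j\) are determined by the exterior and do not merge with \(\mathcal H_{D_i}\), which means the clusters with top sets \(D_j\) are read off from \(\mathcal H^-_{D_i}\).

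Given this, we bound
\[
\mathbb P\bigl(E_i\mid\mathcal K,E_{<i}\bigr)
\le \sup \mathbb P\bigl(E_i\mid \mathcal H^-_{D_i},\mathsf T_{D_i}\bigr)=\Psi(D_i;Y_i),
\]
with the supremum over exterior realizations consistent with \(\mathcal K\) and \(E_{<i}\). The inequality is obtained by writing \(\mathbb P(E_i\mid\mathcal K,E_{<i})=\mathbb P(E_i\cap\mathsf T_{D_i}\mid\cdots)\le\mathbb P(E_i\mid\mathcal K,E_{<i},\mathsf T_{D_i})\) and then averaging over \(\mathcal H^-_{D_i}\). Lemma~\ref{lem:single_bound} then gives the factor. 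Since the disjointness hypotheses on the \(D_i\) from the revealed top sets are exactly those of that lemma, its proof applies verbatim, including the Red/Blue ratio step.

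The main obstacle is the measurability and independence step. Conditioning on \(\mathcal K\) and on the earlier \(E_j\) could in principle bias the marks used by \(\mathcal H_{D_i}\), for instance through Purple designations, which depend on distances between Red clusters. We will handle this exactly as in the proof of Lemma~\ref{lem:single_bound}: the only effect of this extra information on the \(D_i\)-history is to rule out some Red realizations, while every Blue realization of \(D_i\) remains compatible. Hence the Red/Blue ratio bound from \cite{Exposition} still dominates. Multiplying the \(r\) factors yields \(C_{\rm h}^r e^{-r(1-m)T}\prod_i e^{-\lambda_{\rm P}W(D_i,Y_i)}\).
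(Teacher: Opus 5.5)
Your proposal is correct and follows the paper's proof. Both use the chain rule over the events $E_i$ and the inclusion $E_i\subseteq\mathsf T_{D_i}^{(a)}$ to insert that conditioning; both observe that on $\mathsf T_{D_i}^{(a)}$ the earlier events $E_j$, $j<i$, are determined by the outside transcript $\mathcal H_{D_i}^{-,(a)}$ (since the $D_j$ are disjoint from $D_i$), and then bound each factor by $\Psi(D_i;Y_i)$ via Lemma~\ref{lem:single_bound}, with the paper conditioning on $\sigma(\mathbf 1_{E_1},\ldots,\mathbf 1_{E_{i-1}})$ instead of on the event $E_1\cap\cdots\cap E_{i-1}$, which is immaterial for the product bound.
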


\begin{proof}
Set
\[
        E_i:=
        \{D_i\in{\rm Red}^{(a)},\,
          \mathcal H_{D_i}^{(a)}\rightsquigarrow Y_i\},
\]
and 
\[
        \mathscr F_i
        :=
        \sigma(\mathbf 1_{E_1},\ldots,\mathbf 1_{E_i}).
\]
We claim that
\[
\mathbb{P}(E_i\mid\mathcal K,\mathscr F_{i-1})\leq \Psi (D_i;Y_i).
\]
Indeed, since \(E_i\subseteq \mathsf T_{D_i}^{(a)}\), $\mathbb P(E_i\mid \mathcal K,\mathscr F_{i-1})
\le
\mathbb P(E_i\mid \mathcal K,\mathscr F_{i-1},\mathsf T_{D_i}^{(a)}).$
On \(\mathsf T_{D_i}^{(a)}\), the top histories started from
\(\Lambda\setminus D_i\) determine the events \(E_j\), \(j<i\), because the
sets \(D_j\) are disjoint from \(D_i\). Hence, after conditioning on
\(\mathcal H_{D_i}^{-,(a)}\), the sigma-field \(\mathscr F_{i-1}\) adds no
further information relevant to \(E_i\). Taking the supremum over all outside
transcripts compatible with \(\mathcal K\) gives
\[
\mathbb P(E_i\mid\mathcal K,\mathscr F_{i-1})
\le \Psi(D_i;Y_i).
\]

Iterating, we get
\[
\mathbb P\left(\bigcap_{i=1}^r E_i\,\middle|\,\mathcal K\right)
\le
\prod_{i=1}^r \Psi(D_i;Y_i).
\]
Applying Lemma~\ref{lem:single_bound} to each factor yields
\[
\prod_{i=1}^r \Psi(D_i;Y_i)
\le
C_{\rm h}^r e^{-r(1-m)T}
\prod_{i=1}^r e^{-\lambda_{\rm P} W(D_i,Y_i)}.
\]
\end{proof}

\paragraph{Purple components.}

For nonempty \(D,E\subseteq\Lambda\), write
\[
        D\sim_R E
        \quad\Longleftrightarrow\quad
        \operatorname{dist}(D,E)\le R.
\]
For each copy \(a\), let \(\mathfrak R^{(a)}\) be the family of Red top
sets and equip it with the graph induced by \(\sim_R\).
A family of top sets is called \(R\)-connected when its graph under
\(\sim_R\) is connected. A Red top set \(D\) is \emph{long} in copy \(a\)
if
\[
        \max_{u\in\operatorname{Span}(\mathcal H_D^{(a)})}
        \operatorname{dist}(u,D)>R/3.
\]
A component \(\mathcal C\) of the Red proximity graph is \emph{bad} if
\(|\mathcal C|\ge2\), or if \(\mathcal C=\{D\}\) and \(D\) is long. Set
\[
        U(\mathcal C):=\bigcup_{D\in\mathcal C}D.
\]
Let
\(\mathsf{Bad}_{\mathrm P}^{(a)}\) be the family of bad components. Then, by the definition of Purple clusters in
Subsection~\ref{subsec:yellowSR},
\[
        V_{\mathrm P}^{(a)}
        =
        \bigcup_{\mathcal C\in\mathsf{Bad}_{\mathrm P}^{(a)}}U(\mathcal C),
\]
and the supports in this union are pairwise disjoint.

For the counting argument, let \(\mathsf{Cand}_{\mathrm P}\) be the collection
of all finite pairwise-disjoint \(R\)-connected families \(\mathcal C\) of
nonempty subsets of \(\Lambda\), including singletons. Thus every realized bad
component belongs to \(\mathsf{Cand}_{\mathrm P}\); a singleton candidate
\(\{D\}\) represents the possible event that \(D\) is an isolated long
component. A family
\(\mathcal S\subseteq\mathsf{Cand}_{\mathrm P}\) is \emph{compatible} if the
supports \(U(\mathcal C)\), \(\mathcal C\in\mathcal S\), are pairwise
disjoint.

For \(\mathcal C\in\mathsf{Cand}_{\mathrm P}\), define
\[
\omega(\mathcal C)
:=
\begin{cases}
\displaystyle
C_{\rm h}e^{-(1-m)T} \sum_{\substack{\xi\in\Lambda,\,
                   \operatorname{dist}(\xi,D)>R/3}}
e^{-\lambda_{\rm P}W(D\cup\{\xi\})},
&\mathcal C=\{D\},\\[4mm]
\displaystyle
\prod_{D\in\mathcal C}
 C_{\rm h}e^{-(1-m)T}e^{-\lambda_{\rm P}W(D)},
&|\mathcal C|\ge2,
\end{cases}
\qquad
\zeta(\mathcal C):=q^{|U(\mathcal C)|}\omega(\mathcal C).
\]

These activities dominate the joint occurrence of compatible bad
components. Namely, for \(a\in\{1,2\}\) and every finite compatible
\(\mathcal S\subseteq\mathsf{Cand}_{\mathrm P}\),
\begin{equation}
\label{eq:purple-component-domination}
\mathbb P\left(
        \mathcal S\subseteq\mathsf{Bad}_{\mathrm P}^{(a)}
        \,\middle|\,\mathcal K
\right)
\le
\prod_{\mathcal C\in\mathcal S}\omega(\mathcal C),
\end{equation}
by applying Lemma~\ref{lem:multi-red-bound} once to all the
pairwise-disjoint top sets, with \(Y_D=\varnothing\) for every top set in a component of size at least two. If one of the candidate top sets is
incompatible with the data in \(\mathcal K\), the probability on the left is
zero.

\begin{lemma}[Summability of Purple activities]
\label{lem:p-local-activity}
Fix \(\lambda_{\rm P}\) sufficiently large in terms of \(d\) and \(q\).
After possibly decreasing \(\beta_0(d,q)\), let \(C_{\rm h}\) denote the
constant supplied by Lemma~\ref{lem:multi-red-bound} corresponding to
this choice of \(\lambda_{\rm P}\). Then there exist constants
\(C_{\rm loc}<\infty\) and \(c_{\rm loc}>0\) such that, in the cutoff
upper-bound regime \(T=t_\star+s\), uniformly as \(s\) ranges over any
fixed bounded interval and for all sufficiently large \(\Lambda\),
\[
        \sup_{v\in\Lambda}
        \sum_{\substack{\mathcal C\in\mathsf{Cand}_{\mathrm P}\\
                        v\in U(\mathcal C)}}
        \zeta(\mathcal C)
        \le
        C_{\rm loc}
        \left(
        e^{-(1-m)T-c_{\rm loc}R}
        +R^d e^{-2(1-m)T}
        \right)
        \le
        C_{\rm loc}R^d e^{-2(1-m)T}.
\]
\end{lemma}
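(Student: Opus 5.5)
I will split the sum over candidates $\mathcal C\ni v$ into singletons $\{D\}$ and families with $|\mathcal C|\ge2$, and bound each by a lattice-animal count. The basic input is the standard count: the number of connected sets $\operatorname{Hull}$ of size $k$ containing a fixed vertex is at most $(2de)^k$. Moreover, a set $D$ with $W(D)=k$ containing $v$ lies in such a hull, giving at most $2^k$ choices of $D$ inside it. Hence
\[
\sum_{D\ni v,\ W(D)=k}1\le (4de)^k .
\]
Since $|D|\le W(D)$, the factor $q^{|U(\mathcal C)|}$ is absorbed by taking $\lambda_{\rm P}>\log(4deq)+2$. With this choice,
\[
\sum_{D\ni v} q^{|D|}e^{-\lambda_{\rm P}W(D)}\le C,
\]
and the same bound holds with extra room $e^{-W(D)}$ kept in reserve.

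For singletons $\mathcal C=\{D\}$, the constraint $\operatorname{dist}(\xi,D)>R/3$ forces $W(D\cup\{\xi\})\ge R/3$. Write $W(D\cup\{\xi\})=k\ge \max(W(D),R/3)$ and count pairs $(D,\xi)$ with $D\ni v$ through connected sets of size $k$ containing $v$. Each such set has at most $(4de)^k$ choices of $D$ and at most $k$ choices of $\xi$ inside the hull. This gives a total of
\[
C_{\rm h}e^{-(1-m)T}\sum_{k\ge R/3}k\,q^k(8de)^k e^{-\lambda_{\rm P}k}\le C e^{-(1-m)T-c_{\rm loc}R}.
\]

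For $|\mathcal C|=j\ge2$, the product structure gives a factor $\bigl(C_{\rm h}e^{-(1-m)T}\bigr)^j$, which is at most $\bigl(C_{\rm h}e^{-(1-m)T}\bigr)^2\cdot\bigl(C_{\rm h}e^{-(1-m)T}\bigr)^{j-2}$. The key observation is that $e^{-(1-m)T}=|\Lambda|^{-(1-m)/(2(1+\kappa))}e^{-(1-m)s}$ is a negative power of $|\Lambda|$, and $1-m<1+\kappa$ is close to $1$. Meanwhile $R$-connectivity costs only a polynomial factor: order the members along a spanning tree of the $\sim_R$ graph, rooted at the member containing $v$. Each new member $D'$ must contain a point within distance $R$ of an earlier member. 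Given the earlier member with hull size $W$, the choices of an anchor point number at most $W\cdot CR^d$, and the sum over $D'$ containing the anchor is bounded by $\sum q^{|D'|}e^{-\lambda_{\rm P}W(D')}\le C$ with $e^{-W}$ to spare. The factor $W$ is absorbed by the spare $e^{-W(D)}$.

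A tree-counting (Cayley/branching) bound then gives total weight at most
\[
\sum_{j\ge2}\bigl(C'R^de^{-(1-m)T}\bigr)^{j-1}\,C e^{-(1-m)T}\cdot j!^{0}\,C^j.
\]
To be careful, enumerate via a depth-first exploration so that each step is charged at most $C R^d e^{-(1-m)T}$. Since $R^de^{-(1-m)T}=(\log|\Lambda|)^{2d}|\Lambda|^{-c}\to0$ uniformly for bounded $s$, the geometric series converges and the sum is at most $C R^d e^{-2(1-m)T}$.

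For the final inequality, note that $e^{-c_{\rm loc}R}=e^{-c_{\rm loc}(\log|\Lambda|)^2}$ is smaller than any power of $|\Lambda|$. In particular it is at most $e^{-(1-m)T}$ for large $\Lambda$, so the singleton term is dominated by the second term.

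The main obstacle is the combinatorics of $R$-connected families: the tree enumeration must not generate a factorial in $j$. I would handle this by the standard rooted-tree bound (the number of labeled trees enters as $\le C^j$ once each vertex's children are summed via the generating function $\sum_{k}x^k/k!$-style exponential bound), uniform in the top sets through the per-step activity $CR^de^{-(1-m)T}\ll1$.
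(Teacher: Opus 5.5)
Your proposal is correct and follows essentially the same route as the paper: you split off singleton candidates, which the far-hull lattice-animal sum bounds by $e^{-(1-m)T-cR}$, and you bound $R$-connected families of size at least two via a spanning tree rooted at the member containing $v$, with each edge costing $CR^de^{-(1-m)T}$, before summing the geometric series and using that $e^{-cR}$ is superpolynomially small. The one point to state precisely is that a member $D$ with $k_u$ children incurs $W(D)^{k_u}$ anchor choices, which the spare $e^{-W(D)}$ reduces only to $k_u!$; this is cancelled by the $1/k_u!$ symmetry factor for unordered children, which is how the paper organizes it via the moment bound $\sum_{D\ni x}q^{|D|}|D|^{k}e^{-\lambda_{\rm P}W(D)}\le Ck!$ and plane trees counted by $4^{r-1}$.
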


\begin{proof}

We begin with two lattice-animal estimates. For every integer \(k\ge0\),
\begin{equation}
\label{eq:purple-moment}
        \sup_{x\in\Lambda}
        \sum_{D\ni x}
        q^{|D|}|D|^k e^{-\lambda_{\rm P}W(D)}
        \le Ck!.
\end{equation}
Indeed, there is a constant \(a_d<\infty\) such that the number of
connected hulls of size \(\ell\) containing \(x\) is at most
\(a_d^\ell\). For each such hull, the total \(q\)-weight of its subsets
is at most \((1+q)^\ell\). Since
\[
        |D|^k\le \ell^k\le k!e^\ell,
\]
summing over \(\ell\) proves \eqref{eq:purple-moment} once
\(\lambda_{\rm P}\) is sufficiently large.

The same enumeration gives
\begin{equation}
\label{eq:purple-far}
        \sup_{x\in\Lambda}
        \sum_{\substack{D\ni x,\ \xi\in\Lambda\\
                        \operatorname{dist}(\xi,D)>R/3}}
        q^{|D|}e^{-\lambda_{\rm P}W(D\cup\{\xi\})}
        \le Ce^{-cR}.
\end{equation}
Indeed, any connected hull spanning \(D\cup\{\xi\}\) has size at least
\(R/3\), while a hull of size \(\ell\) contains at most \(\ell\) possible
choices for \(\xi\). Thus the sum is an exponentially decaying tail of the
same geometric series.

For singleton candidates, \eqref{eq:purple-far} gives
\begin{equation}
\label{eq:purple-long-sum}
        \sup_{v\in\Lambda}
        \sum_{\substack{\mathcal C=\{D\}\in\mathsf{Cand}_{\mathrm P}\\
                        v\in D}}
        \zeta(\mathcal C)
        \le
        Ce^{-(1-m)T}e^{-cR}.
\end{equation}

It remains to sum the \(R\)-connected families of size at least two. Set
\[
        z_T(D):=C_{\rm h}e^{-(1-m)T}q^{|D|}e^{-\lambda_{\rm P}W(D)}.
\]
By \eqref{eq:purple-moment}, for every \(k\ge0\),
\begin{equation}
\label{eq:purple-weighted-moment}
        \sup_{x\in\Lambda}
        \sum_{D\ni x}z_T(D)|D|^k
        \le CC_{\rm h}e^{-(1-m)T}k!.
\end{equation}
Since the members of \(\mathcal C\) are disjoint,
\(\zeta(\mathcal C)=\prod_{D\in\mathcal C}z_T(D)\) whenever
\(|\mathcal C|\ge2\). For \(r\ge2\), let
\[
        S_r(v):=
        \sum_{\substack{\mathcal C\in\mathsf{Cand}_{\mathrm P}\\
                        |\mathcal C|=r,\ v\in U(\mathcal C)}}
        \zeta(\mathcal C).
\]

The unique member of \(\mathcal C\) containing \(v\) serves as the root.
Every \(R\)-connected family has a spanning tree rooted at that member. For a
fixed rooted tree on the distinct sets of \(\mathcal C\), ordering the
\(k_u\) children of every vertex \(u\) gives exactly \(\prod_u k_u!\) plane
representations. We sum these representations with the symmetry factor
\(\prod_u(1/k_u!)\); summing over all spanning trees only enlarges the
result.

Consider the tree edge from $D$. 
Each edge has at most \(CR^d\) choices for its child-side endpoint,
while a set at a vertex with \(k_u\) children contributes
\(|D|^{k_u}\). After dropping the disjointness and distinctness restrictions,
\eqref{eq:purple-weighted-moment} yields
\[
\begin{aligned}
        S_r(v)
        \le
        \sum_{\tau\in\mathcal T_r^{\rm pl}}
        (CR^d)^{r-1}
        \prod_{u\in\tau}
        \frac{1}{k_u!}
        \sup_{x\in\Lambda}
        \sum_{D\ni x}z_T(D)|D|^{k_u}
        \le
        Ce^{-(1-m)T}\bigl(CR^de^{-(1-m)T}\bigr)^{r-1},
\end{aligned}
\]
where \(\mathcal T_r^{\rm pl}\) is the collection of rooted plane tree
shapes with \(r\) vertices and
\(|\mathcal T_r^{\rm pl}|\le4^{r-1}\).

For \(s\) in a fixed bounded interval,
\[
        T=t_\star+s
        =
        \frac{1}{2(1+\kappa)}\log|\Lambda|+O(1),
\]
so \(R^d e^{-(1-m)T}=o(1)\). Summing the last display over \(r\ge2\) gives
\begin{equation}
\label{eq:purple-connected-sum}
        \sup_{v\in\Lambda}
        \sum_{\substack{\mathcal C\in\mathsf{Cand}_{\mathrm P}\\
                        |\mathcal C|\ge2,\ v\in U(\mathcal C)}}
        \zeta(\mathcal C)
        \le
        CR^de^{-2(1-m)T}.
\end{equation}
Combining \eqref{eq:purple-long-sum} and
\eqref{eq:purple-connected-sum} proves the first inequality. The second
follows for all sufficiently large \(\Lambda\), since \(R/T\to\infty\) in
the stated regime.
\end{proof}

\begin{prop}[Purple overlap]
\label{prop:purple}
After decreasing \(\beta_0(d,q)\) if necessary, there exists
\(C<\infty\) such that, at \(T=t_\star+s\), locally uniformly for
\(s\ge0\),
\[
        \mathbb E\left[
        q^{|V_{\mathrm{P}}^{(1)}\cap V_{\mathrm{P}}^{(2)}|}-1
        \right]
        \le
        C|\Lambda|R^{2d}e^{-4(1-m) T}.
\]
Consequently, after decreasing \(\beta_0(d,q)\) so that
\(2(1-m)>1+\kappa\),
\[
        \mathbb E\left[
        q^{|V_{\mathrm{P}}^{(1)}\cap V_{\mathrm{P}}^{(2)}|}-1
        \right]
        =
        o\left(e^{-2(1+\kappa) s}\right)
\]
as \(|\Lambda|\to\infty\), locally uniformly for \(s\ge0\).
\end{prop}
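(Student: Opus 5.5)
Conditionally on \(\mathcal K\), the two copies \(V_{\mathrm P}^{(1)}\) and \(V_{\mathrm P}^{(2)}\) are independent. Each is a disjoint union of supports \(U(\mathcal C)\) of bad components, and by \eqref{eq:purple-component-domination} the joint occurrence of any compatible family is dominated by a product of activities \(\omega\). I will follow the standard polymer/cluster-expansion route for Miller--Peres overlaps, as in \cite{Exposition}. First expand
\[
q^{|V^{(1)}_{\mathrm P}\cap V^{(2)}_{\mathrm P}|}
\le
\prod_{v\in V^{(1)}_{\mathrm P}\cap V^{(2)}_{\mathrm P}}\bigl(1+(q-1)\bigr),
\]
and bound crudely by
\[
\prod_{\mathcal C_1,\mathcal C_2}\bigl(1+\mathbf 1\{U(\mathcal C_1)\cap U(\mathcal C_2)\ne\varnothing\}(q^{|U(\mathcal C_1)|}-1)\bigr),
\]
where \(\mathcal C_1\) and \(\mathcal C_2\) range over the realized bad components of the two copies. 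Multiplying out expresses \(q^{|V^{(1)}\cap V^{(2)}|}-1\) as a sum over nonempty finite sets of intersecting pairs. Equivalently, and more cleanly, \(q^{|V^{(1)}\cap V^{(2)}|}-1\) is at most the sum over nonempty compatible families \(\mathcal S_1\subseteq\mathsf{Bad}^{(1)}_{\mathrm P}\) with an associated family \(\mathcal S_2\subseteq\mathsf{Bad}^{(2)}_{\mathrm P}\) such that every member of \(\mathcal S_1\) meets some member of \(\mathcal S_2\) and conversely, weighted by \(\prod_{\mathcal C\in\mathcal S_1}q^{|U(\mathcal C)|}\).

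Next I take the conditional expectation given \(\mathcal K\), use conditional independence of the copies, and apply \eqref{eq:purple-component-domination} to each copy. This bounds the expected overlap by a sum over such bipartite "intersection graphs" of \(\prod\zeta(\mathcal C_1)\prod\omega(\mathcal C_2)\le\prod\zeta\prod\zeta\). Grouping into connected components of the intersection graph, the sum is at most \(\exp(\Theta)-1\). Here \(\Theta\) is the total weight of connected alternating chains, rooted at a vertex \(v\in\Lambda\) and summed over \(v\), with at least one component from each copy; any nonzero contribution requires both copies to be present. Summing over the root gives a factor \(|\Lambda|\). Each further link in the chain is controlled by Lemma~\ref{lem:p-local-activity}, using the supremum over a vertex \(v\) in \(U(\mathcal C)\). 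Passing from one set to the next costs a factor \(|U(\mathcal C)|\); I handle this with the moment bound \eqref{eq:purple-weighted-moment} and the same plane-tree \(1/k_u!\) bookkeeping as in that lemma. The minimal chain has one component from each copy, each of local weight \(C_{\rm loc}R^de^{-2(1-m)T}\), so
\[
\Theta\le C|\Lambda|\bigl(R^de^{-2(1-m)T}\bigr)^2\sum_{k\ge0}\bigl(CR^de^{-2(1-m)T}\bigr)^k\le C|\Lambda|R^{2d}e^{-4(1-m)T}.
\]

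At \(T=t_\star+s\) this quantity is \(|\Lambda|^{1-2(1-m)/(1+\kappa)}\) times polylogarithmic factors, which is \(o(1)\) once \(2(1-m)>1+\kappa\). Hence \(e^\Theta-1\le 2\Theta\), which gives the first display. For the consequence, write
\[
|\Lambda|e^{-4(1-m)T}=e^{-4(1-m)s}\,|\Lambda|^{1-2(1-m)/(1+\kappa)}.
\]
Since \(4(1-m)>2(1+\kappa)\), we have \(e^{-4(1-m)s}\le e^{-2(1+\kappa)s}\) for \(s\ge0\), and the power of \(|\Lambda|\) decays polynomially and kills \(R^{2d}=(\log|\Lambda|)^{4d}\).

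The main obstacle is the combinatorial step. The expansion must be organized so that each component's \(q^{|U|}\) weight is charged to only one copy while all intersection links are paid by the size factors \(|U(\mathcal C)|\). One also has to check that the domination \eqref{eq:purple-component-domination} applies to arbitrary compatible families inside a single copy, so that dropping disjointness across copies is legitimate. If the factorial bookkeeping becomes messy, I would instead absorb \(|U|^k\) into \(e^{|U|}\) by enlarging \(\lambda_{\rm P}\).
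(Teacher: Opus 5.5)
Your argument is correct, but it organizes the combinatorics differently from the paper.

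\textbf{The paper's route.} The paper never forms connected intersection clusters. It orders the intersecting candidate pairs $(\mathcal C,\mathcal C')$ and greedily retains a pair when both components are realized, in copies $1$ and $2$ respectively, and neither has already been used. The retained set $\mathcal M$ is a matching with disjoint supports in each copy, and it still covers $V_{\mathrm P}^{(1)}\cap V_{\mathrm P}^{(2)}$. Hence $q^{|V_{\mathrm P}^{(1)}\cap V_{\mathrm P}^{(2)}|}\le\prod_{\mathcal M}q^{|U(\mathcal C)|+|U(\mathcal C')|}$. Expanding this product and applying \eqref{eq:purple-component-domination} in each copy gives $e^\rho-1$, where $\rho=\sum\zeta(\mathcal C)\zeta(\mathcal C')$ runs over intersecting pairs only. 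Then $\rho\le\sum_v\bigl(\sum_{\mathcal C\ni v}\zeta(\mathcal C)\bigr)^2$ follows directly from Lemma~\ref{lem:p-local-activity} as stated.

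\textbf{Your route.} Your bipartite-cluster expansion has the same leading term, namely the two-component clusters. The larger clusters, however, force a tree-graph bound in which each link costs a factor $|U(\mathcal C)|$. That needs a local-activity estimate stronger than Lemma~\ref{lem:p-local-activity}. Note that \eqref{eq:purple-weighted-moment} concerns single sets $D$, not components $\mathcal C$, so it does not apply directly.

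\textbf{The fix.} Your fallback is the clean repair:
\begin{itemize}
\item use $|U|^{k}/k!\le e^{|U|}$ together with the unordered-children symmetry factor;
\item rerun Lemma~\ref{lem:p-local-activity} with $q$ replaced by $eq$, enlarging $\lambda_{\rm P}$, which is permitted since it depends only on $d,q$;
\item close with a Koteck\'y--Preiss-type induction on tree depth.
\end{itemize}

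\textbf{A caution on your first product.} Your ``crude'' product over all pairs $(\mathcal C_1,\mathcal C_2)$ is not equivalent to the per-component expansion. It charges $q^{|U(\mathcal C_1)|}$ once for every intersecting $\mathcal C_2$, which can produce weights like $q^{|U|^2}$ and is not summable. Only the version charging each $\mathcal C_1\in\mathcal S_1$ once should be used.

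\textbf{Comparison.} The paper's thinning reduces the combinatorics to one line and uses the local-activity lemma unchanged. Your route is the more standard polymer expansion and charges $q^{|U|}$ to only one copy. The price is an extra exponentially weighted activity estimate.
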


\begin{proof}
The idea is to thin the intersecting component pairs to a collection that is
disjoint in each copy. Joint occurrence of the retained components can then
be bounded by \eqref{eq:purple-component-domination}, while the total cost of
an intersection is controlled by the square of the local activity.

Let
\[
\mathcal Q:=
\left\{
(\mathcal C,\mathcal C')\in\mathsf{Cand}_{\mathrm P}^2:
U(\mathcal C)\cap U(\mathcal C')\ne\varnothing
\right\},
\]
and fix an ordering of this finite set. For each realization, scan the pairs
in this order and retain \((\mathcal C,\mathcal C')\) if both components are
realized and neither has appeared in an earlier retained pair. Denote the
resulting collection by \(\mathcal M\). Its supports are pairwise disjoint in
each coordinate.

The retained pairs still cover the Purple overlap. Indeed, every
\(x\in V_{\mathrm P}^{(1)}\cap V_{\mathrm P}^{(2)}\) belongs to unique realized
components \(\mathcal C\) and \(\mathcal C'\) in the two copies. If
\((\mathcal C,\mathcal C')\) was not retained, then one of these components
already appeared in a retained pair, whose support therefore contains \(x\).
Thus
\[
|V_{\mathrm P}^{(1)}\cap V_{\mathrm P}^{(2)}|
\le
\sum_{(\mathcal C,\mathcal C')\in\mathcal M}
\bigl(|U(\mathcal C)|+|U(\mathcal C')|\bigr).
\]

Set
\[
b(\mathcal C,\mathcal C')
:=
q^{|U(\mathcal C)|+|U(\mathcal C')|}-1.
\]
Expanding the resulting product and taking expectations gives
\[
\begin{aligned}
\mathbb E\left[
q^{|V_{\mathrm P}^{(1)}\cap V_{\mathrm P}^{(2)}|}-1
\right]
\le
\sum_{\substack{\varnothing\ne\mathcal S\subseteq\mathcal Q\\
                 \mathcal S\ \mathrm{compatible}}}
\left(
\prod_{(\mathcal C,\mathcal C')\in\mathcal S}
b(\mathcal C,\mathcal C')
\right)
\mathbb P(\mathcal S\subseteq\mathcal M),
\end{aligned}
\]
where compatibility means disjoint supports in each coordinate.

For a fixed compatible \(\mathcal S\), the event
\(\mathcal S\subseteq\mathcal M\) requires all first-coordinate components
to occur in copy \(1\) and all second-coordinate components to occur in copy
\(2\). Conditional independence given \(\mathcal K\), together with
\eqref{eq:purple-component-domination}, therefore gives
\[
\mathbb P(\mathcal S\subseteq\mathcal M)
\le
\prod_{(\mathcal C,\mathcal C')\in\mathcal S}
\omega(\mathcal C)\omega(\mathcal C').
\]
Since
$b(\mathcal C,\mathcal C')
\le q^{|U(\mathcal C)|+|U(\mathcal C')|}$,
each pair contributes at most
\(\zeta(\mathcal C)\zeta(\mathcal C')\). Dropping compatibility and summing
over all subsets of \(\mathcal Q\), we obtain
\[
\mathbb E\left[
q^{|V_{\mathrm P}^{(1)}\cap V_{\mathrm P}^{(2)}|}-1
\right]
\le e^\rho-1,
\qquad
\rho:=
\sum_{(\mathcal C,\mathcal C')\in\mathcal Q}
\zeta(\mathcal C)\zeta(\mathcal C').
\]

An intersecting pair can be rooted at any common vertex, so
Lemma~\ref{lem:p-local-activity} yields
\[
\begin{aligned}
\rho
&\le
\sum_{v\in\Lambda}
\left(
\sum_{\substack{\mathcal C\in\mathsf{Cand}_{\mathrm P},\,
                 v\in U(\mathcal C)}}
\zeta(\mathcal C)
\right)^2\le
C|\Lambda|R^{2d}e^{-4(1-m)T}.
\end{aligned}
\]

Using the allowed decrease of \(\beta_0(d,q)\), we may assume $\delta:=4(1-m)-2(1+\kappa)>0$. 
Since \(|\Lambda|=e^{2(1+\kappa)t_\star}\), at \(T=t_\star+s\), $\rho\leq CR^{2d}e^{-\delta t_\star}e^{-4(1-m)s}
=o(1)$
locally uniformly for \(s\ge0\). Hence,
\[
\mathbb E\left[
q^{|V_{\mathrm P}^{(1)}\cap V_{\mathrm P}^{(2)}|}-1
\right]
\le
C|\Lambda|R^{2d}e^{-4(1-m)T}.
\]
with sufficiently large \(\Lambda\) to apply
\(e^\rho-1\le2\rho\).
Finally,
\[
|\Lambda|R^{2d}e^{-4(1-m)T}
=
e^{-2(1+\kappa)s}
R^{2d}e^{-\delta(t_\star+s)}
=
o\left(e^{-2(1+\kappa)s}\right).
\]
\end{proof}

\smallskip

\subsubsection{Strong--Red overlap}

    We bound
\[
\mathbb E\left[
q^{|V_{\mathrm{SR}}^{(1)}\cap V_{\mathrm{SR}}^{(2)}|}-1
\right],
\]
where \(V_{\mathrm{SR}}^{(1)}:=V_{\mathrm{SR}}\) and $V_{\mathrm{SR}}^{(2)}$ is a conditionally independent copy of \(V_{\mathrm{SR}}^{(1)}\) given \(\mathcal H_{G,Y}\vee\mathcal H_{\mathrm{P}}\).
Let
\[
        \mathcal K:=\mathcal H_{G,Y}\vee\mathcal H_{\mathrm{P}} .
\]
We write
\(A\in\mathrm{SR}^{(a)}\) for the event that \(A\) is the top set of a
Strong-Red cluster in copy \(a\).
For copy \(a\in\{1,2\}\), let
\[
    \mathsf T_A^{(a)}
    :=
    \{A\in\mathrm{SR}^{(a)}\}
    \cup
    \{A\subseteq V_{\mathrm{Blue}}^{(a)}\}.
\]

\paragraph{Separation depth}

For a nonempty spatial set \(A\subset\Lambda\) and \(h\ge0\), define
\[
        \mathsf D_h(A)
        :=
        \left\{
        (x,s)\in\Lambda\times[0,T-h]:
        \dist(x,A)\le c_{\rm cone}(T-s)
        \right\},
\]
with the convention that \({\mathsf D}_h(A)=\varnothing\) if \(h>T\).
If \(\mathcal H\subseteq\Lambda\times[0,T]\) is a space--time set, define
\[
        \ell_A(\mathcal H)
        :=
        \min\left\{
        T,\,
        1+
        \inf\left\{
        h\in[0,T):
        \mathcal H\cap\mathsf D_h(A)=\varnothing
        \right\}
        \right\},
\]
where the infimum is defined to be \(T\) when the displayed set is empty.
Plus one is added to ensure that $\mathcal H$ and $\mathsf D_h(A)$ are disjoint at depth $\ell_A(\mathcal H)$.

\begin{prop}
\label{prop:one-block-SR}
For every \(\lambda_{\rm SR}>0\), after decreasing \(\beta_0(d,q)\) if necessary, there exist constants \(C_{\mathrm{sr}}<\infty\) and \(c_{\mathrm{sr}}<\infty\) such that the following holds.  For every non-empty \(A\subseteq\Lambda\), and \(0\le h\le T \),
\[
    \sup_{\substack{X:\ \ell_A(X)=h}}
    q^{|A|}
    \mathbb P\left(
        A\in\mathrm{SR}
        \,\middle|\,
        \mathcal H_A^{-}=X,
        \mathsf T_A
    \right)
    \le
    C_{\mathrm{sr}}
    e^{-(1+\kappa)T}
    e^{-\lambda_{\rm SR} W(A)}
    e^{c_{\mathrm{sr}}\beta h}.
\]
\end{prop}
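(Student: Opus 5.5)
We follow the Red/Blue ratio scheme of Lemma~\ref{lem:single_bound}, and additionally extract the sharp rate $e^{-(1+\kappa)T}$ from the Strong-Red splitting. The starting point is the same comparison. Conditional on $\mathcal H_A^-=X$ and $\mathsf T_A$, every realization with $A\subseteq V_{\mathrm{Blue}}$ remains admissible. Hence
\[
q^{|A|}\,\mathbb P\bigl(A\in\mathrm{SR}\,\big|\,\mathcal H_A^-=X,\mathsf T_A\bigr)\le
\frac{q^{|A|}\,\mathbb P(A\in\mathrm{SR}\mid \mathcal H_A^-=X)}{\mathbb P(A\subseteq V_{\mathrm{Blue}}\mid \mathcal H_A^-=X)} .
\]
The Blue denominator is bounded below by $e^{-c|A|}$, uniformly in $X$, exactly as in Lemma~2.1 of \cite{Exposition}. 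This loss, together with $q^{|A|}$, is absorbed into $e^{-\lambda_{\rm SR}W(A)}$ once the numerator carries a lattice-animal factor $e^{-\lambda' W(A)}$ with $\lambda'$ large. By making $\beta$ small, each branch-out costs $\alpha$, and connecting $A$ into one cluster forces $\gtrsim W(A)$ branch-outs.

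For the numerator, we decompose according to the backward history of $A$ from time $T$ down to depth $h+O(1)$. Write $s_h:=T-h$. By the definition of $\ell_A(X)=h$, the exterior history $X$ avoids $\mathsf D_{h}(A)$ (up to the $+1$ slack). So below real time $s_h$, nothing in $X$ can intrude into the cones $\mathsf C_{T-s}(v)$ for tips $(v,s)\in\mathsf C_0(A)$ with $s\le s_h$. The plan has three steps.

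\emph{Step 1 (upper part).} The history of $A$ on $[s_h,T]$ must coalesce to a single lineage, i.e.\ $A$ is one cluster. Otherwise $A$ contains several Red clusters within distance $R$, which makes it Purple and not SR. This part is controlled by Lemma~\ref{lem:branch-penalty} and Lemma~\ref{lem:brw-deviation} with a lattice-animal sum. It gives $e^{-\lambda'W(A)}$ times the survival cost of a lineage over depth $h$. We deliberately bound that survival cost only crudely, by $e^{-(1-C\beta)h}$, instead of $e^{-(1+\kappa)h}$. This crude bound is the source of the loss factor $e^{c_{\rm sr}\beta h}$, since $1+\kappa=1-\Theta(\beta)$.

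\emph{Step 2 (separation and residual).} Once the cluster is a singleton at some $(v,s)$ with $s\le s_h$ inside $\mathsf C_0(A)$, the cone below it is free of $\mathcal H_{G,P}$. The event that $A$ is SR with $\tau_A=s$ then requires the lower-cone event $\mathsf{Bot}_s(v)$ together with $J_{(v,s)}>p_s(v)$. Proposition~\ref{prop:isolated-cone-residual} bounds this by $Ce^{-(1+\kappa)s}$. We also need to handle the case where the first admissible singleton tip is rejected only because the lower history leaves the cone. In that case we continue to the next singleton time; the overshoot event carries exponential cost $e^{-\lambda(c_{\rm cone})u}$ from Lemma~\ref{lem:brw-deviation}, which beats $e^{-(1+\kappa)u}$. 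If $\tau_A=0$, we need a history of depth $T$ staying singleton-connected, and Lemma~\ref{lem:IK-cone-l1} / Proposition~\ref{prop:magnetization-bound}-type bounds give $e^{-(1+\kappa)T}$ up to $e^{c\beta h}$.

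\emph{Step 3 (combine).} We sum over the splitting time $s$ and the tip $v$; there are $O(h^d)$ tips, which is absorbed into $e^{c\beta h}$ or $W(A)$. This yields $e^{-(1-C\beta)(T-s)}\cdot e^{-(1+\kappa)s}\le e^{-(1+\kappa)T}e^{c_{\rm sr}\beta (T-s)}$. The main obstacle is that $T-s$ can exceed $h$: a singleton can persist below $s_h$ while the separation tip is delayed by rejections. To handle this, we plan to show that for $s<s_h$ the only possible rejections are cone exit or an unfavorable tip location, each of which is exponentially penalized at rate $>1$. Thus the time spent below $s_h$ costs at least $e^{-(1+\kappa)(s_h-s)}$, and the loss factor $e^{c\beta\cdot}$ stays confined to the top $h$ units. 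Making this renewal-type summation rigorous, uniformly over conditioning on $\mathsf T_A$, is the delicate point.
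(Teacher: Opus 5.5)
Your architecture matches the paper's: the Red/Blue ratio, charging the top $h$ units only at the crude rate $1-K\beta$ (the real source of $e^{c_{\rm sr}\beta h}$), and Proposition~\ref{prop:isolated-cone-residual} at each separation trial. However, the estimate that actually produces $e^{-(1+\kappa)T}$ is missing, and the tools you cite would not give it.

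For $\tau_A=0$ you appeal to Lemma~\ref{lem:IK-cone-l1} and Proposition~\ref{prop:magnetization-bound}. These control the signed, $p(x)$-weighted kernel, i.e.\ residual biases, not probabilities of history events. Redo the computation of Lemma~\ref{lem:Zp-bound} without the factor $p_1\approx 1/(2d)$: survival of a lineage decays only at rate $1-2d\alpha+O(\alpha^2)$, strictly slower than $1+\kappa=1-\alpha+O(\alpha^2)$. So Red clusters are far likelier than $e^{-(1+\kappa)T}$, and you must prove that Red with no successful trial is atypical.

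The paper does this in two steps. First, the deterministic delay bound of Lemma~\ref{lem:no-admissible-delay}: survival on $[0,r]$ with no $(A,h)$-admissible singleton in $[h,r)$ forces $r-h\le D_r+c_{\rm cone}^{-1}\chi_r$. Second, the envelope supermartingale $e^{(1-K\beta)t+\theta(Z_t-t)+\zeta Y_t}\Phi(N_t)$, optionally stopped at the first trial depth. Together they give $\mathbb E[e^{(1+\kappa)r_{A,1}}\mathbf 1_{\mathcal R_1}+e^{(1+\kappa)T}\mathbf 1_{\mathcal N_0}\mid\mathcal U_A]\le Ce^{-\lambda_0W(A)}e^{c\beta h}$, with the lattice-animal factor coming from $\chi_\sigma\ge W(A)-1$ on the connectivity event. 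This is an exponential moment of the random trial depth, and the delay beyond $h$ is penalized at a rate strictly above $1+\kappa$.

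Your Step~3 charges the delay at exactly $e^{-(1+\kappa)(s_h-s)}$ and sums over deterministic depths and tips. There are $O(|A|T^d)$ tips, not $O(h^d)$, so you lose a factor polynomial in $T$ that cannot be absorbed into $e^{c\beta h}$ when $h\ll T$. Step~1 also conflates connectivity with coalescence: $A$ being one cluster forces no singleton on $[T-h,T]$, and a $\tau_A=0$ cluster may never be a singleton.

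The treatment of failed trials has a similar gap. Summing over candidate tips $(v,s)$ is not meaningful as stated in continuous time: singleton times form intervals, and the $J_{(v,s)}$ are independent over a continuum. The paper instead runs trials at stopping times $r_{A,1}<r_{A,2}<\cdots$, revealing each lower history only up to its first cone exit so the next lower cone is fresh. This gives $\mathbb P(E^*_{{\rm res},A}(h)\mid\mathcal U_A)\le Ce^{-(1+\kappa)T}\,\mathbb E[\sum_k\mathbf 1_{\mathcal R_k}e^{(1+\kappa)r_{A,k}}\mid\mathcal U_A]$.

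Summing over $k$ then needs a per-trial contraction by some $\rho<1$ of the $e^{(1+\kappa)(\cdot)}$-weighted delay to the next trial. Lemma~\ref{lem:brw-deviation} cannot supply it: its bound $2^de^{-\lambda u}$ exceeds $1$ for early exits, and it says nothing about the time from exit to the next admissible singleton. Lemma~\ref{lem:one-site-continuation} gets $\rho_{\rm cont}=O(\alpha)$ from three ingredients: an exit forces the first ring below the tip to branch; the delay bound $\Delta\le\frac12\widetilde L_\Delta+c_{\rm cone}^{-1}\widetilde\chi_\Delta$; and the envelope exponential moments.

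Finally, your Blue lower bound is not uniform in $X$. If $X$ visits $(u,s_u)$ with $u\in A$ and $s_u$ close to $T$, Blue requires an update at $u$ in $(s_u,T]$, which has small probability. This is why the paper conditions both numerator and denominator on the top-unit event $\mathcal U_A$, which contains $\mathsf T_A$.
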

Its proof is deferred to the end of this subsubsection.

For copy \(a\in\{1,2\}\), let $\mathcal H_A^{-,(a)}$
denote the outside transcript obtained by revealing the histories started from
\((\Lambda\setminus A)\times\{T\}\) in copy \(a\). We write
\[
    X\sim_A \mathcal K
\]
when \(X\) is a compatible outside transcript of Strong Red set $A$, $\mathcal H_A^-$, with the conditioned information \(\mathcal K\). 
If the revealed geometry
in \(\mathcal K\) (or $X$) already forces \(A\) to be Purple, or not a single information percolation cluster, then \(A\) is declared
incompatible with \(\mathcal K\) (or $X$). Define
\[
    \Psi(A,\mathcal K)
    :=
    \sup_{X:\,X\sim_A\mathcal K}
    \mathbb P\left(
        A\in\mathrm{SR}
        \,\middle|\,
        \mathcal H_A^{-}=X,
        \mathsf T_A
    \right).
\]
If \(A\) is incompatible with \(\mathcal K\), we set $\Psi(A,\mathcal K)=0$.

If $A$ is compatible with $\mathcal K$, fix a compatible transcript \(X\sim_A\mathcal K\) which is also compatible with $A$.
Remark that if \(A\) is a geometrically regular Red cluster, every other geometrically Red cluster is disjoint from \({\mathsf D}_0(A)\). Therefore 
$\ell_A(\mathcal H_A^-) =
\ell_A(\mathcal H_{\rm Blue}\cup\mathcal H_G\cup\mathcal H_P)
\leq \ell_A(\mathcal H_G\cup\mathcal H_P)+1$, as blue can survive at most depth one. 
Applying Proposition~\ref{prop:one-block-SR} gives
\begin{equation}\label{eq:activity-pointwise}
q^{|A|}\Psi (A,\mathcal K)
\le
C\,
        e^{-(1+\kappa)T}
        e^{-\lambda_{\rm SR} W(A)}
        e^{c_{\mathrm{sr}}\beta \ell_A(\mathcal H_{G,P})}.
\end{equation}

We shall use the following moment bound.
\begin{lem}
\label{lem:sum-h}
There exist constants \(\delta_0>0\) and \(C_{\mathrm{bd}}<\infty\) such that
the following holds. Let \(K_1,\ldots,K_k\subset\Lambda\) be nonempty sets
satisfying
\[
        \operatorname{dist}(K_i,K_j)>R,
        \qquad i\ne j.
\]
Let \(\mathcal H(S)\) be the backward history generated from \(S\times\{T\}\), for a deterministic
\(S\subseteq\Lambda\).
Then, for every \(0\le\delta\le\delta_0\),
\[
        \mathbb E
        \exp\left\{
        \delta\sum_{i=1}^k \ell_{K_i}(\mathcal H(S))
        \right\}
        \le
        \exp\left\{
        C_{\mathrm{bd}}\sum_{i=1}^k |K_i|
        \right\}.
\]
\end{lem}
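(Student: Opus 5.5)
We bound the exponential moment by a union bound over the spatial origins of the history that reaches deep into each cone $\mathsf D_h(K_i)$, and then decouple the $k$ sets through the separation $\dist(K_i,K_j)>R$.

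\textbf{Step 1: reduce $\ell_{K_i}$ to single-site histories.} Since $\ell_{K_i}\le T$ and $e^{\delta\ell}-1\le\sum_{h\ge1}\delta e^{\delta h}\mathbf 1\{\ell\ge h\}$, it suffices to control the tail probabilities $\mathbb P(\ell_{K_i}(\mathcal H(S))\ge h)$. The event $\ell_{K_i}\ge h$ (with $h\ge 1$) forces $\mathcal H(S)$ to meet $\mathsf D_{h-1}(K_i)$, i.e.\ some history $\mathcal H_w$, $w\in S$, contains a point $(x,u)$ with $u\le T-h+1$ and $\dist(x,K_i)\le c_{\rm cone}(T-u)$.

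We split according to the starting site $w$.
\begin{itemize}
\item If $\dist(w,K_i)\le c_{\rm cone}(T-u)/2$, then $\mathcal H_w$ survives to backward depth at least $h-1$. By domination by the subcritical branching process, this has probability at most $e^{-(1-m)(h-1)}$.
\item Otherwise, $\mathcal H_w$ has a particle at displacement exceeding $\tfrac{c_{\rm cone}}2\cdot$(elapsed time) at an elapsed time $\ge h-1$. Lemma~\ref{lem:brw-deviation} bounds this by $2^de^{-\lambda(1)(h-1)}$ times a factor $e^{-\dist(w,K_i)}$, obtained from the radial estimate.
\end{itemize}
Summing over $w$ (with $S$ arbitrary) and over the sites $x\in K_i$ near the cone gives
\[
\mathbb P(\ell_{K_i}\ge h)\le C|K_i|\,h^d e^{-c h},\qquad c=\min(1-m,\lambda(1))>0 .
\]
For $\delta\le\delta_0<c/2$ this yields $\mathbb E e^{\delta\ell_{K_i}}\le 1+C|K_i|\le e^{C|K_i|}$ for a single set.

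\textbf{Step 2: product over $i$.} The main obstacle is the joint exponential moment, since one long history could serve several $K_i$ simultaneously. The plan is to expand
\[
\prod_i\bigl(1+(e^{\delta\ell_i}-1)\bigr)=\sum_{I}\prod_{i\in I}\bigl(e^{\delta\ell_i}-1\bigr)
\]
and bound $\mathbb P(\ell_i\ge h_i\ \forall i\in I)$. For each $i\in I$, pick a witness: a starting site $w_i$ and a particle path of $\mathcal H_{w_i}$ entering the $h_i$-cone of $K_i$.
\begin{itemize}
\item If the witnesses are distinct sites, the single-site histories are dominated by independent branching walks, since the non-coalescing domination is monotone under taking unions. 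The van den Berg--Kesten / BK-type disjoint-occurrence bound then factorizes the probability into $\prod_i C|K_i|h_i^de^{-ch_i}$.
\item If one history serves several $K_i$, then its spatial spread must connect cones around sets at mutual distance $>R$. This costs at least $e^{-\theta R/2}$ by the radial part of Lemma~\ref{lem:brw-deviation}, or else its depth is at least $R/(2c_{\rm cone})$. Since $R=(\log|\Lambda|)^2\gg T$ while depths are bounded by $T$, the second alternative cannot occur once $R>4c_{\rm cone}T$. The radial cost $e^{-R/2}$ then absorbs any combinatorial factor $|K|$, and such witness-sharing contributes a factor that can be folded into the constant.
\end{itemize}

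\textbf{Step 3: conclude.} Summing over $h_i$ with weights $\delta e^{\delta h_i}$ gives
\[
\mathbb E\exp\Bigl\{\delta\sum_i\ell_{K_i}\Bigr\}\le\prod_i\bigl(1+C|K_i|\bigr)\le\exp\Bigl\{C_{\mathrm{bd}}\sum_i|K_i|\Bigr\}.
\]
The delicate point is making the BK/independence step rigorous for backward histories, which share graphical marks when they coalesce. The approach is to work with the dominating non-coalescing branching walks, where particles from different roots are independent, and to note that the events $\{\ell_{K_i}\ge h_i\}$ are increasing in the set of particles.
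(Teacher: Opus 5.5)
\textbf{There is a genuine gap, in the shared-witness case of Step~2.}

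Step~1 and the distinct-witness half of Step~2 are repairable.
\begin{itemize}
\item In Step~1 the case split is backwards. From $\dist(x,K_i)\le c_{\rm cone}(T-u)$ and $\dist(w,K_i)>c_{\rm cone}(T-u)/2$, the triangle inequality gives no lower bound on $\dist(w,x)$. You need a threshold like $\tfrac32c_{\rm cone}(T-u)$ and a sum over depth shells. To get time decay and distance decay simultaneously, rerun Ville's inequality on the martingale from the proof of Lemma~\ref{lem:brw-deviation}; its stated conclusions do not give the product.
\item For distinct witnesses, plain independence of the envelopes from distinct roots suffices; BK is not needed.
\end{itemize}

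The shared-witness case is where the content of the lemma lies, and your treatment does not work quantitatively. Suppose one root serves a block $J\subseteq I$ of $m$ sets.
\begin{itemize}
\item The event that a single history reaches all $m$ cones at depths $\ge h_i$ gives survival decay only in $\max_{i\in J}h_i$. Summing the weights $\prod_{i\in J}\delta e^{\delta h_i}$ therefore leaves a factor up to $e^{\delta(m-1)T}$.
\item Covering $\{\ell_{K_i}\ge h_i\ \forall i\in I\}$ by witness assignments forces a sum over all groupings of $I$ into blocks with a common root. That is Bell-number many terms.
\end{itemize}
A single radial cost $e^{-\theta R/2}$ per shared history pays for neither, and $m$ is not bounded. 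Here $k$ can be of order $|\Lambda|/R^d$ while $R/T$ is only of order $\log|\Lambda|$, so $e^{-\theta R/2}e^{\delta(m-1)T}$ diverges once $m\gtrsim R/T$. A sharper radial remark does not rescue this: a history of radius $\rho$ meets only $O((1+\rho/R)^d)$ of the $R$-separated cones, but that yields a cost of only $e^{-cRm^{1/d}}$, which still loses to $e^{\delta mT}$.

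\textbf{The missing ingredient is a cost linear in $m-1$.} Since $c_{\rm cone}T\le R/4$, hit points in distinct cones are at mutual distance at least $R/2$. So the spatial projection of a connected history meeting $m$ cones contains a tree of length at least $R(m-1)/4$. Combine this with the uniform bound $\sup\mathbb E\exp\{\tfrac12\widehat L+\Gamma\widehat\chi\}<\infty$ for the subcritical envelope (any fixed $\Gamma$, after shrinking $\beta$). This pays for the extra depths through $T(m-1)\le\tfrac{4T}{R}\widehat\chi$.

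The paper's proof uses exactly this and avoids witness combinatorics altogether.
\begin{itemize}
\item After $\ell_{K_i}\le1+g_i$ and domination by independent envelopes, it bounds $\sup_{x\in S}\widehat g_i^x\le\sum_{x\in S}\widehat g_i^x$. Independence is then used over roots $x$ rather than over sets $i$.
\item For one root, $\widehat G_x=\sum_i\widehat g_i^x\le\widehat L_x+\tfrac{4T}{R}\widehat\chi_x$, and $\mathbf 1\{\widehat g_i^x>0\}\le e^{-\zeta\dist(x,K_i)}e^{\zeta\widehat\chi_x+\zeta c_{\rm cone}\widehat L_x}$.
\item These give $\mathbb E e^{\delta\widehat G_x}\le1+M\sum_ie^{-\zeta\dist(x,K_i)}$, and then $\prod_x(1+a_x)\le e^{\sum_xa_x}\le e^{C\sum_i|K_i|}$.
\end{itemize}
Keeping your product over $i$ would require a block bound of order $e^{-c(m-1)R}$ plus a summation over $R$-connected groupings. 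Switching to the product over roots is much simpler.
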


\begin{proof}
If \(S=\varnothing\), then \(\mathcal H(S)=\varnothing\) and the claim is
immediate.  Assume henceforth that \(S\ne\varnothing\).

For each \(x\in S\), construct an independent non-coalescing branching process
started from one particle at \(x\) at backward depth \(0\). Each particle rings
at rate one; at a ring it dies with probability \(1-\alpha\), and with
probability \(\alpha\) it is replaced by one child at each of the \(2d\)
neighbouring vertices. Let \(\widehat{\mathcal H}^x\) be the corresponding
space--time trace, and let \(\widehat{\mathcal H}^x_r\) be its spatial support
at depth \(r\), hence $\widehat{\mathcal H}^x_0=\{x\}$. We couple these branching processes with the backward history so
that
\[
        \mathcal H(S)\subseteq
        \widehat{\mathcal H}(S):=
        \bigcup_{x\in S}\widehat{\mathcal H}^x
\]
as space--time sets.
Let \(N_r^x\) be the number of particles descended from \(x\) at
depth \(r\), let \(Y_r^x\) be the number of spatial branch edges created by
depth \(r\), and set
\[
        Z_r^x:=\int_0^r N_u^x\,du .
\]
Define the one-particle branching-process occupation time and total spatial length by
\[
        \widehat L_x:=Z_T^x,
        \qquad
        \widehat\chi_x:=Y_T^x .
\]
For every fixed \(0<\eta<1\) and \(\Gamma<\infty\), after decreasing
\(\beta_0(d,q)\) by an amount depending on \(\eta,\Gamma\), the subcritical
branching process satisfies
\begin{equation}\label{eq:subBP-exp}
    M(\eta,\Gamma)
        :=
        \sup_{n\ge1,\,T\ge0,\,x\in\Lambda_n}
        \widehat{\mathbb E}_x
        \exp\{\eta\widehat L_x+\Gamma\widehat\chi_x\}
        <\infty .
\end{equation}
        
This is a standard result for a subcritical branching process; eg, see Lemma 3.1 of \cite{Exposition}. By decreasing $\beta_0$ if needed, we may assume $M(1/2,10)<\infty$.

For a space-time set $\mathcal A\subset\Lambda\times[0,T]$, write
\[
        \mathcal A_r:=\{x\in\Lambda:(x,T-r)\in \mathcal A   \},
        \qquad 0\le r\le T,
\]
where \(r\) is backward time. For each \(i\), set
\[
        C^{K_i}_r:=\{x\in\Lambda:\dist(x,K_i)\le c_{\rm cone}r\}.
\]
We also define the continuous barrier depth
\[
        g_i(\mathcal A):=
        \sup\{r\in[0,T]:\mathcal A_r\cap C^{K_i}_r\ne\varnothing\},
\]
with value \(0\) if the set is empty.
If \(g_i(\mathcal A)<T\), then \(\mathcal A\cap\mathsf D_h(K_i)=\varnothing\) for every
\(h>g_i(\mathcal A)\). Hence the infimum in the definition of \(\ell_{K_i}(\mathcal A)\) is at
most \(g_i(\mathcal A)\). If no \(h\le T\) makes \(\mathcal A\cap\mathsf D_h(K_i)\) empty, then
\(\ell_{K_i}(\mathcal A)=T\) and \(g_i(\mathcal A)=T\). Therefore, in all cases,
\[
        \ell_{K_i}(\mathcal A)\le 1+g_i(\mathcal A).
\]
Since \(k\le\sum_i |K_i|\), the factor coming from the additive \(1\) in
\(\ell_{K_i}(\mathcal A)\le1+g_i(\mathcal A)\) can be absorbed into the right-hand side. It is
therefore enough to estimate the exponential moment of \(\sum_i g_i\).

Since \(g_i\) is
increasing under inclusion of space-time sets,
\[
        \sum_{i=1}^k g_i(\mathcal H(S))
        \le
        \sum_{i=1}^k g_i(\widehat{\mathcal H}(S)).
\]
For a single process \(\widehat{\mathcal H}^x\), set
\[
        \widehat g_i^x
        :=
        g_i(\widehat{\mathcal H}^x),
        \qquad
        \widehat G_x:=\sum_{i=1}^k \widehat g_i^x .
\]
Then
\[
        \sum_{i=1}^k g_i(\mathcal H(S))
        =\sum_{i=1}^k \left(\sup_{x\in S}\widehat g_i^x\right)
        \le
        \sum_{x\in S}\widehat G_x .
\]
The random variables \((\widehat G_x)_{x\in S}\) are independent.
Let
\[
        I_x:=\#\{i:\widehat g_i^x>0\}.
\]
If \(I_x=0\), then \(\widehat G_x=0\). Suppose
\(I_x=m\ge1\). For every cone hit by \(\widehat{\mathcal H}^x\), choose one hit
point \((y_i,r_i)\), so that
\[
        y_i\in\widehat{\mathcal H}^x_{r_i},
        \qquad
        \dist(y_i,K_i)\le c_{\mathrm{cone}}r_i\le c_{\mathrm{cone}}T .
\]
For distinct hit cones \(i\ne j\),
\[
        \dist(y_i,y_j)
        \ge
        \operatorname{dist}(K_i,K_j)-2c_{\mathrm{cone}}T
        \ge R/2,
\]
because \(R\ge4c_{\mathrm{cone}}T\). The spatial projection of \(\widehat{\mathcal H}^x\) contains a connected graph joining
the \(m\) chosen sites. A connected graph joining \(m\) terminals whose mutual
distances are at least \(R/2\) has total length at least \(R(m-1)/4\). Indeed,
take a connected subgraph of minimal total length joining these terminals; it
is a tree. A depth-first traversal of this tree has total length twice the tree
length. If the terminals are recorded in the order in which they are first
visited by this traversal, then each passage from one newly visited terminal to
the next has length at least \(R/2\). Hence the traversal has length at least
\((m-1)R/2\), and the tree has length at least \(R(m-1)/4\). Since this tree is
contained in the spatial projection of \(\widehat{\mathcal H}^x\), its total length is at
most \(\widehat\chi_x\). Therefore $\widehat\chi_x
        \ge
        \frac R4(I_x-1)$.

Let
\[
        \widehat\tau_x:=
        \sup\{r\in[0,T]:\widehat{\mathcal H}^x_r\ne\varnothing\}.
\]
Then \(\widehat\tau_x\le\widehat L_x\). The largest positive
\(\widehat g_i^x\) is at most \(\widehat\tau_x\), while every other positive
\(\widehat g_i^x\) is at most \(T\). Therefore
\[
        \widehat G_x
        \le
        \widehat L_x+T(I_x-1)
        \le
        \widehat L_x+\frac{4T}{R}\widehat\chi_x .
\]

Next, if \(\widehat g_i^x>0\), then for some \(r\le\widehat\tau_x\) and some
\(y\in\widehat{\mathcal H}^x_r\),
\[
        \dist(y,K_i)\le c_{\mathrm{cone}}r.
\]
Thus
\[
        \dist(x,K_i)
        \le
        \dist(x,y)+\dist(y,K_i)
        \le
        \widehat\chi_x+c_{\mathrm{cone}}\widehat L_x .
\]
Consequently, for every \(\zeta>0\),
\[
        \mathbf 1_{\{\widehat g_i^x>0\}}
        \le
        e^{-\zeta\dist(x,K_i)}
        \exp\{\zeta\widehat\chi_x+\zeta c_{\mathrm{cone}}\widehat L_x\}.
\]

Choose \(\zeta>0\) and then \(\delta_0>0\) so small that, using
\(4T/R\le c_{\mathrm{cone}}^{-1}\), for every
\(0\le\delta\le\delta_0\),
\[
        \delta+c_{\mathrm{cone}}\zeta<1/2,
        \qquad
        \delta(4T/R)+\zeta<10 .
\]

Using
\[
        e^{\delta\widehat G_x}-1
        \le
        \mathbf 1_{\{I_x\ge1\}}
        \exp\{\delta\widehat L_x+\delta(4T/R)\widehat\chi_x\}
        \le
        \left(\sum_{i=1}^{k}\mathbf  1_{\{\widehat g_i^x>0\}}\right)
        \exp\{\delta\widehat L_x+\delta(4T/R)\widehat\chi_x\},
\]
we obtain
\[
\begin{aligned}
        e^{\delta\widehat G_x}-1
        &\le
        \sum_{i=1}^k
        e^{-\zeta\dist(x,K_i)}
        \exp\{(\delta+c_{\mathrm{cone}}\zeta)\widehat L_x
        +(\delta(4T/R)+\zeta)\widehat\chi_x\}.
\end{aligned}
\]
Taking expectation and using \eqref{eq:subBP-exp} with
\(\eta=1/2,\Gamma=10\) gives
\[
        \widehat{\mathbb E}_x(e^{\delta\widehat G_x}-1)
        \le
        M_\delta\sum_{i=1}^k e^{-\zeta\dist(x,K_i)},
\]
where $M_\delta
        :=
        M(\delta+c_{\mathrm{cone}}\zeta,\delta(4T/R)+\zeta)
        \leq M(1/2,10)<\infty$.
Hence
\[
        \widehat{\mathbb E}_x e^{\delta\widehat G_x}
        \le
        1+
        M_\delta\sum_{i=1}^k e^{-\zeta\dist(x,K_i)}.
\]
Using independence over \(x\in S\) and \(1+a\le e^a\),
\[
\begin{aligned}
        \mathbb E
        \exp\left\{
        \delta\sum_{i=1}^k g_i(\mathcal H(S))
        \right\}
        \le
        \prod_{x\in S}
        \left(
        1+M_\delta\sum_{i=1}^k e^{-\zeta\dist(x,K_i)}
        \right)                           \le
        \exp\left\{
        M_\delta\sum_{x\in\Lambda}\sum_{i=1}^k
        e^{-\zeta\dist(x,K_i)}
        \right\}.
\end{aligned}
\]
Finally, uniformly in the torus size,
\[
        \sum_{x\in\Lambda}e^{-\zeta\dist(x,K_i)}
        \le
        \sum_{z\in K_i}\sum_{x\in\Lambda}e^{-\zeta\dist(x,z)}
        \le
        C_{d,\zeta}|K_i|.
\]
Combining the last displays with
\(\ell_{K_i}(\mathcal H(S))\le1+g_i(\mathcal H(S))\) and
\(k\le\sum_i |K_i|\) proves the lemma.
\end{proof}

\paragraph{$R$-separated families of intersecting pairs.}
Consider a family \(\mathcal S\) of pairs \((A,B)\) with
\(\varnothing\ne A,B\subseteq\Lambda\), and define
\[
        \mathcal S^{(1)}:=\{A:(A,B)\in\mathcal S\},
        \qquad
        \mathcal S^{(2)}:=\{B:(A,B)\in\mathcal S\}.
\]
\(\mathcal S\) is called $R$-separated if, for any two distinct pairs
\((A,B),(A',B')\in\mathcal S\),
\[
        \dist(A,A')>R,
        \qquad
        \dist(B,B')>R .
\]

\begin{lemma}
\label{lem:separated-family-testing}
For every finite \(R\)-separated family \(\mathcal S\) of pairs of nonempty subsets of
\(\Lambda\),
\[
    \mathbb P\left(
        A\in\mathrm{SR}^{(1)},\,
        B\in\mathrm{SR}^{(2)}
        \text{ for every }(A,B)\in\mathcal S
        \,\middle|\,
        \mathcal K
    \right)
    \le
    \prod_{(A,B)\in\mathcal S}
        \Psi (A,\mathcal K)\Psi (B,\mathcal K).
\]
\end{lemma}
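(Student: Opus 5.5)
We follow the proof of Lemma~\ref{lem:multi-red-bound}. The plan is to peel off the pairs one at a time and bound each conditional factor by a supremum over outside transcripts. Enumerate $\mathcal S=\{(A_1,B_1),\dots,(A_k,B_k)\}$ and set
\[
E_i:=\{A_i\in\mathrm{SR}^{(1)}\},\qquad F_i:=\{B_i\in\mathrm{SR}^{(2)}\}.
\]
Given $\mathcal K=\mathcal H_{G,Y}\vee\mathcal H_{\mathrm P}$, the two copies are conditionally independent by construction. Hence the probability factorizes as
\[
\mathbb P\Bigl(\bigcap_i E_i\,\Big|\,\mathcal K\Bigr)\,\mathbb P\Bigl(\bigcap_i F_i\,\Big|\,\mathcal K\Bigr),
\]
and it suffices to show $\mathbb P(\bigcap_i E_i\mid\mathcal K)\le\prod_i\Psi(A_i,\mathcal K)$, with the analogous bound for copy $2$.

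For a single copy, let $\mathscr F_{i-1}:=\sigma(\mathbf 1_{E_1},\dots,\mathbf 1_{E_{i-1}})$. We aim to prove
\[
\mathbb P(E_i\mid\mathcal K,\mathscr F_{i-1})\le\Psi(A_i,\mathcal K).
\]
First, since $E_i\subseteq\mathsf T_{A_i}$, we may additionally condition on $\mathsf T_{A_i}$. This conditioning can only increase the probability, because $\mathbb P(E_i\mid\cdot)=\mathbb P(E_i\mid\cdot,\mathsf T_{A_i})\,\mathbb P(\mathsf T_{A_i}\mid\cdot)$. Next, on $\mathsf T_{A_i}$ the set $A_i$ is either a single Strong-Red cluster or a union of Blue singletons, so its history does not merge with any outside history. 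The $R$-separation $\dist(A_i,A_j)>R$ means the sets $A_j$ ($j<i$) are disjoint from $A_i$. So on $\mathsf T_{A_i}$, whether each $A_j$ is a Strong-Red top set is determined by the outside transcript $\mathcal H_{A_i}^-$: its cluster, its geometric regularity, its separation tip $\tau_{A_j}$, and its coin $J_{A_j}$. The cone $\mathsf C_{A_j}$ and the coin variables $J_{(v,t)}$ are attached to $A_j$'s own tip, which lies in $\mathsf D_0(A_j)$ and is far from $A_i$. Therefore, after further conditioning on $\mathcal H_{A_i}^-=X$, the $\sigma$-field $\mathscr F_{i-1}$ adds nothing, and
\[
\mathbb P(E_i\mid\mathcal K,\mathscr F_{i-1},\mathsf T_{A_i})\le\sup_{X\sim_{A_i}\mathcal K}\mathbb P(A_i\in\mathrm{SR}\mid\mathcal H_{A_i}^-=X,\mathsf T_{A_i})=\Psi(A_i,\mathcal K).
\]
Transcripts that are incompatible with $\mathcal K$ contribute zero; this matches the convention $\Psi=0$. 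Iterating over $i$ with the tower property then gives the product.

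The main obstacle is justifying that conditioning on $\mathcal K$ together with $\mathcal H_{A_i}^-=X$ leaves the law of the $A_i$-history equal to its unconditional law given $X$, restricted to $\mathsf T_{A_i}$. The concern is that the Purple and Green data in $\mathcal K$ might carry information about marks inside $A_i$'s history, and that the Yellow/Strong-Red decision of other clusters might depend on $A_i$'s marks. We will handle this as in the discussion after the definition of $\tau_A$. The Yellow/Strong-Red split of $A_j$ uses only marks in $\mathsf C_{A_j}$ and the independent $J_{A_j}$. Geometric regularity of $A_j$ is decided by outside histories once $A_i$ is known to be an isolated cluster. The Green/Purple/Yellow labels in $\mathcal K$ are functions of $\mathcal H_{A_i}^-$ on $\mathsf T_{A_i}$. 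Consequently every event in $\mathcal K\vee\mathscr F_{i-1}$, intersected with $\mathsf T_{A_i}$, is $\sigma(\mathcal H_{A_i}^-)$-measurable up to independent coins. This is exactly what the supremum over compatible $X$ in $\Psi$ absorbs. Finally, we combine the two copies through conditional independence.
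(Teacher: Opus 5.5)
Your proposal is correct and follows essentially the same route as the paper. You factor over the two copies by conditional independence given $\mathcal K$. You then prove the one-copy bound by exposing the tests in order: you use $\{A_i\in\mathrm{SR}\}\subseteq\mathsf T_{A_i}$ and the fact that on $\mathsf T_{A_i}$ the earlier outcomes are determined by $\mathcal K$ and $\mathcal H_{A_i}^-$, so the tower property bounds each factor by $\Psi(A_i,\mathcal K)$. Your discussion of why the extra conditioning on $\mathcal K$ does not bias the $A_i$-history given the outside transcript spells out a step the paper leaves implicit when it bounds $\mathbb P(A_i\in\mathrm{SR}\mid\mathcal K,\mathcal H_{A_i}^{-},\mathsf T_{A_i})$ by $\Psi(A_i,\mathcal K)$.
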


\begin{proof}
We first prove the one-copy bound. Fix \(a\in\{1,2\}\), and let
\(\mathcal A=\{A_1,\ldots,A_m\}\) be an \(R\)-separated family, ordered
deterministically.  We claim that
\[
\mathbb P
\left(
A_j\in\SR^{(a)}\text{ for all }1\le j\le m
\mid
\mathcal K
\right)
\le
\prod_{i=1}^m \Psi (A_i,\mathcal K).
\]

For a nonempty \(A\subseteq\Lambda\), write $\mathsf T_A^{(a)}:=\{A\in\SR^{(a)}\}\cup \{A\subseteq V_{\mathrm{Blue}}^{(a)}\}$. 
Expose the tests $\{A\in\SR^{(a)}\}$ in the chosen order.  Suppose the first \(i-1\) tests have
succeeded, and let \(\mathscr F_{i-1}\) be the sigma-field generated by
\(\mathcal K\) and all information revealed before testing \(A_i\).  If the revealed information is
not compatible with \(A_i\) being a Strong-Red cluster, then the next success probability is zero. 
Thus, the successful outcomes of the previous tests
\(A_j\in\SR^{(a)}\), \(j<i\), are measurable with respect to
$\sigma(\mathcal K,\mathsf T_{A_i}^{(a)},\mathcal H_{A_i}^{-,(a)})$. 

Write $G_i:=\{A_j\in\SR^{(a)}\text{ for all }1\leq j\leq i\}$.
Observe that since \(\{A_i\in\SR^{(a)}\}\subseteq \mathsf T_{A_i}^{(a)}\), we have
\[
\begin{aligned}
\mathbb P(G_i\mid \mathcal K)
&=
\mathbb E\!\left[
\mathbf 1_{G_{i-1}}\mathbf 1_{\{A_i\in\SR^{(a)}\}}
\,\middle|\,
\mathcal K
\right] \\
&=
\mathbb E\!\left[
\mathbf 1_{G_{i-1}}\mathbf 1_{\mathsf T_{A_i}^{(a)}}
\mathbb P(A_i\in\SR^{(a)}\mid\mathcal K,\mathcal H_{A_i}^{-,(a)},\mathsf T_{A_i}^{(a)})
\,\middle|\,
\mathcal K
\right] \\
&\le
\Psi (A_i,\mathcal K)\,
\mathbb E\!\left[
\mathbf 1_{G_{i-1}}\mathbf 1_{\mathsf T_{A_i}^{(a)}}
\,\middle|\,
\mathcal K
\right]
\le
\Psi (A_i,\mathcal K)\,
\mathbb P(G_{i-1}\mid\mathcal K).
\end{aligned}
\]
Iterating over $i=1,2,\cdots,m$ proves the one-copy bound.

Now return to the pair family \(\mathcal S\).  Conditional on \(\mathcal K\),
the two copies are independent.  Therefore
\[
\begin{aligned}
&\mathbb P\left(
A\in\mathrm{SR}^{(1)},\,
B\in\mathrm{SR}^{(2)}
\text{ for all }(A,B)\in\mathcal S
\,\middle|\,
\mathcal K
\right)\\
&\qquad =
\mathbb P
\left(A\in\mathrm{SR}^{(1)}\text{ for all }A\in\mathcal S^{(1)}\mid
\mathcal K \right)
\mathbb P
\left(B\in\mathrm{SR}^{(2)}\text{ for all }B\in\mathcal S^{(2)}\mid\mathcal K\right)\\
&\qquad\le
\prod_{A\in\mathcal S^{(1)}}\Psi (A,\mathcal K)
\prod_{B\in\mathcal S^{(2)}}\Psi (B,\mathcal K).
\end{aligned}
\]
Since compatibility makes both coordinate projections injective, this is exactly
\[
\prod_{(A,B)\in\mathcal S}\Psi (A,\mathcal K)\Psi (B,\mathcal K).
\]
\end{proof}

\begin{lemma}
\label{lem:weighted-activity}
For every sufficiently large \(\lambda_{\rm SR}>0\), after decreasing \(\beta\) if necessary, there
exist constants \(C<\infty\) such that the following holds.
Define
\[
    r(A)
    :=
    C e^{-(1+\kappa)T}e^{-\lambda_{\rm SR} W(A)/2}.
\]
Then, for every \(R\)-separated family \(\mathcal S\) of pairs of nonempty subsets
of \(\Lambda\),
\[
    \mathbb E\left[
        \prod_{A\in \mathcal{S}^{(1)}}q^{|A|}\Psi (A,\mathcal K)
        \prod_{B\in \mathcal{S}^{(2)}}q^{|B|}\Psi (B,\mathcal K)
    \right]
    \le
    \prod_{A\in\mathcal{S}^{(1)}} r(A)
    \prod_{B\in \mathcal{S}^{(2)}} r(B) .
\]
\end{lemma}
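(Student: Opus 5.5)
We plan to combine the pointwise activity bound \eqref{eq:activity-pointwise} with the exponential moment bound of Lemma~\ref{lem:sum-h}. By \eqref{eq:activity-pointwise}, each factor obeys
\[
q^{|A|}\Psi(A,\mathcal K)\le C e^{-(1+\kappa)T}e^{-\lambda_{\rm SR}W(A)}e^{c_{\rm sr}\beta\,\ell_A(\mathcal H_{G,P})},
\]
and similarly for every $B\in\mathcal S^{(2)}$. If $A$ is incompatible with $\mathcal K$, the factor is zero and the bound is trivial. The product inside the expectation is therefore at most
\[
\prod_{A\in\mathcal S^{(1)}}Ce^{-(1+\kappa)T}e^{-\lambda_{\rm SR}W(A)}\prod_{B\in\mathcal S^{(2)}}Ce^{-(1+\kappa)T}e^{-\lambda_{\rm SR}W(B)}\cdot \exp\Bigl\{c_{\rm sr}\beta\Bigl(\sum_{A}\ell_A(\mathcal H_{G,P})+\sum_B\ell_B(\mathcal H_{G,P})\Bigr)\Bigr\}.
\]
Only the last exponential is random, so it remains to bound its expectation over $\mathcal K$.

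To apply Lemma~\ref{lem:sum-h}, we first use monotonicity of $\ell_K$ under inclusion. Since $\mathcal H_{G,P}\subseteq\mathcal H(\Lambda)$, the full backward history from $\Lambda\times\{T\}$, we have $\ell_K(\mathcal H_{G,P})\le\ell_K(\mathcal H(\Lambda))$. This step checks that $\ell_K$ is monotone, which holds because disjointness from $\mathsf D_h(K)$ is inherited by subsets. We then split the two sums by Cauchy--Schwarz:
\[
\mathbb E\bigl[e^{\delta(\Sigma_1+\Sigma_2)}\bigr]\le \bigl(\mathbb E e^{2\delta\Sigma_1}\bigr)^{1/2}\bigl(\mathbb E e^{2\delta\Sigma_2}\bigr)^{1/2},\qquad \delta:=c_{\rm sr}\beta.
\]
Here $\Sigma_1:=\sum_{A\in\mathcal S^{(1)}}\ell_A(\mathcal H(\Lambda))$ and $\Sigma_2$ is defined likewise over $\mathcal S^{(2)}$. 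Cauchy--Schwarz is needed because within a single copy only the $A$'s are mutually $R$-separated, and separately only the $B$'s; an $A$ may be close to a $B$. Each family $\mathcal S^{(a)}$ is $R$-separated by definition, so Lemma~\ref{lem:sum-h} applies with $S=\Lambda$ and $2\delta\le\delta_0$, which we arrange after decreasing $\beta_0$. It gives $\mathbb E e^{2\delta\Sigma_1}\le \exp\{C_{\rm bd}\sum_A|A|\}$, and similarly for $\Sigma_2$.

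The factor $e^{C_{\rm bd}|A|/2}$ is absorbed using $|A|\le W(A)$. Taking $\lambda_{\rm SR}\ge C_{\rm bd}$ gives $e^{-\lambda_{\rm SR}W(A)+C_{\rm bd}|A|/2}\le e^{-\lambda_{\rm SR}W(A)/2}$, which is exactly the form of $r(A)$.

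The main obstacle is justifying the expectation step. The expectation is over $\mathcal K=\mathcal H_{G,Y}\vee\mathcal H_{\rm P}$, but the moment bound in Lemma~\ref{lem:sum-h} is for the unconditioned history. Both are functions of the single graphical construction, which is shared by the two copies before they are resampled conditionally. Bounding $\ell_K(\mathcal H_{G,P})$ pointwise by a functional of the unconditioned full history $\mathcal H(\Lambda)$ removes the conditioning. This is the point where one must check that $\ell_A(\mathcal H_A^-)\le\ell_A(\mathcal H_{G,P})+1$ was used consistently, and that the additive constants give only a further factor $e^{O(1)}$ per set, absorbed into $C$.
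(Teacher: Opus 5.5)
Your proposal is correct and follows essentially the same route as the paper. Both proofs apply the pointwise bound \eqref{eq:activity-pointwise}, use the monotonicity $\ell_A(\mathcal H_{G,P})\le\ell_A(\mathcal H(\Lambda))$, split the two copies' sums by Cauchy--Schwarz, apply Lemma~\ref{lem:sum-h} with $S=\Lambda$ and $2c_{\rm sr}\beta\le\delta_0$, and absorb $e^{C_{\rm bd}|A|/2}$ into $e^{-\lambda_{\rm SR}W(A)/2}$ via $|A|\le W(A)$.
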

\begin{proof}
By \eqref{eq:activity-pointwise},
\[
\begin{aligned}
&\prod_{A\in \mathcal{S}^{(1)}}q^{|A|}\Psi (A,\mathcal K)
        \prod_{B\in \mathcal{S}^{(2)}}q^{|B|}\Psi (B,\mathcal K)       \\
&\qquad\le
        C_{\mathrm{sr}}^{|\mathcal S^{(1)}|+|\mathcal S^{(2)}|}
        e^{-(1+\kappa)T(|\mathcal S^{(1)}|+|\mathcal S^{(2)}|)}
        e^{-\lambda_{\rm SR}\sum_{A\in\mathcal S^{(1)}}W(A)}
        e^{-\lambda_{\rm SR}\sum_{B\in\mathcal S^{(2)}}W(B)}       \\
&\qquad\quad\times
        \exp\left\{
        c_{\mathrm{sr}}\beta
        \sum_{A\in\mathcal S^{(1)}}\ell_A(\mathcal H_{G,P})
        +
        c_{\mathrm{sr}}\beta
        \sum_{B\in\mathcal S^{(2)}}\ell_B(\mathcal H_{G,P})
        \right\}.
\end{aligned}
\]
By Cauchy--Schwarz:
\[
\begin{aligned}
&\mathbb E
\exp\left\{
c_{\mathrm{sr}}\beta
\sum_{A\in\mathcal S^{(1)}}\ell_A(\mathcal H_{G,P})
+
c_{\mathrm{sr}}\beta
\sum_{B\in\mathcal S^{(2)}}\ell_B(\mathcal H_{G,P})
\right\}                                      \\
&\qquad\le
\left(
\mathbb E
\exp\left\{
2c_{\mathrm{sr}}\beta
\sum_{A\in\mathcal S^{(1)}}\ell_A(\mathcal H_{G,P})
\right\}
\right)^{1/2}
\left(
\mathbb E
\exp\left\{
2c_{\mathrm{sr}}\beta
\sum_{B\in\mathcal S^{(2)}}\ell_B(\mathcal H_{G,P})
\right\}
\right)^{1/2}.
\end{aligned}
\]
Since \(\mathcal H_{G,P}\subseteq \mathcal H(\Lambda)\), monotonicity gives
\[
        \ell_A(\mathcal H_{G,P})\le \ell_A(\mathcal H(\Lambda)).
\]
Taking \(\beta_0\) small enough so that \(2c_{\mathrm{sr}}\beta\le\delta_0\),
Lemma~\ref{lem:sum-h} gives
\[
\begin{aligned}
&\mathbb E
\exp\left\{
c_{\mathrm{sr}}\beta
\sum_{A\in\mathcal S^{(1)}}\ell_A(\mathcal H_{G,P})
+
c_{\mathrm{sr}}\beta
\sum_{B\in\mathcal S^{(2)}}\ell_B(\mathcal H_{G,P})
\right\}                               \le
        \exp\left\{
        \frac{C_{\mathrm{bd}}}{2}
        \sum_{A\in\mathcal S^{(1)}}|A|
        +
        \frac{C_{\mathrm{bd}}}{2}
        \sum_{B\in\mathcal S^{(2)}}|B|
        \right\}.
\end{aligned}
\]
Choosing \(\lambda_{\rm SR}\) sufficiently large, and using \(W(A)\ge |A|\), the last
exponential is absorbed into
\(e^{-\lambda_{\rm SR} W(\cdot)/2}\).  This gives the claimed bound, after increasing
the $C$.
\end{proof}

\begin{prop}[Strong--Red overlap]
\label{prop:strong-red-overlap}
After choosing \(\lambda_{\rm SR}>0\) sufficiently large and decreasing
\(\beta_0(d,q)\) if necessary, there exists \(C<\infty\) such that, at
\(T=t_\star+s\) with \(s\ge0\),
\[
        \mathbb E\left[
        q^{|V_{\mathrm{SR}}^{(1)}\cap V_{\mathrm{SR}}^{(2)}|}-1
        \right]
        \le
        C e^{-2(1+\kappa)s}.
\]
\end{prop}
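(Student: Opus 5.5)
The plan mirrors the Purple-overlap argument (Proposition~\ref{prop:purple}), replacing the component-domination bound \eqref{eq:purple-component-domination} by Lemmas~\ref{lem:separated-family-testing} and~\ref{lem:weighted-activity}.

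\textbf{Step 1: reduce to a separated family of pairs.} Condition on $\mathcal K=\mathcal H_{G,Y}\vee\mathcal H_{\mathrm P}$. Every $x\in V_{\mathrm{SR}}^{(1)}\cap V_{\mathrm{SR}}^{(2)}$ lies in unique Strong-Red top sets $A\ni x$ in copy~1 and $B\ni x$ in copy~2. Let $\mathcal Q$ be the set of intersecting pairs $(A,B)$. Since Strong-Red clusters are geometrically regular, distinct Strong-Red top sets in one copy are at distance $>R$. Hence the realized intersecting pairs automatically form an $R$-separated family: distinct pairs have distinct first coordinates (and distinct second coordinates), since each top set meets at most one top set of the other copy at a given vertex. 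The case of one $A$ meeting two $B$'s is handled by the greedy thinning of Proposition~\ref{prop:purple}, which retains a subfamily $\mathcal M$ injective in each coordinate and still covering the overlap. Then
\[
|V_{\mathrm{SR}}^{(1)}\cap V_{\mathrm{SR}}^{(2)}|\le\sum_{(A,B)\in\mathcal M}(|A|+|B|).
\]
Expanding $\prod(1+(q^{|A|+|B|}-1))-1$ gives
\[
\mathbb E\bigl[q^{|\cap|}-1\bigr]\le\sum_{\emptyset\neq\mathcal S}\mathbb E\Bigl[\prod_{(A,B)\in\mathcal S}q^{|A|+|B|}\,\mathbb P(\mathcal S\subseteq\mathcal M\mid\mathcal K)\Bigr],
\]
where the sum runs over $R$-separated, coordinatewise-injective families $\mathcal S\subseteq\mathcal Q$.

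\textbf{Step 2: bound each term.} Lemma~\ref{lem:separated-family-testing} bounds the conditional probability by $\prod\Psi(A,\mathcal K)\Psi(B,\mathcal K)$. Lemma~\ref{lem:weighted-activity} then bounds the expectation of $\prod q^{|A|}\Psi(A,\mathcal K)\,q^{|B|}\Psi(B,\mathcal K)$ by $\prod r(A)r(B)$. This is exactly why the weighted-activity lemma is stated for products over separated families: the $e^{c_{\rm sr}\beta\ell_A}$ factors from Proposition~\ref{prop:one-block-SR} are correlated through the common history $\mathcal H_{G,P}$, and Lemma~\ref{lem:sum-h} decouples them. Dropping the separation constraint then gives
\[
\mathbb E\bigl[q^{|\cap|}-1\bigr]\le e^{\rho}-1,\qquad \rho:=\sum_{(A,B)\in\mathcal Q}r(A)r(B).
\]

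\textbf{Step 3: compute $\rho$.} Root each pair at a common vertex $v$:
\[
\rho\le\sum_{v\in\Lambda}\Bigl(\sum_{A\ni v}r(A)\Bigr)^2\le|\Lambda|\,C^2e^{-2(1+\kappa)T}\Bigl(\sup_v\sum_{A\ni v}e^{-\lambda_{\rm SR}W(A)/2}\Bigr)^2.
\]
The lattice-animal count \eqref{eq:purple-moment}, valid for $\lambda_{\rm SR}$ large, makes the inner sum $O(1)$. Hence $\rho\le C|\Lambda|e^{-2(1+\kappa)(t_\star+s)}=Ce^{-2(1+\kappa)s}$ by the choice of $t_\star$. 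For $s$ large, $\rho\le1$ and $e^\rho-1\le2\rho$; for bounded $s$ the claimed bound is trivial after enlarging $C$, though one should check that $e^{\rho}$ stays bounded there, which holds since $\rho\le C$ uniformly for $s\ge0$.

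\textbf{Main obstacle.} The delicate point is Step~1: the pair selection must be coordinatewise injective and $R$-separated, so that Lemma~\ref{lem:separated-family-testing} applies. It must also be justified that the thinning event $\{\mathcal S\subseteq\mathcal M\}$ is contained in the joint Strong-Red event, and that conditioning only on $\mathcal K$ (not on the thinning order) is legitimate. The approach is to use the $R$-separation of regular Red clusters within each copy, so that the thinning is only needed to resolve coordinate collisions, with every retained pair consisting of realized Strong-Red sets.
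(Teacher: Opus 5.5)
Your proposal is correct and follows the paper's proof essentially step for step. You greedily thin the intersecting Strong-Red pairs into an $R$-separated, coordinatewise-injective family $\mathcal M$ that still covers the overlap. You then apply Lemma~\ref{lem:separated-family-testing} followed by Lemma~\ref{lem:weighted-activity} to bound each term by $\prod r(A)r(B)$. Finally, rooting each pair at a common vertex and using lattice-animal entropy, you bound the overlap moment by $e^{\rho}-1$ with $\rho\le C|\Lambda|e^{-2(1+\kappa)T}=Ce^{-2(1+\kappa)s}$.

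One remark in Step~1 is wrong as stated: realized intersecting pairs are not automatically coordinatewise injective, since one $A$ may meet several $B$'s at different vertices. You then correctly rely on the thinning to handle this, exactly as the paper does.
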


\begin{proof}
We use the same thinning idea as in Proposition~\ref{prop:purple}. Here the
testing costs depend on the revealed sigma-field \(\mathcal K\), so
Lemmas~\ref{lem:separated-family-testing} and~\ref{lem:weighted-activity}
are used together.

Let
\[
\mathcal Q:=
\left\{
(A,A'):\varnothing\ne A,A'\subseteq\Lambda,\ A\cap A'\ne\varnothing
\right\},
\]
and fix an ordering of this finite set. For each realization, scan the pairs
in this order and retain \((A,A')\) if
\(A\in\mathrm{SR}^{(1)}\), \(A'\in\mathrm{SR}^{(2)}\), and neither set has
appeared in an earlier retained pair. Denote the resulting collection by
\(\mathcal M\). Distinct Strong--Red top sets in the same copy are at
distance greater than \(R\), so \(\mathcal M\) is \(R\)-separated.

The retained pairs still cover the Strong--Red overlap. Indeed, every
\(x\in V_{\mathrm{SR}}^{(1)}\cap V_{\mathrm{SR}}^{(2)}\) belongs to unique
top sets \(A\in\mathrm{SR}^{(1)}\) and \(A'\in\mathrm{SR}^{(2)}\). If
\((A,A')\) was not retained, then one of these sets already appeared in a
retained pair and therefore contains \(x\). Thus
\[
\left|V_{\mathrm{SR}}^{(1)}\cap V_{\mathrm{SR}}^{(2)}\right|
\le
\sum_{(A,A')\in\mathcal M}\bigl(|A|+|A'|\bigr).
\]

Expanding the resulting product and taking expectations gives
\[
\begin{aligned}
\mathbb E\left[
q^{|V_{\mathrm{SR}}^{(1)}\cap V_{\mathrm{SR}}^{(2)}|}-1
\right]
\le
\sum_{\substack{\varnothing\ne\mathcal S\subseteq\mathcal Q\\
                 \mathcal S\ R\text{-separated}}}
\left(
\prod_{(A,A')\in\mathcal S}\left(q^{|A|+|A'|}-1\right)
\right)
\mathbb P(\mathcal S\subseteq\mathcal M).
\end{aligned}
\]
For a fixed \(R\)-separated family \(\mathcal S\), its two coordinate
projections are injective, and the event \(\mathcal S\subseteq\mathcal M\)
requires all of its top sets to occur. The tower property and
Lemma~\ref{lem:separated-family-testing} therefore give
\[
\mathbb P(\mathcal S\subseteq\mathcal M)
\le
\mathbb E\left[
\prod_{(A,A')\in\mathcal S}
\Psi (A,\mathcal K)\Psi (A',\mathcal K)
\right].
\]
Since \(q^{|A|+|A'|}-1\le q^{|A|+|A'|}\),
Lemma~\ref{lem:weighted-activity} yields
\[
\left(
\prod_{(A,A')\in\mathcal S}\left(q^{|A|+|A'|}-1\right)
\right)
\mathbb P(\mathcal S\subseteq\mathcal M)
\le
\prod_{(A,A')\in\mathcal S}r(A)r(A').
\]
Dropping the separation restriction and summing over all subsets of
\(\mathcal Q\), we obtain
\[
\mathbb E\left[
q^{|V_{\mathrm{SR}}^{(1)}\cap V_{\mathrm{SR}}^{(2)}|}-1
\right]
\le e^\rho-1,
\qquad
\rho:=
\sum_{(A,A')\in\mathcal Q}r(A)r(A').
\]

An intersecting pair can be rooted at any common vertex, so $\rho
\le
\sum_{v\in\Lambda}
(
\sum_{{\varnothing\ne A\subseteq\Lambda,\,v\in A}}r(A)
)^2.$
By the lattice-animal entropy bound, there is
\(C_{\mathrm{ent}}<\infty\) such that
\[
\#\{\varnothing\ne A\subseteq\Lambda:v\in A,\ W(A)=k\}
\le C_{\mathrm{ent}}^k.
\]
Taking \(\lambda_{\rm SR}\) sufficiently large, the definition of \(r(A)\) therefore
gives, uniformly in \(v\),
\[
\sum_{\substack{\varnothing\ne A\subseteq\Lambda\\v\in A}}r(A)
\le
C e^{-(1+\kappa)T}
\sum_{k\ge1}C_{\mathrm{ent}}^ke^{-\lambda_{\rm SR} k/2}
\le
C e^{-(1+\kappa)T}.
\]
Consequently, since
\(|\Lambda|e^{-2(1+\kappa)t_\star}=1\),
$\rho
\le
C|\Lambda|e^{-2(1+\kappa)T}
=
C e^{-2(1+\kappa)s}.$
As \(s\ge0\), the exponent is uniformly bounded. Since
\(e^\rho-1\le\rho e^\rho\), it follows that
\[
\mathbb E\left[
q^{|V_{\mathrm{SR}}^{(1)}\cap V_{\mathrm{SR}}^{(2)}|}-1
\right]
\le
C e^{-2(1+\kappa)s}.
\]
\end{proof}

\paragraph{Proof of Proposition~\ref{prop:one-block-SR}.}
The proof combines a geometric estimate with an exponential-moment computation along the cone trials of Subsection~\ref{subsec:yellowSR}. Lemma~\ref{lem:stopped-weighted-red} bounds the exponential moment of a stopped functional of the backward history via the dominating non-coalescing branching envelope. Lemma~\ref{lem:no-admissible-delay} shows the exploration cannot run long without an admissible cone trial, which together with the previous bound yields
Lemma~\ref{lem:first-h-admissible} on the depth of the first such trial. Since a trial can fail,
Lemma~\ref{lem:one-site-continuation} shows the sequence of trials contracts geometrically, bounding the total weight accumulated over all of them. The proposition then follows by expressing \(\{A\in\mathrm{SR}\}\) via some trial succeeding and being declared Strong Red, and assembling these four estimates.

Fix an admissible outside history \(X\) for \(A\).  For each \(u\in A\), define
\[
        s_u=s_u(X):=
        \sup\{t<T:(u,t)\in X\},
\]
with the convention \(s_u=-\infty\) if the set is empty. Let
\[
        A_{\mathrm{top}}
        :=
        \{u\in A:s_u>T-1\}.
\]
Define the top-unit event
\begin{equation}\label{eq:top-unit-U}
\mathcal U_A=\mathcal U_A(X)
        :=
        \bigcap_{u\in A_{\mathrm{top}}}
        \left\{
        \text{there is at least one update at }u
        \text{ in }(s_u,T]
        \right\}.
\end{equation}
The event \(\mathcal U_A(X)\) depends only on graphical marks in the top unit
slab at sites in \(A\).  In statements where \(X\) is fixed we suppress the
dependence on \(X\).
For backward depth \(u\), define the time-reversed actual-history slice by
\[
        \widehat{\mathcal H}_A(u)
        :=
        \mathcal H_A(T-u)
        =\{x\in\Lambda:(x,T-u)\in\mathcal H_A\},
        \qquad 0\le u\le T.
\]
Let \((\widehat{\mathscr F}_u^A)_{0\le u\le T}\) be the corresponding
right-continuous depth filtration
generated by the graphical marks of the \(A\)-exploration revealed in the slab
\(\Lambda\times[T-u,T]\).
Here and below, the hat on \(\widehat{\mathcal H}_A\) and
\(\widehat{\mathscr F}^A\) denotes time reversal; the hatted processes with a
superscript \(x\) introduced earlier denote the non-coalescing branching
envelope.

We use the standard non-coalescing branching envelope. Start one particle from each site of \(A\). Each particle carries an independent rate-one clock. At a ring it is killed with probability \(1-\alpha\), and with probability \(\alpha\) it is replaced by the \(D=2d\) neighbouring sites. Two particles may occupy the same site. This process can be coupled so that
the spatial support of the actual backward history is contained in the support of the non-coalescing branching process at every depth.

\begin{lemma}
\label{lem:stopped-weighted-red}
Fix \(\lambda_0>0\). There exist
constants \(C<\infty\), \(K<\infty\), and \(\theta_0\in(0,1)\), depending only on
\(d,q,\lambda_0\), such that the following holds.

Let \(A\subset\Lambda\) be finite and nonempty, let \(X\) be a compatible outside history with \(h=\ell_A(X)\). Let \(\sigma\) be a \([0,T]\cup\{\infty\}\)-valued stopping time for \((\widehat{\mathscr F}_u^A)\). Construct the non-coalescing branching envelope on the same probability space, and assume that \(\sigma\) is also a stopping time for its filtration.
Assume that, on \(\{\sigma<\infty\}\), one has \(\sigma\ge h\).

Let
\[
        L_s:=\int_0^s |\widehat{\mathcal H}_A(u)|\,du,
\]
and let \(\chi_s\) be the total spatial length of all spatial edges in the actual history up to backward time $s$. Let \(\mathcal C_s\) be the event that \(\widehat{\mathcal H}_A(s)\ne\varnothing\) and the truncated space--time history $\mathcal H_A\cap(\Lambda\times[T-s,T])$ is connected.

Then for every \(0<\theta\le\theta_0\), with \(\gamma=c_{\mathrm{cone}}^{-1}\theta\), there exists $\beta_0(\theta)$ such that, for all $\beta<\beta_0$, for every \(a\in(0,1-K\beta)\),
\begin{equation*}
\mathbb E
\left[
\mathbf 1_{\{\sigma<\infty\}}\mathbf 1_{\mathcal C_\sigma}
e^{a\sigma}
e^{\theta(L_\sigma-\sigma)+\gamma\chi_\sigma}
\,\middle|\,
\mathcal U_A(X)
\right]
\le
C e^{-\lambda_0 W(A)}
e^{-(1-K\beta-a)h}.
\end{equation*}

\end{lemma}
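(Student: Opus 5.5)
The plan is to dominate the actual history by the non-coalescing branching envelope and build an exponential supermartingale that carries the weights $e^{a s}e^{\theta(L_s-s)+\gamma\chi_s}$, then stop it at $\sigma$. The two stated factors have separate sources. The factor $e^{-\lambda_0W(A)}$ comes from connectivity: on $\mathcal C_\sigma$, the truncated history spans the hull of $A$, so $\chi_\sigma\ge W(A)-|A|$. The factor $e^{-(1-K\beta-a)h}$ comes from survival: the history must stay alive up to depth $\sigma\ge h$, and each unit of depth on which a nonempty history survives costs roughly $e^{-(1-m)}$. The conditioning on $\mathcal U_A(X)$ is harmless. It only forces at least one update in the top unit slab at the sites of $A_{\rm top}$, which are independent marks. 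Conditioning on it changes the law of the envelope by at most a factor $e^{O(|A|)}$, which is absorbed into $e^{-\lambda_0W(A)}$ once $\lambda_0$ is shifted and $\beta$ is small.

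\textbf{Step 1 (one-particle exponential functional).} For the envelope started from one particle, set $u(t):=\mathbb E[\exp\{\int_0^t(\theta N_v+a-\theta)\,dv+\gamma Y_t\}\,\mathbf 1_{N_t\ge1}]$, possibly with a spatial tilt $e^{\gamma|\cdot|}$ on displacements. I will show, as in Lemma~\ref{lem:branch-penalty}, via the ODE/barrier argument, that
\[
\sup_t e^{(1-K\beta-a)t}\,u(t)<\infty .
\]
A single lineage with no ring costs $e^{-t}$ and gains $e^{at}$. Its $\theta$-weight $\theta N_v-\theta$ vanishes when $N_v=1$. A branching contributes a factor $\alpha e^{2d\gamma}\cdot(\text{offspring generating term})$, which is $O(\beta)$ because $\alpha=\Theta(\beta)$. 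This is why the rate is $1-K\beta$ with $K$ independent of $\theta$, provided $\theta\le\theta_0$ and $\gamma=\theta/c_{\rm cone}$ are small and $\beta<\beta_0(\theta)$. The same bound gives that
\[
M_s:=e^{(1-K\beta)s}\exp\{as+\theta(L_s-s)+\gamma\chi_s\}\mathbf 1_{\{\text{alive}\}}\,e^{-\lambda' \cdot(\text{pending connection cost})}
\]
is a supermartingale for the envelope filtration. For several initial particles, the $-\theta s$ term is paid only once, while each extra particle pays $e^{\theta\cdot}$ times its lifetime. Independence then makes the multi-particle functional factorize.

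\textbf{Step 2 (connectivity to $W(A)$).} On $\mathcal C_\sigma$, all $|A|$ lineages must merge before depth $\sigma$. The merging tree has spatial length at least $W(A)-1$, and each lineage beyond the first survives an extra time. I will pay for this by borrowing part of $\gamma\chi$ and $\theta L$: introduce a factor $e^{\lambda_0 W(A)}$ on the left and show the tilted expectation stays bounded. This repeats the lattice-animal/red-cluster argument of Lemma 2.1 in \cite{Exposition}, with the enlarged tilts $\gamma+\lambda_0$ still summable after decreasing $\beta$.

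\textbf{Step 3 (stopping).} Apply optional stopping for the supermartingale of Step 1 at $\sigma\wedge T$. Since $\sigma$ is a stopping time for both filtrations and the actual history is contained in the envelope, $L$, $\chi$ and the alive indicator are monotone under the coupling, so the envelope bound dominates the actual one. Using $\sigma\ge h$ on $\{\sigma<\infty\}$, the residual factor $e^{-(1-K\beta-a)\sigma}$ is at most $e^{-(1-K\beta-a)h}$.

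\textbf{Main obstacle.} I expect the main difficulty to be Step 1 combined with Step 2: obtaining a rate $1-K\beta$ that is uniform in $\theta$ while keeping the weights $e^{\theta(L-s)}$ and $e^{\gamma\chi}$. Multi-particle phases must be penalized at rate at least $2-O(\beta)$ per unit time, so that $\theta$ extra particles per unit time remain affordable. This is where $\theta_0<1$ and the subcriticality $m\cosh\theta<1$ from Lemma~\ref{lem:brw-deviation} are needed.
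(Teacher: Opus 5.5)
Your architecture (domination by the non-coalescing envelope, optional stopping at $\sigma$, and extracting $W(A)$ from $\chi_\sigma\ge W(A)-1$ on $\mathcal C_\sigma$ via a large tilt on branch edges) matches the paper's. However, the central step is missing: you never exhibit a process that is actually a supermartingale.

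Your candidate $M_s$ carries $\mathbf 1_{\{\text{alive}\}}$. In a state with $n\ge2$ envelope particles, a death leaves this indicator unchanged. So the drift of $e^{(1-K\beta)s}e^{\theta(Z_s-s)+\gamma Y_s}\mathbf 1_{\{N_s\ge1\}}$ there is $(1-K\beta)+\theta(n-1)+n\alpha(e^{\gamma D}-1)>0$. The extra factor $e^{-\lambda'(\cdots)}$ is smaller when more lineages are pending, so it only makes this worse. Your $M_s$ also carries both $e^{(1-K\beta)s}$ and $e^{as}$, so it grows. The weight $e^{a\sigma}$ should instead be absorbed through $e^{a\sigma}\le e^{(1-K\beta)\sigma}e^{-(1-K\beta-a)h}$, as your Step 3 implicitly does.

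Nor can you get the supermartingale from Step 1. A bound on $\sup_t e^{(1-K\beta-a)t}u(t)$ at deterministic times says nothing about the functional at a general stopping time. The factorization over independent initial particles is also lost as soon as $\sigma$ is adapted to the joint filtration. That is exactly the situation in the applications, where $\sigma$ is the first admissible depth.

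The missing idea is a potential in $N$ that gives multi-particle states the extra killing you describe heuristically in your last paragraph. The paper weights by $\Phi(N_s)$ with $\Phi(n)=e^n-1$. This vanishes at $0$ and satisfies $n(\Phi(n)-\Phi(n-1))-\Phi(n)\ge c_\Phi(n-1)\Phi(n)$ and $\Phi(n+D-1)\le C_\Phi\Phi(n)$. The drift of $e^{(1-K\beta)s}e^{\theta(Z_s-s)+\zeta Y_s}\Phi(N_s)$ in state $n\ge1$ is then at most
\[
\bigl[-(c_\Phi-\theta-\alpha C_\Phi e^{\zeta D})(n-1)+\alpha C_\Phi e^{\zeta D}-K\beta\bigr]\Phi(n)\le0 .
\]
This holds once $\theta_0<c_\Phi/2$, $K$ is large, and $\beta$ is small (depending on the maximal tilt $\zeta$, hence on $\lambda_0$). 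At $\sigma$ one then uses $\Phi(N_\sigma)\ge(e-1)\mathbf 1_{\{N_\sigma\ge1\}}$.

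Your treatment of $\mathcal U_A(X)$ also needs repair. Since $s_u$ may be arbitrarily close to $T$, $\mathbb P(\mathcal U_A(X))=\prod_{u\in A_{\rm top}}(1-e^{-(T-s_u)})$ can be arbitrarily small. So there is no $e^{O(|A|)}$ bound on the change of law, and independence of the marks does not give one. What is true is an $e^{O(|A|)}$ bound on the relevant weighted functional, and it needs an argument. The paper starts the supermartingale at depth $1$; this is allowed because $\sigma\ge h\ge1$, and the marks below $T-1$ are independent of $\mathcal U_A(X)$. It then bounds the initial value $\mathbb E[e^{\theta Z_1+\zeta Y_1}\Phi(N_1)\mid\mathcal U_A(X)]\le e^{C|A|}$ in two parts:
\begin{itemize}
\item an ODE bound on the unit slab;
\item dominating each conditioned clock by a free clock plus one forced ring, whose expected multiplicative cost is $1+O(\alpha e^{\zeta D})\le2$.
\end{itemize}

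Finally, the bound $\chi_\sigma\ge W(A)-|A|$ in your first paragraph gives nothing when $A$ is connected. You need $\chi_\sigma\ge W(A)-1$, as in your Step 2, with the tilt on $Y$ chosen large enough to absorb this $e^{C|A|}$ and still leave $e^{-\lambda_0W(A)}$.
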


\begin{proof}
Let \(D:=2d\). We use the standard non-coalescing branching
domination. Let \(N_t\) be the number of labelled particles at backward depth
\(t\), let \(Y_t\) be the number of spatial branch edges created by depth
\(t\), and set
\[
        Z_t:=\int_0^t N_s\,ds .
\]
Then
\[
        |\widehat{\mathcal H}_A(t)|\le N_t,\qquad
        L_t\le Z_t,\qquad
        \chi_t\le Y_t .
\]

Set \(\Phi(n):=e^n-1\). There exist constants \(c_\Phi>0\) and
\(C_\Phi<\infty\), depending only on \(d\), such that, for all \(n\ge1\),
\begin{equation}\label{eq:Psi}
        n(\Phi(n)-\Phi(n-1))-\Phi(n)\ge c_\Phi(n-1)\Phi(n),
        \qquad
        \Phi(n+D-1)\le C_\Phi\Phi(n).
\end{equation}

\smallskip
\noindent\textbf{Step 1: the top-unit bound.}
We first record a one-unit bound under the conditioning \(\mathcal U_A(X)\).
For every fixed finite \(\bar R\), after decreasing
\(\beta_0(d,q,\bar R)\), there is \(C_*<\infty\), independent of
\(A,X,T,\bar R\), such that
\begin{equation}\label{eq:Q-ini}
\mathbb E\left[
        e^{\theta Z_1+\bar RY_1}\Phi(N_1)
        \,\middle|\,
        \mathcal U_A(X)
\right]
\le e^{C_*|A|}
\end{equation}
uniformly in \(0\le\theta\le1\).

Indeed, first ignore the conditioning. For the labelled process started from
one particle, define
\[
        F_t:=
        \mathbb E_1
        \exp\left\{\theta Z_t+\bar RY_t+N_t\right\},
        \qquad 0\le t\le1 .
\]
Let $Q_t:=\exp\left\{\theta Z_t+\bar RY_t+N_t\right\}$.
The derivative of $F_t$ collects the first-order changes coming from the three parts: $\theta Z_t,\, \bar RY_t,\, N_t$. 
For the continuous growth of \(Z_t\), since \(dZ_t=N_t\,dt\), this gives $\theta N_tQ_t$. 
For the changes of \(Y_t\) and \(N_t\), when an oblivious update happens, \(N_t\) jumps down by \(1\), so \(Q_t\) is multiplied by \(e^{-1}\). Thus the contribution is $(1-\alpha)N_t(e^{-1}-1)Q_t.$ 
When a non-oblivious update happens, \(Y_t\) increases by \(D\), and \(N_t\) increases by \(D-1\). So \(Q_t\) is multiplied by $e^{\bar RD+D-1}$, giving contribution $\alpha N_t(e^{\bar RD+D-1}-1)Q_t.$
Therefore, since $\theta\in(0,1)$,

\[
        \partial_tF_t
        =
        (\theta-1)F_t
        +(1-\alpha)
        +\alpha e^{\bar R D}F_t^D
        \le
        1+\alpha e^{\bar R D}F_t^D ,
        \qquad
        F_0=e .
\]
Set \(B:=e+2\). After decreasing \(\beta_0(d,q,\bar R)\), we may
assume
\begin{equation}\label{eq:beta-dec}
    \alpha e^{\bar R D}B^D\le1
\end{equation}
Then, as long as \(F_t\le B\), one has \(\partial_tF_t\le2\), and hence
\(F_t\le e+2\leq B\) for \(0\le t\le1\). By continuity,
\[
        \sup_{0\le t\le1}F_t\le B .
\]
Starting from one particle at each site of \(A\), we get
\begin{equation}\label{eq:indep}
        \mathbb E_A e^{\theta Z_1+\bar RY_1}\Phi(N_1)
        \le
        \mathbb E_A e^{\theta Z_1+\bar RY_1+N_1}
        \le B^{|A|}.
\end{equation}
We now pass from the unconditioned top slab to the top slab conditioned on
\(\mathcal U_A(X)\). For each \(x\in A_{\rm top}\), the event
\(\mathcal U_A(X)\) requires at least one clock ring in \(I_x=(s_x,T]\). A
Poisson clock conditioned to have at least one ring in \(I_x\) is stochastically
dominated by an unconditioned clock on \(I_x\) together with one extra forced
ring in \(I_x\). 
An oblivious forced ring does not enlarge the process. A non-oblivious
forced ring has probability \(\alpha\), creates at most \(D\) descendants and adds at most \(D\) spatial edges. Its remaining duration is at most one, so \eqref{eq:indep} bounds its future multiplicative cost by \(e^{\bar R D}B^D\). Hence one
forced ring costs at most $1+\alpha e^{\bar R D}B^D$, which is at most 2 from \eqref{eq:beta-dec}. Since
there are at most \(|A_{\rm top}|\le |A|\) forced rings, \eqref{eq:Q-ini} holds with $C_*:=\log (2B)=\log(2(e+2))$.

\smallskip
\noindent\textbf{Step 2: the supermartingale construction.}
We now prove the desired bound in the Lemma. Put
\[
        \rho:=\lambda_0+C_*+2 .
\]
Choose \(\theta_0>0\) small, then set
\[
        \bar R:=\rho+c_{\mathrm{cone}}^{-1}\theta_0 .
\]
Choose \(K<\infty\) large enough, and then decrease \(\beta_0\), so that for
every \(0<\theta\le\theta_0\) and every \(0\le\zeta\le\bar R\),
\[
        Q_t(\zeta):=
        e^{(1-K\beta)t}
        e^{\theta(Z_t-t)+\zeta Y_t}\Phi(N_t)
\]
is a nonnegative supermartingale until extinction.

To check this, suppose \(N_t=n\ge1\). The drift of \(Q_t(\zeta)\), divided by
the common exponential factor, equals
\[
(1-K\beta-\theta+\theta n)\Phi(n)
+n(1-\alpha)(\Phi(n-1)-\Phi(n))
+n\alpha\bigl(e^{\zeta D}\Phi(n+D-1)-\Phi(n)\bigr).
\]
Indeed, between clock rings, \(N_t\) and \(Y_t\) are constant, while $\frac{d}{dt}(Z_t-t)=N_t-1=n-1$. Thus the factor \(e^{(1-K\beta)t}e^{\theta(Z_t-t)}\) has logarithmic derivative $(1-K\beta)+\theta(n-1)=1-K\beta-\theta+\theta n$, which contributes $(1-K\beta-\theta+\theta n)\Phi(n).$
The remaining terms come from jumps. Since there are \(n\) active particles, clock rings occur at total rate \(n\).
At such a ring, with probability \(1-\alpha\), the ringing particle dies; then \(N_t:n\mapsto n-1\) and \(Y_t\) does not change, giving $n(1-\alpha)(\Phi(n-1)-\Phi(n))$.
With probability \(\alpha\), the ringing particle branches to \(D\) children; then \(N_t:n\mapsto n+D-1\), and \(Y_t\) increases by \(D\), so the weight \(e^{\zeta Y_t}\) gains a factor \(e^{\zeta D}\). This gives $n\alpha(e^{\zeta D}\Phi(n+D-1)-\Phi(n))$.
Using \eqref{eq:Psi} and \(\zeta\le\bar R\), and writing
\(B_{\bar R}:=C_\Phi e^{\bar R D}\), this is at most
\[
        \bigl[-c_\Phi(n-1)+\theta(n-1)+\alpha n B_{\bar R}-K\beta\bigr]\Phi(n)
        =
        \bigl[-(c_\Phi-\theta-\alpha B_{\bar R})(n-1)
        +\alpha B_{\bar R}-K\beta\bigr]\Phi(n).
\]
Since \(\alpha(\beta)=O(\beta)\), we first choose
\(\theta_0<c_\Phi/2\), then choose \(K\) large enough, and finally decrease
\(\beta_0\), so that the last display is nonpositive for every \(n\ge1\).

\smallskip
\noindent\textbf{Step 3: optional stopping and conclusion.}
Since \(\sigma\ge h\geq 1\) on \(\{\sigma<\infty\}\), we may condition on the dominating process up to depth \(1\).  Given this top-slab information, the graphical marks below real time \(T-1\) are independent of \(\mathcal U_A(X)\), and \(\sigma-1\) is a stopping time for the shifted labelled process. Applying
optional stopping to the shifted supermartingale for $Q$ followed by \(\Phi(N_\sigma)\ge\Phi(1)\mathbf 1_{\{N_\sigma\ge1\}}\) gives

\begin{equation}\label{eq:OST}
\mathbb E\left[
        \mathbf 1_{\{\sigma<\infty\}}
        \mathbf 1_{\{N_\sigma\ge1\}}
        e^{(1-K\beta)\sigma+\theta(Z_\sigma-\sigma)+\zeta Y_\sigma}
        \,\middle|\,
        \mathcal U_A(X)
\right] \le
C\,
\mathbb E\left[
        e^{\theta Z_1+\zeta Y_1}\Phi(N_1)
        \,\middle|\,
        \mathcal U_A(X)
\right]
\le
C e^{C_*|A|},
\end{equation}
one may take $C=e/(e-1)$, and the last step uses \eqref{eq:Q-ini}.

On \(\mathcal C_\sigma\), the spatial projection of the truncated history contains a connected graph spanning \(A\).
Thus $\chi_\sigma\ge W(A)-1$. 
Also, on \(\{\sigma<\infty\}\), \(\sigma\ge h\). Hence, for \(a\in(0,1-K\beta)\),
\[
        e^{a\sigma}e^{\gamma\chi_\sigma}
        \le
        e^{-(1-K\beta-a)h}
        e^{(1-K\beta)\sigma}
        e^{-\rho(W(A)-1)}
        e^{(\rho+\gamma)\chi_\sigma}.
\]
Applying \eqref{eq:OST} with \(\zeta=\rho+\gamma\), we obtain
\[
\begin{aligned}
&\mathbb E
\left[
\mathbf 1_{\{\sigma<\infty\}}\mathbf 1_{\mathcal C_\sigma}
e^{a\sigma}
e^{\theta(L_\sigma-\sigma)+\gamma\chi_\sigma}
\,\middle|\,
\mathcal U_A(X)
\right] \le
C e^{-(1-K\beta-a)h}
e^{C_*|A|-\rho(W(A)-1)} .
\end{aligned}
\]
Finally, \(|A|\le W(A)\) and \(\rho=\lambda_0+C_*+2\) imply
\[
        e^{C_*|A|-\rho(W(A)-1)}
        \le C e^{-\lambda_0 W(A)} .
\]
This proves the desired bound.
\end{proof}

Continue to work with the fixed outside transcript
\(\mathcal H_A^-=X\), and put \(h:=\ell_A(X)\).
A backward depth \(r\in[h,T]\) is called
\emph{\((A,h)\)-admissible} if there exists \(v\in\Lambda\) such that
\[
    \widehat{\mathcal H}_A(r)=\{v\},
    \qquad
    \mathsf C_r(v)\subset\mathsf D_h(A).
\]
In this case, denote the unique vertex by \(v(r)\).

For \(0\le s\le T\), define
\[
    D_s
    :=
    L_s-s
    =
    \int_0^s\bigl(|\widehat{\mathcal H}_A(u)|-1\bigr)\,du.
\]

\begin{lemma}[Deterministic delay without an admissible singleton]
\label{lem:no-admissible-delay}
Let \(A\subset\Lambda\) be finite and nonempty, and fix
\(0\le h\le r\le T\). Assume that
\(\widehat{\mathcal H}_A(u)\ne\varnothing\) for every \(u\in[0,r]\)
and that no \((A,h)\)-admissible depth occurs in \([h,r)\).
Then
\[
    r-h
    \le
    D_r+c_{\rm cone}^{-1}\chi_r.
\]
\end{lemma}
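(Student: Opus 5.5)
The plan is to split the interval \([h,r)\) into two kinds of depths and bound the Lebesgue measure of each separately. At every \(u\in[h,r)\) the slice \(\widehat{\mathcal H}_A(u)\) is nonempty by hypothesis. Either \(|\widehat{\mathcal H}_A(u)|\ge2\), or \(\widehat{\mathcal H}_A(u)=\{v\}\) is a singleton which, since \(u\) is not admissible, satisfies \(\mathsf C_u(v)\not\subset\mathsf D_h(A)\). Write \(I_{\ge2}\) and \(I_{\rm bad}\) for the corresponding sets of depths. The claimed inequality then reduces to two bounds, \(|I_{\ge2}|\le D_r\) and \(|I_{\rm bad}|\le c_{\rm cone}^{-1}\chi_r\).

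\emph{Multi-site depths.} Nonemptiness on \([0,r]\) gives \(|\widehat{\mathcal H}_A(u)|-1\ge0\) for all \(u\le r\), so the integrand defining \(D_r\) is nonnegative. On \(I_{\ge2}\) the integrand is at least \(1\). Hence \(|I_{\ge2}|\le\int_0^r(|\widehat{\mathcal H}_A(u)|-1)\,du=D_r\).

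\emph{Bad singleton depths.} This is the main (though still elementary) step. Suppose \(u\in I_{\rm bad}\) with singleton \(v\), and pick a point \((y,s)\in\mathsf C_u(v)\setminus\mathsf D_h(A)\). Because \(s\le T-u\le T-h\), the time constraint in \(\mathsf D_h(A)\) holds automatically, so the failure must be spatial: \(\operatorname{dist}(y,A)>c_{\rm cone}(T-s)\). The triangle inequality together with \(\operatorname{dist}(y,v)\le c_{\rm cone}(T-u-s)\) then gives
\[
\operatorname{dist}(v,A)\ \ge\ \operatorname{dist}(y,A)-\operatorname{dist}(y,v)\ >\ c_{\rm cone}(T-s)-c_{\rm cone}(T-u-s)\ =\ c_{\rm cone}\,u .
\]
On the other hand, \(v\in\widehat{\mathcal H}_A(u)\), so some backward path in the history runs from a site of \(A\) at depth \(0\) to \((v,T-u)\). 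Its vertical segments contribute no spatial displacement, and each spatial step is one of the spatial edges counted in \(\chi_u\). Therefore \(\operatorname{dist}(v,A)\le\chi_u\le\chi_r\). Combining, every \(u\in I_{\rm bad}\) satisfies \(u<c_{\rm cone}^{-1}\chi_r\). Since \(I_{\rm bad}\subset[h,r)\), this gives \(|I_{\rm bad}|\le(c_{\rm cone}^{-1}\chi_r-h)^+\le c_{\rm cone}^{-1}\chi_r\).

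Adding the two bounds gives \(r-h=|I_{\ge2}|+|I_{\rm bad}|\le D_r+c_{\rm cone}^{-1}\chi_r\). The only points needing care are the following.
\begin{itemize}
\item The failure of \(\mathsf C_u(v)\subset\mathsf D_h(A)\) is purely spatial, which relies on the ranges \(h\le u\) and \(s\le T-u\).
\item \(\chi_u\) really does dominate the graph distance from \(A\) to the current singleton. This follows because each branch step of the history moves a lineage by one lattice edge, and that edge is recorded in \(\chi\).
\end{itemize}
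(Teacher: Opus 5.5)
Your proof is correct and follows the paper's argument. You split $[h,r)$ into multi-site depths, whose measure is at most $D_r$, and cone-failing singleton depths, where the cone failure forces $\operatorname{dist}(v,A)>c_{\rm cone}u$ while the connecting lineage gives $\operatorname{dist}(v,A)\le\chi_r$. You also write out the triangle-inequality step that the paper compresses into ``using the definition of $\mathsf D_h(A)$''.
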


\begin{proof}
For \(u\in[h,r)\), absence of an \((A,h)\)-admissible depth implies that either
\(|\widehat{\mathcal H}_A(u)|\ge2\), or \(\widehat{\mathcal H}_A(u)=\{v_u\}\) and the depth cone
\(\mathsf C_u(v_u)\) is not contained in \({\mathsf D}_h(A)\).  In the
second case, using the definition of \({\mathsf D}_h(A)\), one has
\[
        \operatorname{dist}(v_u,A)>c_{\rm cone}u .
\]
Whenever \(\widehat{\mathcal H}_A(u)=\{v_u\}\), the truncated history connects \(v_u\) to at least
one vertex of \(A\).  Therefore the spatial length accumulated by depth \(u\)
is at least \(\operatorname{dist}(v_u,A)\).  Hence every singleton
cone-failing depth \(u\) satisfies \(u<c_{\rm cone}^{-1}\chi_r\).  The set of
such depths has Lebesgue measure at most \(c_{\rm cone}^{-1}\chi_r\).
The remaining depths in \([h,r)\) have \(|\widehat{\mathcal H}_A(u)|\ge2\), and their Lebesgue
measure is at most \(D_r\). Adding the two contributions gives the displayed
bound.
\end{proof}

Fix a cemetery vertex \(v_\dagger\in A\).
Set \(e_{A,0}:=h\). If \(e_{A,0}\ge T\), the exploration stops without a
cone trial. At stage \(k\), let \(\mathcal R_k\) be the
event that the stage is reached and an \((A,h)\)-admissible depth
exists in \([e_{A,k-1},T]\). On \(\mathcal R_k\), set
\[
    r_{A,k}
    :=
    \inf\left\{
        r\in[e_{A,k-1},T]:
        r\text{ is \((A,h)\)-admissible}
    \right\},
    \quad
    \tau_{A,k}:=T-r_{A,k},
    \quad
    v_{A,k}:=v(r_{A,k})=v_{A,\tau_{A,k}}.
\]
On \(\mathcal R_k^c\), the exploration stops. On this event, and for all
stages after the exploration stops, use the
finite cemetery values
\[
        (r_{A,k},\tau_{A,k},v_{A,k}):=(T,0,v_\dagger).
\]
Thus \(r_{A,k}\) is the trial's backward depth and \(\tau_{A,k}\) is its
absolute tip time. On \(\mathcal R_k\), set
\[
         \mathsf C_{A,k}
         :=\mathsf C_{r_{A,k}}(v_{A,k})
         =\mathsf C_{T-\tau_{A,k}}(v_{A,k}).
\]
With the natural convention \(\mathsf{Bot}_0(v):=\Omega\), the cone test at
stage \(k\), on \(\mathcal R_k\), is the event
\[
        \mathsf{Bot}_{\tau_{A,k}}(v_{A,k}).
\]
If this event
occurs, the cone test succeeds and the sequential exploration stops. If it
does not occur, either the lower history becomes empty while still inside the
cone, in which case the exploration stops and \(A\) is not Red, or the lower
history exits \(\mathsf C_{A,k}\) before reaching time \(0\). In the latter
case, let \(e_{A,k}\) be the backward depth of the first exit. It is an
\((\widehat{\mathscr F}_u^A)\)-stopping time. We reveal the lower history only up to
depth \(e_{A,k}\), and then continue to stage \(k+1\).
Each successful trial is a valid separation candidate in the notation of
Subsection~\ref{subsec:yellowSR}.

\begin{lemma}[First admissible cone estimate]
\label{lem:first-h-admissible}
Fix \(\lambda_0>0\). After decreasing
\(\beta_0(d,q)\), there are constants \(C,c<\infty\) such that the following
holds.

Fix \(h:=\ell_A(X)\). On \(\mathcal R_1\), let \(r_{A,1}\in[h,T]\) be the
first \((A,h)\)-admissible backward depth in the sequential exploration above. Let
\(\mathcal N_0\) be the event that \(A\) is Red and \(\mathcal R_1^c\) occurs.
\[
        \mathbb E
        \left[
            e^{(1+\kappa)r_{A,1}}\mathbf 1_{\mathcal R_1}
            +
            e^{(1+\kappa)T}\mathbf 1_{\mathcal N_0}
            \,\middle|\,
            \mathcal U_A
        \right]
        \le
        C e^{-\lambda_0 W(A)}e^{c\beta h}.
\]
\end{lemma}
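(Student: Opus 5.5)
The plan is to combine the deterministic delay bound of Lemma~\ref{lem:no-admissible-delay} with the stopped exponential moment of Lemma~\ref{lem:stopped-weighted-red}. The stopping time is the first admissible depth $r_{A,1}$ on $\mathcal R_1$, and on $\mathcal N_0$ it is the full depth $T$.

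\textbf{Step 1: the stopping time.} Define $\sigma:=r_{A,1}$ on $\mathcal R_1$. On $\mathcal N_0$, set $\sigma:=T$. Otherwise set $\sigma:=\infty$; this covers, for instance, the case where the history dies before any admissible depth, so that $A$ is not Red.

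Both $\{r\text{ is }(A,h)\text{-admissible}\}$ and extinction are read off from $\widehat{\mathcal H}_A(r)$. Hence $\sigma$ is a stopping time for $(\widehat{\mathscr F}^A_u)$. I will check the same for the coupled envelope filtration, by revealing the actual history inside the envelope.

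On $\{\sigma<\infty\}$ we have $\sigma\ge h$. The truncated history up to $\sigma$ is nonempty and connected, so $\mathcal C_\sigma$ holds. Connectedness holds because $A$ is a single cluster (compatibility of $X$), together with the fact that either $\widehat{\mathcal H}_A(\sigma)$ is a singleton, or $\sigma=T$ and $A$ is Red. The connectedness point, and the claim that $\sigma$ is also a stopping time for the envelope, need care but should follow from the coupling.

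\textbf{Step 2: converting $r-h$ into the weight.} Before $\sigma$ there is no admissible depth in $[h,\sigma)$ and the history is alive. Lemma~\ref{lem:no-admissible-delay} then gives
\[
\sigma-h\le D_\sigma+c_{\rm cone}^{-1}\chi_\sigma .
\]
Multiplying by $\theta$, with $\gamma=c_{\rm cone}^{-1}\theta$, yields
\[
e^{\theta(\sigma-h)}\le e^{\theta(L_\sigma-\sigma)+\gamma\chi_\sigma}.
\]
We want $e^{(1+\kappa)\sigma}$. Write
\[
(1+\kappa)\sigma = a\sigma+\theta(\sigma-h)+\theta h,
\qquad a:=1+\kappa-\theta .
\]
Lemma~\ref{lem:kappa bound} gives $1+\kappa\le 1$. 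Fixing $\theta\le\theta_0$ small but independent of $\beta$, the requirement $a<1-K\beta$ holds once $K\beta<\theta$, after decreasing $\beta_0$.

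\textbf{Step 3: applying Lemma~\ref{lem:stopped-weighted-red}.} The lemma bounds the conditional expectation by
\[
C e^{\theta h}\,e^{-\lambda_0W(A)}\,e^{-(1-K\beta-a)h}
= C e^{-\lambda_0 W(A)}\,e^{(\theta-(\theta-\kappa-K\beta))h}.
\]
Since $-\kappa<2d\alpha=O(\beta)$, the exponent equals $(\kappa+K\beta)h$ up to sign, which is at most $c\beta h$. This gives the claimed bound, with both indicator terms handled at once because $\sigma$ covers $\mathcal R_1\cup\mathcal N_0$.

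\textbf{Main obstacle.} The parameter bookkeeping is delicate. We need $a>0$ and $a<1-K\beta$ while $\theta$ stays fixed. The gain $\theta(\sigma-h)$ must exactly offset the loss $1-K\beta-a$, and this works only because $1+\kappa$ is within $O(\beta)$ of $1$.
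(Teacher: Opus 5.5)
Your argument is essentially the paper's proof: Lemma~\ref{lem:no-admissible-delay} combined with Lemma~\ref{lem:stopped-weighted-red} at $a=1+\kappa-\theta$, $\gamma=c_{\rm cone}^{-1}\theta$, giving the factor $e^{(K\beta+\kappa)h}\le e^{K\beta h}$ (for which $\kappa<0$ alone suffices); the only difference is that the paper applies Lemma~\ref{lem:stopped-weighted-red} twice, with $\sigma=T$ for $\mathcal N_0$ and $\sigma=\sigma_1$ for $\mathcal R_1$, while your single merged stopping time is equally fine because $\mathcal R_1\cap\mathcal N_0=\varnothing$ and $\{\sigma\le T\}=\mathcal R_1\cup\mathcal N_0$ is determined by the full $A$-history. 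One caveat, which your proposal shares with the paper's own justification: compatibility of $X$ does not force $\mathcal H_A$ to be a single cluster, so on $\mathcal R_1$ the event $\mathcal C_{r_{A,1}}$ (needed to extract $e^{-\lambda_0 W(A)}$) can fail when the branch from some $u\in A$ dies before depth $r_{A,1}$ without meeting the others; the $\mathcal R_1$ term should therefore carry the indicator of $\mathcal C_{r_{A,1}}$, which holds on $\mathcal R_1\cap\{\mathcal H_A\ \text{connected}\}$ and hence on $\{A\in\mathrm{SR}\}$, so this suffices for the application.
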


\begin{proof}
First consider \(\mathcal N_0\). On \(\mathcal N_0\), no \((A,h)\)-admissible depth
occurs in \([h,T)\), and the history survives to depth \(T\).
Lemma~\ref{lem:no-admissible-delay}, applied with \(r=T\), gives
\[
        T-h
        \le
        D_T+c_{\rm cone}^{-1}\chi_T.
\]
Thus, for any $\theta\geq 0$,
\[
        \mathbf 1_{\mathcal N_0}
        \le
        \mathbf 1_{\{\widehat{\mathcal H}_A(T)\ne\varnothing\}}
        e^{-\theta(T-h)}
        e^{\theta(L_T-T)+c_{\rm cone}^{-1}\theta\chi_T}.
\]
Applying Lemma~\ref{lem:stopped-weighted-red} with
\[
        \sigma=T,
        \qquad
        a=1+\kappa-\theta,
        \qquad
        \gamma=c_{\rm cone}^{-1}\theta,
\]
and choosing \(\theta>0\) fixed and then \(\beta_0\) small so that
\[
        1+\kappa-\theta<1-K\beta,
\]
gives
\begin{equation}\label{eq:N0}
    e^{(1+\kappa) T}
        \mathbb P(\mathcal N_0\mid\mathcal U_A)
        \le
        C e^{-\lambda_0W(A)}e^{(K\beta+\kappa)h}.
\end{equation}
We can take $c:=K$ because $\kappa<0$.

On \(\mathcal R_1\), no \((A,h)\)-admissible depth occurs in \([h,r_{A,1})\), and the
history is nonempty up to depth \(r_{A,1}\). By
Lemma~\ref{lem:no-admissible-delay},
\[
        r_{A,1}-h
        \le
        D_{r_{A,1}}+c_{\rm cone}^{-1}\chi_{r_{A,1}}.
\]
Therefore
\[
\begin{aligned}
e^{(1+\kappa)r_{A,1}}\mathbf 1_{\mathcal R_1}
&\le
e^{\theta h}
\mathbf 1_{\mathcal R_1}
e^{(1+\kappa-\theta)r_{A,1}}
e^{\theta D_{r_{A,1}}+c_{\rm cone}^{-1}\theta\chi_{r_{A,1}}}                      \\
&=
e^{\theta h }
\mathbf 1_{\mathcal R_1}
e^{(1+\kappa-\theta)r_{A,1}}
e^{\theta(L_{r_{A,1}}-r_{A,1})+c_{\rm cone}^{-1}\theta\chi_{r_{A,1}}} .
\end{aligned}
\]
On \(\mathcal R_1\), at depth \(r_{A,1}\), the history is a singleton. Therefore
\(\mathcal C_{r_{A,1}}\) occurs because all branches from \(A\times\{T\}\) have
merged into the single space--time point at depth \(r_{A,1}\). Define the
first-trial depth stopped at infinity by
\[
        \sigma_1
        :=
        \begin{cases}
        r_{A,1},&\mathcal R_1\text{ occurs},\\
        \infty,&\mathcal R_1^c\text{ occurs},
        \end{cases}
\]
is an \((\widehat{\mathscr F}_u^A)\)-stopping time: the event \(\{\sigma_1\le r\}\) is
determined by the history revealed up to depth \(r\), using the right-continuous
convention for the singleton slice. Applying
Lemma~\ref{lem:stopped-weighted-red} with
\[
        \sigma=\sigma_1,
        \qquad
        a=1+\kappa-\theta,
        \qquad
        \gamma=c_{\rm cone}^{-1}\theta,
\]
gives
\[
\mathbb E
\left[
e^{(1+\kappa)r_{A,1}}\mathbf 1_{\mathcal R_1}
\,\middle|\,
\mathcal U_A
\right]
\le
C e^{\theta h}
e^{-\lambda_0W(A)}
e^{-(-K\beta-\kappa+\theta)h} 
=C 
e^{-\lambda_0W(A)}
e^{(K\beta+\kappa)h}.
\]
Together with \eqref{eq:N0}, this proves the lemma by taking constant $2C$ and taking $c:=K$.
\end{proof}

\begin{lemma}[Weighted continuation bound]
\label{lem:one-site-continuation}
After decreasing \(\beta_0(d,q)\), there exists \(C_{\rm cont}<\infty\), depending only on \(d\) and \(q\), such that the following holds.

Fix a finite nonempty \(A\subset\Lambda\), \(h\in[0,T]\), and a reached
\((A,h)\)-admissible trial at depth \(r\).  Let \(v=v(r)\), and let
\(\mathscr G\) be the sigma-field generated by the exploration revealed before
testing the cone at \((v,T-r)\), including the data
\((r,v,\mathsf C_r(v))\), but excluding the graphical marks strictly below its tip. 
Continue the sequential exploration from this trial, and write
\[
        r=r_0<r_1<\cdots<r_J
\]
for the depths of all trials subsequently reached, including the current one.
Let
\[
        \mathcal N_r
        :=
        \{\text{the continuation reaches depth \(T\) without a successful trial}\}.
\]
Then
\begin{equation}\label{eq:one-site-continuation}
\mathbb E\left[
    \sum_{j=0}^{J}e^{(1+\kappa)(r_j-r)}
    +e^{(1+\kappa)(T-r)}\mathbf 1_{\mathcal N_r}
    \,\middle|\,
    \mathscr G
\right]
\le C_{\rm cont}.
\end{equation}
\end{lemma}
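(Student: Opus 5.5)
The plan is to write the sum as a geometric-type series over the failed trials, and to bound each step by a one-trial contraction estimate. Condition on \(\mathscr G\) and write \(r_0=r\). At each reached trial \(r_j\) there are three outcomes below the tip \((v_j,T-r_j)\): success, meaning \(\mathsf{Bot}_{T-r_j}(v_j)\), after which the exploration stops; extinction inside the cone, after which it also stops; or a first exit at depth \(e_j>r_j\). In the exit case the next trial depth \(r_{j+1}\), if any, is the first \((A,h)\)-admissible depth after \(e_j\); if there is none, the continuation may reach \(T\) and \(\mathcal N_r\) occurs. The marks strictly below the tip are unrevealed given \(\mathscr G\), so the cone experiment is a fresh independent one-site history, and the strong Markov property applies at every stopping time \(e_j\) and \(r_{j+1}\). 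The estimate therefore reduces to a one-step bound of the form
\[
\mathbb E\bigl[e^{(1+\kappa)(r_{1}-r_0)}\mathbf 1\{\text{exit, next trial reached}\}+e^{(1+\kappa)(T-r_0)}\mathbf 1\{\text{exit},\ \mathcal N\}\,\big|\,\mathscr G\bigr]\le \rho<1,
\]
uniformly over the conditioning. Iterating this bound gives \(\sum_j\rho^j\le(1-\rho)^{-1}=:C_{\rm cont}\), where the \(j=0\) term contributes \(1\).

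To get \(\rho\), I split the delay \(r_1-r_0\) into two pieces: the exit depth \(e_0-r_0\), and the extra delay \(r_1-e_0\) needed to find the next admissible singleton.

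\emph{Exit piece.} Exit from the cone \(\mathsf C_{r_0}(v_0)\) at relative depth \(u\) requires the single-site branching envelope to have a particle at distance more than \(c_{\rm cone}u\) at elapsed time \(u\). With \(\theta=1\), Lemma~\ref{lem:brw-deviation} gives a tail \(2^de^{-\lambda(c_{\rm cone})u}\). Since \(\lambda(2)=3-m\cosh1>1+\kappa+1\), the weight \(e^{(1+\kappa)u}\) integrates to something of order \(O(e^{-c})\) over the exit piece. I also need the exit event itself to have small probability. Exit before depth \(1\) requires a non-oblivious ring, which has probability \(O(\alpha)\), and later exits are exponentially rare, so the exit contribution is \(O(\alpha)\) and hence small.

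\emph{Delay piece.} Apply Lemma~\ref{lem:no-admissible-delay} from the exit depth onward. After exit, the history is still connected to \(v_0\). Because \(\mathsf C_{r_0}(v_0)\subset\mathsf D_h(A)\), the admissibility requirement amounts to recovering a singleton whose cone lies inside \(\mathsf D_h(A)\), and the time spent failing is bounded by \(D+c_{\rm cone}^{-1}\chi\) accumulated after exit plus the exit overshoot. I then run the supermartingale of Lemma~\ref{lem:stopped-weighted-red}, started from the configuration at \(e_0\) instead of at \(A\), with \(a=1+\kappa-\theta\) and \(\gamma=c_{\rm cone}^{-1}\theta\). The \(\mathcal N\) term is handled the same way with \(\sigma=T\), exactly as in the proof of Lemma~\ref{lem:first-h-admissible}.

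The starting weight is \(\Phi(N_{e_0})\) times \(e^{\gamma\chi_{e_0}}\). Its expectation, restricted to exit, is \(O(\alpha)\) by the same exponential-moment bound \eqref{eq:subBP-exp}, since an exit forces a branching event. This gives \(\rho=O(\alpha)+O(e^{-c}\alpha)\), which is less than \(1/2\) after decreasing \(\beta_0\).

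The main obstacle is the delay piece. The spatial excursion \(\chi\) is measured relative to \(A\), but after an exit the singleton must re-enter a cone nested in \(\mathsf D_h(A)\). This is measured relative to \(v_0\), whose distance to \(A\) can be up to \(c_{\rm cone}r_0\), so Lemma~\ref{lem:no-admissible-delay} cannot be applied verbatim. I would first try to prove a relative version of it: any singleton at depth \(u\ge r_0\) within distance \(c_{\rm cone}(u-r_0)\) of \(v_0\) is admissible, by the triangle inequality with \(\operatorname{dist}(v_0,A)\le c_{\rm cone}r_0\). This gives \(u-e_0\le D+c_{\rm cone}^{-1}\chi\) with both \(D\) and \(\chi\) counted from \(r_0\). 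That relative bound is what makes the one-step supermartingale estimate uniform in \(\mathscr G\).
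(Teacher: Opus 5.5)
Your proposal is correct and is essentially the paper's argument. Both use a one-trial contraction with $\rho=O(\alpha)$, which comes from the fact that an exit forces the first ring at the tip to branch. Both use the relative delay bound $c_{\rm cone}(u+t)<\operatorname{dist}(w,A)\le c_{\rm cone}u+\widetilde\chi_t$, obtained from $\operatorname{dist}(v_u,A)\le c_{\rm cone}u$, together with exponential moments of the branching envelope and geometric summation $a_{j+1}+f_j\le\rho\, a_j$ over trials.

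The paper does not need your separate exit piece via Lemma~\ref{lem:brw-deviation}. It also avoids the joint-moment step your sketch glosses over, namely combining $e^{(1+\kappa)(e_0-r_0)}$ with the restarted supermartingale's starting weight. It does this by noting that the pre-exit depth also satisfies $\delta\le c_{\rm cone}^{-1}\widetilde\chi_\delta$. Hence $\Delta\le\tfrac12\widetilde L_\Delta+c_{\rm cone}^{-1}\widetilde\chi_\Delta$ on the whole cycle, and a single use of \eqref{eq:subBP-exp} after the first branching ring suffices.
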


\begin{proof}
The proof has two steps: Step 1 shows that a single trial contracts the exponential weight by a
fixed factor \(\rho_{\rm cont}<1\); Step 2 sums this one-trial contraction over the sequence of
trials \(r_0<\cdots<r_J\), via a supermartingale-type recursion, to obtain the uniform bound
\(C_{\rm cont}\).

\smallskip
\noindent\textbf{Step 1: contraction for one trial.}
We first prove a uniform contraction for
one trial. Fix a reached trial at depth \(u<T\), write \(v_u:=v(u)\), and
denote its pre-test sigma-field by \(\mathscr G_u\). Conditional on
\(\mathscr G_u\), the history strictly below the cone tip is a fresh one-site
history.
Let \(\mathcal E_u^+\) be the event that the current cone exits and a later
trial is reached, and let \(u^+\) be the depth of the first such trial, with
\(u^+:=T\) off \(\mathcal E_u^+\). Let \(\mathcal E_u^0\) be the event that
the current cone exits, the history survives to depth \(T\), and no later
trial is reached. We claim that, after decreasing \(\beta_0\),
\begin{equation}\label{eq:cycle-contraction}
\mathbb E\left[
    e^{(1+\kappa)(u^+-u)}\mathbf 1_{\mathcal E_u^+}
    +e^{(1+\kappa)(T-u)}\mathbf 1_{\mathcal E_u^0}
    \,\middle|\,
    \mathscr G_u
\right]
\le \rho_{\rm cont},
\end{equation}
for a constant \(\rho_{\rm cont}<1\), uniformly in the reached pre-test state.

By Lemma~\ref{lem:kappa bound}, after decreasing \(\beta_0\), $1/2\leq 1+\kappa<1$. 
Set
\(\mathcal E:=\mathcal E_u^+\sqcup\mathcal E_u^0\). On \(\mathcal E\), let
\[
\Delta:=
\begin{cases}
u^+-u,&\mathcal E_u^+\text{ occurs},\\
T-u,&\mathcal E_u^0\text{ occurs}.
\end{cases}
\]
Let \(\delta\le\Delta\) be the elapsed depth of the first exit from the
current cone, and let \(\widetilde L_t,\widetilde\chi_t\) denote the lineage-time and spatial length accumulated between backward depths \(u\) and \(u+t\).

On \(\mathcal E\),
the history is nonempty throughout \([0,\Delta]\).

The exit occurs at elapsed depth \(\delta\), so
\[
t\le\delta
\le c_{\rm cone}^{-1}\widetilde\chi_\delta
\le c_{\rm cone}^{-1}\widetilde\chi_\Delta,
\qquad 0\le t\le\delta.
\]
Thus every singleton time before the exit lies in
\([0,c_{\rm cone}^{-1}\widetilde\chi_\Delta]\).
Now suppose that \(\delta<t<\Delta\) and that the history at depth \(u+t\)
is a singleton \(\{w\}\). Since no later trial has yet been reached, this
singleton is not admissible. As \(u+t\ge h\), this means
$c_{\rm cone}(u+t)
<\operatorname{dist}(w,A)$.
Moreover, the current trial is admissible, so
\(\operatorname{dist}(v_u,A)\le c_{\rm cone}u\), and the exposed history
contains a spatial path from \(v_u\) to \(w\). Therefore
\[
c_{\rm cone}(u+t)
<\operatorname{dist}(w,A)
\le\operatorname{dist}(v_u,A)+\widetilde\chi_t
\le c_{\rm cone}u+\widetilde\chi_t.
\]
Hence
\[
t<c_{\rm cone}^{-1}\widetilde\chi_t
\le c_{\rm cone}^{-1}\widetilde\chi_\Delta.
\]
Consequently, all singleton times in \([0,\Delta)\) have total length at
most \(c_{\rm cone}^{-1}\widetilde\chi_\Delta\), while the times with at least
two active vertices have total length at most
\(\frac12\widetilde L_\Delta\). Therefore
\begin{equation}\label{eq:cycle-delay}
        \Delta
        \le \frac12\widetilde L_\Delta
        +c_{\rm cone}^{-1}\widetilde\chi_\Delta
        \qquad\text{on }\mathcal E.
\end{equation}

Couple the fresh history below the tip to the one-particle branching
envelope, and let \(\widehat L,\widehat\chi\) be the envelope's occupation
time and spatial length up to depth \(T-u\). Then
\[
        \widetilde L_\Delta\le\widehat L,
        \qquad
        \widetilde\chi_\Delta\le\widehat\chi.
\]
Set
\[
        \eta:=\frac{1+\kappa}{2},
        \qquad
        \zeta:=\frac{1+\kappa}{c_{\rm cone}}.
\]
Since \(c_{\rm cone}=2\) and \(1+\kappa<1\), we have \(\zeta<1/2<10\). Let
\(M_*:=M(1/2,10)<\infty\) be the uniform envelope bound from
\eqref{eq:subBP-exp}.

On \(\mathcal E\), \eqref{eq:cycle-delay} and the envelope domination give
\((1+\kappa)\Delta\le\eta\widehat L+\zeta\widehat\chi\), while an exit is
possible only if the first ring of the initial envelope particle branches.
Decomposing at that ring and applying
\eqref{eq:subBP-exp} to its \(D=2d\) descendants therefore gives
\[
\begin{aligned}
\mathbb E\left[
 e^{(1+\kappa)\Delta}\mathbf 1_{\mathcal E}
 \,\middle|\,\mathscr G_u
\right]
&\le
\widehat{\mathbb E}_1\left[
 e^{\eta\widehat L+\zeta\widehat\chi};
 \text{the first ring branches before \(T-u\)}
\right]\\
&\le
\int_0^{T-u}
\alpha e^{-t}
e^{\eta t}
e^{\zeta D}
M_*^D\,dt
\le 2\alpha e^{10D}M_*^D .
\end{aligned}
\]
Indeed, \(\alpha e^{-t}\,dt\) is the joint law of a first ring at depth \(t\)
that branches; the initial segment contributes \(e^{\eta t}\), the branch
contributes \(e^{\zeta D}\), and the \(D\) descendants contribute at most
\(M_*^D\). Since \(\eta\le1/2\), \(\zeta\le10\), and
\(\alpha=O(\beta)\), decreasing \(\beta_0\) makes the last display at most
\(\rho_{\rm cont}:=1/3\), proving \eqref{eq:cycle-contraction}.
For a reached trial at depth \(T\), the same estimate is trivial because the
zero-height cone succeeds.

\smallskip
\noindent\textbf{Step 2: summing over the trial sequence.}
For \(j\ge0\), put \(\mathcal T_j:=\{J\ge j\}\), and set
\(r_j:=T\) on \(\mathcal T_j^c\). Let \(\mathscr G_j\) be the sigma-field
just before the \(j\)-th test on \(\mathcal T_j\), continued by the terminal
exploration transcript on \(\mathcal T_j^c\). Then
\[
        \mathscr G=\mathscr G_0\subseteq\mathscr G_1\subseteq\cdots,
        \qquad
        \mathcal T_j\in\mathscr G_j,
        \qquad
        r_j\text{ is }\mathscr G_j\text{-measurable}.
\]
On \(\mathcal T_j\), the two alternatives in
\eqref{eq:cycle-contraction} are respectively \(\mathcal T_{j+1}\) and
\(\mathcal N_r\cap\{J=j\}\); off \(\mathcal T_j\), both vanish.
Thus its global form is
\[
\mathbb E\left[
 e^{(1+\kappa)(r_{j+1}-r_j)}\mathbf 1_{\mathcal T_{j+1}}
 +e^{(1+\kappa)(T-r_j)}
  \mathbf 1_{\mathcal N_r\cap\{J=j\}}
 \,\middle|\,\mathscr G_j
\right]
\le \rho_{\rm cont}\mathbf 1_{\mathcal T_j}.
\]

Define
\[
a_j
:=
\mathbb E\left[
 e^{(1+\kappa)(r_j-r)}\mathbf 1_{\{J\ge j\}}
 \,\middle|\,
 \mathscr G
\right],
\qquad
f_j
:=
\mathbb E\left[
 e^{(1+\kappa)(T-r)}
 \mathbf 1_{\mathcal N_r\cap\{J=j\}}
 \,\middle|\,
 \mathscr G
\right].
\]

Applying \eqref{eq:cycle-contraction} at the \(j\)-th reached trial gives
\[
a_{j+1}+f_j\le\rho_{\rm cont}a_j,
\qquad a_0=1.
\]
Therefore \(a_j\le\rho_{\rm cont}^j\). Since $\mathcal N_r
        =\bigsqcup_{j\ge0}
          (\mathcal N_r\cap\{J=j\})$, we obtain
\[
\mathbb E\left[
    \sum_{j=0}^{J}e^{(1+\kappa)(r_j-r)}
    +e^{(1+\kappa)(T-r)}\mathbf 1_{\mathcal N_r}
    \,\middle|\,\mathscr G
\right]
=\sum_{j\ge0}(a_j+f_j)                 \le1+\rho_{\rm cont}\sum_{j\ge0}a_j
\le\frac1{1-\rho_{\rm cont}}
\le\frac32.
\]
This proves \eqref{eq:one-site-continuation} with \(C_{\rm cont}:=3/2\).
\end{proof}

\begin{proof}[Proof of Proposition~\ref{prop:one-block-SR}]

We suppress the copy index \(a\) throughout the proof.
Fix the outside transcript \(\mathcal{H}_A^-=X\), and set
\[
        h:=\ell_A(X).
\]

%%%%
For a given subset $S\subset \Lambda$, define $V_{\mathrm{Blue}_S^*}$ be the union of the top sets of the Blue clusters that arise when
exposing the joint histories of $S$. Then,
\[
\{A\subset V_{\rm{Blue}}\}=\{A\subset V_{\mathrm{Blue}_A^*}\}\cap \{\mathcal{H}_A\cap X=\eset\}.
\]

%%%%
In an exploration from \(A\times\{T\}\), define
\[
        E_A^*(h):=E_{{\rm res},A}^*(h)\cup E_{{\rm fail},A}^*(h),
\]
where \(E_{{\rm res},A}^*(h)\) is the event that, while exposing only histories
of \(A\), some reached admissible cone trial \(k\) succeeds and takes the
residual branch
\[
        J_{(v_{A,k},\tau_{A,k})}>p_{\tau_{A,k}}(v_{A,k}),
\]
and \(E_{{\rm fail},A}^*(h)\) is the event that, while exposing only histories
of \(A\), the set \(A\) is Red but no admissible cone trial succeeds.
Then,
\[
\{A\in\SR\}\subset E_A^*(h)\cap\{\mathcal H_A\cap X=\eset\}.
\]
Remark that as $X\cap \mathsf D_h(A)=\varnothing$, every cone \(\mathsf C\subset\mathsf D_h(A)\) is disjoint from the outside
transcript \(X\).

%%%
We use the top-unit event from \eqref{eq:top-unit-U} on time slab $[T-1,T]$.
On $\mathsf T_A:=\{A\in{\rm SR}\}\cup \{A\subset V_{\rm Blue}\}$, \(\mathcal U_A\) must occur.
Indeed, if some
\(u\in A\) has no update in \((s_u,T]\), then the vertical branch
from \((u,T)\) hits the outside transcript at \((u,s_u)\). Therefore,
$\mathbb{P}\big(A\in\mathrm{SR} \,\big\vert\, \mathcal{H}_A^-= X,\{A\in\mathrm{SR}\}\cup\{A\subset V_{\mathrm{Blue}}\}\big)$ is bounded above by

\begin{align*}
\frac{
\mathbb{P}\left(
E_A^*(h),\,
\mathcal{H}_{A}\cap  X=\emptyset,\,
\mathcal{U}_A
\mid \mathcal{H}_{A}^{-}= X
\right)
}{
\mathbb{P}\left(
A\subset V_{\mathrm{Blue}_{A}^{*}},\,
\mathcal{H}_{A}\cap X=\emptyset,\,
\mathcal{U}_A
\mid \mathcal{H}_{A}^{-}= X
\right)
}
=
\frac{
\mathbb{P}\left(
E_A^*(h),\,
\mathcal{H}_{A}\cap  X=\emptyset
\mid \mathcal{U}_A
\right)
}{
\mathbb{P}\left(
A\subset V_{\mathrm{Blue}_{A}^{*}},\,
\mathcal{H}_{A}\cap X=\emptyset
\mid \mathcal{U}_A
\right)
}
.
\end{align*}
The equality holds because all indicated events are now $\mathcal{H}_A$ measurable.

%%%
For the denominator, there is a constant
\(c_{\rm bl}>0\) such that
\[
\mathbb{P}\left(
A\subset V_{\mathrm{Blue}_{A}^{*}},\,
\mathcal{H}_{A}\cap X=\emptyset
\mid \mathcal{U}_A
\right)
        \ge
        c_{\rm bl}^{|A|}.
\]
Indeed, for \(u\in A_{\rm top}\), conditional on the existence of an update in
\((s_u,T]\), the latest such update is oblivious with probability
\(p_{\rm obl}\). For \(u\in A\setminus A_{\rm top}\), require that the latest
update in \((T-1,T]\) exists and is oblivious; this has probability
\((1-e^{-1})p_{\rm obl}\). If all these independent events occur, then the
histories from all sites of \(A\) die before time \(T-1\), and \(A\subset
V_{\rm Blue}\). Thus one may take $c_{\rm bl}:=p_{\rm obl}\min\{1,1-e^{-1}\}$. Therefore,
\begin{align*}
\mathbb P
\left(
    A\in{\rm SR}
    \,\middle|\,
    \mathcal H_A^-=X,
    \{A\subset V_{\rm Blue}\}\cup\{A\in{\rm SR}\}
\right)  
\leq
c_{\rm bl}^{-|A|}
        \mathbb P
        \left(
            E_A^*(h),\,
\mathcal{H}_{A}\cap  X=\emptyset
            \,\middle|\,
            \mathcal U_A
        \right)
\leq
c_{\rm bl}^{-|A|}
        \mathbb P
        \left(
            E_A^*(h)
            \,\middle|\,
            \mathcal U_A
        \right).
\end{align*}

It remains to bound $\mathbb P\left(E_A^*(h)\,\middle|\,\mathcal U_A\right)$.
Recall that \(\mathcal R_k\) is the event that trial \(k\) is reached, that
\(r_{A,k}\) is its backward depth on this event, and that
\(\tau_{A,k}=T-r_{A,k}\) is its absolute tip time, hence the remaining
lower-cone time.
Define
\[
        \mathcal M_A
        :=
        \sum_{k\ge1}
        \mathbf 1_{\mathcal R_k}
        e^{(1+\kappa)r_{A,k}}
        +
        e^{(1+\kappa) T}
        \mathbf 1_{E_{{\rm fail},A}^*(h)} .
\]
We first prove that, for every \(\lambda_0>0\), after decreasing
\(\beta_0(d,q)\) if necessary,
\begin{equation}\label{eq:MA-bound}
        \mathbb E\left[
            \mathcal M_A
            \,\middle|\,
            \mathcal U_A
        \right]
        \le
        C e^{-\lambda_0W(A)}e^{c\beta h}.
\end{equation}
On \(\mathcal R_1\), define the continuation functional from the first trial by
\[
\begin{aligned}
        \mathcal C_{A,1}
        &:=
        \sum_{j\ge0}
        \mathbf 1_{\mathcal R_{1+j}}
        e^{(1+\kappa)(r_{A,1+j}-r_{A,1})}
        +
        e^{(1+\kappa)(T-r_{A,1})}
        \mathbf 1_{E_{{\rm fail},A}^*(h)} .
\end{aligned}
\]
Then, pathwise on \(\mathcal R_1\), $\mathcal M_A
        =
        e^{(1+\kappa)r_{A,1}}\mathcal C_{A,1}$.
Let \(\mathcal G_{A,1}\) be the intrinsic pre-test sigma-field of the first
trial, enlarged by \(\sigma(\mathcal U_A)\). Since \(r_{A,1}\ge h\) on
\(\mathcal R_1\), and since
\(h=\ell_A(X)\ge1\) by the definition of \(\ell_A\), the top-unit event
\(\mathcal U_A\) is determined by the history exposed above the first trial.
Thus, on \(\mathcal R_1\), adding \(\mathcal U_A\) does not change the
pre-test state or the fresh lower-cone law.
Lemma~\ref{lem:one-site-continuation}, applied with
\(r=r_{A,1}\), gives
\[
        \mathbb E\left[
            \mathcal C_{A,1}
            \,\middle|\,
            \mathcal G_{A,1}
        \right]
        \le C_{\rm cont}.
\]
Since
\(e^{(1+\kappa)r_{A,1}}\mathbf 1_{\mathcal R_1}\) is
\(\mathcal G_{A,1}\)-measurable,
\[
\begin{aligned}
        \mathbb E\left[
            \mathcal M_A\mathbf 1_{\mathcal R_1}
            \,\middle|\,
            \mathcal U_A
        \right]
        &\le
        C_{\rm cont}
        \mathbb E\left[
            e^{(1+\kappa)r_{A,1}}
            \mathbf 1_{\mathcal R_1}
            \,\middle|\,
            \mathcal U_A
        \right].
\end{aligned}
\]
On \(\mathcal R_1^c\), no trial is reached. Hence
\[
        E_{{\rm fail},A}^*(h)\cap\mathcal R_1^c
        =
        \mathcal N_0,
\]
where \(\mathcal N_0\) is the no-admissible-trial Red event from
Lemma~\ref{lem:first-h-admissible}. Therefore
\[
\begin{aligned}
        \mathbb E\left[
            \mathcal M_A
            \,\middle|\,
            \mathcal U_A
        \right]
        &\le
        C
        \mathbb E\left[
            e^{(1+\kappa)r_{A,1}}
            \mathbf 1_{\mathcal R_1}
            +
            e^{(1+\kappa) T}\mathbf 1_{\mathcal N_0}
            \,\middle|\,
            \mathcal U_A
        \right] \le
        C e^{-\lambda_0W(A)}e^{c\beta h},
\end{aligned}
\]
where the last inequality is Lemma~\ref{lem:first-h-admissible}. This proves
\eqref{eq:MA-bound}.

We now convert the weighted estimate into a probability bound for
\(E_A^*(h)\). By definition,
\[
        E_{{\rm res},A}^*(h)
        =
        \bigcup_{k\ge1}
        \left\{
            \mathcal R_k,\,
            \mathsf{Bot}_{\tau_{A,k}}(v_{A,k}),\,
            J_{(v_{A,k},\tau_{A,k})}>p_{\tau_{A,k}}(v_{A,k})
        \right\}.
\]
Let \(\mathcal G_{A,k}\) be the \(k\)-th pre-test sigma-field enlarged
by \(\sigma(\mathcal U_A)\). On \(\mathcal R_k\), the lower cone below
\((v_{A,k},\tau_{A,k})\) is contained in \(\mathsf D_h(A)\), and
therefore is disjoint from the outside transcript \(X\). Since
\(r_{A,k}\ge h\ge1\), the event \(\mathcal U_A\) is already
determined by the pre-test history, and the lower cone is fresh conditionally
on \(\mathcal G_{A,k}\). Proposition \ref{prop:isolated-cone-residual}
applies when \(\tau_{A,k}>0\), while the case \(\tau_{A,k}=0\) is trivial after
increasing \(C\). Thus, in both cases, the following holds on \(\mathcal R_k\):
\[
\begin{aligned}
        \mathbb P
        \left(
            \mathsf{Bot}_{\tau_{A,k}}(v_{A,k}),
            J_{(v_{A,k},\tau_{A,k})}>p_{\tau_{A,k}}(v_{A,k})
            \,\middle|\,
            \mathcal G_{A,k}
        \right)
        &=
        M_{\tau_{A,k}}\bigl(1-p_{\tau_{A,k}}(v_{A,k})\bigr)
        \le
        C e^{-(1+\kappa)\tau_{A,k}}     
        =
        C e^{-(1+\kappa) T}e^{(1+\kappa)r_{A,k}} .
\end{aligned}
\]
The event \(\mathcal R_k\) is \(\mathcal G_{A,k}\)-measurable, so the
tower property and a union bound imply
\[
\begin{aligned}
        \mathbb P(E_{{\rm res},A}^*(h)\mid\mathcal U_A)
        &\le
        C e^{-(1+\kappa) T}
        \mathbb E
        \left[
            \sum_{k\ge1}
            \mathbf 1_{\mathcal R_k}
            e^{(1+\kappa)r_{A,k}}
            \,\middle|\,
            \mathcal U_A
        \right]
        \le
        C e^{-(1+\kappa) T}
        e^{-\lambda_0W(A)}e^{c\beta h},
\end{aligned}
\]
by \eqref{eq:MA-bound}.
For the terminal branch,
\[
\begin{aligned}
        \mathbb P(E_{{\rm fail},A}^*(h)\mid\mathcal U_A)
        &=
        e^{-(1+\kappa) T}
        \mathbb E
        \left[
            e^{(1+\kappa) T}
            \mathbf 1_{E_{{\rm fail},A}^*(h)}
            \,\middle|\,
            \mathcal U_A
        \right]  \le
        C e^{-(1+\kappa) T}
        e^{-\lambda_0W(A)}e^{c\beta h}.
\end{aligned}
\]
Since \(E_A^*(h)=E_{{\rm res},A}^*(h)\cup E_{{\rm fail},A}^*(h)\),
\[
        \mathbb P(E_A^*(h)\mid\mathcal U_A)
        \le
        C e^{-(1+\kappa)T}e^{-\lambda_0W(A)}e^{c\beta h}.
\]
Choose
\[
        \lambda_0:=\lambda_{\rm SR}+\log q+\log c_{\rm bl}^{-1}+1.
\]
Using \(W(A)\ge |A|\), the preceding display and the bound before this
paragraph give
\[
\begin{aligned}
    q^{|A|}
    \mathbb P
    \left(
        A\in{\rm SR}
        \,\middle|\,
        \mathcal H_A^-=X,\mathsf T_A
    \right)
    &\le
    C
    e^{-(1+\kappa)T}
    e^{-\lambda_{\rm SR} W(A)}
    e^{c\beta h}.
\end{aligned}
\]
Renaming the constants as \(C_{\rm sr}\) and \(c_{\rm sr}\) proves the claim.

\end{proof}

%==========================================

\subsection{Lower bound}
Set $t=t_\star-s$ with $s>0$. Recall the one-site magnetization
\[
    \mathfrak{m}_t^{(n)}(\sigma_0,c)
    = \mathbb P_{\sigma_0}(\sigma_t(o)=c)-\frac1q.
\]
In this section, we abbreviate it by $\mathfrak{m}_t$ and $\Lambda:=\Lambda_n$. Fix the monochromatic initial configuration $\mathfrak{c}\equiv c:=1$, and let $\sigma_t$ denote the Glauber dynamics with initial configuration $\mathfrak{c}$. Define the distinguishing statistic $Y$ and its stationary counterpart $Y'$,
\[
    f(\sigma) := \sum_{v\in\Lambda} \mathfrak{m}_t(\mathfrak{c},c)\bigl(\mathbf 1(\sigma(v)=1)-\mathbf 1(\sigma(v)=2)\bigr), \qquad Y := f(\sigma_t), \quad Y' := f(\pi),
\]
where $\pi$ is stationary distribution.

\paragraph{Mean.}
By the spatial and color symmetries among $2,\dots,q$ under the monochromatic start,
\[
    \mathbb P_{\mathfrak{c}}(\sigma_t(o)=2) = \frac{1-\mathbb P_{\mathfrak{c}}(\sigma_t(o)=1)}{q-1},
\]
and therefore
\[
    \mathbb E[Y] = |\Lambda|\,\mathfrak{m}_t(\mathfrak{c},c)\Bigl(\mathbb P_{\mathfrak{c}}(\sigma_t(o)=1)-\mathbb P_{\mathfrak{c}}(\sigma_t(o)=2)\Bigr) = \frac{q}{q-1}|\Lambda|\,\mathfrak{m}_t(\mathfrak{c},c)^2.
\]
Symmetrically, $\mathbb E[Y']=0$ under $\pi$.

\paragraph{Variance.}
For $v\in \Lambda$, denote the information percolation cluster containing $v$ by $\mathcal{C}_v$. Because it is driven by a subcritical branching process, $\mathbb{E}|\mathcal{C}_v|<K$ as established in Lemma 2.2 of \cite{Universality}. Moreover, mimicking the covariance decomposition from Claim 3.4 in \cite{Universality}, it follows that for any function $\psi:[q]\to\mathbb R$ with $\|\psi\|_\infty\le A$ and setting $\psi_t(u):=\psi(\sigma_t(u))$, we have the covariance bound
\begin{equation}\label{eq:cov-sum-bound}
    \sum_{u\in \Lambda}\mathrm{Cov}\big(\psi_t(u),\psi_t(v)\big) \le 2A^2K.
\end{equation}
Let $\phi(a)=\mathbf 1(a=1)-\mathbf 1(a=2)$, which is bounded by $|\phi|\le 1$, and $Y=\mathfrak{m}_t(\mathfrak{c},c)\sum_{v}\phi(\sigma_t(v))$. Applying the covariance sum estimate \eqref{eq:cov-sum-bound} gives
\[
    \mathrm{Var}(Y) = \mathfrak{m}_t(\mathfrak{c},c)^2\sum_{u,v\in\Lambda}\mathrm{Cov}\big(\phi(\sigma_t(u)),\phi(\sigma_t(v))\big) \le C|\Lambda|\,\mathfrak{m}_t(\mathfrak{c},c)^2
\]
for a constant $C<\infty$, and an identical bound holds for $\mathrm{Var}(Y')$.

By Corollary~\ref{cor:torus-magnetization-bounds}, for any $\sigma_0$, $\abs{\mathfrak{m}_t(\sigma_0,c)}\leq C_{\mathrm{m}}e^{-(1+\kappa)t}$, establishing $\mathfrak{m}_t(\pi,c)^2\leq C e^{-2(1+\kappa)t}$.
Evaluated at $t=t_\star-s$, we deduce $|\Lambda|\mathfrak{m}_t(\pi,c)^2\leq e^{2(1+\kappa)s}$. 
Furthermore, we have $\mathfrak{m}_t(\mathfrak{c},c) \ge C_{\mathrm{all}}\,e^{-(1+\kappa)t}$, ensuring the lower bound $\mathbb E[Y] \ge c e^{2(1+\kappa)s}$.
Therefore, Chebyshev's inequality gives
\[
    \mathbb P\Bigl(Y\le \tfrac12\mathbb E[Y]\Bigr) \le \frac{4\,\mathrm{Var}(Y)}{(\mathbb E[Y])^2} \le \frac{C'}{|\Lambda|\mathfrak{m}_t(\mathfrak{c},c)^2},
    \qquad
    \mathbb P\Bigl(Y'\ge \tfrac12\mathbb E[Y]\Bigr) \le \frac{4\mathrm{Var}(Y')}{(\mathbb E[Y])^2} \le \frac{C'\mathfrak{m}_t(\pi,c)^2}{|\Lambda|\mathfrak{m}_t(\mathfrak{c},c)^4},
\]
and choosing $s=s(\varepsilon)$ sufficiently large suppresses both tail probabilities below $\varepsilon/2$. Consequently,
\[
    \bigl\|\mathbb P_{\mathfrak{c}}(\sigma_{t_\star-s}\in\cdot)-\pi\bigr\|_{\mathrm{TV}} \ge \mathbb P\Bigl(Y\ge \tfrac12\mathbb E[Y]\Bigr) - \mathbb P\Bigl(Y'\ge \tfrac12\mathbb E[Y]\Bigr) \ge 1-\varepsilon.
\]
\qed

\subsection{Conclusion}
Combining the upper and lower bounds, we complete the proof of Theorem~\ref{thm:main}. Specifically,
for the Potts Glauber dynamics on the discrete torus \(\Lambda_n=(\mathbb Z/n\mathbb Z)^d\), there exists
\(\kappa=\kappa(d,q,\beta)\in(-1,0)\) such that, centered at
\[
t_\star=\frac{1}{2(1+\kappa)}\log|\Lambda|,
\]
the family exhibits cutoff with an \(O(1)\) window. Equivalently, for every fixed \(\varepsilon\in(0,1)\),
\[
t_{\mathrm{mix}}(\varepsilon)=t_\star+O(1).
\]

%%%%%%%%%%%%%%%%%%%%%%%%%%%%%%%%%%%%%%%%%%%%%%%%%%%%%%%%%%%%%%%%%%%%%%%%%%%%%%%%%%%%%%%%%%%%%%%%%%%%%%%%%%%%%%%%%%%%%%%%%%%%%%%%%%%%%%%%%%%%%%%%%%%%%%%%%%%%%%%%%%%%%%%%%%%%%%%%%%%%%%%%%%%%%%%%%%%%%%%%%%%%%%%%%%%%%%%%%%%%%%%%%%%%%%%%%%%%%%%%
\medskip

\section{Extremality of monochromatic initial states}\label{sec:extremality}

Throughout this section we use the notation and constants from Section~\ref{sec:sausage}.
In particular, \(\eta>1+\kappa\), and after decreasing \(\beta_0(d,q)\) if necessary we assume
\begin{equation}\label{eq:mono-smallness-final}
36\,c_{\rm br}\,C_{\rm ball}(d,\vartheta)\le c_{\rm str},
\end{equation}
where
\[
C_{\rm ball}(d,\zeta):=\sum_{j\ge1}\zeta^{j/2}(2j+1)^d,
\]
and
\[
c_{\rm str}:=\frac{q}{q-1}\frac{c_{\rm len}e^{-\eta}}{1-e^{-\eta}},
\qquad
c_{\rm br}:=\frac{2C_{\rm jnt}e^{-\eta}}{1-e^{-\eta}}.
\]

Here \(1+\kappa\) is the decay rate of the signed influence kernel, whereas \(\eta\) is the
terminal-sausage tail rate. The strict inequality between them allows the cutoff below to make
deep terminal histories negligible relative to the leading kernel contribution. The constant
\(c_{\rm str}\) measures the positive contribution of non-branching terminal sausages, while
\(c_{\rm br}\) controls the possible negative contribution of branching terminal sausages.
Since \(0<\vartheta<1\), this also implies
\[
c_{\rm br}C_{\rm ball}(d,\vartheta^2)\le \frac{c_{\rm str}}{2}.
\]

The two extremality statements use this comparison in different ways. For total occupation,
summing over observation sites reduces the signed kernel to its positive total mass and gives an
exact extremality statement. For a one-site observable, that spatial summation is unavailable, so
we first establish local positivity and domination estimates for the kernel.

\subsection{Total-occupation extremality}
We first prove Theorem~\ref{thm:mono-extremal}.
\begin{proof}[Proof of Theorem \ref{thm:mono-extremal}]
By color symmetry, it suffices to consider the color \(1\). We use bars for sites of the torus
\(\Lambda_n=(\mathbb Z/n\mathbb Z)^d\). For
\(\sigma_0\in[q]^{\Lambda_n}\), set

\[
M_t(\sigma_0):=
\mathbb E_{\sigma_0}\left[\sum_{\bar w\in\Lambda_n}
\mathbf 1\{\sigma_t(\bar w)=1\}\right].
\]
By translation invariance, we may take the changed site to be \(\bar o\). It is enough to prove the
following single-site replacement inequality. Suppose that
\(\sigma_0,\sigma_0'\in[q]^{\Lambda_n}\) differ only at \(\bar o\), with
\[
\sigma_0(\bar o)=1,\qquad \sigma_0'(\bar o)\neq 1.
\]
Then
\begin{equation}\label{eq:single-site-total-occ}
M_t(\sigma_0)\ge M_t(\sigma_0')
\qquad\text{for every }t\ge0.
\end{equation}
Indeed, starting from any initial configuration, one can change all non-\(1\) spins into \(1\), one
site at a time. Repeated application of \eqref{eq:single-site-total-occ} then gives
\[
M_t(\mathfrak c)\ge M_t(\sigma_0),
\]
where \(\mathfrak c\equiv 1\).

To apply the infinite-volume sausage expansion to a torus initial condition, periodize the
influence kernel by setting
\[
\overline{\IK}(\bar z,s):=\sum_{m\in\mathbb Z^d}\IK(z+nm,s),
\qquad \bar z\in\Lambda_n,
\]
where \(z\) is any lift of \(\bar z\). This is well-defined by Lemma~\ref{lem:IK-l1}.
Projecting the magnetization expansion \eqref{eq:mag-sausage} to the torus gives, for every
\(\bar w\in\Lambda_n\),
\[
\mathbb P_{\sigma_0}(\sigma_t(\bar w)=1)-\frac1q
=
\frac{q-1}{q}
\sum_{\tau=0}^{n_t}\sum_{x_0\in\Xi}
\mu(x_0)\mathbf 1\{\ell(x_0)>\theta_t+\tau\}
\sum_{\bar z\in\Lambda_n}
\overline{\IK}(\bar z-\bar w,n_t-\tau)\,
g_{x_0,\sigma_0,1}(\bar z,\theta_t+\tau).
\]
Here the function \(g\) is evaluated using the periodic lift of the torus initial condition.

Subtracting the analogous identity for \(\sigma_0'\) and summing over \(\bar w\in\Lambda_n\), we get
\begin{equation}\label{eq:M-diff-gamma}
M_t(\sigma_0)-M_t(\sigma_0')
=
\frac{q-1}{q}
\sum_{\tau=0}^{n_t}\mathcal I_{n_t-\tau}\,\Gamma_t(\tau),
\end{equation}
where
\[
\mathcal I_s
:=
\sum_{\bar u\in\Lambda_n}\overline{\IK}(\bar u,s)
=
\sum_{u\in\mathbb Z^d}\IK(u,s),
\]
and
\[
\Gamma_t(\tau)
:=
\sum_{x_0\in\Xi}\mu(x_0)\mathbf 1\{\ell(x_0)>\tau+\theta_t\}
\sum_{\bar z\in\Lambda_n}
\Delta g_{x_0}(\bar z,\tau+\theta_t),
\quad
\Delta g_{x_0}(\bar z,r)
:=
g_{x_0,\sigma_0,1}(\bar z,r)-g_{x_0,\sigma_0',1}(\bar z,r).
\]

We first note that
\[
\mathcal I_s>0
\qquad\text{for every }s\ge0.
\]
Indeed, \(\mathcal I_0=1\), while for \(s\ge1\), summing the tilted representation
\eqref{eq:IK-tilt} over \(u\in\mathbb Z^d\) and using
\(\sum_u K(u;r,s)=1\) gives
\[
\mathcal I_s
=
e^{-(1+\kappa)s}
\sum_{r\ge1}\widetilde{\mathbb P}_\kappa(S_r=s)>0.
\]
Thus, by \eqref{eq:M-diff-gamma}, it remains to show that
\begin{equation}\label{eq:Gamma-nonnegative}
\Gamma_t(\tau)\ge0
\qquad\text{for every }0\le \tau\le n_t.
\end{equation}

Fix \(0\le \tau\le n_t\) and write \(s_\tau:=\theta_t+\tau\). Split
\[
\Gamma_t(\tau)=\Gamma_t^{(0)}(\tau)+\Gamma_t^{(\ge1)}(\tau),
\]
according to whether the truncated terminal sausage has no branch-out,
\(\tilde b_{s_\tau}(x_0)=0\), or at least one branch-out.

If \(\tilde b_{s_\tau}(x_0)=0\), then the truncated terminal sausage is vertical. Hence
\(\Delta g_{x_0}(\bar z,s_\tau)\) is supported only at the site \(\bar o\), and at that site
\[
\Delta g_{x_0}(\bar o,s_\tau)
=
1-\left(-\frac1{q-1}\right)
=
\frac q{q-1}.
\]
Therefore, by Lemma~\ref{lem:terminal-sausage-tails},
\[
\Gamma_t^{(0)}(\tau)
=
\frac q{q-1}
\mu\bigl(\ell(x_0)>s_\tau,\ \tilde b_{s_\tau}(x_0)=0\bigr)
\ge
c_{\rm str}e^{-\eta\tau}.
\]

We now control the branching contribution. If
\(\Delta g_{x_0}(\bar z,s_\tau)\neq0\), then the truncated terminal sausage must reach the unique
site \(\bar o\) where the two initial configurations differ. Since each branch-out changes the
spatial position of a lineage by at most one in \(\ell^\infty\)-distance, where $\rho_\infty(\cdot,\cdot)$ denotes the torus distance,
\[
\rho_\infty(\bar z,\bar o)\le \tilde b_{s_\tau}(x_0).
\]
Hence the number of possible \(\bar z\)'s is at most
\((2\tilde b_{s_\tau}(x_0)+1)^d\). Using \(|\Delta g_{x_0}|\le2\), we obtain
\[
\sum_{\bar z\in\Lambda_n}\Delta g_{x_0}(\bar z,s_\tau)
\ge
-2(2\tilde b_{s_\tau}(x_0)+1)^d.
\]
Thus
\[
\Gamma_t^{(\ge1)}(\tau)
\ge
-2\sum_{j\ge1}(2j+1)^d
\mu\bigl(\ell(x_0)>s_\tau,\ \tilde b_{s_\tau}(x_0)\ge j\bigr).
\]
By Lemma~\ref{lem:terminal-sausage-tails},
\[
\mu\bigl(\ell(x_0)>\tau+\theta_t,\ \tilde b_{s_\tau}(x_0)\ge j\bigr)
\le
C_{\mathrm{jnt}}\vartheta^j\sum_{\ell\ge \tau+1}e^{-\eta\ell}
=
\frac{C_{\mathrm{jnt}}e^{-\eta}}{1-e^{-\eta}}\,\vartheta^j e^{-\eta\tau}.
\]
Consequently,
\[
\Gamma_t^{(\ge1)}(\tau)
\ge
-c_{\rm br} C_{\rm ball}(d,\vartheta^2)e^{-\eta\tau}
\ge
-\frac{c_{\rm str}}2 e^{-\eta\tau}.
\]
Combining the last two estimates gives
\[
\Gamma_t(\tau)
\ge
\frac{c_{\rm str}}2e^{-\eta\tau}\ge0.
\]
Thus the non-branching contribution dominates the entire branching error at every depth \(\tau\).
This proves \eqref{eq:Gamma-nonnegative}, and therefore \eqref{eq:single-site-total-occ}.
The theorem follows by iterating over the sites at which the initial configuration differs from
\(\mathfrak c\equiv1\).

\end{proof}

\subsection{A local kernel input}

The total-occupation argument above worked by summing \(\IK(\cdot,\tau)\) over all sites, which
collapses the signed kernel to its total mass and reduces positivity to the scalar comparison
\eqref{eq:mono-smallness-final}. For the one-site statement of Theorem~\ref{thm:worstini}, that
summation is unavailable: we instead need \(\IK(u,s)\) itself to be positive, and in fact to
dominate its own spatial tail, at the single site \(u\) of interest. We obtain this by replacing
total-mass positivity with a \emph{local}, pointwise estimate coming from the local-limit
expansion for the signed convolution powers underlying \(\IK\): the following three consequences
show that on the diffusive (moderate-deviation) scale, \(\IK(u,s)\) is comparable to a Gaussian
density and dominates the mass in a geometric neighborhood of \(u\).

\begin{prop}[Local estimates for the signed influence kernel]\label{prop:local-kernel-input}
Fix \(A>0\) and \(\zeta\in(0,1)\). The following estimates hold.

\begin{enumerate}
\item[\rm(a)] There exist \(c_{\rm md}(A)>0\), \(B_{\rm md}(A)<\infty\), and
\(t_{\rm md}(A)<\infty\) such that, for all \(s\ge t_{\rm md}(A)\) and all
\(u\in\mathbb Z^d\) satisfying
\[
|u|\le A\sqrt{s\log\log s},
\]
one has
\[
\IK(u,s)
\ge
c_{\rm md}(A)s^{-d/2}(\log s)^{-B_{\rm md}(A)}
e^{-(1+\kappa)s}.
\]

\item[\rm(b)] There exists \(t_{\rm dom}(A,\zeta)<\infty\) such that, for all
\(s\ge t_{\rm dom}(A,\zeta)\) and all
\[
|u|\le A\sqrt{s\log\log s},
\]
one has
\[
\IK(u,s)
\ge
\frac{1}{9C_{\rm ball}(d,\zeta)}
\sum_{b\ge1}\zeta^b
\sum_{z\in B_\infty(u,b)}|\IK(z,s)|.
\]
In particular, \(\IK(u,s)>0\) on this scale.

\item[\rm(c)] There exist \(C_{\rm out}(A,\zeta)<\infty\), \(\Delta(A)>0\), and
\(t_{\rm out}(A,\zeta)<\infty\) such that, for all \(s\ge t_{\rm out}(A,\zeta)\),
\[
\sum_{|u|>A\sqrt{s\log\log s}}
\sum_{b\ge0}\zeta^b
\sum_{z\in B_\infty(u,b)}|\IK(z,s)|
\le
C_{\rm out}(A,\zeta)e^{-(1+\kappa)s}(\log s)^{-\Delta(A)}.
\]
\end{enumerate}
\end{prop}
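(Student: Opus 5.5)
All three parts will be read off the tilted renewal representation \eqref{eq:IK-tilt},
\[
\IK(u,s)=e^{-(1+\kappa)s}\sum_{r\ge1}\tilde{\mathbb P}_\kappa(S_r=s)\,K(u;r,s),
\qquad
K(u;r,s)=\tilde{\mathbb E}\bigl[\Phi_{\vs}(u)\,\big|\,S_r=s\bigr],
\]
combined with the local expansion of Proposition~\ref{prop:Fourier-six-exp}. First split the $r$-sum at the window $\mathrm{SP}_s$ of Lemma~\ref{lem:LDP}. Off the window, $\|K(\cdot;r,s)\|_1\le C_{\ell^1}$ (Lemma~\ref{lem:Phi-l1}), and the window tail is at most $e^{-c_{\rm ld}s}$. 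This gives an error
\[
e^{-(1+\kappa)s}e^{-c_{\rm ld}s}
\]
in $\ell^1$, which is negligible against every bound below; in particular it is exponentially small compared with $s^{-d/2}(\log s)^{-B}$.

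On the window, $r\asymp s$. For each admissible profile, apply \eqref{eq:main-expansion} and write
\[
\Phi_{\vs}=H_{\vs}+rQ_{4,\vs}(i\nabla)H_{\vs}+\mathrm{Err}_{\vs}.
\]
The pieces are controlled as follows:
\begin{itemize}
\item By Lemma~\ref{lem:derivative-bound}, the quartic term is bounded by $\tfrac{C}{r}\bigl(1+|u|^4/r^2\bigr)H_{\vs}(u)$. On $|u|\le A\sqrt{s\log\log s}$ this is at most $C(\log\log s)^2 s^{-1}H_{\vs}(u)$, so it is a vanishing relative correction.
\item The error is at most $C_{\rm F}r^{-2-d/2}$ in $\ell^\infty$.
\item Since $\sigma_{\vs}\le\sigma_+$,
\[
H_{\vs}(u)\ge c\,s^{-d/2}\exp\Bigl(-\frac{A^2 s\log\log s}{2r\sigma_-}\Bigr)\ge c\,s^{-d/2}(\log s)^{-B_{\rm md}(A)},
\]
with $B_{\rm md}=A^2\bar\ell_\kappa'/\sigma_-$ (up to the window factor). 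This dominates $r^{-2-d/2}$ by a factor $s^{2}(\log s)^{-B}\to\infty$.
\end{itemize}
Averaging over $\vs$ given $S_r=s$ keeps positivity, so $K(u;r,s)\ge c\,s^{-d/2}(\log s)^{-B}$ on the window. Summing with the renewal mass \eqref{eq:renewal-window} then gives (a).

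For (b), bound the right-hand side using the same expansion.
\begin{itemize}
\item For $z\in B_\infty(u,b)$ with $b\le\sqrt s$, the Gaussian ratio $H_{\vs}(z)/H_{\vs}(u)$ is at most
\[
\exp\bigl(C b|u|/s+Cb^2/s\bigr)\le \exp\bigl(Cb\sqrt{\log\log s/s}\,\bigr)\le 1+o(1)
\]
uniformly for $b\le \log s$. The quartic and error terms are again relatively $o(1)$. Hence $|\IK(z,s)|\le (1+o(1))\IK(u,s)$ for these $z$.
\item For $b>\log s$, bound $|\IK(z,s)|$ crudely by the $\ell^\infty$ bound $Cs^{-d/2}e^{-(1+\kappa)s}$. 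The weight $\zeta^b(2b+1)^d$ summed over $b>\log s$ is polynomially small, $s^{-\log(1/\zeta)+o(1)}$. I need this to beat $(\log s)^{-B}$; it does, though only by a polynomial versus logarithmic comparison.
\end{itemize}
Thus
\[
\sum_b\zeta^b\sum_{z\in B_\infty(u,b)}|\IK(z,s)|\le (1+o(1))\,C_{\rm ball}(d,\zeta)\,\IK(u,s)+o\bigl(\IK(u,s)\bigr).
\]
Here I need $C_{\rm ball}$ to absorb the $(2b+1)^d$ ball counts with $\zeta^b$ rather than $\zeta^{b/2}$. This mismatch is harmless: $\sum_b\zeta^b(2b+1)^d\le C_{\rm ball}(d,\zeta)$ since $\zeta^b\le\zeta^{b/2}$. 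The factor $9$ gives ample slack, which proves (b).

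For (c), sum the same expansion outside the ball $|u|>A\sqrt{s\log\log s}$.
\begin{itemize}
\item The Gaussian tail mass is at most $\exp\bigl(-cA^2\log\log s\bigr)=(\log s)^{-cA^2}$.
\item The quartic term's tail is controlled by the same Gaussian tail times a fourth moment, which is bounded.
\item Each point $z$ is counted in at most $(2b+1)^d$ balls, so the $\zeta^b$-weighted double sum is at most $C_{\rm ball}$ times the tail of $|\IK|$ over $|z|>A\sqrt{s\log\log s}-b$. The shift by $b$ is handled by splitting at $b=\sqrt s$, with the $\zeta^b$ decay covering $b>\sqrt s$.
\end{itemize}

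The main obstacle is the error term in (c). Proposition~\ref{prop:Fourier-six-exp} only gives $\|\mathrm{Err}_{\vs}\|_1\le C_{\rm F}r^{-2}$, which is not localized. This suffices, because $r^{-2}\le (\log s)^{-\Delta}$ trivially, and it yields $\Delta(A)=\min(cA^2,1)$. A second concern is the off-window $r$ in (a) and (b), where $K$ may be signed. There I rely on the exponential smallness of the large-deviation tail in Lemma~\ref{lem:LDP} rather than on any pointwise positivity.
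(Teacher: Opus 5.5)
Your proposal is correct and follows essentially the paper's route: split the tilted renewal sum at the window $\mathrm{SP}_s$ of Lemma~\ref{lem:LDP}, apply Proposition~\ref{prop:Fourier-six-exp} profile by profile on the window (where $r\asymp s$), average conditionally on $S_r=s$, and discard the off-window mass as exponentially small. The one minor variation is in (b). The paper avoids splitting in $b$ by using the uniform ratio bound $H_{\vs}(z)\le\zeta^{-k/2}H_{\vs}(u)$ for $z\in B_\infty(u,k)$, which is why $C_{\rm ball}$ carries $\zeta^{j/2}$. Your split at $b=\log s$, with the crude bound $|\IK(z,s)|\le Cs^{-d/2}e^{-(1+\kappa)s}$ and $\zeta^{\log s}=s^{-\log(1/\zeta)}$ on the far shells, works equally well.
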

Part~(b) bounds the weighted branching error by the kernel value at \(u\), which is the comparison
needed at each shallow depth. Part~(a) gives the quantitative lower bound needed to dominate the
deep tail, and part~(c) controls the influence of spins outside the improvement ball in the final
proof. The proof is deferred to Appendix~\ref{sec:appendix}.

We choose the cutoff depth so that the deep terminal-sausage tail decays strictly faster than the
kernel signal. Set
\begin{equation}\label{eq:ccut-def}
c_{\rm cut}
:=
\frac12\left(1+\frac{1+\kappa}{\eta}\right).
\end{equation}
Since \(\eta>1+\kappa\), we have \(c_{\rm cut}\in(0,1)\) and
\[
\eta c_{\rm cut}>1+\kappa.
\]

\begin{lemma}[Local improvement]\label{lem:local-improvement}
There exists \(t_{\rm loc}<\infty\) such that the following holds for every \(t\ge t_{\rm loc}\).
Let
\[
n_c:=\lfloor c_{\rm cut}n_t\rfloor,
\qquad
R_t:=\sqrt{(n_t-n_c)\log\log(n_t-n_c)},
\qquad
\mathfrak B_t:=\{u\in\mathbb Z^d:\|u\|_\infty\le R_t\}.
\]
If two initial configurations \(\sigma_0,\sigma_0'\in[q]^{\mathbb Z^d}\) differ only at a site
\(u\in\mathfrak B_t\), with
\[
\sigma_0(u)=c,
\qquad
\sigma_0'(u)\neq c,
\]
then
\[
\mathfrak m_t(\sigma_0,c)\ge \mathfrak m_t(\sigma_0',c).
\]
\end{lemma}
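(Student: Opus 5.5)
We follow the proof of Theorem~\ref{thm:mono-extremal}, but work with the one-site expansion \eqref{eq:mag-sausage} instead of total occupation. Subtracting the identities for \(\sigma_0\) and \(\sigma_0'\), we get
\[
\mathfrak m_t(\sigma_0,c)-\mathfrak m_t(\sigma_0',c)
=\frac{q-1}{q}\sum_{\tau=0}^{n_t}\sum_{x_0\in\Xi}\mu(x_0)\mathbf 1\{\ell(x_0)>s_\tau\}\sum_{z\in\mathbb Z^d}\IK(z,n_t-\tau)\,\Delta g_{x_0}(z,s_\tau),
\]
with \(s_\tau=\theta_t+\tau\). Here \(\Delta g_{x_0}(z,s_\tau)\) vanishes unless the truncated terminal sausage rooted at \(z\) reaches \(u\). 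We split the range of \(\tau\) into \emph{shallow depths} \(\tau\le n_c\), where the kernel depth is \(s:=n_t-\tau\ge n_t-n_c\), and \emph{deep depths} \(\tau>n_c\).

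\textbf{Shallow depths.} Fix \(\tau\le n_c\) and split according to \(\tilde b_{s_\tau}(x_0)\).
\begin{enumerate}
\item[(i)] If \(\tilde b_{s_\tau}(x_0)=0\), then \(\Delta g\) is supported at \(z=u\) with value \(q/(q-1)\). By Lemma~\ref{lem:terminal-sausage-tails} this contributes at least \(c_{\rm str}e^{-\eta\tau}\IK(u,s)\).
\item[(ii)] If \(\tilde b_{s_\tau}(x_0)\ge j\ge1\), then the relevant \(z\) lie in \(B_\infty(u,j)\) and \(|\Delta g|\le2\). By Lemma~\ref{lem:terminal-sausage-tails} the negative part is bounded by
\[
c_{\rm br}e^{-\eta\tau}\sum_{j\ge1}\vartheta^j\sum_{z\in B_\infty(u,j)}|\IK(z,s)|.
\]
\end{enumerate}
Since \(u\in\mathfrak B_t\) gives \(\|u\|_\infty\le R_t\le\sqrt{s\log\log s}\), after adjusting \(A\) to pass between \(\ell^\infty\) and Euclidean norms (say \(A=\sqrt d\)), Proposition~\ref{prop:local-kernel-input}(b) applies with \(\zeta=\vartheta\). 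It bounds this weighted sum by \(9C_{\rm ball}(d,\vartheta)\IK(u,s)\). Since \(\zeta^{b}\) is used here rather than \(\zeta^{b/2}\), the bound remains valid with the constant \(C_{\rm ball}(d,\vartheta)\) appearing in \eqref{eq:mono-smallness-final}. Hence the branching error is at most \(9c_{\rm br}C_{\rm ball}(d,\vartheta)\IK(u,s)e^{-\eta\tau}\le\frac14 c_{\rm str}\IK(u,s)e^{-\eta\tau}\) by \eqref{eq:mono-smallness-final}. Each shallow \(\tau\) therefore contributes at least \(\frac34c_{\rm str}e^{-\eta\tau}\IK(u,s)\ge0\). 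Keeping only \(\tau=0\) and applying Proposition~\ref{prop:local-kernel-input}(a) with \(s=n_t\) gives a total shallow contribution of at least
\[
c\,n_t^{-d/2}(\log n_t)^{-B}e^{-(1+\kappa)n_t}.
\]
For this we take \(t_{\rm loc}\) large enough that \(n_t-n_c\ge\max(t_{\rm md},t_{\rm dom})\).

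\textbf{Deep depths.} For \(\tau>n_c\), use \(|\Delta g|\le 2\), Proposition~\ref{prop:sausagetail} for \(\mu(\ell(x_0)>s_\tau)\le c_{\rm tail}e^{-\eta\tau}\), and Lemma~\ref{lem:IK-l1} for \(\sum_z|\IK(z,n_t-\tau)|\le C_{\IK,1}e^{-(1+\kappa)(n_t-\tau)}\). The deep sum is then at most
\[
Ce^{-(1+\kappa)n_t}\sum_{\tau>n_c}e^{-(\eta-1-\kappa)\tau}\le Ce^{-(1+\kappa)n_t}e^{-(\eta-1-\kappa)c_{\rm cut}n_t}.
\]
This bound is crude, since it ignores the restriction to \(z\) near \(u\). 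It is exponentially smaller than the polynomially degraded shallow lower bound, because \(\eta-1-\kappa>0\). The choice \eqref{eq:ccut-def} is used only to guarantee \(n_c\to\infty\) linearly. Increasing \(t_{\rm loc}\) makes the deep error smaller than the shallow lower bound, and the difference is nonnegative.

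\textbf{Main obstacle.} The delicate step is the shallow comparison. It needs \(\IK(u,n_t-\tau)\) to be positive and to dominate its \(\vartheta\)-weighted neighborhood uniformly over all \(\tau\le n_c\) and all \(u\in\mathfrak B_t\). This is exactly why \(R_t\) is defined through \(n_t-n_c\), the smallest kernel depth used: it places every such \(u\) inside the moderate-deviation window of Proposition~\ref{prop:local-kernel-input}(b) for every \(s\in[n_t-n_c,n_t]\), since \(s\mapsto\sqrt{s\log\log s}\) is increasing. I would check carefully the matching of the constants \(9C_{\rm ball}\) against \eqref{eq:mono-smallness-final}, and the norm conventions in \(A\).
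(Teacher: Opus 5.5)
Your proposal is correct and follows essentially the same route as the paper. You split the expansion at $\tau=n_c$, use Proposition~\ref{prop:local-kernel-input}(b) with $\zeta=\vartheta$ together with \eqref{eq:mono-smallness-final} to make every shallow term nonnegative, keep only $\tau=0$ and invoke part~(a), and show that the deep part is negligible.

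Your deviations are harmless improvements. Keeping the factor $e^{-(1+\kappa)(n_t-\tau)}$ in the deep bound means any fixed $c_{\rm cut}\in(0,1)$ would suffice, whereas the paper's cruder bound $Ce^{-\eta n_c}$ needs $\eta c_{\rm cut}>1+\kappa$. Taking $A=\sqrt d$ correctly accounts for $\mathfrak B_t$ being an $\ell^\infty$-ball; the paper writes $A=1$.
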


\begin{proof}
Write
\[
D_t(u):=\mathfrak m_t(\sigma_0,c)-\mathfrak m_t(\sigma_0',c).
\]
Using the magnetization expansion \eqref{eq:mag-sausage}, split
\[
D_t(u)=D_t^{\le n_c}(u)+D_t^{>n_c}(u)
\]
according to whether \(\tau\le n_c\) or \(\tau>n_c\).
We call the latter the deep part. In the shallow part, the remaining convolution time is still of order \(n_t\), so the local
kernel estimates apply.

First consider the deep part. Since \(|g-g'|\le2\), the terminal-sausage tail estimate and the
\(\ell^1\)-bound on \(\IK\) give
\begin{equation}\label{eq:deep-bound}
|D_t^{>n_c}(u)|
\le
C\sum_{\tau=n_c+1}^{n_t}e^{-\eta\tau}
\le
C e^{-\eta n_c}.
\end{equation}

Now fix \(0\le\tau\le n_c\) and set
\[
T:=n_t-\tau.
\]
Since \(T\ge n_t-n_c\) and \(u\in\mathfrak B_t\),
\[
|u|\le R_t\le \sqrt{T\log\log T}
\]
for all sufficiently large \(t\). Applying Proposition~\ref{prop:local-kernel-input}(b) with
\(A=1\) and \(\zeta=\vartheta\), we get
\begin{equation}\label{eq:local-domination}
\IK(u,T)
\ge
\frac{1}{9C_{\rm ball}(d,\vartheta)}
\sum_{b\ge1}\vartheta^b
\sum_{z\in B_\infty(u,b)}|\IK(z,T)|.
\end{equation}

We estimate the contribution at the fixed depth \(\tau\). If
\(\tilde b_{\theta_t+\tau}(x_0)=0\), then the truncated terminal sausage is vertical. Hence the
difference of terminal biases is supported at \(u\), and there it equals \(q/(q-1)\). Therefore the
non-branching contribution at depth \(\tau\) is bounded below by
\[
\frac{q-1}{q}\,
c_{\rm str}e^{-\eta\tau}\IK(u,T).
\]

If \(\tilde b_{\theta_t+\tau}(x_0)\ge1\), then a nonzero contribution at a site \(z\) requires the
truncated terminal sausage to reach \(u\). Hence
\[
z\in B_\infty\bigl(u,\tilde b_{\theta_t+\tau}(x_0)\bigr).
\]
Using \(|g-g'|\le2\) and the terminal-sausage tail estimate, the absolute value of the branching
contribution at depth \(\tau\) is at most
\[
\frac{q-1}{q}\,
c_{\rm br}e^{-\eta\tau}
\sum_{b\ge1}\vartheta^b
\sum_{z\in B_\infty(u,b)}|\IK(z,T)|.
\]
By \eqref{eq:local-domination}, this is at most
\[
\frac{q-1}{q}\,
9c_{\rm br}C_{\rm ball}(d,\vartheta)e^{-\eta\tau}\IK(u,T).
\]
Using the smallness assumption \eqref{eq:mono-smallness-final}, we conclude that the full
contribution at depth \(\tau\) is bounded below by
\[
\frac{q-1}{q}\,
\frac{c_{\rm str}}2 e^{-\eta\tau}\IK(u,T).
\]
Proposition~\ref{prop:local-kernel-input}(b) also gives \(\IK(u,T)>0\), so each of these lower
bounds is nonnegative. Summing over \(0\le\tau\le n_c\) and keeping only the term \(\tau=0\), we
obtain
\begin{equation}\label{eq:shallow-lower-clean}
D_t^{\le n_c}(u)
\ge
\frac{q-1}{q}\,
\frac{c_{\rm str}}2\IK(u,n_t).
\end{equation}

Pointwise positivity alone does not yet dominate the potentially negative deep part. The
quantitative estimate in Proposition~\ref{prop:local-kernel-input}(a), again with \(A=1\), gives
\[
\IK(u,n_t)
\ge
c\,n_t^{-d/2}(\log n_t)^{-B}e^{-(1+\kappa)n_t}
\]
uniformly over \(u\in\mathfrak B_t\). Since \(\eta c_{\rm cut}>1+\kappa\), the deep bound
\eqref{eq:deep-bound} is
\[
o\!\left(
n_t^{-d/2}(\log n_t)^{-B}e^{-(1+\kappa)n_t}
\right).
\]
Combining this with \eqref{eq:shallow-lower-clean} gives \(D_t(u)>0\) for all sufficiently large
\(t\). This proves the lemma.
\end{proof}

The local-improvement lemma allows us to force a near-maximizing configuration to agree with the
monochromatic configuration throughout a growing ball. The exterior estimate in
Proposition~\ref{prop:local-kernel-input}(c) then shows that the remaining disagreement has
negligible influence at the origin.

\subsection{Asymptotic one-site extremality}
We now prove Theorem~\ref{thm:worstini}.

\begin{proof}[Proof of Theorem~\ref{thm:worstini}]
Fix \(c\in[q]\), and let \(\mathfrak c\equiv c\) be the monochromatic configuration on
\(\mathbb Z^d\). Put
\[
S_t:=\sup_{\sigma_0\in[q]^{\mathbb Z^d}}\mathfrak m_t(\sigma_0,c).
\]

To avoid needing an attainment argument, fix \(\varepsilon>0\) and choose an initial condition
\(\sigma^{(\varepsilon)}\) such that
\[
\mathfrak m_t(\sigma^{(\varepsilon)},c)\ge S_t-\varepsilon.
\]
Define \(n_c,R_t,\mathfrak B_t\) as in Lemma~\ref{lem:local-improvement}. By applying
Lemma~\ref{lem:local-improvement} one site at a time inside the finite set \(\mathfrak B_t\), we may
modify \(\sigma^{(\varepsilon)}\) into a configuration \(\widetilde\sigma^{(\varepsilon)}\) satisfying
\[
\widetilde\sigma^{(\varepsilon)}(u)=c
\qquad\text{for every }u\in\mathfrak B_t,
\]
while not decreasing the one-site bias:
\[
\mathfrak m_t(\widetilde\sigma^{(\varepsilon)},c)
\ge
S_t-\varepsilon.
\]

We now compare \(\widetilde\sigma^{(\varepsilon)}\) with the monochromatic configuration
\(\mathfrak c\). Set
\[
A_0:=\frac12\sqrt{1-c_{\rm cut}}\in(0,1).
\]
For all sufficiently large \(t\), uniformly in \(0\le\tau\le n_c\), with \(T:=n_t-\tau\), one has
\begin{equation}\label{eq:Rt-vs-T-clean}
R_t\ge A_0\sqrt{T\log\log T}.
\end{equation}
Indeed,
\[
\frac{R_t^2}{T\log\log T}
=
\frac{n_t-n_c}{T}\cdot
\frac{\log\log(n_t-n_c)}{\log\log T}
\ge
1-c_{\rm cut}+o(1).
\]

Since \(\widetilde\sigma^{(\varepsilon)}\) and \(\mathfrak c\) agree on \(\mathfrak B_t\), the only
possible discrepancy in the magnetization expansion comes either from depths \(\tau>n_c\), or from
terminal boundary sites outside \(\mathfrak B_t\).

By the same argument as in \eqref{eq:deep-bound}, the deep part satisfies
\begin{equation}\label{eq:deep-bound-worstini}
|\mathrm{Deep}|
\le
C e^{-\eta n_c}.
\end{equation}

Consider the shallow part \(0\le\tau\le n_c\), and set \(T=n_t-\tau\). If the truncated terminal
sausage has no branch-out, then the support of
\[
g_{x_0,\widetilde\sigma^{(\varepsilon)},c}(\cdot,\theta_t+\tau)
-
g_{x_0,\mathfrak c,c}(\cdot,\theta_t+\tau)
\]
is contained in \(\mathbb Z^d\setminus\mathfrak B_t\), and the absolute value of the difference is
at most \(2\). This contribution is therefore bounded by
\[
C e^{-\eta\tau}
\sum_{u\notin\mathfrak B_t}|\IK(u,T)|.
\]

If the truncated terminal sausage has at least one branch-out, then a nonzero contribution at a
site \(z\) requires that the truncated terminal sausage reaches some
\(u\notin\mathfrak B_t\). If the number of branch-outs is \(b\), then
\(z\in B_\infty(u,b)\). Hence the branching contribution is bounded by
\[
C e^{-\eta\tau}
\sum_{u\notin\mathfrak B_t}
\sum_{b\ge1}\vartheta^b
\sum_{z\in B_\infty(u,b)}|\IK(z,T)|.
\]
Combining the branching and non-branching parts, and allowing the \(b=0\) term, gives
\[
\mathfrak m_t(\widetilde\sigma^{(\varepsilon)},c)
-
\mathfrak m_t(\mathfrak c,c)
\le
C\sum_{\tau=0}^{n_c}e^{-\eta\tau}
\sum_{u\notin\mathfrak B_t}
\sum_{b\ge0}\vartheta^b
\sum_{z\in B_\infty(u,b)}|\IK(z,n_t-\tau)|
+
C e^{-\eta n_c}.
\]
By \eqref{eq:Rt-vs-T-clean}, Proposition~\ref{prop:local-kernel-input}(c) applies with
\(A=A_0\) and \(\zeta=\vartheta\). Therefore, for some \(\Delta>0\),
\[
\sum_{u\notin\mathfrak B_t}
\sum_{b\ge0}\vartheta^b
\sum_{z\in B_\infty(u,b)}|\IK(z,n_t-\tau)|
\le
C e^{-(1+\kappa)(n_t-\tau)}(\log n_t)^{-\Delta},
\]
uniformly in \(0\le\tau\le n_c\), where we used
\(T=n_t-\tau\asymp n_t\), and hence \((\log T)^{-\Delta}=O((\log n_t)^{-\Delta})\).
Since \(\eta>1+\kappa\), this yields
\[
\sum_{\tau=0}^{n_c}e^{-\eta\tau}
e^{-(1+\kappa)(n_t-\tau)}
\le
C e^{-(1+\kappa)n_t}.
\]
Moreover, because \(\eta c_{\rm cut}>1+\kappa\), the deep term
\(e^{-\eta n_c}\) is exponentially smaller than \(e^{-(1+\kappa)n_t}\). Hence
\begin{equation}\label{eq:almost-mono-bound}
\mathfrak m_t(\widetilde\sigma^{(\varepsilon)},c)
-
\mathfrak m_t(\mathfrak c,c)
\le
C e^{-(1+\kappa)n_t}(\log n_t)^{-\Delta}.
\end{equation}

Since
\[
\mathfrak m_t(\widetilde\sigma^{(\varepsilon)},c)\ge S_t-\varepsilon,
\]
we get from \eqref{eq:almost-mono-bound}
\[
S_t-\varepsilon
\le
\mathfrak m_t(\mathfrak c,c)
+
C e^{-(1+\kappa)n_t}(\log n_t)^{-\Delta}.
\]
Letting \(\varepsilon\downarrow0\),
\begin{equation}\label{eq:St-upper-by-mono}
S_t
\le
\mathfrak m_t(\mathfrak c,c)
+
C e^{-(1+\kappa)n_t}(\log n_t)^{-\Delta}.
\end{equation}

By Proposition~\ref{prop:all-one-magnetization-bound},
\[
\mathfrak m_t(\mathfrak c,c)
\ge
c_{\rm all}e^{-(1+\kappa)t}
\ge
c_{\rm all}e^{-(1+\kappa)}e^{-(1+\kappa)n_t}.
\]
Thus the error term in \eqref{eq:St-upper-by-mono} is at most
\[
C(\log n_t)^{-\Delta}\mathfrak m_t(\mathfrak c,c).
\]
Consequently,
\[
S_t
\le
\left(1+C(\log n_t)^{-\Delta}\right)\mathfrak m_t(\mathfrak c,c).
\]
Equivalently, after adjusting the constant,
\[
\mathfrak m_t(\mathfrak c,c)
\ge
\left(1-C(\log n_t)^{-\Delta}\right)S_t.
\]
Since \(n_t=\lfloor t\rfloor\), this is
\[
\mathfrak m_t(\mathfrak c,c)
\ge
\left(1-C(\log t)^{-\Delta}\right)
\sup_{\sigma_0\in[q]^{\mathbb Z^d}}\mathfrak m_t(\sigma_0,c),
\]
as claimed.
\end{proof}

%%%%%%%%%%%%%%%%%%%%%%%%%%%%%%%%%%%%%%%%%%%%%%%%%%%%%%%%%%%%%%%%%%%%%%%%%%%%%%%%%%%%%%%%%%%%%%%%%%%%%%%%%%%%%%%%%%%%%%%%%%%%%%%%%%%%%%%%%%%%%%%%%%%%%%%%%%%%%%%%%%%%%%%%%%%%%%%%%%%%%%%%%%%%%%%%%%%%%%%%%%%%%%%%%%%%%%%%%%%%%%%%%%%%%%%%%%%%%%%%
\medskip

\appendix
\section{Fourier bounds and local kernel estimates}\label{sec:appendix}
This appendix establishes the Fourier bounds and local kernel estimates used in the proof of one-site extremality.
The argument has three stages. In Lemma~\ref{lem:varphi-input}, we first verify the uniform Fourier properties of the signed displacement kernels associated with a single sausage. We then use them to prove the local-limit expansion in Proposition~\ref{prop:Fourier-six-exp}. Finally, we combine that expansion with the tilted renewal representation to derive
Proposition~\ref{prop:local-kernel-input}.

Recall \(\varepsilon=1/8\), \(p_{\mathrm{stop}}=1/2\), and
\(\eta=1+\log 2\) from \eqref{eq:pstop-choice}.
We begin with the displacement moment bound needed to control the contribution of histories with
at least three branch-outs.
For every integer \(t\ge1\), define
\[
A_t
:=
\{N_1>1,\ldots,N_{t-1}>1,\ N_t\geq1\},
\]
where \((N_s)_{s\ge0}\) and \(J_s\) are the branching process and its branch-out count from
Subsection~\ref{subsec:branch-length}. The intermediate conditions are vacuous when \(t=1\).

\begin{lemma}\label{lem:zt_bound}
On \(A_t\), let
\[
V_t
:=
\max\bigl\{|v|:\text{a particle of the branching process is at }v
\text{ at time }t\bigr\},
\]
and set \(V_t=0\) on \(A_t^c\).
For every integer \(k\ge1\), there exists \(C_{k,d}<\infty\) such that for every integer \(t\ge1\),
\[
\mathbb E\bigl[(V_t)^k\mathbf 1_{A_t}\mathbf 1_{\{J_t\ge3\}}\bigr]
\le
C_{k,d}e^{-2(1-\varepsilon)t}\vartheta^3(1-\vartheta)^{-(k/2+1)}.
\]
\end{lemma}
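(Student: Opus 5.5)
The plan is to reduce the displacement moment to a moment of the branch-out count, and then reuse the exponential-martingale bound of Lemma~\ref{lem:branch-penalty}.

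\textbf{Deterministic step.} Every particle position is reached from the origin by a genealogical path. Each branch-out event moves a lineage by one lattice step, and no lineage moves otherwise. Hence on $A_t$ we have $V_t\le J_t$ pathwise (in $\ell^1$, and so in any norm up to a constant). It therefore suffices to bound
\[
\mathbb E\bigl[J_t^k\mathbf 1_{A_t}\mathbf 1_{\{J_t\ge3\}}\bigr].
\]

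\textbf{Layer-cake step.} Write
\[
J_t^k\mathbf 1_{\{J_t\ge3\}}\le \sum_{j\ge3} j^k\mathbf 1_{\{J_t\ge j\}}.
\]
For each $j$, apply \eqref{eq:staytwoJ} with threshold $j$:
\[
\mathbb P(A_t,\ J_t\ge j)\le e^{1-\varepsilon}\vartheta^j e^{-2(1-\varepsilon)t}.
\]
Summing over $j$ gives
\[
\mathbb E[\,\cdot\,]\le e^{1-\varepsilon}e^{-2(1-\varepsilon)t}\sum_{j\ge3} j^k\vartheta^j .
\]
Factoring out $\vartheta^3$ leaves $\sum_{i\ge0}(i+3)^k\vartheta^i$. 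This is a polylogarithm-type series, bounded by $C_k(1-\vartheta)^{-(k+1)}$.

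\textbf{Matching the stated exponent.} The claimed power is $(1-\vartheta)^{-(k/2+1)}$, which suggests the authors use a diffusive bound: displacement of order $\sqrt{J_t}$ rather than $J_t$. There are two ways to handle this. The first is simply to note that $\vartheta\le 1/2$, so $(1-\vartheta)^{-(k+1)}\le 2^{k/2}(1-\vartheta)^{-(k/2+1)}$ and the discrepancy is absorbed into $C_{k,d}$. The second, if the sharper form is wanted, is to condition on the genealogy. Given the tree, a fixed particle's displacement is a sum of at most $J_t$ independent uniform neighbor steps. Taking a union bound over the at most $2dJ_t+1$ particles together with a sub-Gaussian $k$-th moment bound gives $\mathbb E[V_t^k\mid\text{tree}]\le C J_t^{k/2+1}$. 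The same layer-cake sum then yields exactly $(1-\vartheta)^{-(k/2+1)}$.

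\textbf{Main obstacle.} The delicate point is the conditional-independence claim in the second route. Conditioning on $A_t$ and on the genealogy must leave the neighbor directions uniform, even though the survival and coalescence pattern depends on positions. Since the branching process here is non-coalescing and $A_t$ depends only on particle counts, the directions are indeed independent of the events involved.

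To stay safe, I would present the crude route: the bound $V_t\le J_t$, the layer-cake sum, and the absorption of $(1-\vartheta)^{-(k/2)}\le 2^{k/2}$ into the constant. This gives the claim with $C_{k,d}=e^{1-\varepsilon}2^{k/2}C_k$.
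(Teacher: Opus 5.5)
Your crude route is exactly the paper's proof: the pathwise bound $V_t\le J_t$ along the ancestral line of a maximizing particle, the sum $\sum_{j\ge3}j^k\,\mathbb P(A_t,\ J_t\ge j)$ controlled by \eqref{eq:staytwoJ}, the series bound $\sum_{j\ge3}j^k\vartheta^j\le 3^kC_k\vartheta^3(1-\vartheta)^{-k-1}$, and the absorption $(1-\vartheta)^{-k/2}\le 2^{k/2}$ using $\vartheta\le 1/2$. The diffusive alternative is not needed. As sketched, it would also give $(1-\vartheta)^{-(k/2+2)}$ rather than exactly $(1-\vartheta)^{-(k/2+1)}$, because the layer-cake sum then involves $\sum_j j^{k/2+1}\vartheta^j$. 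Presenting the crude version is therefore the right call.
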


\begin{proof}
Choose a particle attaining the maximum in the definition of \(V_t\). Along its ancestral line,
each branch-out changes the position by one nearest-neighbor step. Hence, on \(A_t\),
\(V_t\le J_t\).
Thus

\begin{align*}
\mathbb E\bigl[(V_t)^k\mathbf 1_{A_t}\mathbf 1_{\{J_t\ge3\}}\bigr]
\le
\mathbb E\bigl[J_t^k\mathbf 1_{A_t}\mathbf 1_{\{J_t\ge3\}}\bigr]
=
\sum_{j\ge3} j^k\,\mathbb P(A_t,\ J_t=j)
\le
\sum_{j\ge3} j^k\,\mathbb P(A_t,\ J_t\ge j).
\end{align*}
By Lemma~\ref{lem:branch-penalty},
\[
\mathbb P(A_t,\ J_t\ge j)\le e^{1-\varepsilon}\,\vartheta^j\,e^{-2(1-\varepsilon)t},
\qquad j\ge1.
\]
Hence
\begin{equation}\label{eq:zt-sum}
\mathbb E\bigl[(V_t)^k\mathbf 1_{A_t}\mathbf 1_{\{J_t\ge3\}}\bigr]
\le
e^{1-\varepsilon}e^{-2(1-\varepsilon)t}\sum_{j\ge3} j^k\vartheta^j.
\end{equation}

It remains to bound the series. For every \(a\ge0\) and \(x\in(0,1)\),
\begin{equation}\label{eq:poly-geometric}
\sum_{n\ge0}(n+1)^a x^n \le C_a(1-x)^{-a-1},
\end{equation}
for some constant \(C_a<\infty\) depending only on \(a\). Applying \eqref{eq:poly-geometric} with
\(a=k\) and \(x=\vartheta\), we get
\[
\sum_{j\ge3} j^k\vartheta^j
=
\vartheta^3\sum_{n\ge0}(n+3)^k\vartheta^n
\le
3^k C_k\,\vartheta^3(1-\vartheta)^{-k-1}.
\]
Since \(\vartheta\le1/2\), one has
\[
(1-\vartheta)^{-k-1}
=
(1-\vartheta)^{-k/2}(1-\vartheta)^{-(k/2+1)}
\le
2^{k/2}(1-\vartheta)^{-(k/2+1)}.
\]
Substituting this into \eqref{eq:zt-sum} proves the claim.
\end{proof}

\smallskip

\begin{lemma}\label{lem:varphi-input}
There exist constants \(\rho\in(0,\pi]\), \(\gamma\in(0,1)\), \(\sigma_\pm>0\), \(A_4>0\), \(C_6>0\), and
\[
M_k:=\sup_{\ell\ge1}\sum_{z\in\mathbb Z^d}(1+|z|)^{k}|\varphi_\ell(z)|,
\qquad
M_{d+7}<\infty,
\]
such that for every \(\ell\ge1\), the Fourier transform \(\widehat\varphi_\ell\) satisfies:

\begin{enumerate}
\item \(|\widehat\varphi_\ell(\xi)|\le1\) for all \(\xi\in\mathbb T^d\), and \(|\widehat\varphi_\ell(\xi)|=1\) if and only if \(\xi=0\);

\item on \(B_{\mathbb T}(0,\rho)\), the logarithm \(\Gamma_\ell(\xi):=\log\widehat\varphi_\ell(\xi)\) is well-defined and
\[
\Gamma_\ell(\xi)
=
-\frac{\sigma_\ell}{2}|\xi|^2
+
\mathfrak a_\ell|\xi|^4
+
\mathfrak b_\ell\sum_{j=1}^d \xi_j^4
+
R_{\ell,6}(\xi),
\qquad
|R_{\ell,6}(\xi)|\le C_6|\xi|^6,
\]
with \(\sigma_-\le\sigma_\ell\le\sigma_+\) and \(|\mathfrak a_\ell|+|\mathfrak b_\ell|\le A_4\);

\item
\[
\sup_{\xi\in\mathbb T^d\setminus B_{\mathbb T}(0,\rho)}|\widehat\varphi_\ell(\xi)|\le 1-\gamma.
\]
\end{enumerate}
\end{lemma}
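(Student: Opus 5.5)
The plan is to treat each \(\varphi_\ell\) as a small signed perturbation of a point mass at the origin, and to obtain all three Fourier properties, uniformly in \(\ell\), from the decomposition by branch-out number.

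\textbf{Decomposition and moments.} Write the numerator of \(\varphi_\ell\) as \(\sum_{b}\) over \(\Xi_0,\Xi_1,\Xi_2,\Xi_{\ge3}\).
\begin{itemize}
\item The \(\Xi_0\) part is \(p_{\rm stop}(1-p_{\rm stop})^{\ell-1}e^{-\ell}\delta_0\).
\item The \(\Xi_1\) part is \(p_1\) times a nonnegative measure supported on the \(2d\) unit vectors. By lattice symmetry it gives equal mass to each.
\item The \(\Xi_2\) part is nonnegative by Lemma~\ref{lem:canonical-sausages}(3).
\item The \(\Xi_{\ge3}\) part may be signed.
\end{itemize}
After multiplying by \(e^\ell\) and dividing by \(w_\ell\), Lemma~\ref{lem:Zp-bound} shows that the point mass at \(0\) has weight \(1-O(\alpha)\). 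The same normalization, together with Lemma~\ref{lem:X1X2-lower-bound} and Proposition~\ref{prop:sausagetail}, shows that the \(\Xi_1\) mass \(m_{1,\ell}\) satisfies \(c\alpha\le m_{1,\ell}\le C\alpha\) uniformly in \(\ell\). The \(\Xi_2\) mass is \(O(\alpha^2)\), and the \(\Xi_{\ge3}\) total variation is \(O(\alpha^3)\).

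For moments, note that the displacement \(|y(x)|\) is bounded by the branch count. Proposition~\ref{prop:sausagetail} and Lemma~\ref{lem:zt_bound} (with \(k=d+7\)) then give \(\sum_z(1+|z|)^{d+7}|\varphi_\ell(z)|\le 1+O(\alpha)\) uniformly. This yields \(M_{d+7}<\infty\), and also that \(\widehat\varphi_\ell\) is \(C^{d+7}\) with uniformly bounded derivatives.

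\textbf{Items (1) and (3).} Lattice symmetry makes \(\widehat\varphi_\ell\) real and even in each coordinate. Set \(\psi(\xi):=\frac1d\sum_j\cos\xi_j\). Then
\[
\widehat\varphi_\ell(\xi)=1-m_{1,\ell}(1-\psi(\xi))+E_\ell(\xi),
\]
where \(E_\ell\) collects the \(\Xi_2\) and \(\Xi_{\ge3}\) contributions. The \(\Xi_2\) part is real, even, and of nonnegative sign, so it contributes \(-(\text{mass})(1-\text{cos-average})\le 0\). The \(\Xi_{\ge3}\) part is signed, but it is controlled at second order: \(|\Re(\widehat\mu_3(\xi)-\widehat\mu_3(0))|\le C\alpha^3|\xi|^2\), using its second moment.

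Combining these:
\begin{itemize}
\item For small \(|\xi|\), \(1-\widehat\varphi_\ell\ge c\alpha|\xi|^2-C\alpha^3|\xi|^2>0\).
\item For \(|\xi|\ge\rho\), \(1-\psi\ge c(\rho)\), and the \(\Xi_{\ge3}\) perturbation is \(O(\alpha^3)\).
\item The lower bound \(\widehat\varphi_\ell\ge 1-2C\alpha>-1\) holds everywhere.
\end{itemize}
This gives \(|\widehat\varphi_\ell|\le1\) with equality only at \(0\), and \(\sup_{|\xi|\ge\rho}|\widehat\varphi_\ell|\le1-\gamma\) with \(\gamma\asymp\alpha\), uniformly in \(\ell\).

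\textbf{Item (2).} Near \(0\), \(\widehat\varphi_\ell=1-O(\alpha|\xi|^2)\), so \(\log\) is analytic on a fixed ball \(B(0,\rho)\). Its Taylor expansion to order \(6\) has remainder bounded by the sixth moment, \(C_6|\xi|^6\). Odd terms vanish by symmetry. Cubic symmetry forces the quadratic term to be \(-\frac{\sigma_\ell}2|\xi|^2\), and the quartic term to lie in the span of \(|\xi|^4\) and \(\sum\xi_j^4\). Here \(\sigma_\ell\) is the second moment \(\frac1d\sum_z|z|^2\varphi_\ell(z)\). Its main part is \(m_{1,\ell}/d\), with \(O(\alpha^2)\) corrections, so \(\sigma_\pm\asymp\alpha\). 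The quartic coefficients are bounded by fourth moments.

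\textbf{Main obstacle.} The hard step is the uniform-in-\(\ell\) two-sided control \(m_{1,\ell}\asymp\alpha\), together with the signed \(\Xi_{\ge3}\) error being \(O(\alpha^3)\) in second moment. A weaker bound would not do: the signed part could otherwise destroy positivity of \(\sigma_\ell\) or push \(|\widehat\varphi_\ell|\) above \(1\). The lower bound comes from Lemma~\ref{lem:X1X2-lower-bound}, and the upper bound from \eqref{eq:w-ell}. For the error, I would combine \eqref{eq:mu-joint} at \(b=3\) with Lemma~\ref{lem:zt_bound} (\(k=2\)), after the conditioning-on-last-ring decomposition used in Proposition~\ref{prop:sausagetail}. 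This gives a bound of order \(\vartheta^3e^{-\eta\ell}\), which is then normalized by \(w_\ell e^{-\ell}\asymp e^{-\eta\ell}\).
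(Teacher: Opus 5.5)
Your proposal is correct and follows essentially the same route as the paper. Both split by branch number: $\Xi_0$ gives the near-unit atom at the origin; $\Xi_1$ gives the symmetric unit-step part of mass $\asymp\alpha$ (lower bound from Lemma~\ref{lem:X1X2-lower-bound} together with $p_1>1/(4d)$, which you should cite explicitly); $\Xi_2$ is discarded by sign; and $\Xi_{\ge3}$ is controlled at order $\alpha^3$ in second moment via Lemma~\ref{lem:zt_bound}. Symmetry then gives the quadratic and quartic structure, followed by a Taylor expansion of the logarithm. The only cosmetic difference is that the paper proves $1-\widehat\varphi_\ell(\xi)\ge c_0\alpha|\xi|^2$ globally on $\mathbb T^d$ (using $1-\cos u\ge 2u^2/\pi^2$) and reads off both item (1) and the spectral gap from that, whereas you split into small and large $|\xi|$.
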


\noindent
\subsubsection*{Proof}
For \(\ell\ge1\), set
\[
D_\ell:=\sum_{x:\ \ell(x)=\ell}\mu(x)p(x)=e^{-\ell}w_\ell.
\]
By Lemma~\ref{lem:Zp-bound},
\begin{equation}\label{eq:Dl-bounds-proof}
\frac{p_{\mathrm{stop}}}{2(1-p_{\mathrm{stop}})}e^{-\eta\ell}
\le
D_\ell
\le
\frac{3p_{\mathrm{stop}}}{2(1-p_{\mathrm{stop}})}e^{-\eta\ell}.
\end{equation}

\smallskip
\noindent\textbf{Step 1: moment bounds.}
Fix an integer \(k\in\{1,\dots,d+7\}\). Remark
\[
\sum_{z\in\mathbb Z^d}|z|^k|\varphi_\ell(z)|
=
D_\ell^{-1}\sum_{x:\ \ell(x)=\ell}|y(x)|^k|p(x)|\mu(x).
\]
We bound the numerator by splitting according to \(b(x)\).

If \(b(x)=0\), then \(y(x)=0\), so this contribution vanishes.

If \(b(x)=1\), then \(|y(x)|=1\) and \(|p(x)|=p_1\leq 1\). Hence, using \eqref{eq:mu-joint},
\[
\sum_{x:\ \ell(x)=\ell,\ b(x)=1}|y(x)|^k|p(x)|\mu(x)
\le
p_1\,\mu(\ell(x_1)=\ell,\ b(x_1)\ge1)
\le
C\alpha e^{-\eta\ell}.
\]

If \(b(x)=2\), then \(|y(x)|\le2\) and \(0\le p(x)\le p_1\), so
\[
\sum_{x:\ \ell(x)=\ell,\ b(x)=2}|y(x)|^k|p(x)|\mu(x)
\le
2^k p_1\,\mu(\ell(x_1)=\ell,\ b(x_1)\ge2)
\le
C\alpha^2e^{-\eta\ell}.
\]

If \(b(x)\ge3\), then \(|p(x)|\le1\). We decompose according to the last integer time
\(a\in\{0,\dots,\ell-1\}\) before the first clock ring. The event ``no ring and no stop up to \(a\)''
has probability \(e^{-\eta a}\).

Conditioned on this event, the remainder of the sausage is dominated by
a branching excursion of duration \(\ell-a\). On
\(\{\ell(x)=\ell,\ b(x)\ge3\}\), the dominating process lies in \(A_{\ell-a}\),
has \(J_{\ell-a}\ge3\), and contains a particle whose distance from the origin is at least
\(|y(x)|\). Therefore
\begin{align*}
\sum_{x:\ \ell(x)=\ell,\ b(x)\ge3}|y(x)|^k\mu(x)
&\le
\sum_{a=0}^{\ell-1}e^{-\eta a}
\mathbb E\bigl[(V_{\ell-a})^k\mathbf 1_{A_{\ell-a}}\mathbf 1_{\{J_{\ell-a}\ge3\}}\bigr]\\
&\le
C\vartheta^3\sum_{a=0}^{\ell-1}e^{-\eta a}e^{-2(1-\varepsilon)(\ell-a)}(1-\vartheta)^{-(k/2+1)}
\leq
C\alpha^3e^{-\eta\ell},
\end{align*}
where we used Lemma~\ref{lem:zt_bound}, \(\vartheta=C_{\mathrm{bp}}\alpha\), and
\(\eta<2(1-\varepsilon)\).

Combining the three cases and dividing by \eqref{eq:Dl-bounds-proof}, we obtain
\[
\sup_{\ell\ge1}\sum_{z\in\mathbb Z^d}|z|^k|\varphi_\ell(z)|<\infty.
\]

Applying the same bounds to the off-origin mass gives
\[
\sum_{z\neq0}|\varphi_\ell(z)|\le C\alpha
\qquad\text{uniformly in }\ell.
\]
Since \(\sum_z\varphi_\ell(z)=1\), this implies
\[
|\varphi_\ell(0)-1|
=
\Big|\sum_{z\neq0}\varphi_\ell(z)\Big|
\le
\sum_{z\neq0}|\varphi_\ell(z)|
\le
C\alpha.
\]
Together with the preceding moment bounds, this shows that \(M_{d+7}<\infty\), where
\[
M_{k}:=\sup_{\ell\ge1}\sum_{z\in\mathbb Z^d}(1+|z|)^{k}|\varphi_\ell(z)|,
\qquad k\geq 1.
\]
Therefore, for every \(\xi\in\mathbb T^d\), noting that $\varphi_\ell$ is an even function,
\begin{align*}
\widehat\varphi_\ell(\xi)
&=
\sum_{z\in\mathbb Z^d}e^{i\xi\cdot z}\varphi_\ell(z)
=
\varphi_\ell(0)+\sum_{z\neq0}\cos(\xi\cdot z)\varphi_\ell(z) \notag
\ge
\varphi_\ell(0)-\sum_{z\neq0}|\varphi_\ell(z)|
\ge
1-2C\alpha.
\end{align*}
After further decreasing \(\beta_0\), we may assume \(2C\alpha\le1/2\), and then
\begin{equation}\label{eq:hat-positive-half}
\widehat\varphi_\ell(\xi)\ge \frac12
\qquad\text{for all }\ell\ge1,\ \xi\in\mathbb T^d.
\end{equation}

\smallskip
\smallskip
\noindent\textbf{Step 2: symmetry and the quadratic term.}
The graphical construction, the stopping rule defining sausages, and the signed weight \(p(x)\) are
invariant under coordinate permutations and sign changes. Therefore, for every signed permutation \(R\)
of \(\mathbb Z^d\),
\[
\varphi_\ell(Rz)=\varphi_\ell(z).
\]
Hence \(\widehat\varphi_\ell\) is real-valued, even, and invariant under signed permutations. In particular,
\[
\widehat\varphi_\ell(0)=1,
\qquad
\nabla\widehat\varphi_\ell(0)=0.
\]

Since \(\widehat\varphi_\ell\) is even and signed-permutation invariant, its Hessian at \(0\) is a scalar
multiple of the identity:
\[
-\nabla^2\widehat\varphi_\ell(0)=\sigma_\ell I_d,
\qquad
\sigma_\ell:=\sum_{z\in\mathbb Z^d}z_1^2\varphi_\ell(z).
\]
The upper bound \(\sigma_\ell\le \sigma_+\) follows immediately from the \(k=2\) moment bound in Step~1.

We next prove a uniform positive lower bound. By symmetry, under
\(\{\ell(x_1)=\ell,\ b(x_1)=1\}\), the displacement \(y(x_1)\) is uniformly distributed over
\(\{\pm e_1,\dots,\pm e_d\}\). Hence
\[
\sum_{x:\ \ell(x)=\ell,\ b(x)=1}\mu(x)p(x)\,y_1(x)^2
=
\frac{p_1}{d}\,\mu(\ell(x_1)=\ell,\ b(x_1)=1).
\]
Therefore
\begin{align*}
\sum_{x:\ \ell(x)=\ell}\mu(x)p(x)\,y_1(x)^2
&\ge
\frac{p_1}{d}\,\mu(\ell(x_1)=\ell,\ b(x_1)=1)
-
\sum_{x:\ \ell(x)=\ell,\ b(x)\ge3}|y(x)|^2|p(x)|\mu(x)\\
&\ge
\left(\frac{C}{d}-C'\alpha^2\right)\alpha e^{-\eta\ell},
\end{align*}
using Lemma~\ref{lem:canonical-sausages} (2), \eqref{eq:Xi-1-lower}, and the \(k=2\) case of Step~1.
After shrinking \(\beta_0\) again if necessary, the bracket is positive, so
\[
\sum_{x:\ \ell(x)=\ell}\mu(x)p(x)\,y_1(x)^2\ge c\,\alpha e^{-\eta\ell}
\]
for some \(c>0\). Dividing by \eqref{eq:Dl-bounds-proof} gives
\[
\sigma_\ell\ge c\alpha>0
\qquad\text{uniformly in }\ell.
\]
Together with the \(k=2\) upper bound from Step~1, this gives
\begin{equation*}
0<c\alpha\le\sigma_\ell\le C\alpha<\infty.
\end{equation*}
Thus we may take \(\sigma_-:=c\alpha\) and \(\sigma_+:=C\alpha\).
%%%

We now prove a global quadratic lower bound on \(1-\widehat\varphi_\ell\). Since \(\widehat\varphi_\ell\) is real,
\[
1-\widehat\varphi_\ell(\xi)
=
D_\ell^{-1}\sum_{x:\ \ell(x)=\ell}\mu(x)p(x)\bigl(1-\cos(\xi\cdot y(x))\bigr).
\]
Split the sum according to \(b(x)=0,1,2,\ge3\). The \(b(x)=0\) term vanishes because \(y(x)=0\). The
\(b(x)=2\) term is nonnegative because \(p(x)\ge0\) on \(\Xi_2\), so it may be discarded.

For \(b(x)=1\), symmetry over the \(2d\) unit vectors gives
\[
\sum_{x:\ \ell(x)=\ell,\ b(x)=1}\mu(x)p(x)\bigl(1-\cos(\xi\cdot y(x))\bigr)
=
\frac{p_1\,\mu(\ell(x_1)=\ell,\ b(x_1)=1)}{d}\sum_{j=1}^d(1-\cos\xi_j).
\]
Since \(|\xi_j|\le\pi\) and \(1-\cos u\ge \frac{2}{\pi^2}u^2\) on \([-\pi,\pi]\),
\[
\sum_{j=1}^d(1-\cos\xi_j)\ge \frac{2}{\pi^2}|\xi|^2.
\]
Hence, using \eqref{eq:Dl-bounds-proof}, Lemma~\ref{lem:canonical-sausages} (2), and \eqref{eq:Xi-1-lower},
\begin{equation}\label{eq:xi1-positive}
D_\ell^{-1}\!\!\sum_{x:\ \ell(x)=\ell,\ b(x)=1}\mu(x)p(x)\bigl(1-\cos(\xi\cdot y(x))\bigr)
\ge
c_1\alpha|\xi|^2
\end{equation}
for some \(c_1>0\).

For \(b(x)\ge3\), use \(1-\cos u\le u^2/2\), \(|p(x)|\le1\), and the \(k=2\) bound from Step~1:
\begin{align}
\left|
D_\ell^{-1}\!\!\sum_{x:\ \ell(x)=\ell,\ b(x)\ge3}\mu(x)p(x)\bigl(1-\cos(\xi\cdot y(x))\bigr)
\right|
&\le
\frac{|\xi|^2}{2D_\ell}\sum_{x:\ \ell(x)=\ell,\ b(x)\ge3}|y(x)|^2\mu(x)\notag\\
&\le
c_2\alpha^3|\xi|^2
\label{eq:high-branch-negative}
\end{align}
for some \(c_2>0\). After shrinking \(\beta_0\) once more so that \(c_2\alpha^2\le c_1/2\),
\eqref{eq:xi1-positive} and \eqref{eq:high-branch-negative} yield
\begin{equation}\label{eq:one-minus-hat}
1-\widehat\varphi_\ell(\xi)\ge c_0\alpha|\xi|^2
\qquad\text{for all }\ell\ge1,\ \xi\in\mathbb T^d,
\end{equation}
with \(c_0:=c_1/2\).

Since \(\widehat\varphi_\ell(\xi)\ge1/2\) by \eqref{eq:hat-positive-half}, \eqref{eq:one-minus-hat} implies
\[
|\widehat\varphi_\ell(\xi)|=\widehat\varphi_\ell(\xi)\le 1
\qquad\text{for all }\xi\in\mathbb T^d,
\]
and equality holds if and only if \(\xi=0\). This proves item (1).

\smallskip
\noindent\textbf{Step 3: Taylor expansion of \(\Gamma_\ell=\log\widehat\varphi_\ell\).}
Because \(\varphi_\ell\) is symmetric,
\[
\widehat\varphi_\ell(\xi)=\sum_{z\in\mathbb Z^d}\cos(\xi\cdot z)\,\varphi_\ell(z).
\]
We first expand \(\widehat\varphi_\ell\) itself. For every \(u\in\mathbb R\),
\[
\cos u = 1-\frac{u^2}{2}+\frac{u^4}{24}+r_6(u),
\qquad
|r_6(u)|\le \frac{|u|^6}{720}.
\]
Applying this with \(u=\xi\cdot z\) and summing against \(\varphi_\ell(z)\), we get
\begin{align}
\widehat\varphi_\ell(\xi)
&=
1-\frac12\sum_{z}\varphi_\ell(z)(\xi\cdot z)^2
+\frac1{24}\sum_{z}\varphi_\ell(z)(\xi\cdot z)^4
+\widetilde R_{\ell,6}(\xi),
\label{eq:hatphi-taylor}
\end{align}
where
\begin{equation*}
|\widetilde R_{\ell,6}(\xi)|
\le
\frac{|\xi|^6}{720}\sum_z |z|^6|\varphi_\ell(z)|
\le
\frac{M_6}{720}|\xi|^6.
\end{equation*}

By the signed-permutation symmetry, the quadratic form
\[
Q_\ell(\xi):=\sum_z\varphi_\ell(z)(\xi\cdot z)^2
\]
must equal \(\sigma_\ell|\xi|^2\). Likewise, the quartic form
\[
T_{4,\ell}(\xi):=\sum_z\varphi_\ell(z)(\xi\cdot z)^4
\]
is invariant under signed permutations, so it lies in the two-dimensional space spanned by
\(|\xi|^4\) and \(\sum_{j=1}^d\xi_j^4\). Therefore there exist \(a'_\ell,b'_\ell\in\mathbb R\) such that
\[
\frac1{24}T_{4,\ell}(\xi)=a'_\ell|\xi|^4+b'_\ell\sum_{j=1}^d\xi_j^4.
\]
Since the coefficients of \(T_{4,\ell}\) are bounded by \(M_4\), we have
\[
|a'_\ell|+|b'_\ell|\le C(M_4,d)\le C.
\]
Substituting this into \eqref{eq:hatphi-taylor} yields
\begin{equation}\label{eq:hatphi-expansion}
\widehat\varphi_\ell(\xi)
=
1-\frac{\sigma_\ell}{2}|\xi|^2
+a'_\ell|\xi|^4
+b'_\ell\sum_{j=1}^d\xi_j^4
+\widetilde R_{\ell,6}(\xi),
\qquad
|\widetilde R_{\ell,6}(\xi)|\le C|\xi|^6.
\end{equation}

We now pass from \(\widehat\varphi_\ell\) to \(\Gamma_\ell=\log\widehat\varphi_\ell\). Define
\[
u_\ell(\xi):=1-\widehat\varphi_\ell(\xi).
\]
By \eqref{eq:one-minus-hat} and the \(k=2\) moment bound,
\[
0\le u_\ell(\xi)\le C|\xi|^2
\qquad\text{for all }\ell,\ \xi.
\]
Choose \(\rho\in(0,1]\) so small that
$C\rho^2\le 1/4$.
Then for \(|\xi|\le \rho\), one has \(0\le u_\ell(\xi)\le1/4\), uniformly in \(\ell\). Hence
\[
\Gamma_\ell(\xi)=\log(1-u_\ell(\xi))
=
-u_\ell(\xi)-\frac12u_\ell(\xi)^2+\psi(u_\ell(\xi)),
\]
where
\[
|\psi(u)|\le 2|u|^3
\qquad (|u|\le1/4).
\]
Using \eqref{eq:hatphi-expansion},
\[
u_\ell(\xi)
=
\frac{\sigma_\ell}{2}|\xi|^2
-a'_\ell|\xi|^4
-b'_\ell\sum_{j=1}^d\xi_j^4
+O(|\xi|^6).
\]
Therefore
\[
u_\ell(\xi)^2
=
\frac{\sigma_\ell^2}{4}|\xi|^4+O(|\xi|^6),
\qquad
\psi(u_\ell(\xi))=O(|\xi|^6),
\]
uniformly in \(\ell\). Consequently,
\[
\Gamma_\ell(\xi)
=
-\frac{\sigma_\ell}{2}|\xi|^2
+
\left(a'_\ell-\frac{\sigma_\ell^2}{8}\right)|\xi|^4
+
b'_\ell\sum_{j=1}^d\xi_j^4
+
R_{\ell,6}(\xi),
\qquad
|R_{\ell,6}(\xi)|\le C_6|\xi|^6
\]
for \(|\xi|\le\rho\), with \(C_6<\infty\) independent of \(\ell\). Set
\[
\mathfrak a_\ell:=a'_\ell-\frac{\sigma_\ell^2}{8},
\qquad
\mathfrak b_\ell:=b'_\ell.
\]
Since \(\sigma_\ell\in[\sigma_-,\sigma_+]\) and \(|a'_\ell|+|b'_\ell|\le C\), we may choose \(A_4<\infty\) so that
\[
|\mathfrak a_\ell|+|\mathfrak b_\ell|\le A_4
\qquad\text{for all }\ell.
\]
This proves item (2).

\smallskip
\noindent\textbf{Step 4: spectral gap away from \(0\).}
With the choice of \(\rho\) from Step~3, \eqref{eq:one-minus-hat} gives
\[
1-\widehat\varphi_\ell(\xi)\ge c_0\alpha|\xi|^2\ge c_0\alpha\rho^2
\qquad\text{for all }|\xi|\ge \rho.
\]
Set
\[
\gamma:=c_0\alpha\rho^2.
\]
Then
\[
\widehat\varphi_\ell(\xi)\le 1-\gamma
\qquad\text{for all }\xi\in\mathbb T^d\setminus B_{\mathbb T}(0,\rho).
\]
Since \(\widehat\varphi_\ell(\xi)\ge1/2\) by \eqref{eq:hat-positive-half}, this implies
\[
|\widehat\varphi_\ell(\xi)|\le 1-\gamma
\qquad\text{for all }\xi\in\mathbb T^d\setminus B_{\mathbb T}(0,\rho).
\]
This proves item (3).

\qed

\smallskip
\subsubsection*{Proof of Proposition \ref{prop:Fourier-six-exp}}
We compare each mixed convolution kernel with
its Gaussian expansion and control the error in both \(\ell^\infty\) and \(\ell^1\). The
low-frequency region is governed by the Taylor expansion, while the spectral gap makes the
high-frequency contribution exponentially small.
Fix an admissible profile \(\vs=(s_1,\dots,s_t)\), and write

\[
r:=\sum_{i=1}^t s_i,\qquad
\widehat\Phi_{\vs}(\xi)
=
\prod_{i=1}^t \widehat\varphi_i(\xi)^{s_i}
=
e^{r\Gamma_{\vs}(\xi)},
\qquad
\Gamma_{\vs}(\xi):=\frac1r\sum_{i=1}^t s_i\Gamma_i(\xi).
\]
On \(B(0,\rho)\), Lemma~\ref{lem:varphi-input} gives
\[
\Gamma_{\vs}(\xi)
=
-P_{\vs}(\xi)+Q_{4,\vs}(\xi)+R_{\vs,6}(\xi),
\]
where
\[
P_{\vs}(\xi):=\frac{\sigma_{\vs}}2|\xi|^2,
\qquad
Q_{4,\vs}(\xi):=\mathfrak a_{\vs}|\xi|^4+\mathfrak b_{\vs}\sum_{j=1}^d \xi_j^4,
\qquad
R_{\vs,6}(\xi):=\frac1r\sum_{i=1}^t s_iR_{i,6}(\xi).
\]
Decrease \(\rho\) if necessary so that
\[
A_4\rho^2+C_6\rho^4\le \frac{\sigma_-}{8}.
\]
The conclusions of Lemma~\ref{lem:varphi-input} remain valid after this decrease: part~(2)
restricts to the smaller ball, while \eqref{eq:hat-positive-half} and
\eqref{eq:one-minus-hat} preserve part~(3), after replacing \(\gamma\) by
\(\min\{\gamma,c_0\alpha\rho^2/4\}\) if necessary.
Choose \(\chi\in C^\infty(\mathbb T^d)\) such that

\[
0\le \chi\le1,\qquad
\chi\equiv1 \text{ on }B(0,\rho/2),\qquad
\supp\chi\subset B(0,\rho)\subset(-\pi,\pi)^d.
\]
Since \(\operatorname{supp}\chi\subset(-\pi,\pi)^d\), we also regard \(\chi\) as a compactly supported smooth function on \(\mathbb R^d\). 

Fix \(N:=d+1\). We use the following standard weighted Fourier estimate: for every
\(F\in W^{N,1}(\mathbb T^d)\), or every \(F\in W^{N,1}(\mathbb R^d)\),
\begin{equation}\label{eq:weighted-Fourier}
(1+|x|)^N\left|
\frac1{(2\pi)^d}\int F(\xi)e^{-ix\cdot\xi}\,d\xi
\right|
\le
C_N\sum_{|\alpha|\le N}\|\partial^\alpha F\|_{L^1},
\end{equation}
and, since \(N>d\),
\begin{equation}\label{eq:weighted-Fourier-l1}
\left\|
\frac1{(2\pi)^d}\int F(\xi)e^{-ix\cdot\xi}\,d\xi
\right\|_{\ell^1(\mathbb Z^d)}
\le
C_N'\sum_{|\alpha|\le N}\|\partial^\alpha F\|_{L^1}.
\end{equation}

\smallskip
\noindent\textbf{Uniform derivative bounds.}
For every multi-index \(\alpha\) with \(|\alpha|\le d+7\), the Fourier series of
\(\widehat\varphi_\ell\) may be differentiated termwise, since
\[
\sum_{z\in\mathbb Z^d}|z^\alpha|\,|\varphi_\ell(z)|
\le
\sum_{z\in\mathbb Z^d}(1+|z|)^{d+7}|\varphi_\ell(z)|
\le
M_{d+7}.
\]
Hence
\[
\partial^\alpha \widehat\varphi_\ell(\xi)
=
\sum_{z\in\mathbb Z^d}(iz)^\alpha e^{i\xi\cdot z}\varphi_\ell(z),
\]
so
\begin{equation}\label{eq:hatvarphi-derivative-bound}
\sup_{\ell\ge1}\sup_{\xi\in\mathbb T^d}
|\partial^\alpha \widehat\varphi_\ell(\xi)|
\le
M_{d+7}
\qquad (|\alpha|\le d+7).
\end{equation}
On \(B(0,\rho)\), Lemma~\ref{lem:varphi-input}(2) and the bounds
\(\sigma_\ell\in[\sigma_-,\sigma_+]\), \(|\mathfrak a_\ell|+|\mathfrak b_\ell|\le A_4\),
give
\[
|\Gamma_\ell(\xi)|\le C_0|\xi|^2
\qquad (\ell\ge1,\ |\xi|\le\rho),
\]
hence
\[
|\widehat\varphi_\ell(\xi)|=e^{\Re\Gamma_\ell(\xi)}\ge e^{-C_0\rho^2}>0
\qquad (\ell\ge1,\ |\xi|\le\rho).
\]
Combining this with \eqref{eq:hatvarphi-derivative-bound} and the Fa\`a di Bruno formula for
\(\Gamma_\ell=\log \widehat\varphi_\ell\), we get
\begin{equation*}
\sup_{\ell\ge1}\sup_{|\xi|\le\rho}
|\partial^\alpha \Gamma_\ell(\xi)|
\le
C_\alpha
\qquad (|\alpha|\le d+7).
\end{equation*}
Since \(R_{\ell,6}\) is the remainder after subtracting the quadratic and quartic Taylor
polynomials of \(\Gamma_\ell\), Taylor's theorem gives
\begin{equation}\label{eq:R6-derivative-bound}
|\partial^\alpha R_{\ell,6}(\xi)|
\le
C_\alpha |\xi|^{(6-|\alpha|)_+}
\qquad (\ell\ge1,\ |\xi|\le\rho,\ |\alpha|\le N).
\end{equation}
Averaging over the profile, the same bound holds for \(R_{\vs,6}\).

\smallskip

By Fourier inversion,
\[
\Phi_{\vs}(x)
=
\frac1{(2\pi)^d}\int_{\mathbb T^d}\widehat\Phi_{\vs}(\xi)e^{-ix\cdot\xi}\,d\xi
=:I_{\mathrm{in}}(x)+I_{\mathrm{out}}(x),
\]
where
\[
I_{\mathrm{in}}(x):=
\frac1{(2\pi)^d}\int_{\mathbb T^d}\chi(\xi)e^{r\Gamma_{\vs}(\xi)}e^{-ix\cdot\xi}\,d\xi,
\qquad
I_{\mathrm{out}}(x):=
\frac1{(2\pi)^d}\int_{\mathbb T^d}(1-\chi(\xi))\widehat\Phi_{\vs}(\xi)e^{-ix\cdot\xi}\,d\xi.
\]
Also
\[
H_{\vs}(x)
=
\frac1{(2\pi)^d}\int_{\mathbb R^d}e^{-rP_{\vs}(\xi)}e^{-ix\cdot\xi}\,d\xi,
\]
and, since $i\partial_{x_j}e^{-ix\cdot\xi}=\xi_j e^{-ix\cdot\xi}$,
\[
rQ_{4,\vs}(i\nabla_x)H_{\vs}(x)
=
\frac1{(2\pi)^d}\int_{\mathbb R^d}rQ_{4,\vs}(\xi)e^{-rP_{\vs}(\xi)}e^{-ix\cdot\xi}\,d\xi.
\]
Thus
\[
H_{\vs}(x)+rQ_{4,\vs}(i\nabla_x)H_{\vs}(x)
=:J_{\mathrm{in}}(x)+J_{\mathrm{tail}}(x),
\]
where
\[
J_{\mathrm{in}}(x):=
\frac1{(2\pi)^d}\int_{\mathbb R^d}
\chi(\xi)e^{-rP_{\vs}(\xi)}\bigl(1+rQ_{4,\vs}(\xi)\bigr)e^{-ix\cdot\xi}\,d\xi,
\]
\[
J_{\mathrm{tail}}(x):=
\frac1{(2\pi)^d}\int_{\mathbb R^d}
(1-\chi(\xi))e^{-rP_{\vs}(\xi)}\bigl(1+rQ_{4,\vs}(\xi)\bigr)e^{-ix\cdot\xi}\,d\xi.
\]
We therefore write
\[
\mathrm{Err}_{\vs}
=
\bigl(I_{\mathrm{in}}-J_{\mathrm{in}}\bigr)
+I_{\mathrm{out}}
-J_{\mathrm{tail}}
=:E_{\mathrm{in}}+E_{\mathrm{out}}+E_{\mathrm{tail}}.
\]

We estimate the terms in the order \(E_{\mathrm{out}},E_{\mathrm{tail}},E_{\mathrm{in}}\): the
first two are exponentially small, while the inner term determines the polynomial remainder.

\smallskip
\noindent\textbf{High-frequency term.}
Since \(\supp(1-\chi)\subset\mathbb T^d\setminus B(0,\rho/2)\),
\eqref{eq:hat-positive-half} and \eqref{eq:one-minus-hat} give
\[
|\widehat\varphi_i(\xi)|\le1-\gamma.
\]
Write \(\widehat\Phi_{\mathbf s}\) as a product of \(r\) factors, $\widehat\Phi_{\mathbf s}=u_1\cdots u_r$, 
where each \(u_\nu\) is one of the functions \(\widehat\varphi_i\). On \(\operatorname{supp}(1-\chi)\), every undifferentiated factor is bounded by \(1-\gamma\), while every derivative of order at most \(N\) is bounded by \(M_{d+7}\). By Leibniz' rule, for each multi-index \(\alpha\) with \(|\alpha|\le N\),
\[
\sup_{\supp(1-\chi)}|\partial^\alpha \widehat\Phi_{\vs}|
\le
C_\alpha r^{|\alpha|}(1-\gamma)^{r-|\alpha|}
\le
C_\alpha' e^{-cr}
\qquad (|\alpha|\le N).
\]
for some $c=c(\gamma)>0$. Hence
\[
\sum_{|\alpha|\le N}\left\|\partial^\alpha\bigl((1-\chi)\widehat\Phi_{\mathbf s}\bigr)\right\|_{L^1(\mathbb T^d)}
\le
C e^{-cr},
\]
and \eqref{eq:weighted-Fourier}--\eqref{eq:weighted-Fourier-l1} imply
\begin{equation}\label{eq:Eout-bounds-final}
|E_{\mathrm{out}}(x)|\le Ce^{-cr}(1+|x|)^{-N},\qquad
\|E_{\mathrm{out}}\|_{\ell^\infty(\mathbb Z^d)}\le C e^{-c r},
\qquad
\|E_{\mathrm{out}}\|_{\ell^1(\mathbb Z^d)}\le C e^{-c r}.
\end{equation}

\smallskip
\noindent\textbf{Gaussian tail term.}
Set
\[
F_r(\xi):=(1-\chi(\xi))e^{-rP_{\vs}(\xi)}\bigl(1+rQ_{4,\vs}(\xi)\bigr).
\]
On \(\supp(1-\chi)\), one has \(|\xi|\ge\rho/2\), so
\[
e^{-rP_{\vs}(\xi)}
\le
e^{-\sigma_-r\rho^2/16}\,e^{-\sigma_-r|\xi|^2/4}.
\]
Since \(Q_{4,\vs}\) is a quartic polynomial with uniformly bounded coefficients, every
derivative of \(F_r\) of order at most \(N\) is bounded by a finite linear combination of
terms of the form
\[
Cr^C(1+|\xi|)^C e^{-\sigma_-r\rho^2/16}e^{-\sigma_-r|\xi|^2/4}.
\]
Hence
\[
\sum_{|\alpha|\le N}\|\partial^\alpha F_r\|_{L^1(\mathbb R^d)}\le Ce^{-cr}.
\]
for some \(c>0\).
Applying \eqref{eq:weighted-Fourier}--\eqref{eq:weighted-Fourier-l1},
\begin{equation}\label{eq:Etail-bounds-final}
|E_{\mathrm{tail}}(x)|=|J_{\mathrm{tail}}(x)|\le Ce^{-cr}(1+|x|)^{-N},
\quad
\|E_{\mathrm{tail}}\|_{\ell^1(\mathbb Z^d)}\le Ce^{-cr},
\quad
\|E_{\mathrm{tail}}\|_{\ell^\infty(\mathbb Z^d)}\le Ce^{-cr}.
\end{equation}

\smallskip
\noindent\textbf{Inner error term.}
Set
\[
q_r(\eta):=rQ_{4,\vs}\!\left(\frac{\eta}{\sqrt r}\right)=\frac1r\,Q_{4,\vs}(\eta),
\qquad
u_r(\eta):=rR_{\vs,6}\!\left(\frac{\eta}{\sqrt r}\right).
\]
Then, for \(|\alpha|\le N\),
\begin{equation}\label{eq:q-u-derivative-final}
|\partial^\alpha q_r(\eta)|\le C_\alpha r^{-1}(1+|\eta|)^4,
\qquad
|\partial^\alpha u_r(\eta)|\le C_\alpha r^{-2}(1+|\eta|)^6,
\end{equation}
the second bound following from \eqref{eq:R6-derivative-bound}.
Changing variables \(\eta=\sqrt r\,\xi\), we obtain
\[
E_{\mathrm{in}}(x)
=
r^{-d/2}\frac1{(2\pi)^d}
\int_{\mathbb R^d}
B_r(\eta)e^{-i(x/\sqrt r)\cdot\eta}\,d\eta,
\]
where
\[
B_r(\eta)
:=
\chi\!\left(\frac{\eta}{\sqrt r}\right)e^{-P_{\vs}(\eta)}
\Bigl[e^{q_r(\eta)+u_r(\eta)}-1-q_r(\eta)\Bigr].
\]
On \(\supp \chi(\eta/\sqrt r)\), one has \(|\eta|\le \rho\sqrt r\), hence
\[
|q_r(\eta)|\le A_4\rho^2|\eta|^2,
\qquad
|u_r(\eta)|\le C_6\rho^4|\eta|^2,
\qquad
|q_r(\eta)+u_r(\eta)|\le \frac{\sigma_-}{8}|\eta|^2.
\]
Writing \(z_r:=q_r+u_r\) and using
\[
e^{z_r}-1-q_r
=
u_r+\int_0^1(1-\theta)z_r^2e^{\theta z_r}\,d\theta,
\]
together with \eqref{eq:q-u-derivative-final}, Leibniz' rule, and Fa\`a di Bruno,
we obtain for \(|\alpha|\le N\),
\[
|\partial^\alpha(e^{z_r}-1-q_r)|
\le
C_\alpha r^{-2}(1+|\eta|)^M e^{\sigma_-|\eta|^2/8}
\]
for some \(M=M(d)\). Multiplying by \(\chi(\eta/\sqrt r)e^{-P_{\vs}(\eta)}\), and using
\(e^{-P_{\vs}(\eta)}\le e^{-\sigma_-|\eta|^2/2}\), gives
\begin{equation*}
|\partial^\alpha B_r(\eta)|
\le
C_\alpha r^{-2}(1+|\eta|)^M e^{-c|\eta|^2}
\qquad (|\alpha|\le N).
\end{equation*}
Therefore
\[
\sum_{|\alpha|\le N}\|\partial^\alpha B_r\|_{L^1(\mathbb R^d)}\le Cr^{-2}.
\]
Applying \eqref{eq:weighted-Fourier} to the scaled integral,
\[
\left(1+\frac{|x|}{\sqrt r}\right)^N |E_{\mathrm{in}}(x)|
\le
Cr^{-d/2}\sum_{|\alpha|\le N}\|\partial^\alpha B_r\|_{L^1}
\le
Cr^{-d/2-2}.
\]
Hence
\[
|E_{\mathrm{in}}(x)|
\le
Cr^{-d/2-2}\left(1+\frac{|x|}{\sqrt r}\right)^{-N}.
\]
Since \(N>d\),
\begin{equation}\label{eq:Ein-bounds-final}
\|E_{\mathrm{in}}\|_{\ell^\infty(\mathbb Z^d)}\le Cr^{-d/2-2},
\qquad
\|E_{\mathrm{in}}\|_{\ell^1(\mathbb Z^d)}\le Cr^{-2}.
\end{equation}

\smallskip
Combining \eqref{eq:Eout-bounds-final}, \eqref{eq:Etail-bounds-final},
and \eqref{eq:Ein-bounds-final}, and using \(e^{-cr}\le Cr^{-2}\) and
\(e^{-cr}\le Cr^{-2-d/2}\), we obtain
\[
\|\mathrm{Err}_{\vs}\|_{\ell^1(\mathbb Z^d)}\le C_{\mathrm F}r^{-2},
\qquad
\|\mathrm{Err}_{\vs}\|_{\ell^\infty(\mathbb Z^d)}\le C_{\mathrm F}r^{-2-d/2}.
\]
This proves \eqref{eq:ErrBounds}, and hence \eqref{eq:main-expansion}.
\qed

\smallskip
\subsubsection*{Local kernel estimates}

We now convert the profilewise local-limit expansion into estimates for the renewal-averaged
kernel \(\IK\). The pointwise remainder estimate controls the error in the moderate-deviation
positivity argument, while the \(\ell^1\)-estimate controls the remainder in the weighted
exterior-mass argument.

\begin{lemma}[Moderate-deviation positivity]\label{lem:Phi-positive-moddev}
Fix \(A>0\). There exists \(r_0(A)<\infty\) such that, for every admissible profile
\(\mathbf s=(s_1,\ldots,s_t)\) satisfying
\[
\sum_{i=1}^t s_i=r\ge r_0(A)
\]
and every \(x\in\mathbb Z^d\) satisfying
\[
|x|\le A\sqrt{r\log\log r},
\]
one has
\[
\Phi_{\mathbf s}(x)\ge \frac12 H_{\mathbf s}(x)>0.
\]
\end{lemma}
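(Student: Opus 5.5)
The plan is to use the expansion \eqref{eq:main-expansion} from Proposition~\ref{prop:Fourier-six-exp} and show that, in the window $|x|\le A\sqrt{r\log\log r}$, the two correction terms are each at most $\tfrac14 H_{\mathbf s}(x)$. The quartic correction $rQ_{4,\mathbf s}(i\nabla_x)H_{\mathbf s}$ will be handled by Lemma~\ref{lem:derivative-bound}, and the remainder $\mathrm{Err}_{\mathbf s}$ by its $\ell^\infty$ bound in \eqref{eq:ErrBounds}. Together these give $\Phi_{\mathbf s}(x)\ge H_{\mathbf s}(x)-\tfrac14H_{\mathbf s}(x)-\tfrac14H_{\mathbf s}(x)=\tfrac12H_{\mathbf s}(x)$. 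Positivity of $H_{\mathbf s}$ is immediate from its definition.

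\emph{Step 1: Gaussian lower bound in the window.} Since $\sigma_{\mathbf s}\ge\sigma_-$ and $|x|^2\le A^2 r\log\log r$,
\[
H_{\mathbf s}(x)\ge (2\pi r\sigma_+)^{-d/2}\exp\Bigl(-\frac{A^2\log\log r}{2\sigma_-}\Bigr)=(2\pi\sigma_+)^{-d/2}\,r^{-d/2}(\log r)^{-A^2/(2\sigma_-)}.
\]
So in this window $H_{\mathbf s}$ is only a polylogarithmic factor below $r^{-d/2}$.

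\emph{Step 2: the quartic correction.} By Lemma~\ref{lem:derivative-bound},
\[
|rQ_{4,\mathbf s}(i\nabla)H_{\mathbf s}(x)|\le C_{\rm der}r^{-1}\bigl(1+|x|^4/r^2\bigr)H_{\mathbf s}(x)\le C_{\rm der}r^{-1}\bigl(1+A^4(\log\log r)^2\bigr)H_{\mathbf s}(x).
\]
The right-hand side is at most $\tfrac14H_{\mathbf s}(x)$ once $r\ge r_0(A)$.

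\emph{Step 3: the remainder.} We have $|\mathrm{Err}_{\mathbf s}(x)|\le C_{\rm F}r^{-2-d/2}$. Comparing with Step~1, this is at most $\tfrac14 H_{\mathbf s}(x)$ as soon as
\[
4C_{\rm F}(2\pi\sigma_+)^{d/2}(\log r)^{A^2/(2\sigma_-)}\le r^{2},
\]
which holds for all large $r$ depending on $A$. This is the only place where the window size matters. At $|x|\asymp\sqrt{r\log r}$ the Gaussian would fall to a power of $r$ and could drop below the $r^{-2-d/2}$ error, so the $\log\log$ scale is exactly what keeps the loss polylogarithmic.

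I expect no serious obstacle. The one point to check is that $\sigma_\pm$ and $A_4$ in Lemma~\ref{lem:varphi-input} are uniform in the profile, which they are by averaging over the profile. The caveat is that $\sigma_\pm$ scale with $\alpha$, so $r_0(A)$ depends on $\beta$ as well. That is harmless, since $\beta$ is fixed throughout.
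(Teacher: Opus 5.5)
Your proposal is correct and follows essentially the same route as the paper: insert the expansion \eqref{eq:main-expansion}, control the quartic term with Lemma~\ref{lem:derivative-bound}, and use the lower bound $H_{\mathbf s}(x)\ge c_A r^{-d/2}(\log r)^{-B_A}$ on the $\sqrt{r\log\log r}$ window to absorb the $\ell^\infty$ remainder $C_{\mathrm F}r^{-2-d/2}$. The only difference is cosmetic: the paper states the two absorptions as $o(1)$ and $o(H_{\mathbf s}(x))$, while you split them explicitly into two quarters of $H_{\mathbf s}(x)$.
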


\begin{proof}
The expansion \eqref{eq:main-expansion}, Lemma~\ref{lem:derivative-bound}, and
\eqref{eq:ErrBounds} give
\[
\Phi_{\mathbf s}(x)
\ge
\left[
1-\frac{C_{\mathrm{der}}}{r}
\left(1+\frac{|x|^4}{r^2}\right)
\right]H_{\mathbf s}(x)
-C_{\mathrm F}r^{-2-d/2}.
\]
On the stated scale,
\[
\frac{C_{\mathrm{der}}}{r}
\left(1+\frac{|x|^4}{r^2}\right)
\le
\frac{C}{r}\bigl(1+(\log\log r)^2\bigr)=o(1).
\]
Moreover, uniformly in the admissible profile,
\[
H_{\mathbf s}(x)
\ge
(2\pi\sigma_+r)^{-d/2}
\exp\!\left(-\frac{A^2}{2\sigma_-}\log\log r\right)
\ge
c_A r^{-d/2}(\log r)^{-B_A}
\]
for suitable \(c_A>0\) and \(B_A<\infty\). Consequently,
$r^{-2-d/2}=o\bigl(H_{\mathbf s}(x)\bigr)$
uniformly on this scale. Increasing \(r_0(A)\) proves the claim.
\end{proof}

We combine this with 
Lemma~\ref{lem:LDP}. Recall that
\(\bar\ell_\kappa=\tilde{\mathbb E}_\kappa[L_1]\) is the mean sausage length under the tilted
law.

\begin{proof}[Proof of Proposition~\ref{prop:local-kernel-input}]
Fix
\[
\delta\in(0,\bar\ell_\kappa/2),\qquad
a_-:=\frac1{\bar\ell_\kappa+\delta},\qquad
a_+:=\frac1{\bar\ell_\kappa-\delta},
\]
and, for each \(s\), set
\[
\mathrm{SP}_s:=\{r\in\mathbb N:a_-s\le r\le a_+s\},\quad
p_r(s):=\tilde{\mathbb P}_\kappa(S_r=s),\quad
K_{r,s}(x):=
\begin{cases}
K(x;r,s),&p_r(s)>0,\\
0,&p_r(s)=0.
\end{cases}
\]
Thus
\begin{equation}\label{eq:local-input-tilt}
\IK(x,s)
=
e^{-(1+\kappa)s}\sum_{r\ge1}p_r(s)K_{r,s}(x).
\end{equation}
Lemma~\ref{lem:LDP} supplies constants \(c_{\mathrm{ld}}>0\) and \(s_0<\infty\) such that,
for \(s\ge s_0\),
\begin{equation}\label{eq:local-input-renewal}
E_s:=\sum_{r\notin\mathrm{SP}_s}p_r(s)\le e^{-c_{\mathrm{ld}}s},
\qquad
\sum_{r\in\mathrm{SP}_s}p_r(s)\ge\frac1{3\bar\ell_\kappa},
\end{equation}
and also
\begin{equation}\label{eq:local-input-renewal-upper}
\sup_{s\ge1}\sum_{r\ge1}p_r(s)<\infty.
\end{equation}
We will also use
\begin{equation}\label{eq:K-pointwise-one}
|K_{r,s}(x)|\le1.
\end{equation}
Indeed, Lemma~\ref{lem:varphi-input} gives
\(\lvert\widehat\varphi_\ell\rvert\le1\), so Fourier inversion gives
\(\lvert\Phi_{\mathbf s}(x)\rvert\le1\) for every admissible profile; conditional averaging
then proves \eqref{eq:K-pointwise-one}.

\smallskip
\noindent\textit{Proof of {\rm(a)}.}
Fix \(A>0\), and suppose that
\[
|u|\le A\sqrt{s\log\log s}.
\]
Because \(r\asymp s\) uniformly over \(r\in\mathrm{SP}_s\), there exists
\(A_1=A_1(A,\delta)\) such that, for all sufficiently large \(s\),
\[
|u|\le A_1\sqrt{r\log\log r}
\qquad (r\in\mathrm{SP}_s).
\]
Lemma~\ref{lem:Phi-positive-moddev} therefore applies to every admissible profile satisfying
\[
\sum_i s_i=r,\qquad \sum_i i\,s_i=s.
\]
In addition, using \(r\in[a_-s,a_+s]\) and
\(\sigma_{\mathbf s}\in[\sigma_-,\sigma_+]\), we obtain constants \(c_*>0\) and
\(B_{\mathrm{md}}(A)<\infty\) such that
\[
H_{\mathbf s}(u)
\ge
(2\pi a_+\sigma_+s)^{-d/2}
\exp\!\left(-\frac{A^2}{2a_-\sigma_-}\log\log s\right)
\ge
c_*s^{-d/2}(\log s)^{-B_{\mathrm{md}}(A)}.
\]
It follows that
\begin{equation}\label{eq:typical-K-lower}
K_{r,s}(u)
\ge
\frac{c_*}{2}s^{-d/2}(\log s)^{-B_{\mathrm{md}}(A)}
\qquad (r\in\mathrm{SP}_s,\ p_r(s)>0).
\end{equation}

Write
\[
T_s(u):=\sum_{r\in\mathrm{SP}_s}p_r(s)K_{r,s}(u),
\qquad
R_s(u):=\sum_{r\notin\mathrm{SP}_s}p_r(s)K_{r,s}(u).
\]
Equations \eqref{eq:local-input-renewal} and \eqref{eq:typical-K-lower} imply
\begin{equation}\label{eq:typical-mass-lower}
T_s(u)
\ge
\frac{c_*}{6\bar\ell_\kappa}
s^{-d/2}(\log s)^{-B_{\mathrm{md}}(A)}.
\end{equation}
On the other hand, \eqref{eq:K-pointwise-one} and \eqref{eq:local-input-renewal} give
\[
|R_s(u)|\le E_s\le e^{-c_{\mathrm{ld}}s}.
\]
After increasing the threshold in \(s\), the last quantity is at most
\(\frac12T_s(u)\). Substitution in \eqref{eq:local-input-tilt} yields
\[
\IK(u,s)
\ge
\frac12e^{-(1+\kappa)s}T_s(u)
\ge
\frac{c_*}{12\bar\ell_\kappa}
s^{-d/2}(\log s)^{-B_{\mathrm{md}}(A)}e^{-(1+\kappa)s},
\]
which proves (a).

\smallskip
\noindent\textit{Proof of {\rm(b)}.}
Fix \(A>0\) and \(\zeta\in(0,1)\), and continue to assume
\[
|u|\le A\sqrt{s\log\log s}.
\]
Let \(r\in\mathrm{SP}_s\) satisfy \(p_r(s)>0\), and fix an admissible profile
\(\mathbf s\) satisfying
\[
\sum_i s_i=r,\qquad \sum_i i\,s_i=s.
\]
For \(z\in B_\infty(u,k)\), one has \(|z-u|\le\sqrt d\,k\) and
\[
|z|^2-|u|^2\ge-2|u|\,|z-u|.
\]
Since \(\sigma_{\mathbf s}\ge\sigma_-\),
\[
H_{\mathbf s}(z)
\le
\exp\!\left(\frac{\sqrt d\,|u|}{r\sigma_-}k\right)H_{\mathbf s}(u).
\]
Uniformly over \(r\in\mathrm{SP}_s\), one has \(|u|/r\to0\). Thus, for all large \(s\), $e^{\frac{\sqrt d\,|u|}{r\sigma_-}}\le\zeta^{-1/2}$,
and hence
\begin{equation}\label{eq:weighted-H-ball}
\sum_{k\ge1}\zeta^k
\sum_{z\in B_\infty(u,k)}H_{\mathbf s}(z)
\le
C_{\mathrm{ball}}(d,\zeta)H_{\mathbf s}(u).
\end{equation}

If \(z\in B_\infty(u,k)\), then
\[
1+\frac{|z|^4}{r^2}
\le
C\left(1+\frac{|u|^4}{r^2}+k^4\right),
\qquad
\frac{|u|^4}{r^2}\le C(\log\log r)^2.
\]
Lemma~\ref{lem:derivative-bound} and the argument leading to
\eqref{eq:weighted-H-ball} therefore give
\begin{align*}
&\sum_{k\ge1}\zeta^k
\sum_{z\in B_\infty(u,k)}
\left|rQ_{4,\mathbf s}(i\nabla)H_{\mathbf s}(z)\right|\\
&\qquad\le
\frac{C(1+(\log\log r)^2)}{r}
\left[
\sum_{k\ge1}\zeta^{k/2}(2k+1)^d(1+k^4)
\right]H_{\mathbf s}(u)
=o\bigl(H_{\mathbf s}(u)\bigr).
\end{align*}
Similarly, by the pointwise error bound in \eqref{eq:ErrBounds},
\[
\sum_{k\ge1}\zeta^k
\sum_{z\in B_\infty(u,k)}|\mathrm{Err}_{\mathbf s}(z)|
\le
C_{\mathrm F}r^{-2-d/2}
\sum_{k\ge1}\zeta^k(2k+1)^d
=o\bigl(H_{\mathbf s}(u)\bigr),
\]
where the last comparison follows from the moderate-deviation lower bound on
\(H_{\mathbf s}(u)\) used above. Increasing the threshold once more, both of the last two
displays are at most
\(\frac12C_{\mathrm{ball}}(d,\zeta)H_{\mathbf s}(u)\).
Together with \eqref{eq:main-expansion}, \eqref{eq:weighted-H-ball}, and
Lemma~\ref{lem:Phi-positive-moddev}, this gives
\begin{equation}\label{eq:profile-ball-domination}
\sum_{k\ge1}\zeta^k
\sum_{z\in B_\infty(u,k)}|\Phi_{\mathbf s}(z)|
\le
4C_{\mathrm{ball}}(d,\zeta)\Phi_{\mathbf s}(u).
\end{equation}
Conditional averaging in \eqref{eq:profile-ball-domination} and the triangle inequality yield
\begin{equation}\label{eq:K-ball-domination}
\sum_{k\ge1}\zeta^k
\sum_{z\in B_\infty(u,k)}|K_{r,s}(z)|
\le
4C_{\mathrm{ball}}(d,\zeta)K_{r,s}(u)
\qquad (r\in\mathrm{SP}_s,\ p_r(s)>0).
\end{equation}

For the typical and atypical contributions defined above,
\eqref{eq:typical-mass-lower} and \eqref{eq:local-input-renewal} again imply
\(E_s\le\frac12T_s(u)\) for all large \(s\). Consequently,
\[
\IK(u,s)\ge\frac12e^{-(1+\kappa)s}T_s(u).
\]
On the other hand, \eqref{eq:K-ball-domination} and \eqref{eq:K-pointwise-one} give
\begin{align*}
\sum_{k\ge1}\zeta^k
\sum_{z\in B_\infty(u,k)}|\IK(z,s)|
&\le
e^{-(1+\kappa)s}
\left[
4C_{\mathrm{ball}}(d,\zeta)T_s(u)
+E_s\sum_{k\ge1}\zeta^k(2k+1)^d
\right]\\
&\le
\frac92C_{\mathrm{ball}}(d,\zeta)e^{-(1+\kappa)s}T_s(u),
\end{align*}
where we used
\(\sum_{k\ge1}\zeta^k(2k+1)^d\le C_{\mathrm{ball}}(d,\zeta)\).
Combining the last two displays proves (b).

\smallskip
\noindent\textit{Proof of {\rm(c)}.}
Fix \(A>0\) and \(\zeta\in(0,1)\), and put
\[
\mathcal R_s:=A\sqrt{s\log\log s}.
\]
For \(r\ge1\), define
\[
U_{r,s}
:=
\sum_{|u|>\mathcal R_s}\sum_{b\ge0}\zeta^b
\sum_{z\in B_\infty(u,b)}|K_{r,s}(z)|.
\]
By \eqref{eq:local-input-tilt} and the triangle inequality, the expression on the
left-hand side of (c) is at most
\begin{equation}\label{eq:outside-reduction}
e^{-(1+\kappa)s}\sum_{r\ge1}p_r(s)U_{r,s}.
\end{equation}

We first treat \(r\in\mathrm{SP}_s\), where the expansion gives a Gaussian-tail bound with an
\(r^{-2}\) remainder; the atypical renewal counts will require only the uniform
\(\ell^1\)-estimate. Standard lattice Gaussian-tail estimates, uniform in
\(\sigma_{\mathbf s}\in[\sigma_-,\sigma_+]\), give, for \(L\ge2\sqrt r\),

\[
\sum_{|z|>L}H_{\mathbf s}(z)
\le
C\exp\!\left(-\frac{L^2}{4\sigma_+r}\right)
\]
and
\[
\sum_{|z|>L}
\left(1+\frac{|z|^4}{r^2}\right)H_{\mathbf s}(z)
\le
C\exp\!\left(-\frac{L^2}{8\sigma_+r}\right).
\]
These bounds follow, for example, by comparing each lattice sum with the corresponding
Gaussian integral and absorbing the resulting polynomial factor into the weaker exponential.
Using \eqref{eq:main-expansion}, Lemma~\ref{lem:derivative-bound}, and the
\(\ell^1\)-error estimate in \eqref{eq:ErrBounds}, we obtain
\[
\sum_{|z|>L}|\Phi_{\mathbf s}(z)|
\le
C_{\mathrm G}\exp\!\left(-\frac{L^2}{8\sigma_+r}\right)
+C_{\mathrm G}r^{-2}.
\]
Conditional averaging therefore yields
\begin{equation}\label{eq:K-tail-local-input}
\sum_{|z|>L}|K_{r,s}(z)|
\le
C_{\mathrm G}\exp\!\left(-\frac{L^2}{8\sigma_+r}\right)
+C_{\mathrm G}r^{-2}
\qquad (L\ge2\sqrt r).
\end{equation}

For fixed \(b\), each \(z\) belongs to \(B_\infty(u,b)\) for at most
\((2b+1)^d\) choices of \(u\). Furthermore, if \(|u|>\mathcal R_s\) and
\(z\in B_\infty(u,b)\), then
\[
|z|>\mathcal R_s-\sqrt d\,b.
\]
Consequently,
\begin{equation}\label{eq:outside-multiplicity}
U_{r,s}
\le
\sum_{b\ge0}\zeta^b(2b+1)^d
\sum_{|z|>\mathcal R_s-\sqrt d\,b}|K_{r,s}(z)|.
\end{equation}
Split the last sum at \(b=\mathcal R_s/(2\sqrt d)\). If
\(b\le\mathcal R_s/(2\sqrt d)\), then
\(\mathcal R_s-\sqrt d\,b\ge\mathcal R_s/2\). For all sufficiently large \(s\),
this lower bound is at least \(2\sqrt r\). Since \(r\le s\) whenever
\(p_r(s)>0\), while \(r\ge a_-s\) on \(\mathrm{SP}_s\),
\eqref{eq:K-tail-local-input} gives
\[
\sum_{|z|>\mathcal R_s-\sqrt d\,b}|K_{r,s}(z)|
\le
C\exp\!\left(-\frac{\mathcal R_s^2}{32\sigma_+s}\right)+Cs^{-2}
\le
C(\log s)^{-\Delta(A)},
\]
where
\[
\Delta(A):=\frac{A^2}{32\sigma_+}.
\]
The contribution of these \(b\)'s to \eqref{eq:outside-multiplicity} is therefore at most
\(C(\log s)^{-\Delta(A)}\).

For \(b>\mathcal R_s/(2\sqrt d)\), Lemma~\ref{lem:Phi-l1} gives
\[
\sum_{|z|>\mathcal R_s-\sqrt d\,b}|K_{r,s}(z)|
\le
\|K_{r,s}\|_1\le C_{\ell^1}.
\]
Since a polynomially weighted geometric tail decays exponentially,
\[
\sum_{b>\mathcal R_s/(2\sqrt d)}\zeta^b(2b+1)^d
\le
Ce^{-c_\zeta\mathcal R_s}
\le
C(\log s)^{-\Delta(A)}
\]
for all large \(s\). We conclude that
\begin{equation}\label{eq:outside-typical-bound}
U_{r,s}\le C(\log s)^{-\Delta(A)}
\qquad (r\in\mathrm{SP}_s).
\end{equation}

For arbitrary \(r\), another application of Lemma~\ref{lem:Phi-l1} and the same
multiplicity count give
\[
U_{r,s}
\le
\|K_{r,s}\|_1\sum_{b\ge0}\zeta^b(2b+1)^d
\le C.
\]
Therefore, by \eqref{eq:local-input-renewal},
\eqref{eq:local-input-renewal-upper}, and \eqref{eq:outside-typical-bound},
\[
\sum_{r\ge1}p_r(s)U_{r,s}
\le
C(\log s)^{-\Delta(A)}
\sum_{r\in\mathrm{SP}_s}p_r(s)
+C\sum_{r\notin\mathrm{SP}_s}p_r(s)
\le
C_{\mathrm{out}}(A,\zeta)(\log s)^{-\Delta(A)}
\]
for all sufficiently large \(s\). Substitution in \eqref{eq:outside-reduction} proves (c).
\end{proof}

%%%%%%%%%%%%%%%%%%%%%%%%%%%%%%%%%%%%%%%%%%%%%%%%%%%%%%%%%%%%%%%%%%%%%%%%%%%%%%%%%%%%%%%%%%%%%%%%%%%%%%%%%%%%%%%%%%%%%%%%%%%%%%%%%%%%%%%%%%%%%%%%%%%%%%%%%%%%%%%%%%%%%%%%%%%%%%%%%%%%%%%%%%%%%%%%%%%%%%%%%%%%%%%%%%%%%%%%%%%%%%%%%
%%%%%%%%%%%%%%%%%%%%%%%%%%%%%%%%%%%%%%%%%%%%%%%%%%%%%%%%%%%%%%%%%%%%%%%%%%%%%%%%%%%%%%%%%%%%%%%%%%%%%%%%%%%%%%%%%%%%%%%%%%%%%%%%%%%%%%%%%%%%%%%%%%%%%%%%%%%%%%%%%%%%%%%%%%%%%%%%%%%%%%%%%%%%%%%%%%%%%%%%%%%%%%%%%%%%%%%%%%%%%%%%%%%%%%%%%%%%%%%%
\bibliographystyle{amsplain}
\bibliography{Potts}

\end{document}